\newtheorem{theorem}{Theorem}[section]
\newtheorem{lemma}[theorem]{Lemma}
\newtheorem{corollary}[theorem]{Corollary}
\newtheorem{definition1}{Definition}[section]
\newtheorem{observe}{Observation}[section]
\newtheorem{remark1}[observe]{Remark}
\newtheorem{example1}{Example}[section]
\newtheorem{aside1}[observe]{Aside}
\newenvironment{definition}[1][]{\begin{definition1}[#1] \rm}{\end{definition1}}
\newenvironment{observation}{\begin{observe} \rm}{\end{observe}}
\newenvironment{remark}{\begin{remark1} \rm}{\end{remark1}}
\def\qed{\hfill$\blacksquare$\\} \renewenvironment{proof}{\noindent {\bf 
Proof.}}{\qed}
\newif\ifshowboxes \showboxestrue
\providecommand{\e}[1]{\ensuremath{\times 10^{#1}}}
\newcommand{\spn}{\mathop{\mathrm{span}}}
\renewcommand\Re{\operatorname{Re}}
\newcommand{\inner}[2]{\ensuremath{ {\langle #1,#2 \rangle} }}
\newcommand{\ai}{\text{Ai}}
\renewcommand{\d}{\,\mathrm{d}}
\newcommand{\dd}[2]{\frac{d #1}{d #2}}
\newcommand{\ddd}[2]{\frac{d^2 #1}{d #2 ^2}}
\newcommand{\pp}[2]{\frac{\partial #1}{\partial #2}}
\newcommand{\norm}[1]{\ensuremath{ {\lVert #1 \rVert} }}
\newcommand{\bbnorm}[1]{\ensuremath{ {\biggl\lVert #1 \biggr\rVert} }}
\newcommand{\abs}[1]{\ensuremath{ {\lvert #1 \rvert} }}
\def\C{\mathbbm{C}}
\def\R{\mathbbm{R}}
\def\N{\mathbbm{N}}
\def\1{\mathbbm{1}}
\def\G{\mathcal{G}}
\def\O{\mathcal{O}}
\def\T{\mathcal{T}}
\def\L{\mathcal{L}}
\def\cA{\mathcal{A}}
\def\cK{\mathcal{K}}
\renewcommand{\tilde}{\widetilde}
\begin{document}

\begin{center}
    \begin{minipage}[t]{6.0in}

The distributions of the $k$-th largest level at the soft edge scaling limit
of Gaussian ensembles are some of the most important distributions in random
matrix theory, and their numerical evaluation is 
a subject of great practical importance. One numerical
method for evaluating the distributions uses the fact that they
can be represented as Fredholm determinants involving the
so-called Airy integral operator. When the spectrum of the integral operator
is computed by discretizing it directly, the eigenvalues are known to at
most absolute precision. Remarkably, the Airy integral operator is an
example of a so-called bispectral operator, which admits a commuting
differential operator that shares the same eigenfunctions. In this
paper, we develop an efficient numerical algorithm for evaluating the
eigendecomposition of the Airy integral operator to full relative precision,
using the eigendecomposition of the commuting differential operator. This
allows us to rapidly evaluate the distributions of the $k$-th largest level
to full relative precision rapidly everywhere, except in the left tail, where
they are computed to absolute precision. In addition, we characterize the
eigenfunctions of the Airy integral operator, and describe their extremal
properties in relation to an uncertainty principle involving the Airy
transform. We observe that the Airy integral operator is fairly universal,
and we describe a separate application to Airy beams in optics. 

 \noindent
{\bf Keywords:}
{\it Airy integral operator; Eigendecomposition; Random Matrix Theory;
Gaussian ensembles; Finite-energy Airy beam; Propagation-invariant optical fields;
Bispectral operator}

   \vspace{ -100.0in}
 
 \thispagestyle{empty}

   \end{minipage}
 \end{center}
 
 \vspace{ 3.00in}
 \vspace{ 0.80in}
 
 \begin{center}
   \begin{minipage}[t]{4.4in}
     \begin{center}
 
 \textbf{On the Evaluation of the Eigendecomposition of the Airy Integral Operator} \\
 
   \vspace{ 0.50in}
 
 Zewen Shen$\mbox{}^{\dagger\, \star}$ and
 Kirill Serkh$\mbox{}^{\ddagger\, \diamond}$  \\
               v4, updated June 17, 2022
 
     \end{center}
   \vspace{ -100.0in}
   \end{minipage}
 \end{center}
 
 \vspace{ 2.00in}

 \vfill
 
 \noindent 
 $\mbox{}^{\diamond}$  This author's work was supported in part by the NSERC
 Discovery Grants RGPIN-2020-06022 and DGECR-2020-00356.
 \\

 \vspace{2mm}
 
 \noindent
 $\mbox{}^{\dagger}$ Dept.~of Computer Science, University of Toronto,
 Toronto, ON M5S 2E4\\
 \noindent
 $\mbox{}^{\ddagger}$ Dept.~of Math. and Computer Science, University of Toronto,
 Toronto, ON M5S 2E4 \\
 
 \vspace{2mm}
 \noindent 
 $\mbox{}^{\star}$  Corresponding author
 \\

 \vfill
 \eject
\tableofcontents

\section{Introduction}
Recently, random matrix theory (RMT) has become one of the most exciting fields in
probability theory, and has been applied to problems in physics \cite{phy},
high-dimensional statistics \cite{highds}, wireless communications
\cite{wire}, finance \cite{fin}, etc. The Tracy-Widom distributions, or, more
generally, the distributions of the $k$-th largest level at the soft edge
scaling limit of Gaussian ensembles, are some of the most important
distributions in RMT, and their numerical evaluation
is a subject of great practical importance (see \cite{rmtintro,rao}
for friendly introductions to RMT, and see \cite{bornrmt} for an overview of the
numerical aspects of RMT).
There are generally two ways of calculating the distributions to high
accuracy numerically: one, using the Painlev\'e representation of the
distribution to reduce the calculation to solving a nonlinear ordinary
differential equation (ODE) numerically \cite{dieng}, and the other, using
the determinantal representation of the distribution to reduce the
calculation to an eigenproblem involving an integral operator
\cite{bornrmt}. 

In the celebrated work \cite{tw}, the Tracy-Widom distribution for the
Gaussian unitary ensemble (GUE) was shown to be representable as an integral
of a solution to a certain nonlinear ODE called the Painlev\'e II equation.
This nonlinear ODE can be solved to relative accuracy numerically, but
achieving relative accuracy is extremely expensive, since it generally
requires multi-precision arithmetic \cite{table}. In addition, the extension
of the ODE approach to the computation of the $k$-th largest level at the
soft edge scaling limit of Gaussian ensembles is not straightforward, as it
requires deep analytic knowledge for deriving connection formulas
\cite{bornrmt,dieng}. 

On the other hand, the method based on the Fredholm determinantal
representation uses the fact that the cumulative distribution function (CDF)
of the $k$-th largest level at the soft edge scaling limit of the Gaussian
unitary ensemble can be written in the following form:
    \begin{align}
      F_2(k;s)= \sum_{j=0}^{k-1} \frac{(-1)^j}{j!} \pp{^j}{z^j} \det\big(I - z
      \cK|_{L^2[s,\infty)}\big) \Bigr|_{z=1},
    \end{align}
where $\cK|_{L^2[s,\infty)}$ denotes the integral operator on $L^2[s,\infty)$
with kernel
\begin{align}
    K_{Ai}(x,y)=\int_s^\infty \ai(x+z-s)\ai(z+y-s) \d z,
      \label{f2kai}
\end{align}
where $\ai(x)$ is the Airy function of the first kind (see
\cite{tw,forrester} for the derivations). We also note that there exist
similar Fredholm determinantal representations for the cases of the Gaussian
orthogonal ensemble (GOE) and Gaussian sympletic ensemble (GSE) (see Section
\ref{sec:klevel}). The cumulative distribution
function and the
probability density function (PDF) of the distribution can be computed using
the eigendecomposition of the so-called Airy integral operator $\T_s$, where
$\T_s[f](x)=\int_0^\infty \ai(x+y+s)f(y) \d y$ for $x\geq 0$. This is because 
$\cK|_{L^2[s,\infty)} = \G_s^2$, where $\G_s[f](x)=\int_s^\infty
\ai(x+y-s)f(y)  \d y$ for $x\geq s$, and $\T_s$ shares the same eigenvalues and
eigenfunctions (up to a translation) with $\G_s$. If the
eigenvalues of the integral operator $\T_s$ are computed directly, they can
be known only to absolute precision, since $\T_s$ is a compact integral
operator. Furthermore, the number of degrees of freedom required to
discretize $\T_s$ increases when the kernel is oscillatory (as $s\to
-\infty$). 

In this paper, we present a new method for computing the
eigendecomposition of the Airy integral operator $\T_s$, which
solves an open problem in random matrix theory (see, for example, Open
Problem~6 in~\cite{deift}). It exploits the remarkable fact that the Airy
integral operator admits a commuting differential operator, which shares the
same eigenfunctions (see, for example, \cite{tw,karoui}). In our method, we
compute the spectrum and the eigenfunctions of the differential operator by
computing the eigenvalues and eigenvectors of a banded eigenproblem. Since
the eigenproblem is banded, the eigendecomposition can be done very quickly
in $\O(n^2)$ operations, and the eigenvalues and eigenvectors can be
computed to entry-wise full relative precision.  Finally, we use the
computed eigenfunctions to recover the spectrum of the Airy integral operator
$\T_s$, also to full relative precision.

As a direct application, our method computes the distributions of the $k$-th
largest level at the soft edge scaling limit of Gaussian ensembles to full
relative precision rapidly everywhere, except in the left tail (the left tail
is computed to absolute precision). We note that several other integral
operators admitting commuting differential operators have been studied
numerically from the same point of view as this paper (see, for
example, \cite{prolb,royt4})

Integral operators like $\T_s$, which admit commuting differential
operators, are known as bispectral operators (see, for example,
\cite{bispectral}). One famous example of a bispectral operator is the
truncated Fourier transform, which was investigated by Slepian and his
collaborators in the 60's \cite{slepian}; its eigenfunctions are known as
prolate spheroidal wavefunctions. We note that, unlike prolates, the
eigenfunctions of the operator $\T_s$ are relatively unexamined: ``The
behavior of the eigenfunctions, a problem of great practical interest,
presents a serious numerical challenge'' (see Open Problem 6
in~\cite{deift}); ``In the case of the Airy kernel, the differential
equation did not receive much attention and its solutions are not known''
(see Section 24.2 in~\cite{mehta}). In this paper, we also characterize
these previously unstudied eigenfunctions, and describe their extremal
properties in relation to an uncertainty principle involving the Airy
transform.

Finally, we note that the Airy integral operator $\T_s$ is rather universal.
For example, in Section \ref{optics}, we describe an application to optics.
In that section, we use the eigenfunctions of the Airy integral operator to
compute finite-energy Airy beams that are optimal, in the sense that they
maximally concentrate energy near the main lobes in their initial
profiles, while also remaining diffraction-free over the longest possible
distances.

\section{Mathematical and Numerical Preliminaries}
In this section, we introduce the necessary mathematical and numerical
preliminaries.

\subsection{Airy function of the first kind}
The Airy function of the first kind is the solution to the differential equation
    \begin{align}
\ddd{f}{x}-xf=0,\label{aiode}
    \end{align}
for all $x\in \R$, that decays for large $x$. It can also be written in an
integral representation
    \begin{align}
\ai(x)=\frac{1}{\pi}\int_0^\infty \cos\Bigl(\frac{t^3}{3}+xt\Bigr)\d t.
    \end{align}
\begin{remark}
One can extend the definition of $\ai(x)$ to the complex plane and show that
it is an entire function.
\end{remark}
\begin{remark}\label{asymp}
As $x\to+\infty$, 
    \begin{align}
\ai(x)\sim \frac{e^{-\frac{2}{3}x^{3/2}}}{2\pi^{1/2}x^{1/4}}.
    \end{align}
\end{remark}

\subsection{The Airy Integral Operator}
In this section, we give the definition and properties of the Airy integral
operator.

\subsubsection{The Airy integral operator $\T_c$ and its associated integral
operator $\G_c$}\label{aiintch}

In this subsection, we define the Airy integral operator, including its
eigenvalues and eigenfunctions. Its associated integral operator is
introduced as well.
\begin{definition}
  \label{gctcdef}
Given a real number $c$, let $\T_c\colon L^2[0,\infty)\to L^2[0,\infty)$
denote the Airy integral operator defined by
    \begin{align}
\T_c[f](x)=\int_0^\infty \ai(x+y+c)f(y)  \d y,\quad x\geq 0.\label{Tc}
    \end{align}
Let $\G_c\colon L^2[c,\infty)\to L^2[c,\infty)$ denote
the associated Airy integral operator defined by
    \begin{align}
\G_c[f](x)=\int_c^\infty \ai(x+y-c)f(y)  \d y,\quad x\geq c.
      \label{Gc}
    \end{align}
\end{definition}
Obviously, $\G_c$ and $\T_c$ are both compact and self-adjoint.

\medskip

We denote eigenvalues of $\T_c$ by
$\lambda_{0,c},\lambda_{1,c},\dots,\lambda_{n,c},\dots$, ordered so that
$\abs{\lambda_{j-1,c}}\geq \abs{\lambda_{j,c}}$ for all $j\in \N^+$. For each
non-negative integer $j$, let $\psi_{j,c}$ denote the $(j+1)$-th
eigenfunction of $\T_c$, so that
    \begin{align}
\hspace{-1em}\lambda_{j,c}\psi_{j,c}(x) =& \int_0^\infty
\ai(x+y+c)\psi_{j,c}(y)  \d y,\quad x\in[0,\infty).\label{Teigfun}
    \end{align}
In this paper, we normalize the eigenfunctions such that
$\norm{\psi_{j,c}}_2=1$ for any real number $c$
and non-negative integer $j$. Since the eigenfunctions are real, this
condition only specifies the eigenfunctions up to multiplication by $-1$. We
thus require that $\psi_{j,c}(0) > 0$ (we show in Theorem~\ref{thm:rec} in
Appendix~\ref{sec:misc} that $\psi_{j,c}(0)\ne 0$).

\medskip

Note that $\T_c$ and $\G_c$ share the same eigenvalues and eigenfunctions up
to a translation, i.e.  $\lambda_{j,c}, \psi_{j,c}(x)$ is an
eigenpair of the operator $\T_c$, and $\lambda_{j,c}, \psi_{j,c}(x-c)$ is an
eigenpair of the operator $\G_c$ (see Theorem \ref{thm:similarity}). Note
that the operator $\T_c$ is more convenient to work with than the operator
$\G_c$, as its domain is invariant under change of $c$. Therefore, we
will mainly focus on the study of the Airy integral operator $\T_c$ in this
paper.

\begin{remark}
For simplicity, we will use $\lambda_j$ and $\psi_j$ to denote the eigenvalue
and the eigenfunction when there is no ambiguity.
\end{remark}

\subsubsection{Properties and connection to the Airy transform}

\begin{definition}
Let $\cA\colon L^2(\R) \to L^2(\R)$ denote the integral transform defined by 
the formula
  \begin{align}
\cA[\phi](x) = \int_{-\infty}^\infty \ai(x+y)\phi(y) \d y.
  \end{align}
In a mild abuse of terminology, we call $\cA$ the Airy transform. Note that 
the standard Airy transform of $\phi$ is defined as $\int_{-\infty}^\infty
\ai(x-y)\phi(y) \d y$, which can be written as $\cA\circ R$, where $R$
denotes the reflection operator. 
\end{definition}
It is well-known that $\cA$ is unitary, and that $\cA^2 = I$, where $I$ is
the identity operator (see, for example,~\cite{vallee}). To introduce the connection
between the Airy transform $\cA$, the so-called Airy kernel integral
operator $\cK$ (see formula (\ref{f2kai})), and the two integral operators
$\T_c, \G_c$ defined in Section \ref{aiintch}, we first define the following
operators.

\begin{definition}\label{def:F}
Given real numbers $a$ and $b$, let $F_{a,b} \colon
L^2(\R) \to L^2(\R)$ be the operator defined by the formula
  \begin{align}
F_{a,b}[\phi](x) = \1_{[b,\infty)}(x) \cA[ \1_{[a,\infty)}(y) \phi(y)](x),\label{for:F}
  \end{align}
where $\1_X$ denotes the indicator function associated with the set $X$. 
Let $\tilde F_c$ be a synonym for the operator $F_{0,c}$. 
\end{definition}

The operator $F_{a,b}$
represents a truncation or ``band-limiting'' to the half line $[a,\infty)$, 
followed by an Airy transform, followed by another truncation to the half
line $[b,\infty)$.  Clearly, $F_{a,b}^* = F_{b,a}$.

\begin{definition}\label{def:P}
Given a real number $c$, let $P_c \colon
L^2(\R) \to L^2(\R)$ denote the projection operator defined by the formula
  \begin{align}
P_c[\phi](x) = \1_{[c,\infty)}(x) \phi(x).
  \end{align}
\end{definition}
It's easy to see that $\tilde F_c = P_c \cA P_0$, and that $\tilde
F_{-\infty} = \cA P_0$. 
\begin{definition}\label{def:T}
Given a real number $c$, let $T_c\colon L^2(\R) \to L^2(\R)$ denote the
translation operator defined by the formula
  \begin{align}
T_c[\phi](x) = \phi(x-c).
  \end{align}
\end{definition}
Below, we define the integral operator $\cK$.
\begin{definition}\label{def:f2kai}
Let $\cK\colon L^2(\R) \to L^2(\R)$ denote the integral operator with kernel
\begin{align}
    K_{\ai}(x,y) = \int_0^\infty \ai(x+z)\ai(y+z) \d z.
    \label{twkern}
\end{align}
\end{definition}
Clearly, $\cK = \tilde F_{-\infty} \tilde F_{-\infty}^*$.
Moreover, by a change of variables, the kernel~(\ref{twkern}) can be rewritten as
  \begin{align}
K_{\ai}(x,y) = \int_c^\infty \ai(x+z-c)\ai(y+z-c) \d z,
  \end{align}
from which we see that $\cK|_{L^2[c,\infty)}$ (equivalently, $P_c \cK P_c$
or $\tilde F_c {\tilde F_c}^*$) is equal to the square of the associated
Airy integral operator $\G_c$ defined in Definition~\ref{gctcdef}. Thus, the
eigenfunctions and eigenvalues of $\cK|_{L^2[c,\infty)}$ are given by
$\psi_{j,c}(x-c)$ and $\lambda_{j,c}^2$ (see formula (\ref{Teigfun})),
respectively.

The following theorem, proved in Lemma~2 of~\cite{tw}, states that the eigenvalues
$\lambda_{j,c}$ approach one in absolute value as $c\to-\infty$.
\begin{theorem}\label{thm:eig1}
For each $j$, $\lambda_{j,c}^2 \to 1$ as $c\to -\infty$, where $\lambda_{j,c}$ is 
the $(j+1)$-th eigenvalue of the Airy integral operator $\T_c$.
\end{theorem}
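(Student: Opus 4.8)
The plan is to recast the statement as a claim about the top eigenvalues of a compressed orthogonal projection, and then prove matching upper and lower bounds. From the discussion preceding the theorem, $\cK|_{L^2[c,\infty)} = \tilde F_c {\tilde F_c}^*$ with $\tilde F_c = P_c \cA P_0$, and its eigenvalues are exactly $\lambda_{j,c}^2$. Since $\cA$ is unitary with $\cA^2 = I$, it is self-adjoint, so ${\tilde F_c}^* = P_0 \cA P_c$ and
\begin{align}
\cK|_{L^2[c,\infty)} = P_c \cA P_0 \cA P_c = P_c Q P_c, \qquad Q := \cA P_0 \cA .
\end{align}
Using $\cA^2 = I$ and $P_0^2 = P_0$ one checks that $Q^2 = Q$ and $Q^* = Q$, so $Q$ is the orthogonal projection onto the infinite-dimensional subspace $\cA\big(L^2[0,\infty)\big)$. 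It therefore suffices to show that, for each fixed $j$, the $(j+1)$-th largest eigenvalue of the compression $P_c Q P_c$ tends to $1$ as $c\to-\infty$.

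The upper bound is immediate: for $v \in L^2[c,\infty)$ with $\norm{v}=1$ we have $P_c v = v$, so $\inner{P_c Q P_c v}{v} = \inner{Q v}{v} = \norm{Q v}^2 \le \norm{v}^2 = 1$, and the min-max principle gives $\lambda_{j,c}^2 \le 1$ for all $j$ and $c$. Hence $\limsup_{c\to-\infty} \lambda_{j,c}^2 \le 1$.

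For the lower bound I would use the variational characterization
\begin{align}
\lambda_{j,c}^2 = \max_{\substack{V \subseteq L^2[c,\infty) \\ \dim V = j+1}} \ \min_{\substack{v \in V \\ \norm{v} = 1}} \norm{Q v}^2 ,
\end{align}
and construct a good trial subspace. Fix any $(j+1)$-dimensional subspace $W \subseteq L^2[0,\infty)$ and set $V_0 := \cA W$, which lies in the range of $Q$, so that $Q u = u$ for every $u \in V_0$. Since $V_0$ need not be contained in $L^2[c,\infty)$, I would instead take the trial space $V_c := P_c V_0 \subseteq L^2[c,\infty)$. As $P_c \to I$ strongly when $c \to -\infty$ and $V_0$ is finite-dimensional, this convergence is uniform on the unit sphere of $V_0$; writing $\varepsilon_c := \sup_{u \in V_0,\, \norm{u}=1} \norm{(I-P_c)u}$, we have $\varepsilon_c \to 0$. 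In particular $P_c$ is injective on $V_0$ once $\varepsilon_c < 1$, so $\dim V_c = j+1$. For $v = P_c u \in V_c$ with $u \in V_0$, the identity $Q v = u + Q(v-u)$, together with $\norm{v-u} = \norm{(I-P_c)u} \le \varepsilon_c \norm{u}$, $\norm{Q}\le 1$, and $\norm{v} \le \norm{u}$, gives $\norm{Q v} \ge (1-\varepsilon_c)\norm{u} \ge (1-\varepsilon_c)\norm{v}$, so that $\norm{Qv}^2 / \norm{v}^2 \ge (1-\varepsilon_c)^2$ uniformly over $V_c$. Substituting $V_c$ into the variational formula yields $\lambda_{j,c}^2 \ge (1-\varepsilon_c)^2 \to 1$, which combined with the upper bound proves the claim.

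The crux is the lower bound, and specifically the fact that $P_c \to I$ only in the strong operator topology, not in norm: one cannot pass to the limit at the level of operators, nor conclude norm convergence of $P_c Q P_c$ to $Q$. The device that makes the argument work is to fix the finite-dimensional target subspace $V_0$ first, on which strong convergence of $P_c$ automatically upgrades to uniform convergence on the unit sphere, yielding both the preservation of dimension and the uniform quadratic-form estimate. The choice of $W$ is immaterial: the argument uses only that $Q$ acts as the identity on $V_0 = \cA W$.
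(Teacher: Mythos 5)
Your proof is correct and takes essentially the same route as the paper: both identify $\cK = \cA P_0 \cA$ as an orthogonal projection with infinite-dimensional range and view $\lambda_{j,c}^2$ as the eigenvalues of the compression $P_c \cK P_c$, which converges to $\cK$ as $c \to -\infty$. The only difference is one of rigor rather than strategy: the paper asserts the final implication (compression converges, hence each fixed-index eigenvalue tends to $1$) in a single line, while you supply the min-max argument -- fixing a finite-dimensional subspace $V_0 = \cA W$ of the range of the projection, on which the strong convergence $P_c \to I$ upgrades to uniform convergence -- that genuinely justifies this step, since strong convergence of $P_c \cK P_c$ alone does not trivially yield spectral convergence.
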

\begin{proof}
We first show that $\cK$ is a projection operator. Since $\tilde
F_{-\infty} = \cA P_0 $, we have that $\cK = \tilde F_{-\infty} \tilde
F_{-\infty}^* = \cA P_0 \cA$, so $\cK^2 = \cA P_0 \cA^2 P_0 \cA$.  Recalling
that $\cA^2 = I$, it follows that $\cK^2 = \cA P_0 \cA = \cK$.  Since $\cK$
is a projection, its spectrum takes values in the set $\{0,1\}$, and since
$\cK$ has an infinite dimensional range, it has infinitely many eigenvalues
equal to~$1$.  The operator $P_c \cK P_c$ converges to $\cK$ as $c\to
-\infty$, so it follows that, for each $j$, $\lambda_{j,c}^2 \to 1$ as $c\to
-\infty$.
\end{proof}

In the next theorem, we show that the Airy integral operator $\T_c$ is related to its
associated integral operator $\G_c$ by a similarity transformation.
\begin{theorem}\label{thm:similarity}
The Airy integral operator $\T_c$ is similar to its associated integral
operator $\G_c$. Furthermore, if $\lambda_{j,c}$ and $\psi_{j,c}$
are eigenvalues and eigenfunctions of $\T_c$, then $\lambda_{j,c}$ and
$\psi_{j,c}(x-c)$ are eigenvalues and eigenfunctions of $\G_c$.
\end{theorem}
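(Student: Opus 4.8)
The plan is to exhibit an explicit unitary operator intertwining $\T_c$ and $\G_c$, and the natural candidate is the translation operator $T_c$ of Definition~\ref{def:T}. The first step is to check that $T_c$ restricts to a unitary bijection $L^2[0,\infty)\to L^2[c,\infty)$: since $T_c[\phi](x)=\phi(x-c)$ shifts the support of a function by $c$, it carries functions supported on $[0,\infty)$ to functions supported on $[c,\infty)$, and as a translation it preserves the $L^2$ norm. Its inverse is $T_{-c}$, with $T_c^{-1}[g](x)=g(x+c)$, which maps $L^2[c,\infty)$ back onto $L^2[0,\infty)$.

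The core of the argument is to verify, by a single change of variables, the intertwining identity $\G_c = T_c\,\T_c\,T_c^{-1}$ on $L^2[c,\infty)$. Starting from $g\in L^2[c,\infty)$, I would first apply $T_c^{-1}$ to obtain a function on $[0,\infty)$, then apply $\T_c$ through its kernel $\ai(x+y+c)$, and substitute $w=y+c$ so that the kernel argument becomes $\ai(x+w)$ and the domain of integration becomes $[c,\infty)$. Applying $T_c$ then replaces $x$ by $x-c$, turning the kernel into $\ai(x+w-c)$, which is precisely the kernel of $\G_c$ in~(\ref{Gc}). This shows $T_c\,\T_c\,T_c^{-1}[g]=\G_c[g]$ for every such $g$, hence $\T_c$ and $\G_c$ are similar (indeed unitarily equivalent).

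For the eigenfunction statement, I would transport the spectral data across the intertwiner. If $\T_c\psi_{j,c}=\lambda_{j,c}\psi_{j,c}$, then applying $\G_c=T_c\,\T_c\,T_c^{-1}$ to $T_c\psi_{j,c}$ gives $\G_c[T_c\psi_{j,c}] = T_c\,\T_c\,\psi_{j,c}=\lambda_{j,c}\,T_c\psi_{j,c}$, so $T_c\psi_{j,c}$ is an eigenfunction of $\G_c$ with the same eigenvalue $\lambda_{j,c}$. Since $T_c\psi_{j,c}(x)=\psi_{j,c}(x-c)$, this is exactly the claimed eigenpair.

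I do not anticipate a genuine obstacle here; the content is a clean bookkeeping computation. The only points requiring mild care are confirming that $T_c$ really maps the half-line spaces onto one another, so that the compositions are well defined on the stated domains, and tracking the shifts in the kernel argument through the change of variables so that $\ai(x+y+c)$ ends up as $\ai(x+y-c)$ rather than some other translate.
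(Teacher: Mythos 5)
Your proposal is correct and is essentially the paper's argument: the paper also conjugates by the translation operator, establishing $\T_c = T_{-c}\,\G_c\,T_c$ (equivalently $\G_c = T_c\,\T_c\,T_{-c}$, your identity) via the operators $F_{0,c}$ and $F_{c,0}$ of Definition~\ref{def:F}, and then transports eigenpairs across the unitary $T_c$ exactly as you do. Your direct change-of-variables verification of the kernel is just the unpacked form of the paper's identities $\T_c = T_{-c}F_{0,c}$ and $\G_c = T_c F_{c,0}$, so there is no substantive difference.
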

\begin{proof}
We observe that $\T_c = T_{-c}
F_{0,c}$,  so $\T_c = T_{-c} (T_c F_{c,0} T_c) = T_{-c} \G_c T_c$. Since
$T_c^* = T_{-c}$ and $T_c T_c^* = I$, we see that $\T_c$ is related to
$\G_c$ by a similarity transformation. The statement about the
eigenfunctions and eigenvalues follows immediately.
\end{proof}

Finally, we characterize the relation between the eigenfunction
$\psi_{j,c}$ and its Airy transform $\cA[\psi_{j,c}]$.

\begin{theorem}\label{thm:anacont}
For any real $c$, there exists an analytic continuation of the
eigenfunction $\psi_{j,c}$ of the Airy integral operator with parameter $c$,
which we denote by $\tilde\psi_{j,c}$. Furthermore,
  \begin{align}
\tilde\psi_{j,c}(x)=\frac{1}{\lambda_{j,c}}\cA[\psi_{j,c}](x+c),
  \end{align}
for all $x\in \R$, where $\lambda_{j,c}$ is the corresponding eigenvalue of
$\psi_{j,c}$.
\end{theorem}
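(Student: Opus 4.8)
The plan is to extract the stated formula almost verbatim from the eigenvalue equation (\ref{Teigfun}) and then to establish analyticity by a direct estimate. First I would view the eigenfunction $\psi_{j,c}\in L^2[0,\infty)$ as an element of $L^2(\R)$ by extending it by zero to $(-\infty,0)$; with this convention the Airy transform becomes
\begin{align}
\cA[\psi_{j,c}](x) = \int_{-\infty}^\infty \ai(x+y)\psi_{j,c}(y)\d y = \int_0^\infty \ai(x+y)\psi_{j,c}(y)\d y .
\end{align}
Replacing $x$ by $x+c$ and comparing with (\ref{Teigfun}), I get, for every $x\geq 0$,
\begin{align}
\cA[\psi_{j,c}](x+c) = \int_0^\infty \ai(x+y+c)\psi_{j,c}(y)\d y = \lambda_{j,c}\,\psi_{j,c}(x) .
\end{align}
Assuming $\lambda_{j,c}\neq 0$ (the case of interest, since the statement divides by $\lambda_{j,c}$), this is precisely the claimed identity on the half-line. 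It also exhibits $\psi_{j,c}$ on $[0,\infty)$ as the restriction of $\lambda_{j,c}^{-1}\cA[\psi_{j,c}](\cdot+c)$, so by the identity theorem the latter is the only possible analytic continuation, and the whole theorem reduces to proving that this function is analytic.

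Accordingly, I would define
\begin{align}
\tilde\psi_{j,c}(z) := \frac{1}{\lambda_{j,c}}\int_0^\infty \ai(z+c+y)\psi_{j,c}(y)\d y
\end{align}
for $z\in\C$ and show it is entire; its restriction to $z=x\in\R$ is then the desired continuation. Since $\ai$ is entire (see the remark after (\ref{aiode})), the only thing to check is that the integral converges and depends holomorphically on $z$. For this I would use the super-exponential decay of $\ai$ from Remark~\ref{asymp}: for $z$ in a fixed compact set $K\subset\C$ the argument $z+c+y$ runs off to $+\infty$ along a horizontal ray as $y\to+\infty$, so $\babs{\ai(z+c+y)} \leq C_K\,e^{-\frac12 y^{3/2}}$ for all large $y$, uniformly in $z\in K$. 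Together with $\psi_{j,c}\in L^2[0,\infty)$ and Cauchy--Schwarz, this makes the integral converge absolutely and uniformly for $z\in K$.

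Uniform convergence then upgrades to analyticity in the standard way. For any closed contour $\gamma\subset\C$, the uniform bound justifies Fubini's theorem, and $\oint_\gamma \ai(z+c+y)\d z = 0$ because $\ai$ is entire; hence $\oint_\gamma \tilde\psi_{j,c}(z)\d z = 0$, and Morera's theorem gives that $\tilde\psi_{j,c}$ is entire. (Equivalently, one may differentiate under the integral sign, the differentiated integral converging uniformly by the same estimate.) Since $\tilde\psi_{j,c}$ agrees with $\psi_{j,c}$ on $[0,\infty)$, it is the analytic continuation asserted in the theorem, and the displayed formula holds for all real (indeed complex) $x$.

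The one genuinely nontrivial step is the uniform decay estimate off the real axis. Remark~\ref{asymp} is stated only for real $x\to+\infty$, so I must upgrade it to a bound on $\babs{\ai(z+c+y)}$ that is uniform for $z$ in a compact set; this relies on the asymptotics of $\ai(w)$ holding throughout the sector $\abs{\arg w}<\pi/3$, into which the rays $z+c+y$ eventually enter. Everything else---the algebraic identification of the formula and the Morera/Fubini passage from uniform convergence to holomorphy---is routine.
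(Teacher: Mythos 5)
Your proposal is correct and takes essentially the same route as the paper: the displayed identity on $[0,\infty)$ is read off directly from (\ref{Teigfun}) (the paper phrases this same computation via the operator factorization $\T_c = T_{-c}F_{0,c} = P_0 T_{-c}\cA$), and the analytic continuation is the integral itself, whose holomorphy the paper simply asserts from the entirety and superexponential decay of $\ai$. Your uniform decay estimate in the sector $\abs{\arg w}<\pi/3$ and the Fubini--Morera argument supply exactly the details the paper leaves implicit, so the two proofs differ only in level of detail, not in substance.
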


\begin{proof}
The existence of the analytic continuation $\psi_{j,c}$ is given by formula
(\ref{Teigfun}) and the fact that the Airy function is analytic and decays
superexponentially. Note that
  \begin{align}
\T_{c} = T_{-c}
F_{0,c} = T_{-c} P_c \cA = P_0 T_{-c} \cA,\label{for:TcPTA_new}
  \end{align}
so after applying both sides of (\ref{for:TcPTA_new}) to $\psi_{j,c}$, we get
  \begin{align}
\lambda_{j,c}\psi_{j,c}(x)=P_0\cA[\psi_{j,c}](x+c),
  \end{align}
from which it follows that
  \begin{align}
\tilde\psi_{j,c}(x)=\frac{1}{\lambda_{j,c}}\cA[\psi_{j,c}](x+c),
  \end{align}
for all $x\in \R$.
\end{proof}

\subsubsection{Commuting differential operator}\label{commutechap}
\begin{definition}
Given a real number $c$, let $\L_c\colon L^2[0,\infty)\to L^2[0,\infty)$ denote
the Sturm-Liouville operator defined by
    \begin{align}
\L_c[f](x)=-\dd{}{x} \Bigl(x \dd{}{x}f \Bigr) + x(x+c)f.\label{Lc}
    \end{align}
\end{definition}
Obviously, $\L_c$ is self-adjoint (more specifically, it's a singular
Sturm-Louville operator with singular points $x=0$ and $x=\infty$). It has
been shown in \cite{tw} that $\L_c$ commutes with the Airy integral operator
$\T_c$, and their eigenvalues have multiplicity one. Thus, $\L_c$ and $\T_c$
share the same set of eigenfunctions. The following theorem formalizes this
statement (see \cite{tw,karoui}).
\begin{theorem}
For any real number $c$, there exists a strictly increasing sequence of positive real
numbers $\chi_{0,c},\chi_{1,c},\dots$ such that, for each $m\geq 0$, the
differential equation
    \begin{align}
    \label{eigode}
\dd{}{x} \Bigl(x \dd{}{x}\psi_{m,c} \Bigr) - (x^2+cx-\chi_{m,c})\psi_{m,c}=0
    \end{align}
has a unique solution $\psi_{m,c}$ that is continuous on the half-closed
interval $[0,\infty)$.  For each $m\geq 0$, the function $\psi_{m,c}$ is
exactly the $(m+1)$-th eigenfunction of the integral operator $\T_{c}$.
\end{theorem}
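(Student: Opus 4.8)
The plan is to read the differential equation as the eigenvalue problem $\L_c\psi = \chi\psi$ for the singular Sturm--Liouville operator of formula~(\ref{Lc}), and to combine the standard spectral theory of such operators with the already-recorded facts that $\L_c$ commutes with the compact self-adjoint operator $\T_c$ and that both have simple spectra. Existence of solutions to the ODE will come for free from the commutation (every $\T_c$-eigenfunction solves $\L_c\psi=\chi\psi$), so the real work is to show the admissible $\chi$ form a strictly increasing positive sequence, that the continuous solution is unique at each such $\chi$, and that the two natural orderings line up.

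First I would analyze the two singular endpoints. Near $x=0$ the equation has a regular singular point with indicial double root $r=0$, so one solution behaves like a constant and one like $\log x$; both are square-integrable near $0$, placing this endpoint in the limit-circle case, where a boundary condition is genuinely needed. Requiring continuity on the half-closed interval $[0,\infty)$ selects the bounded (Friedrichs) solution and thereby fixes the boundary condition at $0$. At the other endpoint the potential $x(x+c)$ grows like $x^2$, so $x=\infty$ is limit-point and the superexponential-decay ($L^2$) condition selects a one-dimensional solution direction with no extra boundary condition. With these two conditions $\L_c$ is self-adjoint, bounded below, and has compact resolvent; hence its spectrum is discrete and accumulates only at $+\infty$, and by the oscillation and simplicity theory for such problems the eigenvalues are simple and list as $\chi_{0,c}<\chi_{1,c}<\cdots$, with the eigenfunction for $\chi_{m,c}$ having exactly $m$ zeros in $(0,\infty)$. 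Simplicity yields the uniqueness (up to the normalization already fixed) of the continuous solution at each $\chi_{m,c}$. Positivity I would read off the Rayleigh quotient $\chi = \bigl(\int_0^\infty [x(\psi')^2 + (x^2+cx)\psi^2]\,dx\bigr)\big/\int_0^\infty \psi^2\,dx$, whose boundary terms vanish by continuity at $0$ and decay at $\infty$: for $c\ge 0$ the numerator is manifestly positive, and for $c<0$, where the potential dips below zero on $(0,-c)$, I would control the negative contribution by the Dirichlet term $\int x(\psi')^2$ via a Hardy-type estimate to conclude $\chi_{m,c}>0$.

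It remains to identify these eigenfunctions with those of $\T_c$ and to match the orderings. Since $\L_c$ commutes with $\T_c$ and both have simple spectra, they share a common eigenbasis; moreover every $\T_c$-eigenfunction, being $1/\lambda_{j,c}$ times an integral against the entire, superexponentially decaying Airy kernel (cf.\ Theorem~\ref{thm:anacont}), is automatically real-analytic, continuous at $0$, and decaying at $\infty$, hence is exactly one of the Sturm--Liouville eigenfunctions. The main obstacle is to show that the ordering of $\T_c$ by decreasing $\abs{\lambda_{j,c}}$ coincides with the ordering of $\L_c$ by increasing $\chi$, i.e.\ that the eigenfunction with the $(j+1)$-th largest $\abs{\lambda_{j,c}}$ is the one with exactly $j$ nodes. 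I would establish this node-counting correspondence by a continuity/deformation argument in $c$: the pairing is transparent in a limit (as $c\to-\infty$, using Theorem~\ref{thm:eig1} and the structure of $\cA P_0$), and one then argues that as $c$ varies no eigenvalue of $\T_c$ crosses zero and no interior node is created or destroyed, so the pairing between node count and $\abs{\lambda}$-rank is preserved for all $c$. This monotone matching of the two spectra is the delicate step; everything else is standard singular Sturm--Liouville theory together with the commutativity already in hand.
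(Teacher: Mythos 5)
Your Sturm--Liouville framework is sound as far as it goes (the endpoint classification is correct: $x=0$ is limit-circle with a bounded and a $\log x$ solution, so continuity on $[0,\infty)$ is exactly the Friedrichs condition; $x=\infty$ is limit-point; discreteness, simplicity, and the node count of the $m$-th eigenfunction all follow), but your positivity step contains a genuine error: the claim $\chi_{m,c}>0$ is \emph{false} for sufficiently negative $c$, so no Hardy-type estimate can close it. The paper itself records this: Figure~\ref{figspctm_chi} explicitly notes negative eigenvalues of $\L_c$ for sufficiently negative $c$, an Observation in Section~\ref{4.1} says the leading $\chi_{j,c}$ are negative when $c$ is negative, and the corollary following Theorem~\ref{thm:herm} gives $\chi_{n,c}\to(2n+1)(-c/2)^{1/2}-c^2/4$, which tends to $-\infty$. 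Concretely, a Gaussian trial function of width $\sim(-c)^{1/4}$ centered at the potential minimum $x=-c/2$ makes the Rayleigh quotient $\approx -c^2/4+\O(\sqrt{-c})$, so the infimum is very negative; your Dirichlet term $\int_0^\infty x(\psi')^2\,dx$ cannot compensate. The honest resolution is that the theorem's positivity assertion needs qualification (it is manifestly true for $c\ge 0$, where the potential $x(x+c)$ is nonnegative), and a correct proof must either restrict to that regime or drop positivity; attempting to prove it for all $c$ is a dead end.

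The second gap is that the node-count/magnitude correspondence --- which is the actual content of the identification with $\T_c$, and which you rightly call the delicate step --- is only sketched, and the sketch has a shaky base case. By Theorem~\ref{thm:eig1}, as $c\to-\infty$ \emph{all} the $\lambda_{j,c}^2$ tend to $1$, so the ordering by $\abs{\lambda_{j,c}}$ becomes asymptotically degenerate there rather than ``transparent''; you would need the finer asymptotics of the gaps $1-\lambda_{j,c}^2$ to read off the pairing in that limit. Moreover, propagating the pairing in $c$ requires not only that no $\lambda_{j,c}$ crosses zero (which does follow from formula~(\ref{dlc0}), since it forces $\lambda_{j,c}$ to be a nonvanishing exponential in $c$) but also that the magnitudes $\abs{\lambda_{j,c}}$ never cross one another as $c$ varies, and you establish neither this nor the continuity-in-$c$ of the labeled eigenpairs that the deformation argument tacitly uses. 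For what it is worth, the paper offers no proof of this theorem at all --- it is stated with a citation to \cite{tw,karoui}, where the commutation $[\T_c,\L_c]=0$ and the spectral correspondence are worked out --- so there is no in-paper argument to compare against; judged as a self-contained proof, your proposal leaves the two load-bearing claims (positivity, which is false as stated, and the ordering match) unproven.
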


\begin{remark}
The equation (\ref{eigode}) can also be written as 
    \begin{align}
\L_c[\psi_{m,c}]=\chi_{m,c}\psi_{m,c}.\label{eigode2}
    \end{align}
\end{remark}

\begin{remark}
The numerical evaluation of high-order eigenfunctions via the discretization
of the Airy integral operator $\T_c$ is highly inaccurate due to its exponentially
decaying eigenvalues. However, the Sturm-Liouville operator $\L_c$
has a growing and well-separated spectrum, which is numerically much more
tractable. Therefore, $\L_c$ is the principal analytical tool for
computing the eigenvalues and eigenfunctions of $\T_c$ to relative accuracy.
\end{remark}

\subsection{Laguerre polynomials}
\label{laguerrech}
The Laguerre polynomials, denoted by $L_n\colon[0,\infty)\to \R$, are defined
by the following three-term recurrence relation for any $k\geq 1$ (see
\cite{stegun}):
    \begin{align}
L_{k+1}(x)=\frac{(2k+1-x)L_k(x)-kL_{k-1}(x)}{k+1},\label{lrec}
    \end{align}
with the initial conditions
    \begin{align}
L_0(x)=1,\quad L_1(x)=1-x.\label{lini}
    \end{align}
The polynomials defined by the formulas (\ref{lrec}) and (\ref{lini}) are an
orthonormal basis in the Hilbert space induced by the inner product
$\inner{f}{g}=\int_0^\infty e^{-x} f(x)g(x) \d x$, i.e.,
    \begin{align}
\inner{L_n}{L_m}=\int_0^\infty e^{-x}L_n(x)L_m(x) \d x=\delta_{n,m}.\label{lort}
    \end{align}
In addition, the Laguerre polynomials are solutions of Laguerre's equation
    \begin{align}
xf''+(1-x)f'+nf=0.    
    \end{align}

We find it useful to use the scaled Laguerre functions defined below.
\begin{definition}
Given a positive real number $a$, the scaled Laguerre functions, denoted by
$h_n^a\colon[0,\infty)\to \R$, are defined by
    \begin{align}
h_n^a(x)=\sqrt{a}e^{-ax/2}L_n(ax).\label{slag}
    \end{align}
\end{definition}

\begin{remark}
The scaled Laguerre functions $h_n^a(x)$ are an orthonormal basis in
$L^2[0,\infty)$, i.e.,
    \begin{align}
\int_0^\infty h_n^a(x)h_m^a(x) \d x=\delta_{n,m}.\label{hort}
    \end{align}
\end{remark}

The following two theorems directly follow from the results for Laguerre
polynomials in, for example, \cite{stegun}.
\begin{theorem}
Given a positive real number $a$ and a non-negative integer $n$,
    \begin{align}
\hspace{-2em}xh_n^a(x)=&\
\frac{1}{a}\bigl(-nh_{n-1}^a(x)+(2n+1)h_n^a(x)-(n+1)h_{n+1}^a(x)\bigr),\label{slagx}\\
\hspace{-2em}x^2h_n^a(x)=&\
\frac{1}{a^2}\bigl(n(n-1)h_{n-2}^a(x)-4n^2h_{n-1}^a(x)+(6n^2+6n+2)h_{n}^a(x)\notag\\
&-4(1+n)^2h_{n+1}^a(x)+(n+1)(n+2)h_{n+2}^a(x)\bigr).\label{slagxx}
    \end{align}
\end{theorem}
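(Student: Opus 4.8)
The plan is to derive both identities purely algebraically from the three-term recurrence (\ref{lrec}) for the Laguerre polynomials, which already encodes the action of multiplication by $x$; no new analysis of the weight or the integral operators is needed. First I would rearrange (\ref{lrec}) into the standard form expressing $xL_n$ as a combination of neighbouring polynomials: multiplying out $(k+1)L_{k+1} = (2k+1-x)L_k - kL_{k-1}$ and solving for $xL_k$ gives
\begin{align}
xL_n(x) = -nL_{n-1}(x) + (2n+1)L_n(x) - (n+1)L_{n+1}(x). \notag
\end{align}

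Next I would transfer this to the scaled Laguerre functions. Writing $h_n^a(x) = \sqrt{a}\,e^{-ax/2}L_n(ax)$ and substituting $x\mapsto ax$ in the displayed recurrence, the factor $x$ on the left becomes $ax$, so dividing by $a$ and multiplying through by the common weight $\sqrt{a}\,e^{-ax/2}$ converts each $L_m(ax)$ into $h_m^a(x)$ and produces (\ref{slagx}). The point is that the prefactor $\sqrt{a}$ and the weight $e^{-ax/2}$ are identical for every index $m$ appearing on both sides, so they cancel cleanly against the recurrence structure.

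For the quadratic identity (\ref{slagxx}) I would simply iterate: write $x^2 h_n^a = x\,(x h_n^a)$, insert (\ref{slagx}) for the inner factor, and then apply (\ref{slagx}) a second time to each of $x h_{n-1}^a$, $x h_n^a$, and $x h_{n+1}^a$. Collecting the resulting five terms $h_{n-2}^a,\dots,h_{n+2}^a$ and matching coefficients yields the stated constants; for instance the coefficient of $h_n^a$ is $n^2+(2n+1)^2+(n+1)^2 = 6n^2+6n+2$, and the coefficient of $h_{n+1}^a$ is $-(n+1)\big[(2n+1)+(2n+3)\big] = -4(n+1)^2$.

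The main obstacle is not conceptual but purely the bookkeeping of the coefficient collection in the second identity, together with confirming that the boundary cases $n=0,1$ (where negative subscripts would nominally appear) need no special treatment. I would note that the offending coefficients vanish identically: the coefficient $n(n-1)$ of $h_{n-2}^a$ is zero for both $n=0$ and $n=1$, and the coefficient $-4n^2$ of $h_{n-1}^a$ is zero for $n=0$, so the terms with negative subscripts never actually contribute and the identities hold for all $n\geq 0$.
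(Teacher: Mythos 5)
Your proposal is correct, and it matches the paper's intended route: the paper gives no written proof, stating only that the identities ``directly follow from the results for Laguerre polynomials in \cite{stegun},'' and your derivation --- rearranging the three-term recurrence (\ref{lrec}) to express $xL_n$, transferring it to $h_n^a$ via the substitution $x\mapsto ax$ and cancellation of the common factor $\sqrt{a}\,e^{-ax/2}$, then iterating for $x^2h_n^a$ --- is exactly that standard argument with the bookkeeping done explicitly, and your collected coefficients ($n(n-1)$, $-4n^2$, $6n^2+6n+2$, $-4(n+1)^2$, $(n+1)(n+2)$) all check out. The only microscopic addition worth making is that the $n=0$ instance of the rearranged recurrence is not literally covered by (\ref{lrec}) (which is stated for $k\geq 1$) but is verified in one line from the initial conditions (\ref{lini}), after which your vanishing-coefficient observation handles all remaining boundary cases.
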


\begin{theorem}
Given a positive real number $a$ and a non-negative integer $n$,
    \begin{align}
\frac{d}{dx}h_n^{a}=& -\frac{a}{2}h_n^a-a\sum_{k=0}^{n-1}h_k^a,\label{dslag}\\
\frac{d^2}{dx^2}h_n^{a}=&\
\frac{a^2}{4}h_n^a+a^2\sum_{k=0}^{n-1}(n-k)h_k^a.\label{d2slag}
    \end{align}
\end{theorem}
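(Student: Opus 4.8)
The plan is to reduce both formulas to the single classical identity
\begin{align*}
L_n'(x) = -\sum_{k=0}^{n-1} L_k(x),
\end{align*}
which expresses the derivative of a Laguerre polynomial as a sum of the lower-order ones. This is a standard fact recorded in \cite{stegun}, which is why the theorem is stated as following directly from known results; for completeness it can be recovered from the data already in the excerpt. One clean route is to differentiate the Laguerre generating function $\sum_n L_n(x) t^n = (1-t)^{-1} e^{-xt/(1-t)}$ with respect to $x$, obtaining $\sum_n L_n'(x) t^n = -\frac{t}{1-t}\sum_m L_m(x) t^m$, and then to read off the coefficient of $t^n$, which is precisely $-\sum_{k=0}^{n-1} L_k(x)$. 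Equivalently, differentiating the recurrence (\ref{lrec}) and using the initial data (\ref{lini}) yields the contiguous relation $L_n'(x) = L_{n-1}'(x) - L_{n-1}(x)$, which telescopes from $L_0' = 0$ to the displayed sum by induction.

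Granting this identity, the first derivative formula (\ref{dslag}) is a direct computation. Writing $h_n^a(x) = \sqrt{a}\, e^{-ax/2} L_n(ax)$ and applying the product and chain rules gives
\begin{align*}
\frac{d}{dx} h_n^a(x) = -\frac{a}{2} h_n^a(x) + a\sqrt{a}\, e^{-ax/2} L_n'(ax).
\end{align*}
Substituting $L_n'(ax) = -\sum_{k=0}^{n-1} L_k(ax)$ and recognizing that $\sqrt{a}\, e^{-ax/2} L_k(ax) = h_k^a(x)$ for each $k$ turns the second term into $-a\sum_{k=0}^{n-1} h_k^a(x)$, which is exactly (\ref{dslag}).

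For the second derivative I would differentiate (\ref{dslag}) once more and feed (\ref{dslag}) back into itself, since the derivative of each $h_k^a$ in the sum is again given by (\ref{dslag}). After the two $\frac{a^2}{2}$ contributions coming from the $-\frac{a}{2} h_n^a$ term and from the diagonal of the sum combine, this produces
\begin{align*}
\frac{d^2}{dx^2} h_n^a = \frac{a^2}{4} h_n^a + a^2 \sum_{k=0}^{n-1} h_k^a + a^2 \sum_{k=0}^{n-1} \sum_{j=0}^{k-1} h_j^a.
\end{align*}
The only genuine bookkeeping is the double sum: interchanging the order of summation gives $\sum_{k=0}^{n-1}\sum_{j=0}^{k-1} h_j^a = \sum_{j=0}^{n-1} (n-1-j)\, h_j^a$, since each index $j$ is counted once for every $k$ with $j < k \le n-1$. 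Combining this with the single sum yields the coefficient $1 + (n-1-j) = n-j$ on each $h_j^a$, which is precisely (\ref{d2slag}).

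The main obstacle, to the extent there is one, is establishing the Laguerre derivative identity cleanly from the recurrence rather than merely citing it; once that lemma is in hand, both formulas follow by mechanical differentiation and a single reindexing of a double sum. I expect no analytic subtleties, since everything takes place among polynomials multiplied by the fixed exponential weight $e^{-ax/2}$, so differentiation term by term is justified without any convergence concerns.
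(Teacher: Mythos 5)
Your proof is correct and is essentially the paper's intended argument: the paper gives no written proof at all, asserting only that the theorem ``directly follows'' from classical Laguerre results in \cite{stegun}, and your reduction to the identity $L_n'(x)=-\sum_{k=0}^{n-1}L_k(x)$ followed by the product rule and one reindexed double sum is exactly that reduction, made explicit. All the bookkeeping checks out — the two $\frac{a^2}{2}$ contributions do combine to $a^2$, and the interchange $\sum_{k=0}^{n-1}\sum_{j=0}^{k-1}h_j^a=\sum_{j=0}^{n-1}(n-1-j)h_j^a$ gives the coefficient $1+(n-1-j)=n-j$ as claimed.
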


The following corollary is a direct result of (\ref{dslag}).
\begin{corollary}\label{spcdiff}
Given a positive real number $a$ and a non-negative integer $n$,
    \begin{align}
\frac{d}{dx}h_n^{a}-\frac{d}{dx}h_{n-1}^{a}=-\frac{a}{2}h_n^a-\frac{a}{2}h_{n-1}^a.
    \end{align}
\end{corollary}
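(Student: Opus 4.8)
The plan is to derive the identity directly from the first-derivative formula (\ref{dslag}), which expresses $\frac{d}{dx}h_n^a$ as the diagonal term $-\frac{a}{2}h_n^a$ plus a partial sum $-a\sum_{k=0}^{n-1}h_k^a$ of lower-index scaled Laguerre functions. First I would write down two instances of (\ref{dslag}), one at index $n$ and one at index $n-1$:
\begin{align}
\frac{d}{dx}h_n^a = -\frac{a}{2}h_n^a - a\sum_{k=0}^{n-1}h_k^a, \qquad
\frac{d}{dx}h_{n-1}^a = -\frac{a}{2}h_{n-1}^a - a\sum_{k=0}^{n-2}h_k^a. \notag
\end{align}

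Next I would subtract the second equation from the first. The two partial sums agree except for the single term at $k=n-1$, so their difference telescopes to $-a\,h_{n-1}^a$. Collecting this with the diagonal contributions $-\frac{a}{2}h_n^a$ and $+\frac{a}{2}h_{n-1}^a$ yields $-\frac{a}{2}h_n^a + \frac{a}{2}h_{n-1}^a - a\,h_{n-1}^a$, which simplifies to $-\frac{a}{2}h_n^a - \frac{a}{2}h_{n-1}^a$, exactly the claimed right-hand side.

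There is essentially no analytic obstacle here; the entire content of the corollary is the cancellation of the overlapping portions of the two partial sums, so the ``hard part'' is merely bookkeeping the index of the surviving summand. The only point requiring a small amount of care is the boundary case $n=0$, where the index $n-1$ becomes $-1$. Adopting the standard convention that $h_{-1}^a \equiv 0$ and that the empty sum vanishes, the derivation above continues to hold, since both sides then reduce to $\frac{d}{dx}h_0^a = -\frac{a}{2}h_0^a$.
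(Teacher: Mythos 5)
Your proof is correct and follows exactly the route the paper intends: the paper simply declares the corollary ``a direct result of (\ref{dslag})'', and that direct result is precisely your subtraction of the two instances of (\ref{dslag}) with the telescoping cancellation of the partial sums. Your extra remark on the $n=0$ case (with the convention $h_{-1}^a\equiv 0$ and empty sums vanishing) is a sensible clarification the paper leaves implicit.
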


\begin{observation}
The scaled Laguerre functions $h_n^a(x)$ are solutions of the following
ODE on the interval $[0,\infty)$:
    \begin{align}
\frac{d}{dx}\Bigl(x\frac{d}{dx}h_n^a\Bigr)-\frac{a}{4}(ax-4n-2)h_n^a=0.\label{slagode}
    \end{align}

\end{observation}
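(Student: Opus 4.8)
The plan is to verify (\ref{slagode}) by direct differentiation, using only the definition (\ref{slag}) of the scaled Laguerre function together with Laguerre's equation $uf''+(1-u)f'+nf=0$. Writing $h_n^a(x)=\sqrt{a}\,e^{-ax/2}L_n(ax)$, the constant factor $\sqrt{a}$ cancels out of the linear homogeneous equation (\ref{slagode}), so it suffices to show that $g(x)=e^{-ax/2}L_n(ax)$ satisfies the same ODE.

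First I would compute $g'(x)$ and then $xg'(x)$, keeping the factor $e^{-ax/2}$ pulled out in front, so that only $L_n(ax)$ and $L_n'(ax)$ appear, multiplied by polynomial coefficients in $x$. Differentiating once more, the expression $\frac{d}{dx}\bigl(xg'(x)\bigr)$ produces a term in $L_n''(ax)$ together with terms in $L_n'(ax)$ and $L_n(ax)$, all carrying the same factor $e^{-ax/2}$. The crucial step is then to eliminate the second-derivative contribution: evaluating Laguerre's equation at $u=ax$ and multiplying through by $a$ gives the relation $a^2 x\,L_n''(ax)+(a-a^2 x)L_n'(ax)+an\,L_n(ax)=0$, which rewrites the $L_n''$ and $L_n'$ terms in terms of $L_n(ax)$ alone. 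Substituting this collapses the whole expression to a single multiple of $e^{-ax/2}L_n(ax)=g(x)$, and collecting the coefficient yields $\frac{d}{dx}\bigl(xg'(x)\bigr)=\frac{a}{4}(ax-4n-2)\,g(x)$, which is exactly (\ref{slagode}) once the factor $\sqrt{a}$ is restored.

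The computation is entirely routine, and the only real obstacle is the bookkeeping: each differentiation of $L_n(ax)$ brings down a factor of $a$ via the chain rule, and differentiating the product $e^{-ax/2}L_n(ax)$ generates cross terms in $L_n$ and $L_n'$. These are arranged precisely so that, after applying Laguerre's equation, the $L_n'$ and $L_n''$ contributions cancel and leave the expected potential $\frac{a}{4}(ax-4n-2)$. As an alternative, one could instead verify (\ref{slagode}) purely algebraically in the $h_n^a$ basis by inserting the multiplication rule (\ref{slagx}) and the derivative formulas (\ref{dslag}), (\ref{d2slag}); this also works, but it requires checking that the resulting finite sums telescope, so the direct route above is cleaner.
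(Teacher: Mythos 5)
Your proof is correct: the direct computation gives $\frac{d}{dx}\bigl(xg'(x)\bigr)=e^{-ax/2}\bigl(a^2xL_n''(ax)+(a-a^2x)L_n'(ax)+(\frac{a^2x}{4}-\frac{a}{2})L_n(ax)\bigr)$, and substituting Laguerre's equation at $u=ax$ collapses this to $\frac{a}{4}(ax-4n-2)g(x)$ exactly as you describe. The paper states this as an observation without proof, but the derivation it implicitly relies on is precisely yours --- verification via Laguerre's equation, which the paper records immediately before the observation --- so your argument matches the intended route.
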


The following theorem, proven (in a slightly different form) in
\cite{xiang}, describes the decaying property of the expansion coefficients
in the Laguerre polynomial basis.
\begin{theorem}\label{decayx}
Suppose $f\in C^{k}[0,\infty)$ where $k\geq 1$, and $f$ satisfies
\begin{align}
    \lim_{x\to \infty} e^{-x/2}x^{j+1}f^{(j)}(x)=0, \\
    V=\sqrt{\int_0^\infty x^{k+1}e^{-x}(f^{(k+1)}(x))^2\d x}<\infty,
\end{align}
for $j=0,1,\dots,k$. Suppose further that $a_n=\int_0^\infty
e^{-x}f(x)L_n(x)\d x$. Then, for $n>k$,
\begin{align}
    |a_n|\leq \frac{V}{\sqrt{n(n-1)\dots(n-k)}} =
    \O\Bigl(\frac{1}{n^{(k+1)/2}}\Bigr),
\end{align}
and
\begin{align}
    \norm{f(x)-\sum_{n=0}^N a_n L_n(x)} \to 0,
\end{align}
as $N\to\infty$, where $\norm{\cdot}$ represents the $L^2[0,\infty)$ norm with the weight
function $e^{-x}$.
\end{theorem}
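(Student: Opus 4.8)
The plan is to derive the coefficient bound by integrating by parts $k+1$ times, each step trading one derivative of $f$ against one use of the self-adjoint form of Laguerre's equation, and then to finish with a single Cauchy--Schwarz estimate. The only tools I need are three standard facts about the associated (generalized) Laguerre polynomials $L_n^{(\alpha)}$ (with $L_n = L_n^{(0)}$): the derivative rule $\dd{}{x}L_n^{(\alpha)} = -L_{n-1}^{(\alpha+1)}$, the Sturm--Liouville form $\dd{}{x}\bigl(x^{\alpha+1}e^{-x}\,\dd{}{x}L_n^{(\alpha)}\bigr) = -n\,x^{\alpha}e^{-x}L_n^{(\alpha)}$, and the normalization $\int_0^\infty x^{\alpha}e^{-x}\bigl(L_m^{(\alpha)}(x)\bigr)^2\d x = \Gamma(m+\alpha+1)/m!$. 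Combining the first two yields the single identity I would iterate,
\begin{align}
x^{\alpha}e^{-x}L_n^{(\alpha)}(x) = \frac{1}{n}\,\dd{}{x}\Bigl(x^{\alpha+1}e^{-x}L_{n-1}^{(\alpha+1)}(x)\Bigr).\notag
\end{align}

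First I would prove, by induction on $j$, that for $0\le j\le k$ and $n>k$,
\begin{align}
a_n = \frac{(-1)^{j+1}}{n(n-1)\cdots(n-j)}\int_0^\infty f^{(j+1)}(x)\,x^{j+1}e^{-x}L_{n-j-1}^{(j+1)}(x)\d x.\notag
\end{align}
The base case $j=0$ comes from writing $a_n=\int_0^\infty f(x)\,e^{-x}L_n^{(0)}(x)\d x$, substituting the displayed identity with $\alpha=0$, and integrating by parts once; the inductive step applies the identity with $\alpha=j+1$ inside the integral and integrates by parts again, simultaneously raising the order of differentiation on $f$ and lowering the Laguerre index while picking up the factor $-(n-j-1)^{-1}$. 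The crucial point, and the step I expect to demand the most care, is that each of the $k+1$ boundary terms $\bigl[f^{(j)}(x)\,x^{j+1}e^{-x}L_{n-j-1}^{(j+1)}(x)\bigr]_0^\infty$ vanishes: at $x=0$ this is forced by the factor $x^{j+1}$, and at $x=\infty$ one writes $e^{-x}=e^{-x/2}e^{-x/2}$ and factors the integrand as $\bigl(e^{-x/2}x^{j+1}f^{(j)}(x)\bigr)\bigl(e^{-x/2}L_{n-j-1}^{(j+1)}(x)\bigr)$, the first factor tending to $0$ precisely by the hypothesis $\lim_{x\to\infty}e^{-x/2}x^{j+1}f^{(j)}(x)=0$ and the second by polynomial-times-exponential decay. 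Thus the decay hypotheses for $j=0,\dots,k$ are exactly what is needed to annihilate the boundary contributions from the $k+1$ integrations by parts.

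Taking $j=k$ and applying the Cauchy--Schwarz inequality with respect to the weight $x^{k+1}e^{-x}$ then gives
\begin{align}
|a_n| \le \frac{V}{n(n-1)\cdots(n-k)}\,\Bigl(\int_0^\infty x^{k+1}e^{-x}\bigl(L_{n-k-1}^{(k+1)}(x)\bigr)^2\d x\Bigr)^{1/2},\notag
\end{align}
where the factor $V$ is exactly the Cauchy--Schwarz contribution from $f^{(k+1)}$. The normalization identity with $\alpha=k+1$ and $m=n-k-1$ evaluates the remaining integral to $\Gamma(n+1)/(n-k-1)! = n(n-1)\cdots(n-k)$, and substituting this yields $|a_n|\le V/\sqrt{n(n-1)\cdots(n-k)}$, which is $\O(n^{-(k+1)/2})$ since the product is asymptotic to $n^{k+1}$. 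Finally, for the convergence statement I would first observe that $f\in L^2([0,\infty),e^{-x}\d x)$: the case $j=0$ of the decay hypothesis gives $f(x)=o(e^{x/2}/x)$, so $e^{-x}f(x)^2=o(x^{-2})$ is integrable near infinity, while continuity handles any bounded interval. Since the $L_n$ form an orthonormal basis of this weighted space by (\ref{lort}) together with the classical completeness of the Laguerre system, the partial sums $\sum_{n=0}^N a_n L_n$ converge to $f$ in the weighted $L^2$ norm, which is the claimed statement.
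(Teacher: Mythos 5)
Your proof is correct. The paper does not actually prove this theorem itself --- it cites it to \cite{xiang} --- and your argument (iterating the identity $x^{\alpha}e^{-x}L_n^{(\alpha)}(x)=\frac{1}{n}\frac{d}{dx}\bigl(x^{\alpha+1}e^{-x}L_{n-1}^{(\alpha+1)}(x)\bigr)$ through $k+1$ integrations by parts, with the decay hypotheses for $j=0,\dots,k$ annihilating exactly the $k+1$ boundary terms, then Cauchy--Schwarz and the evaluation $\int_0^\infty x^{k+1}e^{-x}\bigl(L_{n-k-1}^{(k+1)}\bigr)^2\d x=\Gamma(n+1)/(n-k-1)!=n(n-1)\cdots(n-k)$, plus completeness of the Laguerre system for the $L^2$ convergence) is precisely the standard proof given in that reference.
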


The following corollary extends the theorem above to the case where the
Laguerre polynomials are replaced by scaled Laguerre functions. 
\begin{corollary}\label{decaycorr}
Suppose that $a\in \R$ and $a>0$. Suppose further that $g\in
C^{k}[0,\infty)$ for some $k\geq 1$, and define
$f(x)=\frac{1}{\sqrt{a}}e^{x/2} g(x/a)$. Assume finally that $f$ satisfies
\begin{align}
    \lim_{x\to \infty} e^{-x/2}x^{j+1}f^{(j)}(x)=&\,0, \label{decayreq1}\\
    V=\sqrt{\int_0^\infty x^{k+1}e^{-x}(f^{(k+1)}(x))^2\d
    x}<&\,\infty,\label{decayreq2}
\end{align}
for $j=0,1,\dots,k$, and let $b_n=\int_0^\infty g(x)h_n^a(x) \d x$. Then,
for $n>k$,
\begin{align}
    |b_n|\leq
    \frac{V}{\sqrt{n(n-1)\dots(n-k)}}=\O\Bigl(\frac{1}{n^{(k+1)/2}}\Bigr),\label{decay2}
\end{align}
and
\begin{align}
    \norm{g(x)-\sum_{n=0}^N b_n h_n^a(x)} \to 0,\label{decay3}
\end{align}
as $N\to\infty$, where $\norm{\cdot}$ represents the $L^2[0,\infty)$ norm with the weight
function $1$.
\end{corollary}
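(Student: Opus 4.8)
The plan is to reduce the corollary to Theorem~\ref{decayx} by an explicit change of variables, exploiting the fact that the scaled Laguerre functions $h_n^a$ are built from the Laguerre polynomials $L_n$ precisely by the dilation $x\mapsto ax$ together with the exponential weight $e^{-ax/2}$ and the normalization $\sqrt a$. The first step is to show that the scaled coefficients $b_n$ coincide exactly with the coefficients $a_n$ of Theorem~\ref{decayx}. Substituting the definition $h_n^a(x)=\sqrt a\,e^{-ax/2}L_n(ax)$ and the relation $g(x)=\sqrt a\,e^{-ax/2}f(ax)$ (which is equivalent to the hypothesis $f(x)=\tfrac{1}{\sqrt a}e^{x/2}g(x/a)$) into $b_n=\int_0^\infty g(x)h_n^a(x)\d x$, the two factors of $\sqrt a\,e^{-ax/2}$ combine into $a\,e^{-ax}$, and the change of variables $u=ax$ then yields $b_n=\int_0^\infty e^{-u}f(u)L_n(u)\d u=a_n$. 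Hence the two expansions have identical coefficients.

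Once $b_n=a_n$ is established, the stated bound on $|b_n|$ is immediate: the hypotheses (\ref{decayreq1}) and (\ref{decayreq2}) of the corollary are imposed directly on $f$, so they are exactly the hypotheses of Theorem~\ref{decayx}, and the bound $|a_n|\le V/\sqrt{n(n-1)\cdots(n-k)}$ transfers verbatim to $|b_n|$.

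For the convergence statement, the plan is to show that the unweighted $L^2[0,\infty)$ norm of the scaled partial-sum error equals the $e^{-x}$-weighted norm of the unscaled error, so that convergence of one is equivalent to convergence of the other. Factoring out the common exponential gives $g(x)-\sum_{n=0}^N b_n h_n^a(x)=\sqrt a\,e^{-ax/2}\bigl(f(ax)-\sum_{n=0}^N a_n L_n(ax)\bigr)$; squaring and integrating produces a factor $a\,e^{-ax}$, and the same substitution $u=ax$ cancels the Jacobian and converts the integral into $\int_0^\infty e^{-u}\bigl(f(u)-\sum_{n=0}^N a_n L_n(u)\bigr)^2\d u$. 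Thus $\norm{g-\sum_{n=0}^N b_n h_n^a}^2$ (weight $1$) equals $\norm{f-\sum_{n=0}^N a_n L_n}^2$ (weight $e^{-x}$), and the right-hand side tends to zero by Theorem~\ref{decayx}, establishing (\ref{decay3}).

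There is no deep obstacle here; the entire argument is a bookkeeping exercise in tracking the normalization constant $\sqrt a$, the exponential factors, and the Jacobian of the dilation $u=ax$. The one point that requires care is the observation that the weight $e^{-x}$ appearing in the Laguerre inner product of Theorem~\ref{decayx} is exactly absorbed by the two copies of the factor $e^{-ax/2}$ carried by $g$ and by $h_n^a$, so that the weighted Laguerre theory and the unweighted scaled-Laguerre statement are two views of the same identity.
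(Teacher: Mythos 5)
Your proposal is correct and takes essentially the same route as the paper's proof: establishing $b_n=a_n$ via the substitution $u=ax$ (so that the bound of Theorem~\ref{decayx} transfers verbatim), and then using the same change of variables to identify the unweighted $L^2[0,\infty)$ norm of the scaled partial-sum error with the $e^{-x}$-weighted norm of the unscaled error. If anything, your version is marginally cleaner, since the paper interposes a redundant inequality (dropping the weight via $e^{-y}\leq 1$) before appealing to Theorem~\ref{decayx}, whereas you invoke the theorem directly on the weighted norm, which is all that is needed.
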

\begin{proof}
By definition,
\begin{align}
    |b_n|=&\,\Big|\int_0^\infty h_n^a(x) g(x) \d x\Big| \notag\\
    =&\, \Big|\int_0^\infty
    \sqrt{a}e^{-ax/2}L_n(ax) g(x) \d x\Big|\notag\\
    =&\,\Big|\int_0^\infty \frac{1}{\sqrt{a}}e^{-x/2}L_n(x) g(x/a) \d
    x\Big|\notag\\
    =&\,\Big|\int_0^\infty e^{-x} L_n(x)f(x)\d x\Big|\notag\\
    =&\,|a_n|\notag\\
    \leq&\, \frac{V}{\sqrt{n(n-1)\dots(n-k)}},\label{anbn}
\end{align}
where $a_n$ is defined in the same way as in Theorem \ref{decayx}. Thus,
(\ref{decay2}) is proved.

To prove (\ref{decay3}), note that 
\begin{align}
    \norm{g(x)-\sum_{n=0}^N b_n h_n^a(x)}\,^2 = \,&\int_{0}^\infty
    \Big(g(x)-\sum_{n=0}^N b_n \sqrt{a}e^{-ax/2}L_{n}(ax)\Big)^2 \d
    x\notag\\
    =&\, \int_{0}^\infty \Big(g\big(\frac{y}{a}\big)-\sum_{n=0}^N
    b_n\sqrt{a}e^{-y/2}L_{n}(y)\Big)^2 \frac{1}{a} \d y\notag\\
    =&\,\int_0^\infty
    \Big(\frac{1}{\sqrt{a}}g\big(\frac{y}{a}\big)-\sum_{n=0}^N
    b_ne^{-y/2}L_{n}(y)\Big)^2 \d y\notag\\
    =&\, \int_0^\infty e^{-y}\Big(f(y)-\sum_{n=0}^N b_nL_n(y)\Big)^2 \d y\notag\\
    \leq &\,\int_0^\infty \Big(f(y)-\sum_{n=0}^N b_nL_n(y)\Big)^2 \d y \notag\\
    =&\,\norm{f(x)-\sum_{j=0}^n a_n L_n(x)}\,^2 \to 0,
\end{align}
as $N\to\infty$, where the last equality holds by combining Theorem \ref{decayx} and the fact
that $b_n=a_n$ (see formula (\ref{anbn})).
\end{proof}

\subsection{Numerical tools for five-diagonal matrices}
\subsubsection{Eigensolver}
A five-diagonal matrix can be reduced to a tridiagonal
matrix using the algorithm in \cite{five}. Once it is in tridiagonal form, a
standard Q-R (or Q-L) algorithm can then be used to solve for all of its
eigenvalues to absolute precision. 
\begin{remark}
The time complexity of the reduction and Q-R algorithm are both $\O(n^2)$
for a five-diagonal matrix of size $n\times n$.
\end{remark}

\subsubsection{Shifted inverse power method}\label{ipm}
Suppose that $A$ is an $N\times N$ real matrix, for some positive integer
$N$, and suppose that its eigenvalues are distinct.
Let $\sigma_1<\sigma_2<\dots<\sigma_N$ denote the eigenvalues of $A$.
The shifted inverse power method iteratively finds the eigenvalue $\sigma_k$
and the corresponding eigenvector $v_k\in\R^N$, provided an approximation
$\lambda$ to $\sigma_k$ is given, and that
    \begin{align}
\abs{\lambda-\sigma_k}<\max \{\abs{\lambda-\sigma_j}\colon j\neq k\}.
    \end{align}
Each shifted inverse power iteration solves the linear system
    \begin{align}
(A-\lambda_j I)x=w_{j},
    \end{align}
where $\lambda_j$ and $w_j\in\R^n$ are the approximations to $\sigma_k$ and
$v_k$, respectively, after $j$ iterations; the number $\lambda_j$ is usually
referred to as the ``shift''.
The approximations $\lambda_{j+1}$ and $w_{j+1}\in\R^N$ are evaluated via the formulas
    \begin{align}
w_{j+1}=\frac{x}{\norm{x}},\quad \lambda_{j+1}=w_{j+1}^T A w_{j+1}
    \end{align}
(see, for example, \cite{prolb,tref} for more details). 

In this paper, we note that we use the phrase ``inverse power method'' to refer to
the unshifted inverse power method.
\begin{remark}\label{ipmr}
The shifted inverse power method converges cubically in the vicinity of the
solution, and each iteration requires $\O(n)$ operations for a tridiagonal
or a five-diagonal matrix (see \cite{prolb,tref}). 
\end{remark}

\section{Analytical Apparatus}
In this section, we first introduce several analytical results which we will use
to develop the numerical algorithm of this paper. We then characterize
the Airy integral operator's previously unstudied eigenfunctions, and
describe their extremal properties in relation to an uncertainty principle
involving the Airy transform (see Sections \ref{sec:uncert}, \ref{sec:extremal},
\ref{sec:qual}).

Recall that we denote the eigenfunctions of the eigenfunctions of the operators $\T_c$
and $\L_c$ by $\psi_{n,c}$ (see Sections \ref{aiintch}, \ref{commutechap}),
and represent them in the basis of scaled Laguerre functions $h_k^a$ (see
Section \ref{laguerrech}). We denote the eigenvalues of the Airy
integral operator $\T_c$ by $\lambda_{n,c}$.
\subsection{The commuting differential operator in the basis of scaled Laguerre functions}

\begin{theorem}\label{thm:Lh}
For any positive real number $a$, real number $c$, and non-negative integer $k$,
  \begin{align}
&\L_c[h_k^a](x)=\frac{1}{4a^2}\Bigl(4k(k-1)h_{k-2}^a(x)\notag \\
&+ k(a^3-4ac-16k)h_{k-1}^a(x)\notag\\
&+ (8+a^3+4ac+24k+2a^3k+8ack+24k^2)h_{k}^a(x) \notag\\
&+ (k+1)\bigl(a^3-4ac-16(k+1)\bigr)h_{k+1}^a(x)\notag\\
&+ 4(k+1)(k+2)h_{k+2}^a(x) \Bigr),\label{Lh}
  \end{align}
for $x\in [0,\infty)$.
\end{theorem}

\begin{proof}
By definition,
  \begin{align}
\L_c[h_k^a](x) = -\dd{}{x} \Bigl(x \dd{}{x}h_k^a(x) \Bigr) +
x(x+c)h_k^a(x).\label{slageig1}
  \end{align}
By applying (\ref{slagode}), terms involving derivatives of $h_n^a(x)$ on
the right side of (\ref{slageig1}) disappear.
Finally, we reduce the remaining $xh_k^a(x), x^2h_k^a(x)$ terms to
$h_k^a(x)$ via (\ref{slagx}), (\ref{slagxx}).
\end{proof}

\begin{remark}
Although $h_{k-2}^a(x), h_{k-1}^a(x)$ may be undefined when $k=0,1$, the
theorem still holds, since the coefficients of $h_{k-2}^a(x),
h_{k-1}^a(x)$ in (\ref{Lh}) will be zero in that case.
\end{remark}

\subsection{Decay of the expansion coefficients of the eigenfunctions}
\begin{theorem}\label{decaypf}
Suppose that $a,c\in \R$ and $a>0$. Suppose further that
$\beta_k^{(m)}=\int_0^\infty \psi_{m,c}(x)h_k^a(x)\d x$ for $k=0,1,\dots $ .
Then, $|\beta_k^{(m)}|$ decays super-algebraically as $k$ goes to infinity.
\end{theorem}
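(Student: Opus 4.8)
The plan is to deduce the result from Corollary~\ref{decaycorr} applied with $g=\psi_{m,c}$, so that the coefficients called $b_n$ there become exactly the $\beta_k^{(m)}$ of the statement. To avoid a clash with the summation index $k$, let me write $p$ for the smoothness order in that corollary. The corollary then yields $|\beta_k^{(m)}|=\O\bigl(k^{-(p+1)/2}\bigr)$ for any fixed $p$, provided $\psi_{m,c}\in C^{p+1}[0,\infty)$ and the auxiliary function $f(x)=\tfrac{1}{\sqrt a}e^{x/2}\psi_{m,c}(x/a)$ satisfies the decay hypotheses~(\ref{decayreq1}) and~(\ref{decayreq2}) for that $p$. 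Since $(p+1)/2\to\infty$, establishing these hypotheses for \emph{every} $p$ immediately gives decay faster than any power of $k$, i.e.\ super-algebraic decay. Everything therefore reduces to a single smoothness-and-decay estimate for $\psi_{m,c}$.

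First I would show that $\psi_{m,c}$ is infinitely differentiable on $[0,\infty)$ and that $\psi_{m,c}$, together with each of its derivatives, decays super-exponentially. The cleanest route is through the integral representation~(\ref{Teigfun}) rather than the singular differential equation~(\ref{eigode}): because $\ai$ is entire and superexponentially decaying (Remark~\ref{asymp}, Theorem~\ref{thm:anacont}), one may differentiate under the integral sign to obtain $\lambda_{m,c}\,\psi_{m,c}^{(j)}(x)=\int_0^\infty \ai^{(j)}(x+y+c)\,\psi_{m,c}(y)\d y$, valid for all $x\geq 0$, including the singular endpoint $x=0$. A Cauchy--Schwarz estimate then bounds $\babs{\psi_{m,c}^{(j)}(x)}$ by $\norm{\ai^{(j)}(x+\,\cdot\,+c)}_{L^2[0,\infty)}\,\norm{\psi_{m,c}}_2$. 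Using Remark~\ref{asymp}, together with the fact that each $\ai^{(j)}$ has the same superexponential decay up to a polynomial prefactor (which follows inductively from $\ai''=x\,\ai$), this $L^2$ norm is $\O\bigl((x+c)^{c_j}e^{-\frac{2}{3}(x+c)^{3/2}}\bigr)$ for some exponent $c_j$. Hence every $\psi_{m,c}^{(j)}$ decays faster than any exponential.

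With this decay in hand, checking the hypotheses of Corollary~\ref{decaycorr} for an arbitrary $p$ is routine bookkeeping. Writing $f^{(j)}(x)=\tfrac{1}{\sqrt a}e^{x/2}Q_j(x)$, where $Q_j$ is a fixed linear combination of $\psi_{m,c}(x/a)$ and its derivatives, the superexponential decay of the latter dominates the factor $e^{x/2}$, so $Q_j(x)\to 0$ faster than any exponential. Consequently $e^{-x/2}x^{j+1}f^{(j)}(x)=\tfrac{1}{\sqrt a}x^{j+1}Q_j(x)\to 0$, which is~(\ref{decayreq1}), while $\int_0^\infty x^{p+1}e^{-x}\bigl(f^{(p+1)}(x)\bigr)^2\d x=\tfrac{1}{a}\int_0^\infty x^{p+1}Q_{p+1}(x)^2\d x<\infty$, which is~(\ref{decayreq2}). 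Applying the corollary for each $p\geq 1$ then gives $|\beta_k^{(m)}|=\O\bigl(k^{-(p+1)/2}\bigr)$, and letting $p$ range over all positive integers yields the claimed super-algebraic decay.

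I expect the only genuine work to lie in the second paragraph: justifying differentiation under the integral sign at the singular endpoint and pinning down the superexponential asymptotics of the derivatives $\ai^{(j)}$ uniformly enough to control the $L^2$ norms. Once those estimates are secured, the passage through Corollary~\ref{decaycorr} is mechanical.
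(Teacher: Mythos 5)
Your proof is correct, but it takes a genuinely different route from the paper's. The paper never establishes smoothness or decay of $\psi_{m,c}$ itself: it substitutes the eigenfunction identity (\ref{Teigfun}) into the definition of $\beta_k^{(m)}$, swaps the order of integration, and applies Cauchy--Schwarz to get
\begin{align}
|\beta_k^{(m)}| \le \frac{1}{|\lambda_m|}\bbnorm{\int_0^\infty \ai(y+x+c)h_k^a(x)\d x}_{L^2_y[0,\infty)},
\end{align}
so that Corollary~\ref{decaycorr} is invoked with $g(x)=\ai(y+x+c)$ (for fixed $y\ge 0$), whose analyticity and superexponential decay are immediate from Remark~\ref{asymp} --- the inner integral is just the $k$-th scaled-Laguerre coefficient of a shifted Airy function. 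You instead apply Corollary~\ref{decaycorr} directly with $g=\psi_{m,c}$, which forces you to first prove the regularity bootstrap: differentiating (\ref{Teigfun}) under the integral sign to get $\lambda_{m,c}\psi_{m,c}^{(j)}(x)=\int_0^\infty \ai^{(j)}(x+y+c)\psi_{m,c}(y)\d y$, and controlling $\ai^{(j)}$ via the inductive consequence of $\ai''=x\ai$ that $\ai^{(j)}=P_j\,\ai+Q_j\,\ai'$ with polynomial $P_j,Q_j$. That extra work is sound (your verification of (\ref{decayreq1})--(\ref{decayreq2}) via $f^{(j)}(x)=\tfrac{1}{\sqrt a}e^{x/2}Q_j(x)$ is exactly right, and the endpoint $x=0$ poses no obstacle for the integral representation, only for the ODE), and it buys you a useful by-product the paper does not state: $\psi_{m,c}$ and all its derivatives decay superexponentially. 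It also sidesteps a point the paper's one-line invocation leaves implicit: in the paper's argument the constant $V$ of Corollary~\ref{decaycorr} depends on the parameter $y$, and passing the super-algebraic decay through the $L^2_y$ norm requires checking that $V(y)$ is controlled (e.g.\ square-integrable in $y$), which the rapid decay of $\ai(y+x+c)$ in $y$ supplies but the paper does not spell out. Your only imprecision is cosmetic: the asymptotic $\O\bigl((x+c)^{c_j}e^{-\frac{2}{3}(x+c)^{3/2}}\bigr)$ is valid only for $x+c$ large, but since only the $x\to\infty$ regime matters for the hypotheses of the corollary, this does not affect the argument.
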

\begin{proof}
Using the integral representation of $\psi_{m,c}$,
    \begin{align}
|\beta_{k}^{(m)}|=&\, \frac{1}{|\lambda_m|}\bigg|\int_0^\infty
\psi_{m,c}(y)\Big(\int_0^\infty \ai(y+x+c)h_k^a(x)\d x\Big)\d
y\bigg|\notag\\
\leq&\,
\frac{1}{|\lambda_m|}\norm{\psi_{m,c}(y)}_{L^2[0,\infty)}\bbnorm{\int_0^\infty
\ai(y+x+c)h_k^a(x)\d x}_{L^2[0,\infty)}\notag\\
=&\, \frac{1}{|\lambda_m|}\bbnorm{\int_0^\infty \ai(y+x+c)h_k^a(x)\d
x}_{L^2[0,\infty)},
    \end{align}
by the Cauchy-Schwartz inequality and the fact that
$\norm{\psi_{m,c}(y)}_{L^2[0,\infty)}=1$.

Define $g(x)=\ai(y+x+c)$ for some constants $y\geq 0,c\in \R$. By Remark
\ref{asymp}, for any real number $a>0$, it's clear that
$f(x)=\frac{1}{\sqrt{a}}e^{x/2} g(x/a)$ satisfies the conditions
(\ref{decayreq1}), (\ref{decayreq2}) in Corollary \ref{decaycorr}. As $g$ is
analytic, we have that $g\in C^{p}[0,\infty)$ for any non-negative integer
$p$. Therefore, by Corollary \ref{decaycorr}, $|\beta_k^{(m)}|$ decays
super-algebraically as $k$ goes to infinity.
\end{proof}

\subsection{Recurrence relation involving the Airy integral operator acting
on scaled Laguerre functions of different orders}
\begin{theorem}
Given a positive real number $a$, a real number $s$, and a non-negative
integer $n$, define 
    \begin{align}
H_n^a:=\int_0^\infty \ai\left(y+s\right) h_n^a(y) \d y=\sqrt{a}\int_0^\infty
\ai\left(y+s\right)e^{-\frac{ay}{2}}L_n(ay) \d y.\label{Hna}
    \end{align}
Then 
    \begin{align}
\hspace*{-3em}(n-1)H_{n-2}^a&-(4n-1+as-\frac{1}{4}a^3)H_{n-1}^a+(6n+3+2as+\frac{1}{2}a^3)H_n^a\notag\\
\hspace*{-3em}&-(4n+5+as-\frac{1}{4}a^3)H_{n+1}^a+(n+2)H_{n+2}^a=0,\label{fiveterm}
    \end{align}
for $n=1,2,\dots$ . We note that $H_n^a$ depends on the variable $s$, but we
omit this dependency on $s$ in our notation where the meaning is
clear.\label{recur}
\end{theorem}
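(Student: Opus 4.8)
The plan is to exploit the fact that $w(y):=\ai(y+s)$ satisfies the Airy equation $w''(y)=(y+s)\,w(y)$, and to compute the single integral $\int_0^\infty w''(y)\,h_n^a(y)\d y$ in two different ways. Equating the two evaluations will produce a relation among the coefficients $H_n^a$, and a final differencing step will collapse it to (\ref{fiveterm}).

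First I would evaluate the integral by substituting the Airy equation, writing $\int_0^\infty w''\,h_n^a\d y=\int_0^\infty (y+s)\,\ai(y+s)\,h_n^a\d y$, and then reduce the factor $y\,h_n^a$ using the three-term recurrence (\ref{slagx}). This expresses the integral as $\tfrac{1}{a}\bigl(-n H_{n-1}^a+(2n+1)H_n^a-(n+1)H_{n+1}^a\bigr)+s\,H_n^a$. Second, I would evaluate the same integral by integrating by parts twice to move both derivatives onto $h_n^a$. The boundary contributions at $+\infty$ vanish, since $\ai$ and $\ai'$ decay superexponentially (Remark \ref{asymp}) while $h_n^a$ and $(h_n^a)'$ decay exponentially; the contributions at $0$ are evaluated using $h_n^a(0)=\sqrt a$ (as $L_n(0)=1$) and $(h_n^a)'(0)=-a\sqrt a\,(n+\tfrac12)$, which follows from (\ref{dslag}). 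The remaining interior integral $\int_0^\infty \ai(y+s)\,(h_n^a)''\d y$ is then expanded with (\ref{d2slag}), yielding $\tfrac{a^2}{4}H_n^a+a^2\sum_{k=0}^{n-1}(n-k)H_k^a$.

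Equating the two evaluations produces a master identity which, apart from the desired banded terms, contains the non-banded sum $S_n:=\sum_{k=0}^{n-1}(n-k)H_k^a$ together with boundary terms that are constant and affine in $n$ (the $\ai'(s)$ term is constant, while the $(h_n^a)'(0)$ term is linear in $n$). The crucial step is to apply the second difference operator $\nabla^2 f_n:=f_n-2f_{n-1}+f_{n-2}$: the constant and affine-in-$n$ boundary terms are annihilated, and the sum telescopes, since a direct computation gives $\nabla^2 S_n=H_{n-1}^a$. What survives is $\nabla^2$ applied to the already-banded pieces, which I would expand using the explicit linear-in-$n$ coefficients found in the first evaluation; after reindexing $n\mapsto n+1$ and multiplying through by $-1$, this collapses to exactly (\ref{fiveterm}).

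The hard part is bookkeeping rather than conceptual: one must track the boundary terms at the origin carefully, in particular verifying that their $n$-dependence is at most affine so that $\nabla^2$ removes them, and then correctly carry out the second difference of the linear-in-$n$ coefficients multiplying the shifted $H^a$'s. The essential observation that makes the computation succeed is that the single operation $\nabla^2$ simultaneously eliminates both the non-banded sum $S_n$ and the boundary terms, leaving a genuine five-term banded recurrence of precisely the stated width.
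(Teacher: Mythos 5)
Your proposal is correct and follows essentially the same route as the paper: both derive the identity $nH_{n-1}^a-(2n+1+as-\frac{1}{4}a^3)H_n^a+(n+1)H_{n+1}^a+a^3\sum_{k=0}^{n-1}(n-k)H_k^a=a\sqrt{a}\,(\ai'(s)+a(n+\frac{1}{2})\ai(s))$ by combining the three-term recurrence for $y\,h_n^a$, the Airy equation, double integration by parts, and (\ref{d2slag}), and then apply the second difference (the paper's combination of (\ref{aihid2}) $-\,2\times$(\ref{aihid1}) $+$ (\ref{aihid3})) to annihilate the affine-in-$n$ boundary terms and telescope the sum via $S_{n-1}-2S_n+S_{n+1}=H_n^a$, which is exactly your $\nabla^2$ step. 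The only cosmetic difference is that you organize the master identity as two evaluations of $\int_0^\infty \ai''(y+s)h_n^a(y)\,\d y$, while the paper reaches the same identity by starting from the Laguerre recurrence applied inside $H_{n+1}^a$.
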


\begin{proof}
By combining the recurrence relation for Laguerre polynomials (see
(\ref{lrec})) and the definition of the Airy function (see (\ref{aiode})),
we have
    \begin{align}
\hspace{-3em}H_{n+1}^a=&\int_0^\infty\ai(y+s)h_{n+1}^a(y)\d y\notag\\        
\hspace{-3em}=&\int_0^\infty\ai(y+s)\frac{(2n+1-ay)h_n^a(y)-nh_{n-1}^a(y)}{n+1}\d
y\notag\\
\hspace{-3em}=&\
\frac{2n+1}{n+1}H_n^a-\frac{n}{n+1}H_{n-1}^a-\frac{a}{n+1}\int_0^\infty y
\ai(y+s)h_n^a(y)d y\notag\\
\hspace{-3em}=&\
\frac{2n+1+as}{n+1}H_n^a-\frac{n}{n+1}H_{n-1}^a-\frac{a}{n+1}\int_0^\infty
\ai''(y+s)h_n^a(y)\d y,\label{Tna}
    \end{align}
for any non-negative integer $n$. By applying integration by parts twice to
the last term in (\ref{Tna}), we get
    \begin{align}
\hspace{-0em}\int_0^\infty \ai''(y+s)h_n^a(y)\d
y=&-\sqrt{a}\ai'(s)-a\sqrt{a}(\frac{1}{2}+n)\ai(s)\notag\\
&+\int_0^\infty \ai(y+s)({h_n^{a}}(y))''\d y.\label{aiprimeh}
    \end{align}
By (\ref{d2slag}), the last term in (\ref{aiprimeh}) becomes
    \begin{align}
\hspace{-4em}\int_0^\infty \ai(y+s)({h_n^{a}}(y))''\d y=&\ a^2\int_0^\infty
\ai(y+s) \Bigl(\frac{1}{4}h_n^a(y)+\sum_{k=0}^{n-1}(n-k)h_k^a(y)\Bigr)\d
y\notag\\
\hspace{-4em}=&\ a^2\bigl(\frac{1}{4}H_n^a +
\sum_{k=0}^{n-1}(n-k)H_k^a\bigr).\label{aihprime}
    \end{align}
Thus, by multiplying both sides of (\ref{Tna}) by $n+1$, and combining
(\ref{aiprimeh}), (\ref{aihprime}), we have
    \begin{align}
\hspace{-3em}nH_{n-1}^a-(2n+1+as-\frac{1}{4}a^3)H_n^a+(n+1)H_{n+1}^a+a^3\sum_{k=0}^{n-1}(n-k)H_k^a\notag
\\
\hspace{-3em}=\ a\sqrt{a}\Bigl(\ai'(s)+a(\frac{1}{2}+n)\ai(s)\Bigr),\label{aihid1}
    \end{align}
for $n=0,1,2,\dots$ .

We can write (\ref{aihid1}) equivalently as 
\begin{align}
    \hspace{-3em}(n-1)H_{n-2}^a-(2n-1+as-\frac{1}{4}a^3)H_{n-1}^a+nH_{n}^a+a^3\sum_{k=0}^{n-2}(n-1-k)H_k^a\notag
    \\
    \hspace{-3em}=\
    a\sqrt{a}\Bigl(\ai'(s)+a(-\frac{1}{2}+n)\ai(s)\Bigr),\label{aihid2}
\end{align}
for $n=1,2,3,\dots$, or
\begin{align}
    \hspace{-3em}(n+1)H_{n}^a-(2n+3+as-\frac{1}{4}a^3)&H_{n+1}^a+(n+2)H_{n+2}^a+a^3\sum_{k=0}^{n}(n+1-k)H_k^a\notag
    \\
    \hspace{-5em}&=\
    a\sqrt{a}\Bigl(\ai'(s)+a(\frac{3}{2}+n)\ai(s)\Bigr),\label{aihid3}
\end{align}
for $n=-1,0,1,\dots$ .

Finally, noticing that 
    \begin{align}
\sum_{k=0}^{n-2}(n-1-k)H_k^a-2\sum_{k=0}^{n-1}(n-k)H_k^a+\sum_{k=0}^{n}(n+1-k)H_k^a=H_n^a,
    \end{align}
equation (\ref{aihid2}), minus two times equation (\ref{aihid1}), plus
equation (\ref{aihid3}), gives the identity that we need.
\end{proof}

\subsection{Ratio between the eigenvalues of the Airy integral operator}
\begin{theorem}\label{ratiothm}
For any non-negative integers $m$ and $n$,
    \begin{align}
\frac{\lambda_m}{\lambda_n}=\frac{\int_0^\infty \psi_n'(x)\psi_m(x)\d
x}{\int_0^\infty \psi_n(x)\psi_m'(x)\d x}.
    \end{align}
\end{theorem}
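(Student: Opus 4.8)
The plan is to exploit the symmetry of the kernel $\ai(x+y+c)$ in its two arguments together with the fact that the derivative of the Airy function is produced simply by differentiating the eigenvalue equation. First I would introduce the symmetric double integral
$$I = \int_0^\infty \int_0^\infty \ai'(x+y+c)\,\psi_m(x)\,\psi_n(y)\d x \d y,$$
where $\ai'$ denotes the derivative of the Airy function. Since $\partial_x\,\ai(x+y+c) = \partial_y\,\ai(x+y+c) = \ai'(x+y+c)$, the integrand is symmetric under the simultaneous interchange of the index/variable pairs $(m,x)$ and $(n,y)$, and the entire proof reduces to evaluating $I$ by integrating in the two different orders and equating the outcomes.

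The key computational observation is that differentiating the eigenvalue equation (\ref{Teigfun}) under the integral sign gives
$$\int_0^\infty \ai'(x+y+c)\,\psi_n(y)\d y = \dd{}{x}\Bigl(\lambda_n \psi_n(x)\Bigr) = \lambda_n\,\psi_n'(x).$$
Substituting this into $I$ (integrating in $y$ first) yields $I = \lambda_n \int_0^\infty \psi_m(x)\psi_n'(x)\d x$. Carrying out the same computation with the roles of the indices reversed — which is legitimate precisely because $\ai'(x+y+c)$ is symmetric in $x$ and $y$ — gives $I = \lambda_m \int_0^\infty \psi_n(x)\psi_m'(x)\d x$. Equating these two expressions produces the identity $\lambda_n \int_0^\infty \psi_m \psi_n' = \lambda_m \int_0^\infty \psi_n \psi_m'$, and dividing through rearranges at once into the claimed formula, since $\int_0^\infty \psi_n'\psi_m = \int_0^\infty \psi_m\psi_n'$.

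The only genuine obstacle is analytic justification rather than the algebra. I would need to confirm that differentiation under the integral sign is permitted, and (if the argument is presented via the double integral) that Fubini's theorem allows the order of integration to be swapped. Both facts follow from the superexponential decay of $\ai$ as its argument tends to $+\infty$ (Remark \ref{asymp}) together with the rapid decay of the eigenfunctions $\psi_{m,c}$, which is guaranteed by their integral representation (\ref{Teigfun}) and the analytic continuation of Theorem \ref{thm:anacont}. These ensure that $\ai'(x+y+c)\psi_m(x)\psi_n(y)$ is absolutely integrable on $[0,\infty)^2$ and that the differentiated integrand admits a uniform dominating function for $x$ in compact sets, so the interchange and the differentiation are routine once the decay is recorded. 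As a sanity check, setting $m=n$ collapses both sides to $1$, consistent with $\int_0^\infty \psi_m\psi_m' = -\tfrac{1}{2}\psi_m(0)^2 \neq 0$, where nonvanishing uses $\psi_m(0)\neq 0$.
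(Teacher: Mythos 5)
Your proposal is correct and takes essentially the same route as the paper: the paper derives this identity as formula (\ref{es7}) in the proof of Theorem \ref{eseries}, by differentiating the eigenvalue equation (\ref{Teigfun}) under the integral sign to get $\lambda_n\psi_n'(x)=\int_0^\infty \partial_x\ai(x+y+c)\psi_n(y)\d y$ and then using the symmetry $\partial_x\ai(x+y+c)=\partial_y\ai(x+y+c)$ to equate the two orders of integration of exactly your double integral $I$. Your added remarks on Fubini, differentiation under the integral sign, and the $m=n$ sanity check are sound (justified by the superexponential decay of $\ai$) and go slightly beyond what the paper records, but the core argument is identical.
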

\begin{proof}
The identity immediately follows from formula (\ref{es7}) in the proof of
Theorem \ref{eseries} in Appendix~\ref{sec:misc}.
\end{proof}

\subsection{Derivative of $\lambda_{n,c}$ with respect to $c$}\label{dldc}
A slightly different version of the following theorem is first proved in \cite{tw}. Here, we present a different proof.
\begin{theorem}
For all real $c$ and non-negative integers $n$, 
    \begin{align}
\pp{\lambda_{n,c}}{c}=-\frac{1}{2}\lambda_{n,c}\bigl(\psi_{n,c}(0)\bigr)^2.\label{dlc0}
    \end{align}
\end{theorem}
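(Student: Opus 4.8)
The plan is to combine a Hellmann--Feynman argument with a single integration by parts. Write $\psi=\psi_{n,c}$ and $\lambda=\lambda_{n,c}$ for brevity, and recall that $\norm{\psi}_2=1$ and that $\lambda\psi(x)=\int_0^\infty \ai(x+y+c)\psi(y)\d y$. Since $\lambda=\inner{\T_c\psi}{\psi}$, I would differentiate the eigenvalue equation $\T_c\psi=\lambda\psi$ with respect to $c$ and pair the result with $\psi$. Using that $\T_c$ is self-adjoint, that $\T_c\psi=\lambda\psi$, and that $\inner{\psi}{\psi}=1$, the two terms involving $\partial\psi/\partial c$ cancel, leaving
\begin{align}
\pp{\lambda}{c}&=\inner{\bigl(\pp{\T_c}{c}\bigr)\psi}{\psi}\notag\\
&=\int_0^\infty\int_0^\infty \ai'(x+y+c)\psi(x)\psi(y)\d x\d y,\notag
\end{align}
where I have used that $\partial_c\T_c$ is the integral operator with kernel $\partial_c\ai(x+y+c)=\ai'(x+y+c)$.

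Next I would exploit the identity $\tfrac{\partial}{\partial x}\ai(x+y+c)=\ai'(x+y+c)$ and integrate by parts in $x$. The boundary term at $x=+\infty$ vanishes because $\ai$ decays superexponentially (Remark~\ref{asymp}) while $\psi\in L^2$, and the boundary term at $x=0$ produces $-\ai(y+c)\psi(0)$. Hence the double integral splits, after interchanging the order of integration in the remaining term, into
\begin{align}
\pp{\lambda}{c}=&-\psi(0)\int_0^\infty \ai(y+c)\psi(y)\d y\notag\\
&-\int_0^\infty\Bigl(\int_0^\infty \ai(x+y+c)\psi(y)\d y\Bigr)\psi'(x)\d x.\notag
\end{align}
Evaluating the eigenvalue equation (\ref{Teigfun}) at $x=0$ gives $\int_0^\infty \ai(y+c)\psi(y)\d y=\lambda\psi(0)$, so the first term equals $-\lambda\psi(0)^2$; the inner integral in the second term is $\lambda\psi(x)$, so the second term is $-\lambda\int_0^\infty \psi'(x)\psi(x)\d x=-\tfrac{1}{2}\lambda\bigl[\psi(x)^2\bigr]_0^\infty=\tfrac{1}{2}\lambda\psi(0)^2$. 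Adding the two contributions yields $\partial\lambda/\partial c=-\lambda\psi(0)^2+\tfrac12\lambda\psi(0)^2=-\tfrac12\lambda\psi(0)^2$, which is exactly (\ref{dlc0}).

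The algebra above is short; the part that actually needs care is justifying that $c\mapsto\lambda_{n,c}$ and $c\mapsto\psi_{n,c}$ are differentiable and that the formal manipulations are legitimate. For the differentiability I would invoke the simplicity of the eigenvalues of $\T_c$ (noted in Section~\ref{commutechap}) together with analytic perturbation theory: since $\T_c$ depends on $c$ analytically through the kernel $\ai(x+y+c)$, the simple eigenvalue $\lambda_{n,c}$ and a corresponding normalized eigenfunction depend smoothly on $c$. The remaining points --- differentiating under the integral sign to obtain the $\ai'$ kernel, and the vanishing of the boundary terms at infinity after integration by parts --- follow from the superexponential decay of $\ai$ and of the eigenfunctions, together with the fact that $\psi_{n,c}$ is analytic (by the integral representation (\ref{Teigfun})) with an $L^2$ derivative. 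I expect the perturbation-theoretic smoothness of $\lambda_{n,c}$ in $c$ to be the only genuinely non-computational ingredient.
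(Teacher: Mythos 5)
Your proposal is correct, but it takes a genuinely different route from the paper. The paper proves (\ref{dlc0}) by a symmetrized difference quotient: it writes the eigenvalue equation at parameter $c$, pairs it against the translated eigenfunction $\psi_{n,a}(x+\epsilon)$ with $\epsilon=(c-a)/2$, and uses the change of variables $s=x+\epsilon$ so that the kernel $\ai(x+y+c)$ becomes $\ai(y+\epsilon+s+a)$; the derivative of the kernel never appears, and the boundary contribution $-\tfrac12\lambda_{n,c}(\psi_{n,c}(0))^2$ emerges in the limit from a shrinking integral over $[\epsilon,0]$ rather than from an integration by parts. Your Hellmann--Feynman argument instead differentiates the operator itself, exploits $\partial_x \ai(x+y+c)=\partial_c \ai(x+y+c)$, and converts the resulting $\ai'$ kernel into a boundary term plus $-\lambda\int_0^\infty \psi'\psi\,\mathrm{d}x=\tfrac12\lambda(\psi(0))^2$; note that this last identity is exactly the $m=n$ case of formula (\ref{es1}) in Theorem \ref{eseries}, so your computation is fully consistent with the paper's own apparatus. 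What each approach buys: yours is more modular and makes explicit the one genuinely analytic ingredient --- smooth dependence of the simple eigenpair on $c$, which you correctly justify by Kato-type analytic perturbation theory (the family $\T_c$ is Hilbert--Schmidt with kernel entire in $c$, and simplicity of the eigenvalues is supplied by the commuting operator $\L_c$); the paper's finite-difference proof avoids invoking $\partial\psi_{n,c}/\partial c$ altogether, though it too silently assumes differentiability of $\lambda_{n,c}$ and $L^2$-continuity of $\psi_{n,a}$ in $a$ when passing to the limit, so your version is arguably the more honest about its hypotheses. The only steps you should spell out slightly more are the superexponential decay of $\psi_{n,c}(x)$ as $x\to\infty$ (immediate from (\ref{Teigfun}), Cauchy--Schwarz, and Remark \ref{asymp}), which is what kills the boundary term at infinity in both of your integrations by parts, and the convergence of the difference quotient of the kernel to $\ai'(x+y+c)$ in Hilbert--Schmidt norm, which legitimizes identifying $\partial_c\T_c$ with the $\ai'$ kernel; neither is a gap, both being routine given the decay of $\ai$ and $\ai'$.
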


\begin{proof}
Given two real numbers $a,c$, define $\epsilon=\frac{c-a}{2}$. By (\ref{Teigfun}),
    \begin{align}
\hspace{-3em}\lambda_{n,c}\psi_{n,c}(x)\psi_{n,a}(x+\epsilon)=\psi_{n,a}(x+\epsilon)\int_0^\infty\ai(x+y+c)\psi_{n,c}(y)
\d y.\label{dlc1}
    \end{align}
We integrate both sides of (\ref{dlc1}) over the interval $[0,\infty)$ with
respect to $x$ to obtain
    \begin{align}
\hspace{-5em}\lambda_{n,c}\int_0^\infty \psi_{n,c}(x)\psi_{n,a}(x+\epsilon)
\d x=&\int_0^\infty\psi_{n,c}(y)\int_0^\infty \ai(x+y+c)
\psi_{n,a}(x+\epsilon)  \d x \d y\label{dlc2}\notag\\
\hspace{-5em}=&\int_0^\infty\psi_{n,c}(y)\int_\epsilon^\infty
\ai(y+\epsilon+s+a) \psi_{n,a}(s)  \d s \d y\notag\\
\hspace{-5em}=&\int_0^\infty\psi_{n,c}(y)\Bigl(\int_\epsilon^0
\ai(y+\epsilon+s+a) \psi_{n,a}(s) \d s\notag\\
&\quad\quad\quad\quad\quad\quad+\lambda_{n,a}\psi_{n,a}(y+\epsilon)\Bigr)  \d y,
    \end{align}
where the change of variable $s=x+\epsilon$ is applied in (\ref{dlc2}).
After rearranging the terms, we have
    \begin{align}
\hspace{-6.5em}(\lambda_{n,c}-\lambda_{n,a})\int_0^\infty
\psi_{n,c}(x)\psi_{n,a}(x+\epsilon) \d
x=\int_0^\infty\psi_{n,c}(y)\Bigl(\int_\epsilon^0 \ai(y+\epsilon+s+a)
\psi_{n,a}(s) \d s\Bigr)  \d y.\label{dlc5}
    \end{align}
Then, we divide both sides by $2\epsilon$ and take the limit $2\epsilon\to
0$. The left side of (\ref{dlc5}) becomes
    \begin{align}
&\lim_{2\epsilon\to
0}\frac{\lambda_{n,c}-\lambda_{n,a}}{2\epsilon}\int_0^\infty
\psi_{n,c}(x)\psi_{n,a}(x+\epsilon) \d x\notag\\
&=\pp{\lambda_{n,c}}{c}\lim_{a\to
c}\int_0^\infty \psi_{n,c}(x)\psi_{n,a}\bigl(x+\frac{c-a}{2}\bigr) \d x\notag\\
&=\pp{\lambda_{n,c}}{c}\norm{\psi_{n,c}}_2^2\notag\\
&=\pp{\lambda_{n,c}}{c}.\label{dlc8}
    \end{align}
The right side of (\ref{dlc5}) becomes
    \begin{align}
&\lim_{2\epsilon\to
0}\frac{1}{2\epsilon}\int_0^\infty\psi_{n,c}(y)\Bigl(\int_\epsilon^0
\ai(y+\epsilon+s+a) \psi_{n,a}(s) \d s\Bigr)  \d y\notag\\
&=-\frac{1}{2}\psi_{n,c}(0)\lim_{a\to
c}\int_0^\infty\ai\Bigl(y+\frac{c+a}{2}\Bigr)\psi_{n,c}(y) \d y\notag\\
&=-\frac{1}{2}\lambda_{n,c}\bigl(\psi_{n,c}(0)\bigr)^2.\label{dlc9}
    \end{align}
Finally, by combining (\ref{dlc5}), (\ref{dlc8}), and (\ref{dlc9}),
    \begin{align}
\pp{\lambda_{n,c}}{c}=-\frac{1}{2}\lambda_{n,c}\bigl(\psi_{n,c}(0)\bigr)^2.
    \end{align}
\end{proof}
The following corollaries are immediate consequences of the preceding one.
\begin{corollary}
For all real $c$ and non-negative integers $m,n$, 
    \begin{align}
\pp{}{c}\Bigl(\frac{\lambda_{m,c}}{\lambda_{n,c}}\Bigr)=\frac{\lambda_{m,c}\Bigl((\psi_{n,c}(0))^2-(\psi_{m,c}(0))^2\Bigr)}{2\lambda_{n,c}}.
    \end{align}
\end{corollary}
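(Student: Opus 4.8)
The plan is to apply the quotient rule to $\lambda_{m,c}/\lambda_{n,c}$ and then substitute the formula from the preceding theorem. First I would write
  \begin{align}
\pp{}{c}\Bigl(\frac{\lambda_{m,c}}{\lambda_{n,c}}\Bigr)
= \frac{\lambda_{n,c}\,\pp{\lambda_{m,c}}{c} - \lambda_{m,c}\,\pp{\lambda_{n,c}}{c}}{\lambda_{n,c}^2},
  \end{align}
which is legitimate provided $\lambda_{n,c}\neq 0$; this is guaranteed since the eigenvalues of $\T_c$ are nonzero for every finite $c$ (indeed $\lambda_{j,c}^2\to 1$ as $c\to-\infty$ by Theorem~\ref{thm:eig1}), so the ratio is a smooth function of $c$ and the differentiation is valid.

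Next I would substitute the identity (\ref{dlc0}) of the preceding theorem, namely $\pp{\lambda_{j,c}}{c}=-\tfrac{1}{2}\lambda_{j,c}\bigl(\psi_{j,c}(0)\bigr)^2$, taking $j=m$ and $j=n$ in turn. The numerator then becomes
  \begin{align}
-\tfrac{1}{2}\lambda_{n,c}\lambda_{m,c}\bigl(\psi_{m,c}(0)\bigr)^2
+ \tfrac{1}{2}\lambda_{m,c}\lambda_{n,c}\bigl(\psi_{n,c}(0)\bigr)^2
= \tfrac{1}{2}\lambda_{m,c}\lambda_{n,c}\Bigl((\psi_{n,c}(0))^2-(\psi_{m,c}(0))^2\Bigr),
  \end{align}
and dividing by $\lambda_{n,c}^2$ cancels exactly one factor of $\lambda_{n,c}$, producing the claimed formula. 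There is no genuine obstacle here: the result is a one-line consequence of the quotient rule once (\ref{dlc0}) is available, and the only point deserving a word of justification is the nonvanishing of $\lambda_{n,c}$, which is what lets us form and differentiate the quotient in the first place.
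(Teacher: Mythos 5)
Your proposal is correct and matches the paper's intent exactly: the paper offers no separate proof, labeling this an immediate consequence of the preceding theorem, and your quotient-rule computation with the substitution of (\ref{dlc0}) is precisely that one-line deduction. Your only wobble is the appeal to Theorem~\ref{thm:eig1} to justify $\lambda_{n,c}\neq 0$ — the limit $\lambda_{j,c}^2\to 1$ as $c\to-\infty$ does not by itself rule out a zero at finite $c$ (nonvanishing follows instead from (\ref{dlc0}) itself, which forces $\lambda_{n,c}$ to be a nonvanishing exponential in $c$ once it is nonzero anywhere) — but the paper tacitly assumes nonvanishing throughout, so this does not affect the verdict.
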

\begin{corollary}\label{dl2dc}
    For all real $c$ and non-negative integers $n$, 
        \begin{align}
\pp{\lambda_{n,c}^2}{c}=-\lambda_{n,c}^2\bigl(\psi_{n,c}(0)\bigr)^2
.
        \end{align}
\end{corollary}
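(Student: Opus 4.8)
The plan is to obtain this formula as a direct application of the chain rule to the identity established in the preceding theorem. Since $\lambda_{n,c}^2$ is a smooth function of $\lambda_{n,c}$, which in turn is a differentiable function of $c$ (its differentiability in $c$ being precisely what the preceding theorem asserts), the chain rule gives
\begin{align}
\pp{\lambda_{n,c}^2}{c} = 2\lambda_{n,c}\pp{\lambda_{n,c}}{c}.
\end{align}

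The only remaining step is to substitute the formula $\pp{\lambda_{n,c}}{c} = -\frac{1}{2}\lambda_{n,c}(\psi_{n,c}(0))^2$ from the preceding theorem into the right-hand side. This yields
\begin{align}
\pp{\lambda_{n,c}^2}{c} = 2\lambda_{n,c}\cdot\Bigl(-\frac{1}{2}\lambda_{n,c}(\psi_{n,c}(0))^2\Bigr) = -\lambda_{n,c}^2(\psi_{n,c}(0))^2,
\end{align}
where the factors of $2$ and $\frac{1}{2}$ cancel exactly, giving the claimed identity.

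There is no genuine obstacle here: the entire analytic content of the corollary is carried by the preceding theorem, and the derivation reduces to a single line of differentiation. The only point that warrants a moment's care is confirming that $c\mapsto\lambda_{n,c}$ is indeed differentiable, so that the chain rule is applicable; but this is guaranteed by the preceding theorem itself, which produces a finite, explicit expression for $\pp{\lambda_{n,c}}{c}$. I would therefore present the proof simply as the two displayed lines above, with a one-sentence remark attributing the differentiability of $\lambda_{n,c}$ in $c$ to the preceding result.
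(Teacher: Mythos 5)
Your proof is correct and matches the paper's approach exactly: the paper presents this corollary as an immediate consequence of the preceding theorem, obtained precisely by the chain rule computation $\pp{\lambda_{n,c}^2}{c} = 2\lambda_{n,c}\pp{\lambda_{n,c}}{c} = -\lambda_{n,c}^2\bigl(\psi_{n,c}(0)\bigr)^2$ that you give. Your remark about differentiability of $c\mapsto\lambda_{n,c}$ being supplied by the preceding theorem is a reasonable point of care, though the paper does not spell it out.
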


\subsection{An uncertainty principle}
  \label{sec:uncert}

\begin{definition}
Suppose that the function $f\colon \R\to\R$ has an Airy transform
$\sigma\colon \R\to\R$ that is supported on the half-line $[a,\infty)$, so
that
  \begin{align}
f(x) = \int_{a}^\infty \ai(x+y)\sigma(y) \d y
    \label{fairytr}
  \end{align}
for all $x\in \R$. We call functions representable by integrals of the
form~(\ref{fairytr}) Airy-bandlimited.
\end{definition}

Since the Airy function $\ai(y)$ decays rapidly for
$y>0$, it is not difficult to see that the function $f$ can be extended to
an entire function, as the integral~(\ref{fairytr}) can always be
differentiated with respect to $x\in \C$ under the integral sign. Thus, $f$
cannot vanish identically over any subinterval of $\R$. In particular, $f$
cannot have its support restricted to the half-line $[b,\infty)$, for any
$b\in \R$. The following theorem bounds the proportion of the energy of $f$
on $[b,\infty)$.

\begin{theorem}[Uncertainty principle]\label{thm:uncert}
Let $f$ be a Airy-bandlimited function with an Airy transform $\sigma$ that is
supported on $[a,\infty)$. Define 
  \begin{align}
\alpha^2 = \frac{\int_b^\infty f^2 \d x}{\int_{-\infty}^\infty f^2 \d x},
    \label{alphadef}
  \end{align}
where $b\in \R$. Then
  \begin{align}
\alpha^2 \le \int_b^\infty \!\! \int_{a}^\infty (\ai\left(x+y\right))^2 \d y \d x.
    \label{alphaineq}
  \end{align}
\end{theorem}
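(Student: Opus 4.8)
The plan is to reduce the entire statement to a single pointwise Cauchy--Schwarz estimate, using the unitarity of $\cA$ to control the denominator. First I would write $f = \cA[\sigma]$, where $\sigma$ is supported on $[a,\infty)$ so that the integral in~(\ref{fairytr}) may be taken over all of $\R$. Since $\cA$ is unitary (see the discussion following its definition), it preserves the $L^2(\R)$ norm, whence
\begin{align}
\int_{-\infty}^\infty f(x)^2 \d x = \norm{\cA[\sigma]}_2^2 = \norm{\sigma}_2^2 = \int_a^\infty \sigma(y)^2 \d y.
\end{align}
This identifies the denominator of $\alpha^2$ with the energy of $\sigma$ on its support, and is the only place where the special structure of the Airy transform is used.

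Next I would estimate the numerator by applying the Cauchy--Schwarz inequality to the inner integral in~(\ref{fairytr}) for each fixed $x$, giving
\begin{align}
f(x)^2 = \Bigl(\int_a^\infty \ai(x+y)\sigma(y) \d y\Bigr)^2 \le \Bigl(\int_a^\infty (\ai(x+y))^2 \d y\Bigr)\Bigl(\int_a^\infty \sigma(y)^2 \d y\Bigr).
\end{align}
The second factor is independent of $x$, and the first is finite because $\ai$ decays superexponentially (Remark~\ref{asymp}). Integrating over $x\in[b,\infty)$ then yields
\begin{align}
\int_b^\infty f(x)^2 \d x \le \Bigl(\int_a^\infty \sigma(y)^2 \d y\Bigr)\int_b^\infty\!\!\int_a^\infty (\ai(x+y))^2 \d y \d x.
\end{align}

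Finally I would divide this by $\int_{-\infty}^\infty f^2 \d x = \int_a^\infty \sigma^2 \d y$ from the first step; the common factor $\int_a^\infty \sigma^2 \d y$ cancels, leaving precisely the claimed bound~(\ref{alphaineq}). I do not anticipate a genuine obstacle here: the two points that require care are the finiteness of the double integral on the right (guaranteed by the decay of $\ai$, which in fact makes the operator $F_{a,b}$ of Definition~\ref{def:F} Hilbert--Schmidt) and the norm identity in the first step (a standard consequence of the unitarity of $\cA$ already recorded in the excerpt). I would remark, moreover, that the estimate is exactly the statement that the Rayleigh quotient of $F_{a,b}$ is bounded by its squared Hilbert--Schmidt norm $\int_b^\infty\!\int_a^\infty (\ai(x+y))^2 \d y \d x$, since $\alpha^2 = \norm{F_{a,b}\sigma}_2^2 / \norm{\sigma}_2^2$; the Cauchy--Schwarz argument above is merely the coordinate-level form of this operator-norm bound, and it is this connection that justifies calling the result an uncertainty principle.
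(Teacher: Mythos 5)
Your proposal is correct and follows essentially the same route as the paper's own proof: a pointwise Cauchy--Schwarz estimate on the representation~(\ref{fairytr}), integration over $[b,\infty)$, and the unitarity of $\cA$ to identify $\int_{-\infty}^\infty f^2 \d x$ with $\int_a^\infty \sigma^2 \d y$ (you merely invoke unitarity at the start rather than at the end, and add an apt Hilbert--Schmidt interpretation of the bound). No gaps.
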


\begin{proof}
Squaring both sides of~(\ref{fairytr}) and applying the Cauchy-Schwarz
inequality, we have that
  \begin{align}
f(x)^2 \le \int_{a}^\infty (\ai(x+y))^2 \d y \int_a^\infty \sigma(y)^2 \d y.
    \label{fairytr2}
  \end{align}
After integrating both sides over $[b,\infty)$, the inequality becomes
  \begin{align}
\int_b^\infty f(x)^2 \d x \le \int_b^\infty \!\! \int_{a}^\infty
(\ai(x+y))^2 \d y \d x
\int_a^\infty \sigma(y)^2 \d y.
    \label{fairyineq}
  \end{align}
By dividing both sides of the inequality by $\int_{-\infty}^\infty f(x)^2 \d
x$, we get
  \begin{align}
\alpha^2 \le \Bigl(\int_b^\infty \!\! \int_{a}^\infty (\ai(x+y))^2 \d y \d x\Bigr)
\frac{\int_a^\infty \sigma(y)^2 \d y}{\int_{-\infty}^\infty f(x)^2 \d
x}.
    \label{alphaineq_tmp}
  \end{align}
Since the Airy transform is unitary, $\int_{-\infty}^\infty f(x)^2 \d x =
\int_{-\infty}^\infty \sigma(y)^2 \d y$.  Furthermore, by our assumption that
$\sigma$ is supported on $[a,\infty)$, we have that 
  \begin{align}
\frac{\int_a^\infty \sigma(y)^2 \d y}{\int_{-\infty}^\infty \sigma(y)^2 \d y} = 1.
  \end{align}
Thus, the inequality (\ref{alphaineq_tmp}) becomes 
  \begin{align}
\alpha^2 \le \int_b^\infty \!\! \int_{a}^\infty (\ai\left(x+y\right))^2 \d y \d x.
  \end{align}
\end{proof}

\begin{remark}
The right hand side of inequality (\ref{alphaineq}) decays rapidly when $b
\ge -a$.  In other words, when the Airy transform $\sigma$ of a function $f$
is supported on $[a,\infty)$, the function $f$ cannot have a large
proportion of its energy on the half-line $[b,\infty)$ when $b\ge-a$.
Furthermore, the proportion of energy it can have on $[b,\infty)$ decreases
rapidly as $b$ increases.
\end{remark}

In the following theorem, we give a bound on the decay rate of $f(x)$ for $x\ge-a$,
as follows. 
\begin{theorem}\label{thm:frate}
Let $f$ be a Airy-bandlimited function with an Airy transform $\sigma$ that is
supported on $[a,\infty)$. Then
  \begin{align}
\abs{f(x)} \le \ai\left(x+a\right) \int_{a}^\infty \abs{\sigma(y)} \d y,
    \label{turnpntineq}
  \end{align}
for all $x\ge -a$. In a mild abuse of terminology, we say that $f$ has a
turning point at $x=-a$.
\end{theorem}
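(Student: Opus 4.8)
The plan is to bound $f$ pointwise directly from its integral representation~(\ref{fairytr}), exploiting the sign and monotonicity of the Airy function on the positive half-line rather than any Cauchy--Schwarz argument as in the preceding theorem. First I would apply the triangle inequality under the integral sign, writing
  \begin{align}
\abs{f(x)} = \Babs{\int_a^\infty \ai(x+y)\sigma(y)\d y} \le \int_a^\infty \abs{\ai(x+y)}\,\abs{\sigma(y)}\d y.
  \end{align}
The whole theorem then reduces to replacing $\abs{\ai(x+y)}$ by the $y$-independent quantity $\ai(x+a)$, which can be pulled out of the integral to leave exactly $\ai(x+a)\int_a^\infty\abs{\sigma(y)}\d y$.

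The key observation driving this replacement is the range restriction built into the hypotheses: since the transform $\sigma$ is supported on $[a,\infty)$, the integration variable satisfies $y\ge a$, and for $x\ge -a$ the argument of the Airy function obeys $x+y \ge x+a \ge 0$. Thus $\ai(x+y)$ is always evaluated on the half-line $[0,\infty)$, where $\ai$ is positive and strictly decreasing. Consequently $\abs{\ai(x+y)}=\ai(x+y)$, and monotone decay together with $x+y\ge x+a\ge 0$ gives $\ai(x+y)\le \ai(x+a)$, so that each integrand is controlled by $\ai(x+a)\abs{\sigma(y)}$. Integrating over $y\in[a,\infty)$ then yields inequality~(\ref{turnpntineq}) immediately.

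The only ingredient requiring genuine justification is the positivity and monotone decrease of $\ai$ on $[0,\infty)$, and I expect this to be the sole substantive point of the argument. It follows from the defining differential equation~(\ref{aiode}), namely $\ai'' = x\,\ai$, combined with the decay at $+\infty$ recorded in Remark~\ref{asymp}: positivity on $[0,\infty)$ holds because all real zeros of $\ai$ lie strictly to the left of the origin, whence $\ai''=x\,\ai\ge 0$ shows $\ai'$ is nondecreasing on $[0,\infty)$, and since $\ai'\to 0$ as $x\to+\infty$ this forces $\ai'\le 0$ there, i.e. $\ai$ is nonincreasing. Everything else is the triangle inequality applied to the representation~(\ref{fairytr}), so the proof is short once this monotonicity is in hand.
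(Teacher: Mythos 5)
Your proof is correct and is essentially identical to the paper's: both apply the triangle inequality to the representation~(\ref{fairytr}) and then use that $\ai$ is positive and monotonically decreasing on $[0,\infty)$ (so that $\ai(x+y)\le \ai(x+a)$ for $y\ge a$, $x\ge -a$) to pull $\ai(x+a)$ out of the integral. The only difference is that you supply a justification of the monotone decay of $\ai$ from the ODE~(\ref{aiode}), a standard fact the paper simply asserts.
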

\begin{proof}
From~(\ref{fairytr}),
it follows that
  \begin{align}
\abs{f(x)} \le \int_{a}^\infty \abs{\ai(x+y)} \abs{\sigma(y)} \d y.
  \end{align}
Since $\ai(x+a)$ is positive and monotonically decreasing for $x \ge -a$, we
have that
  \begin{align}
\abs{f(x)} \le \ai(x+a) \int_{a}^\infty \abs{\sigma(y)} \d y,
  \end{align}
for all $x\ge -a$.
\end{proof}

\subsection{Extremal properties of the eigenfunctions $\psi_{n,c}$}
  \label{sec:extremal}
In this section, we describe the extremal properties of the Airy integral
operator's eigenfunctions, in relation to the uncertainty principle described
in Theorem \ref{thm:uncert}.

\begin{theorem}\label{thm:extremal}
Let $f$ be a Airy-bandlimited function with an Airy transform $\sigma$ that is
supported on $[a,\infty)$. Then, for arbitrary $b\in \R$, $\alpha^2$ (defined
in (\ref{alphadef})) attains its maximum value $\lambda_{0,a+b}^2$ for
$\sigma(y)=\psi_{0,a+b}(y-a)$, where $\lambda_{0,a+b}$ and $\psi_{0,a+b}$
denote the first eigenvalue and eigenfunction of the Airy integral operator
with parameter $a+b$ (see Section \ref{aiintch}). In other words, the
inequality~(\ref{alphaineq}) can be refined to a tight inequality $\alpha^2
\le \lambda_{0,a+b}^2$.

\end{theorem}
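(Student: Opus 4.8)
The plan is to recognize $\alpha^2$ as a Rayleigh quotient for the integral operator $\cK|_{L^2[c,\infty)}$ with $c=a+b$, whose largest eigenvalue is $\lambda_{0,c}^2$, and then to invoke the variational characterization of that eigenvalue. Writing $f=\cA[\1_{[a,\infty)}\sigma]$ and using that $\cA$ is unitary, the denominator in~(\ref{alphadef}) satisfies $\int_{-\infty}^\infty f^2\,\d x=\int_{-\infty}^\infty (\1_{[a,\infty)}\sigma)^2\,\d x=\int_a^\infty \sigma^2\,\d y$, exactly as in the proof of Theorem~\ref{thm:uncert}. So it remains to understand the numerator $\int_b^\infty f^2\,\d x$.

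For the numerator, I would substitute the representation~(\ref{fairytr}) and interchange the order of integration, which is justified by the superexponential decay of $\ai$, to obtain
\begin{align}
\int_b^\infty f(x)^2\,\d x = \int_a^\infty\!\!\int_a^\infty \sigma(y)\sigma(z)\Bigl(\int_b^\infty \ai(x+y)\ai(x+z)\,\d x\Bigr)\d y\,\d z.\notag
\end{align}
Shifting the integration variable by $b$ shows that the inner integral equals $K_{\ai}(y+b,z+b)$, by the definition~(\ref{twkern}) of the Airy kernel. Next I would change variables $y\mapsto y-b$, $z\mapsto z-b$, set $c=a+b$, and define $\tilde\sigma(y)=\sigma(y-b)$, which is supported on $[c,\infty)$. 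This rewrites the numerator as $\inner{\tilde\sigma}{\cK|_{L^2[c,\infty)}\tilde\sigma}$ and the denominator as $\norm{\tilde\sigma}_2^2$, so that $\alpha^2$ is precisely the Rayleigh quotient of $\cK|_{L^2[c,\infty)}$ evaluated at $\tilde\sigma$.

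Finally, since $\cK|_{L^2[c,\infty)}=\G_c^2$ is compact, self-adjoint, and positive semidefinite, with largest eigenvalue $\lambda_{0,c}^2$ and corresponding eigenfunction $\psi_{0,c}(\cdot-c)$ (as established in Section~\ref{aiintch}), the variational principle gives that this Rayleigh quotient is maximized with value $\lambda_{0,c}^2=\lambda_{0,a+b}^2$, attained when $\tilde\sigma(y)=\psi_{0,a+b}(y-c)$. Translating back through $\sigma(y)=\tilde\sigma(y+b)$ yields the extremizer $\sigma(y)=\psi_{0,a+b}(y-a)$, as claimed. I expect the only delicate part to be the bookkeeping of the successive translations relating $\sigma$, $\tilde\sigma$, and the shifted parameter $c=a+b$; the analytic content is routine, because every integral converges absolutely thanks to the rapid decay of the Airy function, and the operator is manifestly positive semidefinite so that its top eigenvalue controls the quotient.
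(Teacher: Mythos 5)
Your proof is correct, and it takes a genuinely different (though closely related) route from the paper's. The paper never squares the operator: it writes $\alpha^2 = \norm{F_{a,b}[\sigma]}^2/\norm{\sigma}^2$, invokes the min-max principle for \emph{singular values}, and reduces to the Airy integral operator via the factorization $F_{a,b} = T_b\, \T_{a+b}\, T_{-a}$, so that the self-adjointness of $\T_{a+b}$ immediately identifies the top singular value $\abs{\lambda_{0,a+b}}$ and right singular function $T_a[\psi_{0,a+b}]$. You instead square first: a Fubini computation identifies the numerator of $\alpha^2$ with the quadratic form of $\cK|_{L^2[c,\infty)}$, $c=a+b$, so that $\alpha^2$ becomes the Rayleigh quotient of the compact, self-adjoint, positive semidefinite operator $\G_c^2$, whose top eigenpair $\bigl(\lambda_{0,c}^2,\ \psi_{0,c}(\cdot-c)\bigr)$ the paper records in the discussion following Definition~\ref{def:f2kai} (not in Section~\ref{aiintch}, as you cite --- a harmless misattribution). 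At bottom the two arguments coincide, since the top singular value of $F_{a,b}$ is the square root of the top eigenvalue of $F_{a,b}^* F_{a,b} = T_a\, \T_{a+b}^2\, T_{-a}$, which is exactly your translated copy of $\cK|_{L^2[c,\infty)}$. What your version buys is an explicit, self-contained kernel-level argument (Fubini plus the spectral theorem, with the interchange of integrals justified by the superexponential decay of $\ai$), avoiding the SVD language entirely and paralleling the Cauchy--Schwarz proof of Theorem~\ref{thm:uncert}; what the paper's version buys is brevity, since it reuses the operator identities of Definitions~\ref{def:F}--\ref{def:T} and gets tightness in one stroke from the spectral theorem for $\T_{a+b}$. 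Your translation bookkeeping also checks out: $\tilde\sigma = T_b[\sigma]$ is supported on $[c,\infty)$, the maximizer is $\tilde\sigma(y)=\psi_{0,c}(y-c)$, and undoing the shift gives $\sigma(y)=\psi_{0,a+b}(y-a)$, with attainment guaranteed because the top of the spectrum of a compact self-adjoint positive semidefinite operator is an eigenvalue.
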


\begin{proof}
By definition, it's easy to see that 
\begin{align}
\alpha^2=
\norm{F_{a,b}[\sigma]}^2 / \norm{f}^2  =\norm{F_{a,b}[\sigma]}^2 /
\norm{\sigma}^2,
\end{align} where $F_{a,b}$ is defined by formula 
(\ref{for:F}). By the usual min-max principle for singular values, we know
that the maximum value of $\alpha$ is thus the largest singular value of
$F_{a,b}$, and that this maximum value is attained when $\sigma$ is equal to the
corresponding right singular function of $F_{a,b}$. We observe that
\begin{align}
F_{a,b} = T_{-a} F_{0,a+b} T_{-a} = T_{-a} T_{a+b} (T_{-a-b} F_{0,a+b})
T_{-a} = T_b \T_{a+b} T_{-a},\label{for:Fab}
\end{align}
where $T_{(\cdot)}$ represents the
translation operator (see Definition \ref{def:T}), and $\T_{a+b}$
represents the Airy integral operator with parameter $a+b$. Since
$\lambda_{0,a+b}$ is the eigenvalue of $\T_{a+b}$ with the largest
magnitude, and $\psi_{0,a+b}$ is the corresponding eigenfunction, it follows
that $\abs{\lambda_{0,a+b}}$ and $T_{a}[\psi_{0,a+b}]$ are the largest
singular value and corresponding right singular function of $F_{a,b}$. Thus,
the largest possible value of $\alpha^2$ is $\lambda_{0,a+b}^2$, and this
value is attained by the function $\sigma(y)=\psi_{0,a+b}(y-a)$. 
\end{proof}

\begin{remark}
The eigenfunction $\psi_{n,c}$, for $n\neq 0$, obeys the same
optimality result, except that it's optimal in the intersection of 
$L^2[0,\infty)$ and $\spn\{\psi_{0,c},\psi_{1,c},\dots,\psi_{n-1,c}\}^\perp$.
\end{remark}

Finally, we characterize the behavior of the right singular functions of
$F_{a,b}$. Without loss of generality, we only need to consider the right
singular functions of the operator $\tilde F_{c} = F_{0,c}$, i.e., the
eigenfunctions $\psi_{n,c}$ of the Airy integral operator $\T_c$, since the
general case of the operator $F_{a,b}$ is related to $\tilde F_{a+b}$ only
by translations (see the first equality in (\ref{for:Fab})).

\begin{theorem}\label{thm:tpsiineq}
For any real $c$, the analytic continuation $\tilde\psi_{n,c}$ of the eigenfunction
$\psi_{n,c}$ of the Airy integral operator with parameter $c$ has a turning
point at $x=-c$, in the sense of Theorem \ref{thm:frate}. Furthermore, 
    \begin{align}
\abs{\tilde\psi_{n,c}(x)}\leq \frac{1}{\abs{\lambda_{n,c}}}\cdot\abs{\ai\left(x+c\right)}\int_0^\infty
\abs{\psi_{n,c}(y)} \d y, \label{for:tpsiineq}
    \end{align}
for $x\geq-c$, where $\lambda_{n,c}$ is the corresponding eigenvalue of
$\psi_{n,c}$.
\end{theorem}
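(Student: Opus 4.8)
The plan is to reduce the statement directly to the turning-point bound of Theorem~\ref{thm:frate}, using the explicit Airy-transform representation of $\tilde\psi_{n,c}$ furnished by Theorem~\ref{thm:anacont}. First I would recall that Theorem~\ref{thm:anacont} gives
  \begin{align}
\tilde\psi_{n,c}(x)=\frac{1}{\lambda_{n,c}}\cA[\psi_{n,c}](x+c),
  \end{align}
so that if I set $f(w):=\lambda_{n,c}\tilde\psi_{n,c}(w-c)=\cA[\psi_{n,c}](w)$, then $f$ is exactly the Airy transform of $\psi_{n,c}$. Since $\psi_{n,c}\in L^2[0,\infty)$ is supported on $[0,\infty)$, the function $f$ is Airy-bandlimited with Airy transform $\sigma=\psi_{n,c}$ supported on $[a,\infty)$ with $a=0$; here I use the identity $\cA^2=I$, which guarantees $\cA[f]=\psi_{n,c}$, so that $\psi_{n,c}$ genuinely plays the role of the Airy transform of $f$ in the sense of Definition following (\ref{fairytr}).

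Next I would apply Theorem~\ref{thm:frate} to $f$ with $a=0$. This immediately yields
  \begin{align}
\abs{f(w)}\le \ai(w)\int_0^\infty \abs{\psi_{n,c}(y)}\d y
  \end{align}
for all $w\ge 0$, and asserts that $f$ has a turning point at $w=0$.

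Finally I would undo the translation. Writing $w=x+c$ gives $f(x+c)=\lambda_{n,c}\tilde\psi_{n,c}(x)$, so the bound becomes
  \begin{align}
\abs{\lambda_{n,c}\tilde\psi_{n,c}(x)}\le \ai(x+c)\int_0^\infty \abs{\psi_{n,c}(y)}\d y
  \end{align}
for $x+c\ge 0$, i.e.\ for $x\ge -c$. Dividing by $\abs{\lambda_{n,c}}$ and noting that $\ai(x+c)=\abs{\ai(x+c)}>0$ when $x\ge -c$ produces exactly (\ref{for:tpsiineq}); the turning point of $f$ at $w=0$ translates to a turning point of $\tilde\psi_{n,c}$ at $x=-c$.

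Since every step is a direct invocation of a previously established result (Theorem~\ref{thm:anacont}, the unitarity identity $\cA^2=I$, and Theorem~\ref{thm:frate}), I do not anticipate a genuine obstacle. The only point requiring slight care is the bookkeeping of the translation by $c$ together with the correct identification of the support parameter $a=0$: one must confirm that it is $\psi_{n,c}$ (supported on $[0,\infty)$), rather than $f$ itself, that serves as the Airy transform $\sigma$ when invoking Theorem~\ref{thm:frate}.
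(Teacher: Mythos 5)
Your proposal is correct and follows essentially the same route as the paper's own proof: both invoke Theorem~\ref{thm:anacont} to write $\tilde\psi_{n,c}(x)=\frac{1}{\lambda_{n,c}}\cA[\psi_{n,c}](x+c)$, apply Theorem~\ref{thm:frate} with $\sigma=\psi_{n,c}$ supported on $[0,\infty)$ (i.e.\ $a=0$), and translate by $c$. Your extra remark via $\cA^2=I$ is harmless but unnecessary, since the representation $f(w)=\int_0^\infty \ai(w+y)\psi_{n,c}(y)\d y$ already exhibits $f$ as Airy-bandlimited in the sense of the definition~(\ref{fairytr}).
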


\begin{proof}
By Theorem \ref{thm:anacont}, we have that 
  \begin{align}
\tilde\psi_{n,c}(x)=\frac{1}{\lambda_{n,c}}\cA[\psi_{n,c}](x+c).\label{for:anacont_app}
  \end{align}
Note that by Theorem \ref{thm:frate}, the Airy transform $\cA[\psi_{n,c}]$
of the right singular function $\psi_{n,c}$ of $\tilde F_c$ has a turning
point at $x=0$, so $\tilde\psi_{n,c}$ has a turning point at $x=-c$ by
(\ref{for:anacont_app}).

Furthermore, by combining (\ref{for:anacont_app}) and inequality
(\ref{turnpntineq}), we have that 
  \begin{align}
\hspace{-2em}\abs{\tilde\psi_{n,c}(x)}=\frac{1}{\abs{\lambda_{n,c}}}\cdot
\abs{\cA[\psi_{n,c}](x+c)}\leq \frac{1}{\abs{\lambda_{n,c}}}\cdot
\abs{\ai\left(x+c\right)}\int_0^\infty \abs{\psi_{n,c}(y)}\d y,
  \end{align}
for all $x\geq -c$.
\end{proof}

\subsection{Qualitative descriptions of the eigenfunction $\psi_{0,c}$ and
its Airy transform}\label{sec:qual}
By the extremal property of $\psi_{0,c}$ (see Theorem \ref{thm:extremal}),
we have that, for any $\sigma$ supported on $[0,\infty)$,
the proportion of the energy of $\cA[\sigma]$ on
$[c,\infty)$, i.e., the quantity
\begin{align}
\alpha^2=\frac{\int_c^\infty (\cA[\sigma](x))^2 \d x }{\int_{-\infty}^\infty (\cA[\sigma](x))^2 \d x},
\end{align}
attains its maximum
$\lambda_{0,c}^2$ with the choice $\sigma(y)=\psi_{0,c}(y)$.  Below, we
characterize the behavior of $\psi_{0,c}$ and its Airy transform, for $c$ in
three different regions.

\begin{itemize}
\item When $c<-5$, we have that $1-\alpha^2=1-\lambda_{0,c}^2 < 1.0\e{-3}$,
which means that the proportion of the energy of $\cA[\psi_{0,c}]$ on
$(-\infty,c]$ is negligible. In other words, $\cA[\psi_{0,c}]$ only has
negligible tail oscillations on the left.
Asymptotically, both $\psi_{0,c}$ and $\cA[\psi_{0,c}]$ converge to scaled
Gaussian functions on $[0,-c]$ and $[c,0]$, respectively, as $c\to-\infty$,
by Theorem \ref{thm:herm} in Appendix \ref{sec:misc} and Theorem
\ref{thm:anacont}.

\item When $c>1.5$, we have that $\alpha^2=\lambda_{0,c}^2 < 1.0\e{-3}$.
Note that, by Theorem \ref{thm:tpsiineq} and the fact that the Airy function
decays superexponentially, the eigenfunction $\psi_{0,c}(x)$ decays
increasingly fast for $x\geq 0$ as $c$ increases.
However,
we know that $\norm{\psi_{0,c}}_2=1$, which implies that it approaches a scaled
delta function. It follows that $\cA[\psi_{0,c}]$ approaches a scaled
Airy function as $c$ increases. Asymptotically, $\psi_{0,c}(x)\to
\sqrt{2}c^{1/4}e^{-\sqrt{c}x}$ as $c\to\infty$ by Theorem \ref{thm:lag} in
Appendix \ref{sec:misc}.

\item When $c\in[-5,1.5]$, generally we have that neither $\alpha^2$ nor
$1-\alpha^2$ is negligible. The former implies that the proportion of energy over
$[c,\infty)$ is substantial, which guarantees that a relatively
large proportion of the total energy is supported around the maximum of
$\cA[\psi_{0,c}](x)$ (empirically, close to $x=-1.5$) by Theorem
\ref{thm:frate}. The latter suggests that $\cA[\psi_{0,c}]$ has tail
oscillations. In fact, by Theorem \ref{thm:tpsiineq}, we know that
$\psi_{0,c}(x)$ decays for $x\geq \max(-c,0)$, so $\psi_{0,c}(x)$ also has a
substantial proportion of its energy near $x=0$. Therefore,
$\cA[\psi_{0,c}](x)$ still resembles a scaled Airy function.

\end{itemize}

Examples of the eigenfunctions $\psi_{n,c}$ and the square of the eigenvalues
$\lambda_{0,c}$ are shown in
Figure~\ref{figeigfun} and Figure~\ref{fig:lambda0}, respectively.

\section{Numerical Algorithm}\label{numalgo}
In this section, we describe a numerical algorithm that computes the
eigenvalues of the Airy integral operator to full relative accuracy, and
computes the eigenfunctions in the form of an expansion in scaled Laguerre
functions, where the expansion coefficients are also computed to full
relative accuracy.

\subsection{Discretization of the eigenfunctions}\label{4.1}
The algorithm for the evaluation of the eigenfunctions $\psi_{j,c}$ is based
on the expression of those functions as a series of scaled Laguerre
functions (see (\ref{slag})) of the form
    \begin{align}
\psi_{j,c}(x)=\sum_{k=0}^\infty \beta_{k}^{(j)} h_k^a(x),\label{exp}
    \end{align}
where the coefficients $\beta_{k}^{(j)}$ depends on the parameter $c$.
\begin{remark}
By orthogonality of the scaled Laguerre functions and the fact that
$\norm{\psi_{j,c}}_2^2=1$, we conclude that 
    \begin{align}
\sum_{k=0}^\infty \bigl(\beta_{k}^{(j)}\bigr)^2 = 1.\label{sum1}
    \end{align}
\end{remark}

Now we substitute the expansion (\ref{exp}) into (\ref{eigode2}), which gives us
    \begin{align}
\sum_{k=0}^\infty \beta_{k}^{(j)}\L_c[h_k^a]=\chi_{j,c}\sum_{k=0}^\infty \beta_{k}^{(j)}h_k^a.\label{slageig2}
    \end{align}
It follows from Theorem \ref{thm:Lh} that the left side of (\ref{slageig2}) can be
expanded into a summation that only involves $h_0^a,h_1^a,\dots$ .
Therefore, as the scaled Laguerre functions are linearly independent, the sequence
$\beta_{0}^{(j)},\beta_{1}^{(j)},\dots$ satisfies the recurrence relation
    \begin{align}
\hspace*{-3em}A_{0,0}\cdot \beta_{0}^{(j)}+A_{0,1}\cdot \beta_{1}^{(j)}+A_{0,2}\cdot \beta_{2}^{(j)}=&\ \chi_{j,c}\cdot \beta_{0}^{(j)},\label{mat1}\\
\hspace*{-3em}A_{1,0}\cdot \beta_{0}^{(j)}+A_{1,1}\cdot \beta_{1}^{(j)}+A_{1,2}\cdot \beta_{2}^{(j)}+A_{1,3}\cdot \beta_{3}^{(j)}=&\ \chi_{j,c}\cdot \beta_{1}^{(j)},\label{mat2}\\
\hspace*{-3em}A_{k,k-2}\cdot \beta_{k-2}^{(j)}+A_{k,k-1}\cdot \beta_{k-1}^{(j)}+A_{k,k}\cdot \beta_{k}^{(j)}\quad &\notag\\
+A_{k,k+1}\cdot \beta_{k+1}^{(j)}+A_{k,k+2}\cdot \beta_{k+2}^{(j)}=&\ \chi_{j,c}\cdot \beta_{k}^{(j)},\label{mat3}
    \end{align}
for $k=2,3,\dots$, where $A_{k,k},A_{k,k+1},A_{k,k+2}$ are defined via the formulas
    \begin{align}
A_{k,k}=&\ \frac{1}{4a^2}(8+a^3+4ac+24k+2a^3k+8ack+24k^2),\label{A1}\\
A_{k,k+1}=&\ A_{k+1,k}=\frac{1}{4a^2}(k+1)\bigl(a^3-4ac-16(k+1)\bigr),\label{A2}\\
A_{k,k+2}=&\ A_{k+2,k}=\frac{1}{a^2}(k+1)(k+2),\label{A3}
    \end{align}
for $k=0,1,\dots$ . Note that (\ref{mat1})--(\ref{mat3}) can
be written in the form of the following linear system:
    \begin{align}
(A-\chi_{j,c}I)\cdot \Bigl(\beta_{0}^{(j)},\beta_{1}^{(j)},\dots\Bigr)^T=0,
    \end{align}
where $I$ is the infinite identity matrix, and the non-zero entries of the
infinite symmetric matrix $A$ are given above.

Suppose that $k$ is a non-negative integer. Although the matrix $A$ is
infinite, and its entries do not decay with
increasing row or column number, the components of each eigenvector
$\beta^{(k)}$ decay super-algebraically (see Theorem \ref{decaypf}). More
specifically, the absolute values of components of the $k$-th eigenvector
will look like a bell-shaped curve centered at the $k$-th entry of the
eigenvector. Therefore, if we need to evaluate
the first $n+1$ eigenvalues $\chi_{0,c},\chi_{1,c},\dots,\chi_{n,c}$ and
eigenvectors $\beta^{(0)},\beta^{(1)},\dots,\beta^{(n)}$ numerically, we can
replace the infinite matrix $A$ with its $(N+1)\times (N+1)$ upper left
square submatrix, where $N=\O(n)$ is sufficiently large, which results in a
symmetric five-diagonal eigenproblem. It follows that we can replace the
series expansion (\ref{exp}) with a truncated one
    \begin{align}
\psi_{j,c}(x)=\sum_{k=0}^N \beta_{k}^{(j)} h_k^a(x),\label{exptrun}
    \end{align}
for $j=0,1,\dots,n$.

Assuming that we are interested in the first $n+1$ eigenfunctions of the
differential operator $\L_c$, it's important to pick the scaling factor $a$
such that $\psi_{n,c}$ gets best approximated, in the sense that the
bell-shape of the expansion coefficients of $\psi_{n,c}$ are concentrated
around $k=n$. By (\ref{sum1}), it follows that a considerably smaller matrix
will be required to calculate the $\psi_{n,c}$ accurately, compared with
other choices of $a$. Note that such an $a$ is not optimal for the
rest of the $n$ eigenfunctions (the eigenfunctions with indices from $0$ to
$n-1$), especially for the leading ones $\psi_{0,c}, \psi_{1,c},\dots$ .
However, in practice, if we can represent $\psi_{n,c}$ accurately, then the
rest of the $n$ eigenfunctions can be represented with at most the same
number of basis functions. Therefore, we only need to choose $a$ to
efficiently represent $\psi_{n,c}$.

To get a best approximation for $\psi_{n,c}$, we want the behavior of
$h_{n}^a$ to be similar to $\psi_{n,c}$. Notice that by (\ref{slagode}) and
(\ref{eigode}), the two ODEs satisfied by $h_n^a$, $\psi_{n,c}$ only differ
by the coefficient of the zero-th order term. It follows that the turning
point of $h_n^a$ is 
\begin{align}
    x=\frac{4n+2}{a},
\end{align}
while the turning point of $\psi_{n,c}$ is 
\begin{align}
    x=\frac{-c+\sqrt{c^2+4\chi_{n,c}}}{2}.
\end{align}
Matching the turning points of the two solutions, we get the following
approximation to the optimal $a$:
\begin{align}
a=\frac{4(2n+1)}{-c+\sqrt{c^2+4\chi_{n,c}}}.\label{choicea}
\end{align}
With this choice of $a$, $\beta_{k}$ decays quickly for $k\geq n$, for the
entire range of $c\in\R$. We note that the decay behavior of $\beta_k$ is
highly sensitive to the choice of $a$; other values of $a$ will often cause
$\beta_k$ to oscillate for a long time before it decays.
To simplify the notation, we will use $h_k(x)$ to denote
$h_k^{a}(x)$ with $a$ given by (\ref{choicea}), in the rest of the paper.
\begin{observation}\label{obschi}
    By applying the method of least squares to our numerical experiments,
    $\chi_{n,c}\approx 19.3c+11.1n+1.19\cdot 10^{-2}n^2+7.4\cdot 10^{-5}
    cn^2$ turns out to be a good approximation to the eigenvalues of the
    differential operator for $c\in[-50,50], n=0,1,\dots,800$. 
\end{observation}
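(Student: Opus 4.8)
The statement is empirical: the quartic-term expression is an \emph{approximant} whose four coefficients are produced by least squares, not an exact asymptotic identity. My plan therefore has two parts. First, I would justify analytically why the chosen functional form — a constant, a term linear in $c$, terms linear and quadratic in $n$, and a single cross term $cn^2$ — is a sensible ansatz for the fit. Second, I would describe how the coefficients are pinned down numerically and how ``good approximation'' is certified over the stated window $c\in[-50,50]$, $n=0,1,\dots,800$.

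For the dependence on $c$, I would invoke the Feynman--Hellmann theorem. Since $\L_c$ depends on $c$ only through the multiplication potential $x(x+c)$, its derivative in $c$ is multiplication by $x$; using $\norm{\psi_{n,c}}_2=1$ we get
  \begin{align}
\pp{\chi_{n,c}}{c}=\inner{\psi_{n,c}}{x\,\psi_{n,c}}=\int_0^\infty x\,\psi_{n,c}(x)^2\,\d x>0.
  \end{align}
Thus $\chi_{n,c}$ is strictly increasing in $c$, with slope equal to the mean position of the probability density $\psi_{n,c}^2$. Within the working window this mean varies slowly with $c$ but grows with $n$ (higher eigenfunctions spread toward larger $x$), which is exactly what a model of the form $(\text{const}+\kappa n^2)\,c$ captures: a near-constant slope in $c$ plus an $n$-dependent correction, accounting for both the linear term $19.3c$ and the cross term $7.4\cdot10^{-5}cn^2$.

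For the dependence on $n$, I would use the WKB (Bohr--Sommerfeld) quantization condition for the singular Sturm--Liouville problem $\L_c\psi=\chi\psi$. Writing the eigenequation as $x\psi''+\psi'+(\chi-x^2-cx)\psi=0$, the local wavenumber is $\sqrt{(\chi-x^2-cx)/x}$, and the classically allowed region runs from the singular endpoint $x=0$ to the turning point $x_+=\tfrac{1}{2}\bigl(-c+\sqrt{c^2+4\chi_{n,c}}\bigr)$ already identified in Section~\ref{numalgo}, giving
  \begin{align}
\int_0^{x_+}\sqrt{\frac{\chi_{n,c}-x^2-cx}{x}}\,\d x\approx\bigl(n+\gamma\bigr)\pi,
  \end{align}
where $\gamma$ is fixed by the Maslov indices at the endpoint and the turning point. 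This phase integral is a complete elliptic integral in the turning points $x_\pm=\tfrac12\bigl(-c\pm\sqrt{c^2+4\chi_{n,c}}\bigr)$; its leading-order evaluation for large $\chi$ scales like $\chi_{n,c}^{3/4}$, so $\chi_{n,c}$ grows super-linearly in $n$ (with leading exponent $\chi_{n,c}\sim n^{4/3}$ at $c=0$, between linear and quadratic over the finite window). A quadratic polynomial in $n$ is therefore an accurate approximant there, justifying the terms $11.1n$ and $1.19\cdot10^{-2}n^2$.

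Finally, I would compute $\chi_{n,c}$ to full relative precision on a grid of $(n,c)$ values by solving the symmetric five-diagonal eigenproblem (\ref{mat1})--(\ref{mat3}), assemble the overdetermined linear system for the four unknown coefficients, and solve it in the least-squares sense; the stated numbers are the result, and the claim is certified by reporting the maximum relative residual over the grid. The main obstacle is conceptual rather than computational: there is no exact closed form, and the true large-$n$ asymptotics involve fractional powers, so the quadratic-in-$n$ model cannot be an identity. The content of the Observation is thus precisely a goodness-of-fit statement over a finite window, and its verification rests on the accuracy of the eigensolver developed in this paper.
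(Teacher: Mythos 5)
The paper offers no proof of this statement at all: it is presented purely as an empirical Observation, with the coefficients obtained by least squares applied to eigenvalues computed by the paper's own algorithm, so your final paragraph --- solve the five-diagonal eigenproblem (\ref{mat1})--(\ref{mat3}) on a grid of $(n,c)$ and fit the four coefficients in the least-squares sense --- reproduces exactly what the authors did, and in that sense your proposal is correct. What you add, and what the paper does not attach to this Observation, is an analytic rationale for the ansatz; it is sound and consistent with results proved elsewhere in the paper. Your Feynman--Hellmann identity $\partial\chi_{n,c}/\partial c=\int_0^\infty x\,\psi_{n,c}(x)^2\,\d x$ is literally the first theorem of Appendix~\ref{sec:misc}, and your WKB turning point $x_+=\bigl(-c+\sqrt{c^2+4\chi_{n,c}}\bigr)/2$ is the same one used in Section~\ref{4.1} to derive the scaling factor (\ref{choicea}); your phase-integral scaling $\chi_{n,0}\sim K n^{4/3}$ (with $K\approx 2.2$) is numerically consistent with the fitted values (both give roughly $1.6\times 10^4$ at $n=800$, $c=0$), and the appendix corollaries $\chi_{n,c}\to(2n+1)\sqrt{c}$ as $c\to\infty$ and $\chi_{n,c}\to(2n+1)(-c/2)^{1/2}-c^2/4$ as $c\to-\infty$ corroborate your point that the linear-in-$c$, quadratic-in-$n$ model can only be a windowed approximant, never an identity. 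Two minor corrections to your justification. First, by your own WKB picture the slope $\langle x\rangle$ grows like $\chi_{n,c}^{1/2}\sim n^{2/3}$, not $n^2$, so the cross term $7.4\cdot 10^{-5}cn^2$ should be read as a least-squares artifact of the finite window rather than evidence that the mean position grows quadratically in $n$. Second, certifying the fit by its maximum \emph{relative} residual is the wrong metric: for sufficiently negative $c$ the eigenvalues $\chi_{n,c}$ change sign (see Figure~\ref{figspctm_chi}), so the relative error blows up near the zero crossings; nor is high accuracy of the fit needed, since its only role in the paper is to seed the choice of $a$ in (\ref{choicea}) and the initial shifts, after which the banded eigensolver and inverse power method recompute $\chi_{n,c}$ to full precision --- ``good approximation'' here means adequate for the turning-point match, nothing more.
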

\begin{observation}\label{obsN}
    Empirically, $\beta_k^{(n)}$ is much smaller than machine epsilon for
    $k\geq N$, where $N=1.1n+|c|+100$.
\end{observation}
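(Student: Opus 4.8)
The plan is to justify the scaling form $N = O(n) + O(|c|) + O(1)$ of the threshold, since the precise constants $1.1$, the unit coefficient on $|c|$, and the additive $100$ are fit to the numerical data and are not expected to emerge from soft asymptotics. First I would make the super-algebraic decay of Theorem~\ref{decaypf} quantitative. In the notation of that proof, $|\beta_k^{(n)}| \le \frac{1}{|\lambda_{n,c}|}\bnorm{\int_0^\infty \ai(y+x+c)h_k^a(x)\d x}_{L^2}$, and Corollary~\ref{decaycorr} applied to $g(x)=\ai(y+x+c)$ bounds the inner integral by $V_p/\sqrt{k(k-1)\cdots(k-p)}$ for every smoothness order $p$. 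Since $\ai$ is entire with the superexponential decay of Remark~\ref{asymp}, the constants $V_p$ (taken in $L^2$ in the variable $y$) can be bounded explicitly, and optimizing the free parameter $p$ against $k$ should upgrade the polynomial bound to super-exponential decay that switches on once $k$ exceeds a threshold set by the Laguerre scaling $a$.

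The second step is to locate that threshold. The scaling $a$ is fixed by~(\ref{choicea}) so as to match the turning point $\frac{4n+2}{a}$ of $h_n^a$ to the turning point $\frac{-c+\sqrt{c^2+4\chi_{n,c}}}{2}$ of $\psi_{n,c}$; consequently $|\beta_k^{(n)}|$ is bell-shaped and centered at $k=n$, as noted in the text. Feeding the empirical eigenvalue estimate $\chi_{n,c}\approx 19.3c + 11.1n + \dots$ of Observation~\ref{obschi} into~(\ref{choicea}) and the turning-point formulas, I would track how far past $k=n$ the index must run before the oscillatory (classically allowed) region of $h_k^a$ pushes beyond the turning point of $\psi_{n,c}$, where $\psi_{n,c}$ has already entered its $\ai(x+c)$-type decay (Theorem~\ref{thm:tpsiineq}). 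The overlap of the oscillatory $h_k^a$ against the rapidly decaying $\psi_{n,c}$ in that forbidden region is what renders $\beta_k^{(n)}$ negligible, and estimating the width of this transition should yield a bell half-width growing linearly in $n$ and in $|c|$.

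A complementary and more self-contained route is to analyze the five-diagonal recurrence~(\ref{mat3}), or equivalently the five-term recurrence~(\ref{fiveterm}) for $H_k^a$, directly: the eigenvector $\beta^{(n)}$ is the unique $\ell^2$ (recessive) solution, whose decay for $k>n$ can be extracted from Birkhoff--Trjitzinsky asymptotics for recurrences with polynomial coefficients. The main obstacle is a degeneracy at leading order: the dominant $O(k^2)$ (respectively $O(k)$) coefficients collapse to the fourth central difference $\delta^4\beta_k = 0$, whose solutions are cubic in $k$ and do not decay, so the actual decay is driven entirely by the subleading terms carrying the $a^3$, $ac$, and constant contributions. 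Because this balance is marginal, one cannot read the rate off a naive characteristic equation, and a careful subleading analysis is needed to confirm super-exponential decay and to pin the onset to $k\approx n + O(|c|) + O(1)$. Even granting this, the exact constants in $N = 1.1n + |c| + 100$ would remain an empirical sharpening of the proven scaling.
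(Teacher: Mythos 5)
You should first note what the paper actually does here: this statement is an \emph{Observation}, explicitly flagged ``Empirically,'' and the paper offers no proof at all --- it is established by numerical experiment, with the only analytic support being Theorem~\ref{decaypf} (super-algebraic decay of $\beta_k^{(n)}$, with no threshold) and the turning-point heuristic of Section~\ref{4.1} explaining the bell shape of the coefficients. Your proposal correctly identifies exactly these two ingredients and is honest that the constants $1.1$, the unit coefficient on $|c|$, and the additive $100$ can only be fit to data; aiming for the scaling form $N=O(n)+O(|c|)+O(1)$ is therefore the right target, and your plan goes beyond what the paper attempts rather than reproducing it.

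As a proof, however, the proposal has concrete gaps. First, the route through Theorem~\ref{decaypf} and Corollary~\ref{decaycorr} inherits the prefactor $1/\abs{\lambda_{n,c}}$, which grows extremely rapidly in $n$ because the eigenvalues of $\T_c$ decay exponentially; to conclude $\abs{\beta_k^{(n)}}<\epsilon$ already at $k\approx 1.1n+|c|+100$, the $k$-decay must overcome this prefactor within a margin of only $0.1n+|c|+100$ indices, and nothing in your sketch addresses that competition. Second, the claimed upgrade to ``super-exponential'' decay is overstated and likely false: optimizing the smoothness order $p$ against $k$ requires controlling the growth of $V_p$ in $p$ (high derivatives of $e^{x/2}\ai(x/a+y+c)$, with $a$ itself depending on $n$ and $c$ through~(\ref{choicea})), which you do not carry out, and your own recurrence analysis points to the truth being root-exponential: the fourfold degenerate characteristic root $\rho=1$ coming from the $\frac{k^2}{a^2}(1,-4,6,-4,1)$ stencil perturbs as $\rho-1\sim\pm(\chi a^2)^{1/4}k^{-1/2}$, giving decay like $\exp\bigl(-\mathrm{const}\cdot\sqrt{k}\bigr)$, not faster than every exponential. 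Third, both of your routes stop precisely at the decisive step --- you defer the ``careful subleading analysis'' of the degenerate Birkhoff--Trjitzinsky problem, and you do not actually estimate the width of the transition region past the turning point --- so neither route, as written, establishes even the linear-in-$n$, linear-in-$|c|$ scaling. In short: the paper proves nothing and verifies numerically; your program is a sensible path toward an analytic justification, but it is a plan with the hard steps outstanding, not a proof.
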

\begin{observation}
    One might hope that, by a certain selection of basis functions, it's
    possible to split this five-diagonal eigenproblem into two tridiagonal
    eigenproblems (see, for example, \cite{prolb,royt}). However, it turns
    out that none of the classical orthogonal polynomials (Laguerre
    polynomials, Hermite polynomials, or their rescaled versions) defined on
    the interval $[0,\infty)$ have the capability to split our five-diagonal
    eigenproblem.
\end{observation}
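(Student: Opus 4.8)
The plan is to reduce the claim to the non-vanishing of a single off-diagonal of the matrix of $\L_c$, for every candidate basis, and then to explain---both computationally and conceptually---why that off-diagonal cannot be killed on the half-line. For any basis in which the matrix of $\L_c$ is pentadiagonal, splitting the eigenproblem into two tridiagonal eigenproblems amounts to reordering the basis functions according to the parity of their index and asking that the reordered matrix be block diagonal; since the only entries that couple indices of opposite parity are those at distance one, a splitting exists if and only if all the distance-one entries vanish. For the scaled Laguerre functions already used in the paper this is settled immediately by (\ref{A2}): the entry $A_{k,k+1}=\frac{1}{4a^2}(k+1)\bigl(a^3-4ac-16(k+1)\bigr)$ is quadratic in $k$ with a nonzero leading coefficient, so no choice of the scaling $a$ or the parameter $c$ can make it vanish for all $k$.

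I would then isolate the structural reason, which simultaneously explains the contrast with the prolate case of \cite{prolb,royt}. In any orthonormal system of the form $\phi_n=\sqrt{w}\,p_n$ built from an orthogonal polynomial family $\{p_n\}$, multiplication by $x$ is the Jacobi matrix, with diagonal coefficients $a_n=\int x\,p_n^2\,w\d x$ and off-diagonal coefficients $b_n$, so multiplication by $x^2$ is pentadiagonal with distance-one entry $b_n(a_n+a_{n+1})$. For the prolate operator the underlying interval $[-1,1]$ is symmetric, the weight is even, and the Jacobi diagonal vanishes identically, $a_n=0$; the $x^2$ term therefore has no distance-one contribution, which is exactly the parity mechanism decoupling the even- and odd-indexed Legendre coefficients. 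On $[0,\infty)$ there is no such reflection symmetry: the weight is supported on the half-line, so $a_n=\int_0^\infty x\,p_n^2\,w\d x>0$, and for the Laguerre-type weights relevant here (see the next paragraph) both $a_n$ and $b_n$ grow linearly in $n$, making the $x^2$ contribution $b_n(a_n+a_{n+1})$ grow quadratically. The remaining pieces of $\L_c$, namely the differential part $-\frac{d}{dx}\bigl(x\frac{d}{dx}\bigr)$ and the linear potential $cx$, contribute only linearly in $n$ to the distance-one entry, so they cannot cancel the quadratic $x^2$ contribution for large $n$; hence the distance-one entry is nonzero for all large $k$ and no splitting is possible.

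To make the statement exhaustive rather than specific to the scaled Laguerre functions, I would appeal to Bochner's classification of the classical orthogonal polynomials into the Jacobi, Laguerre, and Hermite families, up to an affine change of variable. The Jacobi polynomials are supported on a bounded interval and the Hermite polynomials on the whole line, so neither yields an orthonormal basis of $L^2[0,\infty)$ of the form $\sqrt{w}\,p_n$ with the half-line as support; only the (generalized) Laguerre families and their rescalings remain, and for each of these the growth argument of the previous paragraph applies verbatim. The main obstacle I anticipate is precisely this exhaustiveness step: one must rule out a fine-tuned cancellation between the differential and potential contributions for \emph{every} admissible family and \emph{every} rescaling, and also confirm that for families outside the Laguerre class the matrix of $\L_c$ fails to be banded at all, so that no clean two-tridiagonal reduction can even be written down. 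Carrying out the degree-and-growth comparison uniformly across the Bochner families, rather than verifying each by a separate computation, is the part that requires the most care.
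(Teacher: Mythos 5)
The paper offers no proof of this statement: it stands as an empirical Observation, asserted on the strength of the authors' computations, so there is no argument to compare yours against line by line --- what you have written is a proof where the paper has only a claim. On the statement's literal scope your route is correct. The reduction of ``splitting'' to the vanishing of the distance-one band is the right formalization, with one small repair: to get the ``only if'' direction for an \emph{arbitrary} two-block decoupling (not merely the even/odd reordering you describe), you should invoke the non-vanishing of the distance-two entries, $A_{k,k+2}=(k+1)(k+2)/a^2>0$ by (\ref{A3}), which forces any block partition of the index set to refine the parity classes and hence to be exactly even versus odd; only then does ``splitting exists iff the distance-one entries vanish'' hold without qualification. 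Your computation from (\ref{A2}) --- that $A_{k,k+1}$ is quadratic in $k$ with leading coefficient $-4/a^2$, hence not identically zero for any choice of $a>0$ and $c$ --- disposes of all rescaled Laguerre bases at once, and is the decisive step. Your Jacobi-matrix account of why the prolate mechanism of \cite{prolb,royt} is unavailable on the half-line (the diagonal recurrence coefficients $a_n=\int_0^\infty x\,p_n^2\,w\d x$ are strictly positive for any weight supported on $[0,\infty)$, so the distance-one band $b_n(a_n+a_{n+1})$ of multiplication by $x^2$ cannot vanish by symmetry) is correct, and is genuinely more informative than the paper's bare assertion.

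The soft spot is the one you flag yourself, but note that for the observation's literal scope --- Laguerre, Hermite, and their rescalings --- the Bochner exhaustiveness step is not actually needed: Hermite functions restricted to $[0,\infty)$ fail to be mutually orthogonal (opposite-parity pairs survive, e.g.\ $\int_0^\infty 2xe^{-x^2}\d x=1\neq 0$), so they never furnish an orthonormal basis of $L^2[0,\infty)$ in the first place, and the Laguerre case is your (\ref{A2}) computation. If you want the stronger, Bochner-complete claim, the verification you anticipate does go through but must be done explicitly: for generalized Laguerre weights $x^\alpha e^{-x}$ with $\alpha\neq 0$, the self-adjoint form of the ODE acquires an effective $\alpha^2/(4x)$ potential, and multiplication by $1/x$ is \emph{not} banded in that basis --- for instance, with $\alpha=1$ the unnormalized entries $\int_0^\infty e^{-x}L_0^{(1)}(x)L_m^{(1)}(x)\d x$ equal $1$ for every $m$ --- so the matrix of $\L_c$ fails to be pentadiagonal at all, and the splitting question collapses for a different reason than the nonvanishing distance-one band. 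Spelling that out would close the only real gap in your argument, and even without it you have fully justified the claim as the paper states it.
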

\begin{observation}
When $c$ is negative, the leading few eigenvalues, say,
$\chi_{0,c},\chi_{1,c},\dots,\chi_{n',c}$, are negative, where $n'$ is
usually smaller than $100$ in practical situations. In this case, provided
one is only interested in the first $n$ eigenfunctions, where $n-1\leq n'$,
it would appear that the approximation of $a$ given by formula
(\ref{choicea}) may fail, since $c^2+4\chi_{n,c}$ can be negative. However,
$c^2+4\chi_{n,c}$ turns out to always be positive. To estimate $a$, we use
an approximation to $\chi_{n,c}$, for which the quantity $c^2+4\chi_{n,c}$
can, at least in principle, be negative. This turns out to also not be a
problem, since even when we only care about a small number of
eigenfunctions, we can always compute more, say, $n+100$, for which
$c^2+4\chi_{n+99,c}$ is positive.
\end{observation}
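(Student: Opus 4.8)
The plan is to prove the one genuinely analytic assertion inside this observation, namely that $c^2+4\chi_{n,c}>0$ for every real $c$ and every non-negative integer $n$; the surrounding statements (that the leading eigenvalues are negative for $c<0$, that $n'$ is typically below $100$, and that extra eigenfunctions can always be computed) are empirical or computational remarks and are not the sort of thing one proves. Since the $\chi_{n,c}$ form a strictly increasing sequence, it suffices to bound the smallest one from below: I will show $\chi_{0,c}>-c^2/4$, from which $\chi_{n,c}\ge\chi_{0,c}>-c^2/4$ follows for all $n$.

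The tool is the variational (Rayleigh-quotient) characterization $\chi_{0,c}=\min_\psi \inner{\L_c[\psi]}{\psi}/\norm{\psi}_2^2$, valid because $\L_c$ is self-adjoint. First I would evaluate the quadratic form on a normalized admissible $\psi$: starting from the definition (\ref{Lc}) and integrating the term $-\dd{}{x}\!\bigl(x\,\dd{\psi}{x}\bigr)$ by parts, the boundary contribution $-x\psi'\psi$ vanishes at both endpoints (the factor $x$ kills the endpoint $x=0$, where $\psi$ is continuous with bounded derivative, and the superexponential decay of the eigenfunctions kills the endpoint at infinity), leaving
\begin{align}
\inner{\L_c[\psi]}{\psi}=\int_0^\infty x(\psi')^2 \d x+\int_0^\infty (x^2+cx)\psi^2 \d x.\notag
\end{align}
Completing the square in the second integrand, $x^2+cx=(x+c/2)^2-c^2/4\ge -c^2/4$, and discarding the manifestly non-negative first integral yields $\inner{\L_c[\psi]}{\psi}\ge -\frac{c^2}{4}\norm{\psi}_2^2$, so that $\chi_{0,c}\ge -c^2/4$.

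To upgrade this to the strict inequality I actually need, I would note that equality would force the discarded term $\int_0^\infty x(\psi')^2\d x$ to vanish, i.e.\ $\psi'\equiv 0$ on $(0,\infty)$, making $\psi$ constant; but a nonzero constant is not in $L^2[0,\infty)$. Hence the first integral is strictly positive for a genuine eigenfunction, the bound is strict, and $\chi_{0,c}>-c^2/4$, equivalently $c^2+4\chi_{n,c}>0$. As a byproduct this confirms that the turning point $\tfrac{1}{2}\bigl(-c+\sqrt{c^2+4\chi_{n,c}}\bigr)$ entering (\ref{choicea}) is real; splitting into the cases $c\le 0$ (where $-c\ge 0$) and $c>0$ (where the same quadratic form gives $\chi_{n,c}>0$ since $x^2+cx>0$ on $(0,\infty)$) shows it is in fact positive as well.

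I expect the only real obstacle to be the justification of the integration by parts at the singular endpoints, since $\L_c$ is a singular Sturm--Liouville operator with singular points at $0$ and $\infty$: one must confirm that the admissible eigenfunctions lie in the form domain and that $x\psi'\psi\to 0$ as $x\to 0^+$ and as $x\to\infty$. Both follow from the regularity of $\psi_{n,c}$ at the origin (the continuous Frobenius solution has bounded derivative there) together with the superexponential decay coming from the integral representation (\ref{Teigfun}) and Remark \ref{asymp}; but this endpoint analysis, rather than the elementary completion-of-the-square estimate, is the step that requires care.
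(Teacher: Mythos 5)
Your proof is correct, and it is worth noting that the paper itself offers no proof here: this is stated as an empirical Observation (``$c^2+4\chi_{n,c}$ turns out to always be positive''), supported only by numerical evidence and the fallback of computing extra eigenfunctions. You have correctly isolated the one provable claim and supplied a genuine argument for it, which is more than the paper does. The quadratic-form computation is sound: integrating by parts in $\inner{\L_c[\psi]}{\psi}$ gives $\int_0^\infty x(\psi')^2\d x+\int_0^\infty(x^2+cx)\psi^2\d x$, completing the square gives the bound $-c^2/4$, and your strictness argument (equality would force $\psi'\equiv 0$, hence $\psi$ a constant, which is not in $L^2[0,\infty)$ unless zero) is valid, since equality forces the discarded non-negative term to vanish. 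Your endpoint analysis is also the right one: at $x=0$ the ODE $x\psi''+\psi'-(x^2+cx-\chi_{n,c})\psi=0$ has a regular singular point with both indicial roots zero, so the continuous solution selected by the paper's theorem is the analytic Frobenius solution (the second solution has a logarithmic singularity), making $x\psi'\psi\to 0$ there; at infinity, the integral equation (\ref{Teigfun}) with $\lambda_{n,c}\neq 0$ gives superexponential decay of $\psi_{n,c}$ and $\psi_{n,c}'$ via the decay of $\ai$ and Cauchy--Schwarz. Two small simplifications are available: you do not need the min--max principle or the monotonicity of the $\chi_{n,c}$ at all, since pairing the eigenvalue equation $\L_c[\psi_{n,c}]=\chi_{n,c}\psi_{n,c}$ directly with $\psi_{n,c}$ yields $\chi_{n,c}>-c^2/4$ for every $n$ in one step; and invoking min--max for a singular Sturm--Liouville operator would otherwise require verifying discreteness and semiboundedness of the spectrum, which your direct evaluation sidesteps entirely. (Incidentally, your positivity byproduct for $c>0$ exposes a minor inconsistency in the paper, whose commuting-operator theorem asserts the $\chi_{n,c}$ are always positive while this Observation and Figure~\ref{figspctm_chi} correctly report negative leading eigenvalues for sufficiently negative $c$; your bound $\chi_{n,c}>-c^2/4$ is the statement that is actually true uniformly in $c$.)
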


\subsection{Relative accuracy evaluation of the expansion coefficients of
the eigenfunctions}
\label{acceigfun}
Suppose that $n$ is a non-negative integer. In Section \ref{4.1}, we expand
each of the eigenfunctions $\psi_0,\psi_1,\dots,\psi_n$ into a series of
scaled Laguerre functions, and formulate an eigenproblem to solve for the
expansion coefficients $\{\beta^{(j)}_k\}$ of $\psi_j$. We showed that, for
the choice of basis functions described in Section \ref{4.1}, the number of
required expansion coefficients $N$ is not much larger than $n$. In fact, by
Observation \ref{obsN}, the choice $N=1.1n+|c|+100$ is sufficient for all
$c\in \R$. The coefficients are thus the solution to an eigenproblem
involving a $(N+1)\times(N+1)$ five-diagonal matrix. Intuitively, one may
suggest applying a standard eigensolver to solve for all eigenpairs of the
five-diagonal matrix $A$. However, in this case, the eigenvalues and
eigenvectors will only be evaluated to absolute precision, which turns out
not to be sufficient for the relative accuracy evaluation of the spectrum of
the Airy integral operator $\T_c$. Instead, we use the fact that, since the
matrix is five-diagonal, the eigenvalues can be evaluated to relative
precision and the eigenvectors can be evaluated to coordinate-wise relative
precision using the inverse power method (see~\cite{osipov} for a discussion
of the phenomenon). We derive the following algorithm for the relative
accuracy evaluation of expansion coefficients of eigenfunctions
$\{\psi_{j}\}_{j=0,1,\dots,n}$ and the spectrum of $\L_c$:

\begin{enumerate}
    \item Construct an $(N+1)\times (N+1)$ five-diagonal symmetric real
    matrix $A$ whose entries are defined via (\ref{A1})--(\ref{A3}), 
    where $a$ is chosen by formula (\ref{choicea}) and
    Observation \ref{obschi}, and $N$ is given by Observation \ref{obsN}.
    \item Apply a standard symmetric five-diagonal eigenvalue solver to $A$
    to get a approximation of its eigenvalues $\chi_0,\chi_1,\dots,\chi_N$
    to absolute precision.
    \item Apply the shifted inverse power method to $A$ with an initial
    shift of $\chi_0,\chi_1,\dots,\chi_n$, until convergence. This leads to
    an approximation of the expansion coefficients of
    $\{\psi_j\}_{j=0,1,\dots,n}$ to coordinate-wise relative precision, and
    the spectrum of $\L_c$ to relative precision.
\end{enumerate}

\begin{remark}\label{abserr}
For any $j\in \{0,1,\dots, n\}$, let
$\tilde\beta^{(j)}=\Bigl(\tilde\beta_0^{(j)},\tilde\beta_1^{(j)},\dots,\tilde\beta_N^{(j)}\Bigr)
\in \R^{N+1}$ denote the exact values of the first $N+1$ coefficients of the
expansion of $\psi_j$.  Then, each component of the approximation $\beta_j$
produced by the shifted inverse power method in the third step of the
algorithm has the following property, no matter how tiny the component is:
\begin{align}
    \frac{\abs{\beta_k^{(j)}-\tilde\beta_{k}^{(j)}}}{|\tilde\beta_{k}^{(j)}|}<\epsilon,\
    \forall k\in\{0,1,\dots,N\},
\end{align}
where $\epsilon$ represents the machine epsilon (see \cite{osipov} for more details). 
However with a standard eigensolver, one can only achieve
    \begin{align}
        \abs{\beta_k^{(j)}-\tilde\beta_{k}^{(j)}}<\epsilon,\ \forall
        k\in\{0,1,\dots,N\},
    \end{align}
although in norm,
  \begin{align}
\frac{\norm{\beta^{(j)}-\tilde\beta^{(j)}}_2}{\norm{\tilde\beta^{(j)}}_2}<\epsilon.
  \end{align}
In other words, the standard eigensolver can only achieve absolute precision
for each coordinate of the eigenvectors, while the shifted inverse power
method achieves relative precision. This is because the small
entries in the eigenvector only interact with adjacent entries in the
eigenvector in the course of a solve step during the shifted inverse power
method.
\end{remark}

\begin{observation}\label{obs:relacc}
The relative accuracy evaluation of expansion coefficients is essential both
for performing high accuracy spectral differentiation of the eigenfunctions,
and for relative accuracy evaluation of the eigenfunctions $\psi_{j,c}(x)$
for large $x$, where the eigenfunctions are small. 
\end{observation}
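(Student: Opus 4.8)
The plan is to read the observation as a forward-error statement and to justify it by tracing how an error in the coefficients $\beta_k^{(j)}$ propagates into the two quantities of interest. In both cases I would argue that the target is \emph{governed by the tiny tail coefficients} of the expansion, i.e. by the $\beta_k^{(j)}$ with $k$ well past the peak at $k\approx j$, where by Theorem~\ref{decaypf} the sequence decays super-algebraically. An absolute error of size $\epsilon$ in each coefficient injects an $O(\epsilon)$ perturbation that is enormous relative to these super-algebraically small entries and hence destroys any relative accuracy downstream; only the coordinate-wise relative accuracy furnished by the inverse power method (Remark~\ref{abserr}) preserves it. This reduces the whole observation to locating, in each task, exactly which coefficients carry the answer.

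First I would treat the evaluation of $\psi_{j,c}(x)$ at large $x$. The scaled Laguerre function $h_k^a$ has its turning point at $x=(4k+2)/a$, so at a fixed location $x$ the function $h_k^a(x)$ is in its rapidly-decaying tail when $k<(ax-2)/4$ and is still ``active'' when $k\gtrsim ax/4$. Consequently the value of $\psi_{j,c}(x)=\sum_{k=0}^N \beta_k^{(j)}h_k^a(x)$ at a location $x$ is carried chiefly by the coefficients with $k\gtrsim ax/4$: for $x$ at the turning point of $\psi_{j,c}$ these are the peak coefficients, while for larger $x$ they are the tail coefficients with $k>j$. For large $x$, then, the superexponentially small value of $\psi_{j,c}(x)$ (which decays like $\ai(x+c)$ by Theorem~\ref{thm:tpsiineq}) arises precisely because the dominant coefficients $\beta_k^{(j)}$, $k\sim ax/4$, are themselves tiny, not through cancellation among large summands. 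An absolute error $\epsilon$ in those coefficients therefore swamps the true contribution and leaves no correct digit, whereas coordinate-wise relative accuracy reproduces each dominant summand, and hence $\psi_{j,c}(x)$, to relative precision.

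Next I would treat spectral differentiation. Differentiating the truncated series termwise via~(\ref{dslag}) and collecting the coefficient of $h_k$, the $k$-th expansion coefficient of $\psi_{j,c}'$ is $\gamma_k=-\tfrac a2\beta_k^{(j)}-a\sum_{n>k}\beta_n^{(j)}$. For $k$ past the peak, both terms are tail quantities of the super-algebraically decaying sequence, so $\gamma_k$ is itself tiny and is essentially determined by the first few surviving $\beta_n^{(j)}$. Reproducing these tail coefficients of $\psi_{j,c}'$ to relative precision thus forces the input tail $\{\beta_n^{(j)}\}_{n>k}$ to be known to relative precision; an absolute-precision input would contaminate every tail entry of $\gamma_k$ with an $O(\epsilon)$ error that is ruinous relative to its true magnitude. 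Moreover, differentiation in this way hands back a derivative series whose tail coefficients are again relatively accurate, which by the previous paragraph is exactly what is needed to evaluate $\psi_{j,c}'$ (also small at large $x$) to relative precision, linking the two halves of the statement.

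The main obstacle I anticipate is making the phrase ``governed by the coefficients with $k\sim ax/4$'' quantitative rather than heuristic. This requires uniform large-$x$, large-$k$ asymptotics of $h_k^a$ near and beyond its turning point (Airy-type scaling near $x=(4k+2)/a$, exponential decay past it) to confirm that the active summands genuinely reconstruct the superexponentially small value of $\psi_{j,c}$ and that no alternative route through cancellation among near-peak terms is available; this must then be combined with the precise decay rate of $\beta_k^{(j)}$ supplied by Theorem~\ref{decaypf} to pin down the crossover scale $x^\ast$ below which the truncation is faithful. Once this magnitude bookkeeping is in place, the differentiation half of the statement follows routinely from the termwise identity for $\gamma_k$ above.
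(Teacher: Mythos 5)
The statement you are asked to prove is labeled an \emph{Observation} in the paper, and the authors never actually prove it: their only support is Remark~\ref{abserr} (which records, citing \cite{osipov}, that the shifted inverse power method yields coordinate-wise relative accuracy where a standard eigensolver yields only coordinate-wise absolute accuracy) and a later observation asserting, without derivation, that absolute-precision coefficients cause spectral differentiation to lose accuracy proportionally to the order of the expansion. Your proposal therefore does not so much diverge from the paper's argument as supply the mechanism the paper leaves implicit, and the mechanism you give is correct. Your termwise-differentiation identity $\gamma_k=-\tfrac{a}{2}\beta_k^{(j)}-a\sum_{n>k}\beta_n^{(j)}$ is exactly what (\ref{dslag}) (equivalently Corollary~\ref{spcdiff}) produces, and it makes the paper's vague claim quantitative: an absolute error $\epsilon$ per coefficient perturbs $\gamma_k$ by $O(aN\epsilon)$ --- the ``loss proportional to the order of the expansion'' --- while relative errors perturb $\gamma_k$ by at most $\epsilon\, a\sum_{n\ge k}\babs{\beta_n^{(j)}}$, which is commensurate with the decaying tail itself. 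Your localization of the large-$x$ value of $\psi_{j,c}(x)$ in the tail coefficients $k\gtrsim ax/4$, via the turning point $(4k+2)/a$ of $h_k^a$, is likewise consistent with the paper's own turning-point matching in (\ref{choicea}), and it correctly explains why absolute-precision coefficients leave no correct digits once $\abs{\psi_{j,c}(x)}\ll\epsilon$. What each approach buys: the paper's appeal to \cite{osipov} is short and defers all analysis; your route is self-contained and actually connects the two halves of the observation (the relatively accurate tail of $\gamma$ feeds the large-$x$ evaluation of $\psi'_{j,c}$), which the paper never does explicitly.

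Two honest caveats, neither of which the paper resolves either. First, your ``no cancellation among active summands'' step is asserted rather than proved --- you flag this yourself --- and since both $\beta_k^{(j)}$ and $h_k^a(x)$ near the Airy edge oscillate in sign, ruling out cancellation rigorously would indeed require the uniform asymptotics you describe. Second, there is an intrinsic ceiling your crossover scale $x^\ast$ should make explicit: for $x$ beyond the turning point $(4N+2)/a$ of the last retained basis function, every $h_k^a(x)$ with $k\le N$ decays only like $e^{-ax/2}$ times a polynomial, while $\psi_{j,c}(x)$ decays like $\ai(x+c)$ (Theorem~\ref{thm:tpsiineq}), so no accuracy in the coefficients can deliver relative precision there; the observation, like your argument, must be read on the range where the truncated expansion (\ref{exptrun}) with $N$ as in Observation~\ref{obsN} is faithful. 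These are limitations of the informal statement itself, not defects of your reasoning relative to the paper.
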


\begin{remark}
    The eigenvectors $\beta^{(n+1)},\beta^{(n+2)},\dots,\beta^{(N)}\in
    \R^{N+1}$ are never used in our algorithm, since they do not have
    sufficient number of terms to represent $\psi_{n+1},\psi_{n+2},\dots,
    \psi_{N}$, respectively.
\end{remark}

\begin{remark}
The first and second steps of the algorithm cost $\O(n)$ and $\O(n^2)$
operations, respectively. The shifted inverse power method is applied to $n$
eigenpairs in the third step, and each iteration costs $\O(n)$ operations.
The convergence usually requires less than five iterations, since the
initial guesses for the eigenvalues are correct to absolute precision, 
the eigenvalues are well-separated (see Section \ref{commutechap}), and the
inverse power method converges cubically in the vicinity of the solution.
Thus, the third step costs $\O(n^2)$ operations. So, in total, the cost of
the algorithm is $\O(n^2)$ operations. 
\end{remark}

\subsection{Relative accuracy evaluation of the spectrum of the integral operator}
In this subsection, we introduce an algorithm that 
evaluates the Airy integral operator $\T_c$'s eigenvalues
$\lambda_0,\lambda_1,\dots,\lambda_n$ to relative precision, using the
expansion coefficients of the eigenfunctions computed by the algorithm in
Section \ref{acceigfun}.

\subsubsection{Evaluation of the first eigenvalue}
By (\ref{Teigfun}), we know that
    \begin{align}
\lambda_j=\frac{\int_0^\infty \ai(x+y+c)\psi_{j}(y)  \d y}{\psi_j(x)}.\label{naive}
    \end{align}
We will show that, when the expansion coefficients of $\psi_{0}$ are known
to relative accuracy, for a particular choice of $x$, (\ref{naive}) can be
used to evaluate $\lambda_{0}$ to relative accuracy.

Firstly, we discuss how to pick an optimal $x$, such that the evaluation is
well-conditioned. Mathematically, the choice of $x$ makes no
difference to the value of $\lambda_0$, but numerically, it's better to
select $x$ such that there's minimal cancellation in evaluating both
$\psi_0(x)$ and $\int_0^\infty \ai(x+y+c)\psi_{0}(y) \d y$. To achieve this,
we notice that the Airy function is smooth and decaying on the right
half-plane, and oscillatory on the left half-plane. When $c$ is
non-negative, the integrand is decaying superexponentially fast for any value of
$x\geq 0$, and $x=0$ becomes a natural choice, since, for this value of $x$, the
integrand is the largest. When $c$ is negative, the integrand decays
superexponentially fast only when $x\geq -c$, so, in that case, $x=-c$ is
similarly a natural choice. Therefore, we define $x$ to be
\begin{align}\label{xchoice}
x=\begin{cases}
    0,\quad &\text{if } c\geq 0\\
    -c,\quad &\text{otherwise} \end{cases}.
\end{align}
We note that, when $j=0$, formula (\ref{naive}) is well-defined 
when $x$ is given by formula (\ref{xchoice}), as follows. When $c\geq 0$,
Theorem \ref{thm:rec} in Appendix~\ref{sec:misc} shows that $\psi_{0}(0)\neq 0$.
When $c<0$, we have that $-c>0$, so $\psi_0(-c)\neq 0$ by the Sturm
oscillation theorem.

Once the value of $x$ is chosen, we substitute the truncated expansion
(\ref{exptrun}) of $\psi_0$ into (\ref{naive}), to get
    \begin{align}
\lambda_0=\frac{\sum_{k=0}^{N} \beta_{k}^{(0)} \big( \int_0^\infty
\ai(x+y+c)h_k^a(y)\d y\big)}{\sum_{k=0}^{N}
\beta_{k}^{(0)}h_k^a(x)}.\label{lambda0eq2}
    \end{align}
Note that the scaled Laguerre functions are easy to evaluate, and in the
last section, we've already solved for $\{\beta_k^{(0)}\}_{k=0,1,\dots,N}$
to relative accuracy. Thus, it's straightforward to compute the denominator
of (\ref{lambda0eq2}), and for our choice of $x$, it is evaluated without
cancellation error. However, the computation of the numerator is more
difficult due to the presence of integral $\int_0^\infty
\ai(x+y+c)h_k^a(y)\d y$. The integrand is both highly oscillatory and
rapidly decaying as $k$ gets larger, which implies that a standard
quadrature rule will be insufficient. Instead, we derive a five-term linear
homogeneous recurrence relation for $\int_0^\infty \ai(x+y+c)h_k^a(y)\d y$
that satisfies a certain linear condition involving the first four terms (see
Theorem \ref{recur}), and by combining it with the inverse power
method, we find that the integrals are evaluated to relative
accuracy, for all values of $k=0,1,\dots,N$. The main ideas of the algorithm
are as follows.

For consistency, we use $H_k^a$, which is first defined in Theorem
\ref{recur}, to represent the integral $\int_0^\infty \ai(x+y+c)h_k^a(y)\d
y$. It follows that the variable $s$, defined in formula~(\ref{Hna}) of Theorem
\ref{recur}, equals $x+c$ in our case. Clearly, the absolute value of
$H_k^a$ decays exponentially fast as $k$ increases, since the integrand
becomes more and more oscillatory (See Theorem \ref{decaycorr}). The key
empirical observation is that only one of the three linearly independent
solutions to the five-term linear homogeneous recurrence relation satisfying
(\ref{fiveterm}), for $n=1$, decays as $k\to\infty$. This implies that, by
truncating the infinite matrix associated with the recurrence relation and
evaluating the eigenvector corresponding to the zero eigenvalue, we can
solve for $H_k^a$ in a manner similar to Section \ref{4.1}. To put it more
precisely, we first write out the recurrence relation in the form of a
linear system:
    \begin{align}
\hspace*{0em}B_{1,0}H_0^a + B_{1,1}H_1^a+B_{1,2}H_2^a+B_{1,3}H_{3}^a=&\
0,\label{constraint}\\
\hspace*{-3em}B_{k-2,k}H_{k-2}^a+B_{k-1,k}H_{k-1}^a+B_{k,k}H_k^a+B_{k+1,k}H_{k+1}^a+B_{k+2,k}H_{k+2}^a=&\
0,
    \end{align}
for $k=2,3,\dots$, where $B_{k-2,k},B_{k-1,k},B_{k,k},B_{k+1,k},B_{k+2,k}$
are defined via the formulas
\begin{align}
    B_{k-2,k}=&\ k-1,\label{Bmat_st}\\
    B_{k-1,k}=&\ -(4k-1+a(x+c)-\frac{1}{4}a^3),\\
    B_{k,k}=&\ 6k+3+2a(x+c)+\frac{1}{2}a^3,\\
    B_{k,k+1}=&\ -(4k+5+a(x+c)-\frac{1}{4}a^3),\\
    B_{k,k+2}=&\ k+2,\label{Bmat}
\end{align}
for $k=1,2,\dots$ . Note that the first row of the infinite matrix $B$ is
all zeros.  If we consider the eigenproblem for the infinite matrix $B$, by
our observation, it must have an eigenvector corresponding to the zero
eigenvalue, and the coordinates of the eigenvector decay exponentially fast.
Therefore, if we want to evaluate the first $N+1$ coordinates of the
eigenvector with eigenvalue zero, we can replace the infinite matrix $B$
with its $(N'+1)\times (N'+1)$ upper left square submatrix, where
$N'=\O(N)$ is sufficiently large, and apply the inverse power method to $B$.
The empirical fact that there is only one decaying solution to the recurence
relation which satisfies (\ref{constraint}) means that this leads to an
eigenvector $\{\tilde H_{k}^a\}_{k=0,1,\dots,N'}$ whose first $N+1$
coordinates match $\{H_{k}^a\}_{k=0,1,\dots,N}$ to relative accuracy, up to
some scalar factor. 
\begin{remark}
    To avoid division by zero, we set $B_{0,0}$ to be $\epsilon$ during
    computation, where $\epsilon$ is the smallest floating-point number.
    Since we are performing the inverse power method, division by a tiny
    number is numerically stable.
\end{remark}

Therefore, the last step is to rescale the
eigenvector, such that its
$k$-th coordinate equals $H_k^a$, for all $k$. This can be achieved by first
computing $H_0^a$ to relative precision, and multiplying every coordinate of
the eigenvector by $H_0^a/\tilde H_0^a$. Note that, by our particular choice
of $x$, the integrand of $H_0^a=\int_{0}^\infty \ai(x+y+c)h_0^a(y)\d y$ is
smooth and decays superexponentially and monotonically. Thus, the evaluation can
be done rapidly and accurately via quadrature.

\begin{observation}
  It's important to truncate the domain of the integral $\int_0^\infty
  \ai(x+y+c)h_0^a(y) \d y$ properly when it is integrated numerically, since
  otherwise it's either impossible or too expensive to compute the integral
  to full relative precision. A good rule for truncating the domain of the 
  integral is to choose the domain where the absolute value of the integrand is
  larger than machine epsilon times the $L^\infty$ norm of the integrand.
  Since $\max_{y\geq 0} \ai(x+y+c)h_0^a(y)= \sqrt{a}\ai(x+c)$, where $x$ is chosen
  by (\ref{xchoice}), we construct an approximate formula for the cutoff
  point $y_{\text{max}}$ such that
  $\ai(x+y_{\text{max}}+c)h_0^a(y_{\text{max}})\approx \epsilon \sqrt{a}\ai(x+c)$ by
  using Remark \ref{asymp} and symbolic computation, where $\epsilon$
  represents the machine epsilon. 
\end{observation}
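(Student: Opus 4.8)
The plan is to reduce the integrand to an explicit elementary form, locate its maximum, and then derive the cutoff $y_{\max}$ from the large-argument asymptotic of Remark \ref{asymp} by reducing the resulting transcendental equation to a cubic that a symbolic algebra system can solve in closed form. First I would use $L_0 \equiv 1$ to write $h_0^a(y) = \sqrt{a}\,e^{-ay/2}$, so that the integrand is $g(y) := \sqrt{a}\,\ai(x+y+c)\,e^{-ay/2}$. With $x$ chosen by (\ref{xchoice}) one has $x+c = \max(c,0) \ge 0$; writing $\xi := x+c \ge 0$, both factors $\ai(y+\xi)$ (positive and strictly decreasing for $y\ge0$, since $y+\xi \ge 0$) and $e^{-ay/2}$ are positive and strictly decreasing, so $g$ is strictly decreasing on $[0,\infty)$ and attains its maximum $g(0) = \sqrt{a}\,\ai(\xi) = \sqrt{a}\,\ai(x+c)$ at $y=0$. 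The cutoff is then defined by $g(y_{\max}) = \epsilon\,g(0)$, which after cancelling $\sqrt{a}$ reads
\[
\ai(y_{\max}+\xi)\,e^{-a y_{\max}/2} = \epsilon\,\ai(\xi).
\]

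Next I would observe that, because $\epsilon$ is machine epsilon, the right-hand side is minuscule and the strict monotone decay of $g$ forces $y_{\max}$, and hence $y_{\max}+\xi$, to be large; this legitimises substituting the asymptotic $\ai(z) \sim e^{-\frac{2}{3}z^{3/2}}/(2\pi^{1/2}z^{1/4})$ of Remark \ref{asymp} for the factor $\ai(y_{\max}+\xi)$. Taking logarithms of the defining equation then yields the transcendental relation
\[
\tfrac{2}{3}(y_{\max}+\xi)^{3/2} + \tfrac{a}{2}\,y_{\max} + \tfrac{1}{4}\ln(y_{\max}+\xi) = \ln\frac{1}{\epsilon\,\ai(\xi)} - \ln\!\bigl(2\sqrt{\pi}\bigr).
\]

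The main obstacle is that this relation couples a $3/2$-power term, a linear term, and a logarithmic term, so it has no elementary closed-form root. The device that resolves it --- the ``symbolic computation'' referred to in the statement --- is the substitution $w := \sqrt{y_{\max}+\xi}$, under which the two dominant terms become a genuine cubic in $w$,
\[
\tfrac{2}{3}w^3 + \tfrac{a}{2}w^2 - \tfrac{a\xi}{2} = R, \qquad R := \ln\frac{1}{\epsilon\,\ai(\xi)} - \ln\!\bigl(2\sqrt{\pi}\bigr) - \tfrac{1}{4}\ln(y_{\max}+\xi).
\]
Since the left-hand side is strictly increasing for $w>0$ and equals $-\tfrac{a\xi}{2}\le0$ at $w=0$ while $R$ is large and positive, there is a unique positive real root, which a CAS extracts in closed form via Cardano's formula; one then recovers $y_{\max} = w^2 - \xi$.

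Finally, the residual $\tfrac14\ln(y_{\max}+\xi)$ inside $R$ keeps the equation formally transcendental, but it varies slowly, so I would handle it by a single fixed-point pass: evaluate the logarithm at the leading-order guess $y_{\max}^{(0)} = \bigl(\tfrac{3}{2}\ln\tfrac{1}{\epsilon\,\ai(\xi)}\bigr)^{2/3}$, solve the cubic, and re-solve once. The remaining point to verify is self-consistency of the asymptotic, namely that $y_{\max}+\xi$ is large enough that the relative error of Remark \ref{asymp} is itself far below $\epsilon$; this follows from the leading-order estimate, since $R \sim \ln(1/\epsilon)$ is large. As a sanity check on the Cardano root, note that for small $a$ the cubic term dominates and $y_{\max} \approx \bigl(\tfrac32 R\bigr)^{2/3}$, while for large $a$ the quadratic term dominates and $y_{\max} \approx 2R/a$; the exact cubic root interpolates between these regimes automatically, which is precisely why one retains the full cubic rather than a single-term asymptotic.
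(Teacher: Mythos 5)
Your derivation is correct and follows the route the paper merely sketches: the paper offers no proof of this observation beyond the phrase ``using Remark \ref{asymp} and symbolic computation,'' and your argument — noting that the choice (\ref{xchoice}) forces $\xi = x+c = \max(c,0)\ge 0$, so both factors of $\ai(x+y+c)h_0^a(y)=\sqrt{a}\,\ai(y+\xi)e^{-ay/2}$ are positive and strictly decreasing on $[0,\infty)$, giving the stated maximum $\sqrt{a}\,\ai(x+c)$ at $y=0$, followed by the logarithmic reduction and the substitution $w=\sqrt{y_{\max}+\xi}$ turning the dominant terms into a Cardano-solvable cubic with a slowly varying logarithmic residual handled by one fixed-point pass — is precisely the kind of symbolic construction the authors intend. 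One correction: your self-consistency claim that the relative error of Remark \ref{asymp} at $z=y_{\max}+\xi$ is ``far below $\epsilon$'' is false, since the one-term asymptotic carries a relative error of order $z^{-3/2}\sim \bigl(\ln(1/\epsilon)\bigr)^{-1}\approx 10^{-3}$ in double precision; however, this requirement is also unnecessary, because a relative error $\delta$ in the threshold value shifts the cutoff location only by $O\bigl(\delta/(\sqrt{y_{\max}+\xi}+a/2)\bigr)$ owing to the superexponential decay of the integrand, and the observation itself only demands an approximate cutoff (note its use of ``$\approx$''), so your construction stands with this weaker justification.
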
  

\begin{observation}\label{obsNp}
    Empirically, $N'=N+40$ is a safe choice for the truncation of the
    infinite matrix $B$.
\end{observation}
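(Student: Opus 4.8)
The plan is to justify the additive padding $N'-N=40$ by combining the super-exponential decay of the true sequence $\{H_k^a\}$ with a minimal-solution (backward-recurrence) analysis of the inverse power method applied to the truncated matrix $B$. First I would establish that, among the solutions of the five-term recurrence (\ref{fiveterm}) subject to the constraint (\ref{constraint}), the genuine integrals $H_k^a$ form the unique decaying solution up to scaling. The constraint (\ref{constraint}) is exactly the $n=1$ instance of (\ref{fiveterm}), in which the coefficient $(n-1)$ of $H_{n-2}^a$ vanishes; this removes one degree of freedom from the otherwise four-dimensional solution space of a five-term recurrence, leaving three linearly independent solutions, as asserted in the text preceding Observation \ref{obsNp}. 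The super-exponential decay of $\{H_k^a\}$ itself follows from Corollary \ref{decaycorr} applied to $g(y)=\ai(y+s)$: since the Airy function is entire and decays superexponentially (Remark \ref{asymp}), every derivative satisfies the hypotheses (\ref{decayreq1}), (\ref{decayreq2}) for all $k$, so the coefficients decay faster than any power of the index.

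Second, I would analyze the growth of the two remaining solutions in order to estimate how fast the truncation error is damped as one recurses away from the boundary at index $N'$. The leading-order (in $n$) part of (\ref{fiveterm}) has coefficients $n\,(1,-4,6,-4,1)+\O(1)$, whose characteristic polynomial is $n\,(t-1)^4$; thus the recurrence is \emph{degenerate}, with a quadruple root at $t=1$. I would therefore pass to a Birkhoff/WKB-type normal form, tracking the subleading $\O(1)$ terms (the $as$ and $a^3$ contributions), to show that the minimal solution decays while the two subdominant solutions grow, both super-exponentially, so that $\abs{H_{N'}^a}/\abs{H_N^a}$ falls below machine epsilon by a wide margin once $N'-N$ is a fixed additive constant. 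The null vector returned by the inverse power method on the $(N'+1)\times(N'+1)$ truncation of $B$ then agrees with the minimal solution to coordinate-wise relative precision in all coordinates $k\le N'-\Delta$ (using the coordinate-wise relative accuracy of the inverse power method established in Remark \ref{abserr}), with the discrepancy confined to a boundary layer of width $\Delta$ near $k=N'$. Because the decay is super-exponential rather than merely geometric, $\Delta$ need only be a modest additive constant, independent of $n$ and of $c$ over the relevant range; this is precisely what licenses the replacement $N'=N+40$ rather than a multiplicative enlargement.

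Finally, I would calibrate the constant. The preceding analysis shows only that \emph{some} fixed additive padding suffices; the specific value $40$ would be pinned down by a worst-case estimate of the boundary-layer width over the parameter ranges of interest (the $c\in[-50,50]$ and the $n$ appearing in Observation \ref{obsN}) and then verified directly, exactly as the analogous constant in Observation \ref{obsN} is obtained. I expect the main obstacle to be exactly the degeneracy noted above: the quadruple root at $t=1$ in the leading symbol means there is no clean spectral gap between the minimal and the subdominant solutions, so the usual Perron--Kreuser separation of growth rates does not apply directly, and one is forced into the more delicate subleading asymptotics even to recover the qualitative statement that there is a single decaying solution. For this reason I would expect a rigorous argument to deliver the existence of a universal additive constant, while the concrete value $40$ remains, as the statement says, an empirical calibration.
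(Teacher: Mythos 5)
First, a point of comparison: the paper offers no proof of this statement at all. Like Observations \ref{obschi} and \ref{obsN}, Observation \ref{obsNp} is an empirical calibration reported from numerical experiments, and even the underlying structural fact your program relies on --- that exactly one of the three linearly independent solutions of the recurrence (\ref{fiveterm}) subject to (\ref{constraint}) decays as $k\to\infty$ --- is itself flagged in the text as a ``key empirical observation,'' not a theorem. So your proposal is not a reconstruction of the paper's argument but an attempt to supply a theoretical justification the paper deliberately does not give. That would be welcome content if it worked, but there is a genuine flaw in the step that carries all the weight.

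The flaw is your growth-rate claim. You correctly compute that the leading symbol of (\ref{fiveterm}) is $n\,(t-1)^4$, a quadruple root at $t=1$, and you correctly note that Perron--Kreuser separation fails there. But you then assert that the subdominant solutions grow, and the minimal solution decays, \emph{super-exponentially}. Birkhoff--Trjitzinsky asymptotics at a fully degenerate root give precisely the opposite: the solutions behave like stretched exponentials $\exp(\pm\gamma\, n^{\mu})$ with $0<\mu<1$ (rates of the type $e^{\pm\gamma n^{1/2}}$ or $e^{\pm\gamma n^{3/4}}$), i.e.\ \emph{sub}-exponential separation. This is not a cosmetic error, because your entire justification of an $N$-independent additive padding rests on it. With decay $\abs{H_k^a}\sim e^{-\gamma k^{\mu}}$, $\mu<1$, the damping across a fixed window of $40$ indices is $\exp\bigl(-\gamma\bigl((N+40)^{\mu}-N^{\mu}\bigr)\bigr)\approx\exp\bigl(-40\,\gamma\mu\, N^{\mu-1}\bigr)$, which tends to $1$ as $N$ grows; a correct version of your analysis would therefore predict that the required padding \emph{grows} with $N$ (roughly like $N^{1-\mu}$), contradicting the universal additive constant you claim to derive. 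Since $N=1.1n+\abs{c}+100$ from Observation \ref{obsN} is unbounded over the parameter ranges of interest, the qualitative conclusion of your second step does not follow from your premises; what saves the algorithm in practice is only that the tested ranges of $n$ and $c$ are finite, which is exactly why the paper states the constant $40$ as an empirical safety margin rather than proving it. A secondary soft spot: you invoke Remark \ref{abserr} for coordinate-wise relative accuracy of the inverse power method, but that remark concerns the shifted inverse power method applied to the symmetric matrix $A$; transferring it to the null vector of the nonsymmetric truncation of $B$ requires an additional argument (and again leans on the empirical single-decaying-solution property).
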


The first eigenvalue of the integral operator $\T_c$ can now be evaluated to
relative precision by (\ref{lambda0eq2}), using our computed expansion
coefficients $\beta^{(0)}$ and the solution to the recurrence relation
$\{H_{k}^a\}_{k=0,1,\dots,N}$. 

\begin{remark}
One may suggest using numerical integration to compute $\int_0^\infty
\ai(x+y+c)\psi_{0}(y) \d y$ directly, since the integrand decays
superexponentially and is smooth. However, it's rather involved to generate sets
of good quadrature nodes that integrate $\int_0^\infty \ai(x+y+c)\psi_{0}(y)
\d y$ to full relative precision for all ranges of $c$, since the behavior of
the eigenfunction $\psi_0$ is strongly dependent on $c$. Adaptive quadrature
could be applied to overcome this issue, but it is generally not efficient
and robust enough to be used in an algorithm for computing special
functions. On the other hand, the algorithm that we propose only requires
the numerical integration of $\int_0^\infty \ai(x+y+c)h_0^a(y) \d y$, whose
behavior is substantially easier to characterize, since
$h_0^a(y):=\sqrt{a}e^{-ay/2}$ is only weakly dependent on $c$ (see formula
(\ref{choicea})).
\end{remark}

\subsubsection{Evaluation of the rest of the eigenvalues}
The standard way to overcome the obstacle for the numerical evaluation of small
$\lambda_j$'s is to compute all the ratios $\frac{\lambda_1}{\lambda_0},\dots,
\frac{\lambda_n}{\lambda_{n-1}}$, and then evaluate the eigenvalue $\lambda_j$ via
the formula
    \begin{align}
\lambda_j=\lambda_0\cdot
\frac{\lambda_1}{\lambda_0}\cdot\cdots\cdot\frac{\lambda_j}{\lambda_{j-1}},
\label{ratios}
    \end{align}
where the ratio $\frac{\lambda_{n+1}}{\lambda_n}$ can be computed by Theorem
\ref{ratiothm}: 
\begin{align}
\frac{\lambda_{n+1}}{\lambda_n}=\frac{\int_0^\infty \psi_n'(x)\psi_{n+1}(x)\d
x}{\int_0^\infty \psi_n(x)\psi_{n+1}'(x)\d x},\label{rationp1}
\end{align}
(see Section 10.2 in \cite{prolb}).

We note that the computation of the ratio can be done spectrally: for example, one can
evaluate the numerator of (\ref{rationp1}) by first computing the expansion
of $\psi_{n}'$ via Corollary \ref{spcdiff}, then computing
the inner product of the two series expansions of $\psi_{n}'$ and
$\psi_{n+1}$ by the orthogonality of the basis functions. The denominator is
symmetric to the numerator, and can be computed in essentially the same way.
Therefore, it takes $\O(N)$ operations to compute
$\frac{\lambda_{n+1}}{\lambda_n}$, and takes $\O(nN)$ operations in total to
compute $\lambda_j$ for $j=1,2,\dots,n$. Recalling that $N=1.1n+|c|+100$, we
see that the cost is $\O(n^2+|c|n)$.

\begin{remark}\label{H0k}
One may also compute the expansion of the derivative of $\psi_n$ by applying
a differentiation matrix (see formula (\ref{dslag})) to the expansion coefficients
$\beta^{(n)}$ of $\psi_n$. However, this will cost $\O(N^2)$ operations for
each differentiation, which makes the total cost $\O(nN^2)$ operations. 
\end{remark}

\begin{observation}
It's important that the expansion coefficients of the eigenfunctions are
computed to relative accuracy, since otherwise the spectral differentiation of the
eigenfunctions in formula (\ref{rationp1}) will lead to a loss of accuracy
proportional to the order of the expansion (see Observation
\ref{obs:relacc}).  
\end{observation}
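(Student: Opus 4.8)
The statement is a claim about numerical stability rather than a closed-form identity, so the plan is to justify it by a backward-error analysis of the one place where the expansion coefficients are actually differentiated, namely the evaluation of the inner products in~(\ref{rationp1}), and to exhibit there an error-amplification factor that grows with the truncation order $N=\O(n)$. First I would make the spectral differentiation explicit. Writing $\psi_{n,c}=\sum_{k=0}^{N}\beta_k^{(n)}h_k^a$ and applying~(\ref{dslag}) term by term, the derivative admits the expansion $\psi_{n,c}'=\sum_{j=0}^{N}\gamma_j^{(n)}h_j^a$ with
\begin{align}
\gamma_j^{(n)}=-\frac{a}{2}\beta_j^{(n)}-a\sum_{m=j+1}^{N}\beta_m^{(n)}.
\end{align}
Hence each derivative coefficient is, apart from its diagonal term, a full tail sum of the original coefficients, and the differentiation map $\beta^{(n)}\mapsto\gamma^{(n)}$ is lower triangular with $\O(N)$ nonzero entries per column.

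Next I would propagate the two error models of Remark~\ref{abserr} through this map. With a standard eigensolver the coefficients carry only an absolute error, $\abs{\delta\beta_m^{(n)}}\le\epsilon$; summing up to $N-j$ such perturbations yields $\abs{\delta\gamma_j^{(n)}}\le\tfrac{a}{2}\epsilon+a(N-j)\epsilon=\O(aN\epsilon)$, whose growth in $N=\O(n)$ (Observation~\ref{obsN}) amplifies the error in proportion to the expansion order. Since the ratio~(\ref{rationp1}) is to be recovered to \emph{relative} precision from the inner products $\sum_j\gamma_j^{(n)}\beta_j^{(n+1)}$, this amplification destroys the relative accuracy of the small derivative coefficients, and hence of the ratio itself, exactly as asserted.

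Finally I would show that coordinate-wise relative accuracy removes the amplification. By Remark~\ref{abserr}, the shifted inverse power method returns each $\beta_m^{(n)}$ to relative precision, so if the tail coefficients do not change sign, the partial sums $\sum_{m>j}\beta_m^{(n)}$ are dominated by their leading, relatively accurate terms, and the $\gamma_j^{(n)}$ --- together with the inner products in~(\ref{rationp1}) --- retain relative accuracy. The hard part is making precisely this last step rigorous: it requires ruling out catastrophic cancellation in the tail sums, which needs control of the \emph{signs} and decay rate of $\beta_m^{(n)}$ that goes beyond the super-algebraic magnitude bound of Theorem~\ref{decaypf}. The ``bell-shaped, then monotonically decaying'' coefficient profile described in Section~\ref{4.1} and Observation~\ref{obsN} supplies exactly the sign structure needed, but only empirically; a genuinely rigorous treatment would require a sign-resolved asymptotic expansion of the coefficients $\beta_m^{(n)}$, which is the main analytic obstacle.
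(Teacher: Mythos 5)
Your proposal is correct in substance, but it is worth noting that the paper offers no proof of this statement at all: it is an empirical Observation, justified only by cross-references to Remark \ref{abserr} (absolute versus coordinate-wise relative precision of eigensolvers), Observation \ref{obs:relacc}, and the citation of \cite{osipov}. What you have done is supply the explicit error-propagation argument that the paper merely gestures at, and your mechanism is the right one: by (\ref{dslag}), the derivative coefficients are $\gamma_j^{(n)}=-\frac{a}{2}\beta_j^{(n)}-a\sum_{m=j+1}^{N}\beta_m^{(n)}$, so each $\gamma_j^{(n)}$ aggregates an $\O(N)$-term tail sum, and under the absolute-error model $\abs{\delta\beta_m^{(n)}}\le\epsilon$ of Remark \ref{abserr} this yields $\abs{\delta\gamma_j^{(n)}}=\O(aN\epsilon)$ --- exactly the ``loss of accuracy proportional to the order of the expansion'' asserted, since $N=\O(n)$ by Observation \ref{obsN}, and since the small trailing $\gamma_j^{(n)}$ (which decay super-algebraically by Theorem \ref{decaypf}) are then known to no relative accuracy at all, poisoning the inner products in (\ref{rationp1}). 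Two small points. First, the map $\beta^{(n)}\mapsto\gamma^{(n)}$ you write down is \emph{upper} triangular in the usual convention ($\gamma_j$ depends on $\beta_m$ for $m\ge j$), not lower triangular; this is cosmetic. Second, the paper's algorithm actually evaluates $\psi_n'$ via the $\O(N)$ running-sum recurrence of Corollary \ref{spcdiff} rather than by applying the full differentiation matrix (see Remark \ref{H0k}), but the tail sums are identical, so your analysis applies verbatim. Finally, you are right, and commendably honest, that the converse direction --- that coordinate-wise relative accuracy \emph{suffices} --- hinges on ruling out catastrophic cancellation in the tail sums, which requires sign information beyond the magnitude bound of Theorem \ref{decaypf}; the paper does not close this gap either, relying instead on the empirically observed bell-shaped, monotonically decaying coefficient profile of Section \ref{4.1} and on the numerical evidence of Section \ref{sec:computtw}. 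So your account is not weaker than the paper's; it is a strict refinement of it, with the one genuinely open analytic step correctly identified as open.
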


\medskip
Given the expansion coefficients $\{\beta_k^{(j)}\}$ computed by
the algorithm stated in Section \ref{acceigfun}, we summarize the
algorithm for computing the eigenvalues as follows.
\begin{enumerate}
    \item Construct an $(N'+1)\times (N'+1)$ five-diagonal real
    matrix $B$ whose entries are defined via (\ref{Bmat_st})--(\ref{Bmat}), where $N'=N+40$
    (see Observation \ref{obsNp}), and $N$ is given by Observation
    \ref{obsN}.
    \item Apply the inverse power method to $B$ until convergence. This leads to
    an approximation of an eigenvector $\{\tilde H_{k}^a\}_{k=0,1,\dots,N'}$
    whose first $N+1$ coordinates match $\{H_{k}^a\}_{k=0,1,\dots,N}$ to
    relative accuracy, up to some scalar factor. 
    \item Compute $H_0^a$ to relative precision by numerical integration
    (see Remark \ref{H0k}). Rescale the computed eigenvector by multiplying
    every coordinate by $H_0^a/\tilde H_0^a$.
    \item Compute $\lambda_0$ using the previously computed $\{\beta_k^{(0)}\}$
    and $\{H_{k}^a\}$ via formula (\ref{lambda0eq2}), where the
    value of $x$ inside that formula is chosen by (\ref{xchoice}).
    \item Compute the rest of the eigenvalues by formulas (\ref{ratios}),
    (\ref{rationp1}) with the use of spectral differentiation (see Corollary
    \ref{spcdiff}) and the orthogonality of the basis functions.
\end{enumerate}

\section{Applications}
In this section, we discuss two applications of the eigendecomposition of
the Airy integral operator.  In Section~\ref{sec:klevel}, we discuss an
application to the distributions of the $k$-th largest level at the soft edge
scaling limit of Gaussian ensembles, and in Section~\ref{optics}, we discuss
an application to finite-energy Airy beams in optics.

\subsection{Distributions of the $k$-th largest level at the soft edge
scaling limit of Gaussian ensembles}\label{sec:klevel}

The cumulative distribution function of the $k$-th largest level at the soft
edge scaling limit of the GUE is given by the formula
  \begin{align}
F_2(k;s)= \sum_{j=0}^{k-1} \frac{(-1)^j}{j!} \pp{^j}{z^j} \det\bigl(I - z
\cK|_{L^2[s,\infty)}\bigr) \Bigr|_{z=1},
  \end{align}
where $\cK|_{L^2[s,\infty)}$ denotes the integral operator on $L^2[s,\infty)$
with kernel
\begin{align}
    K_{Ai}(x,y)=\int_s^\infty \ai(x+z-s)\ai(z+y-s) \d z.
\end{align}
It's clear that 
    \begin{align}
\cK|_{L^2[s,\infty)}[f]=\G_s^2 [f],\label{KGeq}
    \end{align}
where $\G_s$ is the associated Airy integral operator defined
in Section \ref{aiintch}.

Using the fact that 
  \begin{align}
  \det \bigl(I-z\cK|_{L^2[s,\infty)}\bigr)=\prod_{i=0}^\infty (1-z\lambda_{i,s}^2),
  \end{align}
$F_2(k;s)$ can be expressed in the following form:
    \begin{align}
\hspace*{-5em}F_2(k;s)=\sum_{j=0}^{k-1} \frac{1}{j!} \sum_{i_1=0}^\infty
\lambda_{i_1,s}^2 \sum_{\substack{i_2=0,\\i_2\neq i_1}}^\infty
\lambda_{i_2,s}^2\quad \dots \sum_{\substack{i_j=0,\\i_j\neq
i_1,\dots,i_{j-1}}}^\infty\lambda_{i_j,s}^2\prod_{\substack{i=0,\\i\neq
i_1,\dots,i_j}}^\infty(1-\lambda_{i,s}^2),\label{f2k}
    \end{align}
where $\lambda_i$ is the $(i+1)$-th eigenvalue of $\G_s$. The formula 
\begin{align}
    \hspace*{-6em}\frac{d}{ds}F_2(k;s)=\frac{1}{(k-1)!} \sum_{i_1=0}^\infty
    \lambda_{i_1,s}^2\sum_{\substack{i_2=0,\\i_2\neq i_1}}^\infty
    \lambda_{i_2,s}^2\quad\dots \sum_{\substack{i_{k}=0,\\i_{k}\neq
    i_1,\dots,i_{k-1}}}^\infty
    (-\pp{\lambda_{i_k,s}^2}{s})\prod_{\substack{i=0,\\i\neq
    i_1,\dots,i_k}}^\infty (1-\lambda_{i,s}^2)\label{df2k_tmp}
\end{align}
for the probability density function $\frac{d}{ds}F_2(k;s)$ of the $k$-th
largest level at the soft edge scaling limit of the GUE is obtained from
(\ref{f2k}) by a lengthy calculation in which many terms cancel. By applying
the identity
\begin{align}
    \pp{\lambda_{n,s}^2}{s}=-\lambda_{n,s}^2\bigl(\psi_{n,s}(0)\bigr)^2
\end{align}
(see Corollary \ref{dl2dc}) to formula (\ref{df2k_tmp}), the PDF
$\frac{d}{ds}F_2(k;s)$ gets expressed in terms of the eigenvalues
$\{\lambda_{i,s}\}$ and the values of the eigenfunctions $\{\psi_{i,s}(x)\}$ of
the Airy integral operator $\T_s$ at $x=0$:
    \begin{align}
\hspace*{-6em}\frac{d}{ds}F_2(k;s)=\frac{1}{(k-1)!} \sum_{i_1=0}^\infty
\lambda_{i_1,s}^2\sum_{\substack{i_2=0,\\i_2\neq i_1}}^\infty
\lambda_{i_2,s}^2\quad\dots \sum_{\substack{i_{k}=0,\\i_{k}\neq
i_1,\dots,i_{k-1}}}^\infty \lambda_{i_{k},s}^2 \big(\psi_{i_k,s}(0)\big)^2
\prod_{\substack{i=0,\\i\neq i_1,\dots,i_k}}^\infty
(1-\lambda_{i,s}^2).\label{df2k}
    \end{align}
Clearly, with the eigenvalues $\{\lambda_{j,s}\}$ and expansion coefficients
$\{\beta^{(j)}\}$ of the eigenfunctions $\{\psi_{j,s}\}$ computed to full
relative precision, the PDF $\frac{d}{ds}F_2(k;s)$ can be evaluated to
relative precision everywhere, except in the left tail, for any positive
integer $k$. We note that, in this case, knowing the eigenvalues to relative
precision is essential, since if the eigenvalues are only computed to
absolute precision, $\frac{d}{ds}F_2(k;s)$ loses accuracy exponentially fast
for any fixed $s$ as $k$ increases. Finally, we observe
that the left tail of the PDF is evaluated only to absolute precision due to
the cancellation error in the computation of $\psi_{j,s}(0)$ and
$1-\lambda_{j,s}^2$. 
\begin{observation}
When $k=1$, $\frac{d}{ds}F_2(k;s)$ reduces to the PDF of the Tracy-Widom
distribution $\frac{d}{ds}F_2(s)$, and, by the discussion above, the number
of correct digits of $\frac{d}{ds}F_2(s)$ is 
approximately equal to the number of correct digits of $\lambda_{0,s}$, for
all $s$ except in the left tail. Although, in general, the Fredholm
determinant method introduced in \cite{borndet} only solves eigenvalues to
absolute precision, the first eigenvalue $\lambda_{0,s}$ is actually
computed to relative precision. Therefore, by using formula
(\ref{df2k}), the Tracy-Widom distribution can be evaluated to relative
precision everywhere with Bornemann's method, except in the left tail.
However, to our knowledge, formula (\ref{df2k}) was not used in the
computation of the PDF until this paper. We also recall that evaluating
$\frac{d}{ds}F_2(k;s)$ for $k\geq 2$ to relative precision requires the
eigenvalues beyond $\lambda_{0,s}$ to be computed to relative precision.
\end{observation}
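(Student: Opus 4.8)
The plan is to read this observation as a floating-point error-analysis statement riding on formula (\ref{df2k}), and to verify its pieces in order: the algebraic reduction at $k=1$, the ``correct digits'' claim, the precision of Bornemann's eigenvalues, and the contrast with $k\ge 2$.

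First I would specialize (\ref{df2k}) to $k=1$. Setting $k=1$ collapses the nested sums to the single index $i_1$ and trivializes the prefactor $1/(k-1)!=1$, giving
\[
\frac{d}{ds}F_2(s)=\sum_{i=0}^\infty \lambda_{i,s}^2\big(\psi_{i,s}(0)\big)^2\prod_{\substack{j=0\\ j\neq i}}^\infty\big(1-\lambda_{j,s}^2\big),
\]
which is the Tracy--Widom PDF; this algebraic step is immediate. Pulling out the full determinant $\det(I-\cK|_{L^2[s,\infty)})=\prod_j(1-\lambda_{j,s}^2)$ rewrites the right-hand side as $\det(I-\cK|_{L^2[s,\infty)})\sum_i \lambda_{i,s}^2\big(\psi_{i,s}(0)\big)^2/(1-\lambda_{i,s}^2)$, a form in which the eigenvalue dependence is transparent.

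Next I would run the error analysis behind the ``number of correct digits'' assertion. The structural point is that, for $k=1$, every summand contains exactly one eigenvalue factor $\lambda_{i,s}^2$ multiplied by cofactors $\big(\psi_{i,s}(0)\big)^2$ and $1/(1-\lambda_{i,s}^2)$ that are $\O(1)$ and computable to machine precision away from the left tail; since $\lambda_{i,s}^2$ decays rapidly in $i$, the sum, and hence the overall scale of the PDF, is governed by $\lambda_{0,s}^2$. I would then quantify Bornemann's precision: because $\cK|_{L^2[s,\infty)}=\G_s^2$ is positive with operator norm $\lambda_{0,s}^2$, a standard symmetric eigensolver applied to its discretization returns every eigenvalue with absolute error $\O(\epsilon\,\lambda_{0,s}^2)$, so the top eigenvalue $\lambda_{0,s}^2$ carries relative precision $\O(\epsilon)$ while the smaller ones carry only absolute precision. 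For $k=1$ this absolute error $\O(\epsilon\,\lambda_{0,s}^2)$ per eigenvalue, passing through the $\O(1)$ cofactors, produces an absolute error $\O(\epsilon\,\lambda_{0,s}^2)$ in the PDF; relative to the PDF's own scale $\lambda_{0,s}^2$ this is $\O(\epsilon)$, i.e. full relative precision, set by $\lambda_{0,s}$.

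The contrast with $k\ge 2$ is then the crux of the second half. Here the leading summand of (\ref{df2k}) is the product $\lambda_{0,s}^2\lambda_{1,s}^2\cdots\lambda_{k-1,s}^2$ over the $k$ largest eigenvalues, so each of $\lambda_{1,s}^2,\dots,\lambda_{k-1,s}^2$ enters multiplicatively; a factor known only to absolute precision then injects a relative error $\O(\epsilon\,\lambda_{0,s}^2/\lambda_{i,s}^2)$, which is large when $\lambda_{i,s}\ll\lambda_{0,s}$, directly into the leading term. Thus relative precision for $k\ge 2$ forces $\lambda_{0,s},\dots,\lambda_{k-1,s}$ all to relative precision --- exactly what the banded eigensolver of Section \ref{acceigfun} supplies but Bornemann's method does not. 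The hard part will be making the ``except in the left tail'' qualification rigorous: by Theorem \ref{thm:eig1} the $\lambda_{j,s}^2\to 1$ as $s\to-\infty$, so the cofactors $1-\lambda_{j,s}^2$ (and the values $\psi_{j,s}(0)$) are formed by catastrophic cancellation, the $\O(1)$-cofactor premise of the analysis collapses, and only absolute precision survives no matter how the eigenvalues are computed. I would close by recording that, to our knowledge, (\ref{df2k}) had not previously been used to evaluate the PDF, which is what turns the relative-precision computation of $\lambda_{0,s}$ into the stated improvement of Bornemann's method.
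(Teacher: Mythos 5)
Your proposal is correct and follows essentially the same route as the paper, which offers this statement only as an informal observation resting on the surrounding discussion (the dominance of the $i=0$ term, the relative-precision top eigenvalue of a backward-stable symmetric eigensolver applied to an operator of norm $\lambda_{0,s}^2$, the multiplicative entry of $\lambda_{1,s}^2,\dots,\lambda_{k-1,s}^2$ for $k\geq 2$, and the cancellation in $1-\lambda_{j,s}^2$ and $\psi_{j,s}(0)$ in the left tail); your quantified error analysis simply makes each of these implicit steps explicit. The only cosmetic caveat is that the cofactor $\big(\psi_{0,s}(0)\big)^2$ is not strictly $\O(1)$ but grows like $2\sqrt{s}$ as $s\to\infty$ (by the paper's Theorem on the $c\to\infty$ limit of $\psi_{n,c}$), which is harmless for the ``approximately equal number of correct digits'' conclusion.
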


\begin{observation}\label{fasttw}
Provided that the eigenvalues
$\lambda_{i,s}$ and the values of the eigenfunctions $\psi_{i,s}$ at zero
are given, and each  series in the nested representations (\ref{f2k}),
(\ref{df2k}) is truncated at the $n$-th term,
the time complexities of computing $F_2(k;s)$ and $\frac{d}{ds}F_2(k;s)$ via
the series (\ref{f2k}), (\ref{df2k}) are $\O(n^k)$
and $\O(n^{k+1})$, respectively. The cost appears at first
glance to be unaffordable when $k$ is large, but, in reality,
only a fixed constant number of terms in the infinite
series is required to
compute $F_2(k;s)$ and $\frac{d}{ds}F_2(k;s)$ for all $k$ to full relative
accuracy, owing to the exponential decay of the eigenvalues $\lambda_{i,s}$.
Thus, the time complexity of evaluating $F_2(k;s)$ and
$\frac{d}{ds}F_2(k;s)$ is $\O(k)$.  We also recall that the
computation of $\{\lambda_{i,s}\}_{i=0,1,\dots,n-1}$ and
$\{\psi_{i,s}(0)\}_{i=0,1,\dots,n-1}$ requires $\O(n^2)$ operations (see
Section \ref{numalgo}).

\end{observation}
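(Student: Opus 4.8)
The plan is to separate the naive cost accounting from the reduction made possible by spectral decay, and to treat the claim in three stages. First I would verify the two worst-case bounds directly from the combinatorial structure of~(\ref{f2k}) and~(\ref{df2k}). In~(\ref{f2k}) the term indexed by $j$ is a $j$-fold nested summation, each inner sum truncated at $n$ terms, multiplied by the product $\prod_{i\neq i_1,\dots,i_j}(1-\lambda_{i,s}^2)$; evaluating each such product directly costs $\O(n)$ operations, so the $j$-th term costs $\O(n^{j+1})$ and the outer sum over $j=0,\dots,k-1$ is dominated by $\O(n^{k})$. The representation~(\ref{df2k}) is a single $k$-fold nested summation with one $\O(n)$ product inside, giving $\O(n^{k+1})$. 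This establishes the stated naive complexities.

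Next I would show that the truncation parameter $n$ may be taken to be a constant $m$ independent of $k$. Invoking the exponential decay of $\lambda_{i,s}^2$ in $i$, there is an $m=\O(1)$ (depending on $s$ but not on $k$) such that $\lambda_{i,s}^2$ lies below machine precision for every $i\ge m$. Consequently each factor $1-\lambda_{i,s}^2$ with $i\ge m$ equals $1$ to full relative accuracy, so the infinite products may be restricted to the range $i<m$, and any summand carrying an index $\ge m$ is negligible, so each inner sum may be restricted to $\{0,1,\dots,m-1\}$. Neither restriction degrades the relative accuracy of the computed value.

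With $m$ fixed, I would precompute the base product $P=\prod_{i=0}^{m-1}(1-\lambda_{i,s}^2)$ once, after which every excluded product is recovered from $P$ by dividing out a bounded number of factors. For $F_2(k;s)$, the $j$-th term vanishes whenever $j\ge m$, since $j$ distinct indices cannot all lie in $\{0,\dots,m-1\}$; the surviving terms each cost $\O(1)$, and looping $j$ over the $k$ values $0,\dots,k-1$ yields the $\O(k)$ bound. For the PDF~(\ref{df2k}), I would evaluate the $k$-fold nested sum incrementally as a sequence of partial symmetric sums over the truncated spectrum $\{\lambda_{i,s}^2\}_{i<m}$ (reading the successive inner sums as convolutions against the retained eigenvalues, weighted at the innermost level by $\lambda_{i_k,s}^2(\psi_{i_k,s}(0))^2$); this dynamic-programming organization runs in $\O(\mathrm{poly}(m)\cdot k)=\O(k)$ operations once the $\O(1)$-size spectrum and the numbers $(\psi_{i,s}(0))^2$ are available, and the result is negligible once $k>m$.

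The delicate step is the second one: making rigorous the claim that a single constant $m$, independent of $k$, preserves \emph{full relative} accuracy for \emph{all} $k$. This is nontrivial because, at fixed $s$, increasing $k$ probes an increasingly rare event whose magnitude may be comparable to the contribution of the discarded tail eigenvalues, so a uniform-in-$k$ relative error bound requires quantitative control of how truncation error propagates through the nested sums, rather than the heuristic appeal to decay used above. By contrast, the cost accounting in the first and third steps is routine.
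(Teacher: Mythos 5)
Your cost accounting is correct and, in substance, it is the same argument the paper makes: this statement is an Observation, and the paper offers no formal proof beyond the inline heuristic (naive term counting for the bounds $\O(n^k)$ and $\O(n^{k+1})$, exponential decay of $\lambda_{i,s}$ to justify a constant truncation level, hence $\O(k)$ overall). Your elaboration is a faithful and somewhat more explicit concretization: the precomputation of the base product $P=\prod_{i<m}(1-\lambda_{i,s}^2)$ with division to recover each excluded product, and the dynamic-programming organization of the $k$-fold distinct-index sums as weighted symmetric sums over the retained spectrum, are both implicit in the paper's claimed $\O(k)$ but never spelled out. One small slip: the $j$-th term of~(\ref{f2k}) vanishes for $j>m$, not $j\ge m$, since at $j=m$ the indices can exactly exhaust $\{0,\dots,m-1\}$; this does not affect the complexity bound.

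The delicacy you flag in your final paragraph --- that a constant $m$ independent of $k$ cannot preserve \emph{relative} accuracy once $k>m$, because then every summand of~(\ref{df2k}) carries an index $\ge m$ and the truncated value collapses to zero while the true value is merely tiny --- is a genuine observation, and the paper does not resolve it rigorously; it is an empirical claim. But note that the regime you are worried about is precisely the left tail: for fixed $s$, increasing $k$ pushes $s$ into the left tail of the distribution of the $k$-th largest level, and the paper explicitly concedes (see Section~\ref{sec:klevel} and the error tables) that only absolute, not relative, precision is achieved there due to cancellation in $1-\lambda_{j,s}^2$ and $\psi_{j,s}(0)$. Read with that global caveat, the Observation's ``full relative accuracy'' claim and your constant-truncation analysis are consistent, and your proposal matches the paper's intent.
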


Similarly, the cumulative distribution function $F_1(k;s)$ of the $k$-th
largest level at the soft edge scaling limit of the GOE equals
    \begin{align}
\hspace*{-5em}F_1(k;s)=\frac{1}{2}\sum_{j=0}^{k-1}\frac{(-1)^j}{j!}\pp{^j}{z^j}\biggl(&\,\biggl(1+\sqrt{\frac{z}{2-z}}\biggr)\det\Bigl(I
- \sqrt{z(2-z)} \T_{s/2}|_{L^2[0,\infty)}\Bigr)\notag\\
  +&\,\biggl(1-\sqrt{\frac{z}{2-z}}\biggr)\det\Bigl(I + \sqrt{z(2-z)}
  \T_{s/2}|_{L^2[0,\infty)}\Bigr)\biggr)\bigg|_{z=1},
    \end{align}
and the cumulative distribution function $F_4(k;s)$ of the $k$-th largest
level at the soft edge scaling limit of the GSE can be written as
    \begin{align}
\hspace*{-6em}F_4(k;s)=\frac{1}{2}\sum_{j=0}^{k-1}\frac{(-1)^j}{j!}\pp{^j}{z^j}\biggl(\det\Bigl(I
- \sqrt{z} \T_{s/2}|_{L^2[0,\infty)}\Bigr)+\det\Bigl(I + \sqrt{z}
\T_{s/2}|_{L^2[0,\infty)}\Bigr)\biggr)\bigg|_{z=1},
    \end{align}
(see \cite{bornrmt}). It follows that the distributions (including both the
CDFs and PDFs) can be expressed in terms of the eigenvalues and
eigenfunctions of the Airy integral operator $\T_{s/2}$, in a manner similar to the
GUE case (see formulas (\ref{f2k}), (\ref{df2k})). Thus, the distributions
can also be computed to high accuracy using our method.

\begin{remark}
Two popular methods for computing the Tracy-Widom distribution are: solving for a Painlev\'e
transcendent \cite{bornrmt,rao}, and approximating a Fredholm determinant of
an integral operator \cite{borndet}. When high accuracy is not required,
other effective methods can be used, including methods based on a shifted
Gamma distribution approximation \cite{chiani}, and direct statistical simulation
\cite{olof}.
\end{remark}

\subsection{Connection to Airy beams in optics}
  \label{optics}

In this section, we describe an application of the eigenfunctions of the
Airy integral operator to the construction of an optimal finite-energy
approximation to a certain optical beam called the Airy beam. We begin by
describing the equations governing the propagation of light in free space.

The propagation of light in free space, in the absence of currents and charges,
is governed by Maxwell's equations
  \begin{align}
\nabla \times H - \frac{\epsilon}{c} E' &= 0, \label{heqn} \\
\nabla \times E + \frac{\mu}{c} H' &= 0, \label{eeqn} \\
\nabla \cdot E &= 0, \label{enodiv} \\ 
\nabla \cdot H &= 0, \label{hnodiv}
  \end{align}
where $E$ and $H$ denote the electric and magnetic fields, respectively, $\epsilon$
is the permittivity, and $\mu$ is the magnetic permeability.
From~(\ref{heqn})--(\ref{hnodiv}),
it can be shown that
  \begin{align}
\nabla^2 E - \frac{\epsilon \mu}{c^2} E'' &= 0, \label{eeqn2} \\
\nabla^2 H - \frac{\epsilon \mu}{c^2} H'' &= 0, \label{heqn2}
  \end{align}
where the equations are satisfied separately by each of the components of
$E=(E_x,E_y,E_z)$ and $H=(H_x,H_y,H_z)$, respectively (see, for
example,~\cite{born}). When the light is monochromatic or time-harmonic with
frequency $\omega$, the electric field takes the form $E(r) = \Re(U(r)
e^{-i\omega t})$, where, after subtituting into~(\ref{eeqn2}), we find that
$U$ solves the Helmholtz equation
  \begin{align}
\nabla^2 U + k_0^2 n^2 U = 0,
  \end{align}
where $k_0=\omega/c$ is the reduced or vacuum wavenumber, $n=\sqrt{\epsilon \mu}$
is the absolute refractive index of the medium, and where the equation is 
again satisfied separately by each component of $U=(U_x,U_y,U_z)$.
Letting $\psi$ denote a single component of $U$ and letting $k_H = k_0 n$,
we have that
  \begin{align}
\nabla^2 \psi + k_H^2 \psi = 0,
    \label{psieqn}
  \end{align}
where $k_H$ is called the free space wavenumber.

\subsubsection{Propagation-invariant optical fields}

If we suppose that $\psi$ has the form 
  \begin{align}
\psi(x,y,z) = \Psi(x,y) e^{ik_z z},
    \label{psiform}
  \end{align}
then the intensity of that particular component of the electric field will
be invariant along the $z$-axis (which we call the axial direction).
Substituting $\psi$ into~(\ref{psieqn}), we find that
  \begin{align}
\nabla^2 \Psi + k_t^2 \Psi = 0,
    \label{Psieqn}
  \end{align}
where
  \begin{align}
k_t = \sqrt{k_H^2 - k_z^2},
  \end{align}
and $k_t$ denotes the transverse wave number. Suppose that each component of
the electric field has the form~(\ref{psiform}).  If $k_z>0$, then the
transverse parts of the $E_x$ and $E_y$ components of the
electric field can be chosen to be any two solutions of~(\ref{Psieqn}), and
the axial component $E_z$ is then determined by Maxwell's equations
(see, for example,~\S3.1 of~\cite{pio54}). If $k_z \approx k_H$, then
most of the propagation will be in the axial (meaning $z$) direction, and the
component $E_z$ will be very small. In this situation, the overall intensity
of the electric field is well approximated by the intensity of the field in
just the transverse ($x$-$y$) plane. Solutions to~(\ref{psieqn}) 
are known as waves, and waves of the form~(\ref{psiform}) are examples of
propagation-invariant optical fields (PIOFs) (see, for
example,~\cite{pio54} and~\cite{pio61}).

\subsubsection{The paraxial wave equation}

Instead of assuming that the transverse part of the 
field component $\psi$ is invariant in the $z$ direction, suppose that 
the transverse component varies slowly with respect to $z$, so that
  \begin{align}
\psi(x,y,z) = \Psi(x,y,z) e^{ik_H z},
    \label{Psiform2}
  \end{align}
where $\Psi$ varies slowly with $z$. Substituting~(\ref{Psiform2})
into~(\ref{psieqn}), we have
  \begin{align}
\nabla_t^2 \Psi e^{i k_H z} + \pp{^2\Psi}{z^2} e^{i k_H z} + 2i k_H \pp{\Psi}{z} 
  e^{i k_H z} = 0,
    \label{parax1}
  \end{align}
where $\nabla_t^2 = \pp{^2}{x^2} + \pp{^2}{y^2}$. Since we assumed that 
$\Psi$ varies slowly with respect to $z$, $\abs{\pp{^2}{z^2} \Psi} \ll \abs{2 i k_H
\pp{}{z} \Psi}$. Thus, equation~(\ref{parax1}) becomes 
  \begin{align}
\nabla_t^2 \Psi + 2i k_H \pp{}{z} \Psi = 0,
    \label{parax2}
  \end{align}
which is an equation describing the transverse profile of a beam propagating
along the $z$-axis. Equation~(\ref{parax2}) is called the paraxial
wave equation.

\begin{remark}
We note that (\ref{parax2}) is just Schr\"odinger's equation, where $z$
represents time.
\end{remark}

\subsubsection{Airy beams}

Separating variables, we write the solution $\Psi$ to the paraxial wave
equation~(\ref{parax2}) as
  \begin{align}
\Psi(x,y,z) = \Phi_x(x,z) \Phi_y(y,z).
  \end{align}
From~(\ref{parax2}), we obtain
  \begin{align}
\pp{^2}{x^2} \Phi_x + 2i k_H \pp{}{z}\Phi_x &= 0, \\
\pp{^2}{y^2} \Phi_y + 2i k_H \pp{}{z}\Phi_y &= 0.
  \end{align}
Letting $x_0$ and $y_0$ be arbitrary transverse scaling factors, and setting
  \begin{align}
s_x = \frac{x}{x_0}, \quad s_y=\frac{y}{y_0}, \quad \xi_x=\frac{z}{k_H x_0^2},
\quad \xi_y = \frac{z}{k_H y_0^2},
  \end{align}
we have the equations
  \begin{align}
\frac{1}{2} \pp{^2}{s_x^2} \Phi_x(s_x,\xi_x) + i \pp{}{\xi_x}
\Phi_x(s_x,\xi_x) &= 0, \label{paraxx} \\
\frac{1}{2} \pp{^2}{s_y^2} \Phi_y(s_y,\xi_y) + i \pp{}{\xi_y}
\Phi_x(s_y,\xi_y) &= 0. \label{paraxy}
  \end{align}
One particular solution to~(\ref{paraxx}) is given by the formula
  \begin{align}
\Phi_x(s_x,\xi_x) =\ai\Bigl(s_x - \Bigl(\frac{\xi_x}{2}\Bigr)^2\Bigr) 
\exp\Bigl(i \Bigl(-\frac{\xi_x^3}{12}
  + s_x \frac{\xi_x}{2}\Bigr)\Bigr).
    \label{airybeam}
  \end{align}
Note that $\Phi_x(s_x,0)= \ai(s_x)$.  An identical solution exists for
$\Phi_y$, but for the sake of simplicity we take $\Phi_y \equiv 1$, and
denote $s_x$ and $\xi_x$ by $s$ and $\xi$.  Beams $\Psi$ for which $\Phi_x$
is given by~(\ref{airybeam}) and $\Phi_y\equiv 1$ are called Airy-Plane
beams (see, for example,~\S3.1.5 of~\cite{pio61}).  The transverse profile of the
Airy beam is invariant in the $\xi$-direction in the unusual sense that the
profile does not change, except that it is translated in the $s$-direction
by $(\xi/2)^2$. Thus, the Airy beam is non-diffracting, and is
self-accelerating due to its translation. This seemingly paradoxical
phenomenon (recall that the center of mass of the profile of a beam must
remain invariant with respect to $\xi$ in the absence of external fields) is
explained by the fact that the energy of the Airy beam is infinite, since
$\int_{-\infty}^\infty \ai(x)^2 \d x =\infty$, and so the center of mass of
the beam is undefined.

\subsubsection{Airy eigenfunction beams}

While the Airy beam~(\ref{airybeam}) is perfectly non-diffracting and
self-accelerating, its energy is infinite. Since such a beam is not
realizable, it would be desirable to construct a beam exhibiting the same
properties, but with finite energy.

One well-known solution is the finite Airy beam (see, for
example,~\cite{siviloglou,siviloglou2,jiang}), which is generated by
introducing an exponential aperture function to the initial field envelope of
the Airy beam, i.e., 
  \begin{align}
\Phi(s,0)=(8\pi \alpha)^{1/4} \ai(s)\exp\Bigl(-\frac{\alpha^3}{3}+\alpha s\Bigr),\label{for:finiteic}
  \end{align}
where $\alpha>0$. Note that, for simplicity, the initial field envelope has been
normalized such that $\norm{\Phi(s,0)}_2=1$. Solving equation
(\ref{paraxx}) under the initial condition (\ref{for:finiteic}), we have
that the beam evolves according to
  \begin{align}
\hspace*{-5em}\Phi(s,\xi)=(8\pi \alpha)^{1/4}
\ai\Bigl(s-\Bigl(\frac{\xi}{2}\Bigr)^2+i\alpha\xi\Bigr)\exp\Bigl(-\frac{\alpha^3}{3}+\alpha s
-\frac{\alpha\xi^2}{2}+i\Bigl(-\frac{\xi^3}{12}+\frac{\alpha^2\xi}{2}+\frac{s\xi}{2}\Bigr)
\Bigr).\label{for:finitepro}
  \end{align}
Although these beams only have finite energy, it has been shown both
theoretically and experimentally that, when $\alpha$ is small, the finite
Airy beams exhibit the key characteristics of the Airy beam, i.e., the
ability to remain diffraction-free over long distances, and to freely
accelerate during propagation. To be more specific,
as $\alpha \to 0$, the resulting beam $\Phi$ approaches a scaled Airy
function. When $\alpha$ gets bigger, on the other hand, the beam $\Phi$
resembles a Gaussian. The beam profiles for several values of $\alpha$ are
illustrated in Figures~\ref{fig:cm2_a0202} and~\ref{fig:cm1_a0108}. 

Below, we show that the Airy transform of the eigenfunctions of the Airy
integral operator, which also have finite energy, resemble the
infinite-energy Airy beam in a different way, in that they maximally
concentrate the energy near the main lobes in their initial profiles, while remaining
diffraction-free over the longest possible distances. We note that the
eigenfunction beams achieve their long diffraction-free distances by
spreading their energy as evenly as possible in their side lobes.  For
simplicity, we name them Airy eigenfunction beams.

It is not hard to see that, for any density function $\sigma$, the beam with
transverse profile
  \begin{align}
\Phi(s,\xi) = \int_0^\infty \sigma(v) 
\ai\Bigl(s+v - \Bigl(\frac{\xi}{2}\Bigr)^2\Bigr) 
\exp\Bigl(i \Bigl(-\frac{\xi^3}{12}
  + (s + v)\frac{\xi}{2}\Bigr)\Bigr)
\d v
    \label{paraxsoln}
  \end{align}
is a solution to~(\ref{paraxx}), since~(\ref{paraxsoln}) can be
differentiated under the integral sign due to the rapid decay of $\ai(v)$ as
$v\to \infty$. Note that, when $\xi=0$,
  \begin{align}
\Phi(s,0) = \int_0^\infty \sigma(v) 
\ai(s+v) \d v.
    \label{paraxsoln0}
  \end{align}
When $\sigma$ is a delta function, the beam $\Phi$ is perfectly
non-diffracting, since then it is just an Airy function. When $\sigma$ is
supported over some interval of positive width, however, the beam will
diffract due to interference between different modes. This diffraction is
caused by the term $\exp(iv\xi/2)$ in~(\ref{paraxsoln}), without which the
beam would be perfectly non-diffracting for any $\sigma$.  If the goal is to
construct a non-diffracting and self-accelerating beam, then the support of
$\sigma$ should be as small as possible, so that the beam resembles the Airy
beam as much as possible.
However, when $\sigma$ is highly concentrated around $v=0$, the energy in
$\Phi$ will be very poorly localized, resulting in an overall weak beam
intensity. This trade-off between the localization of $\Phi$ and
the localization of $\sigma$ is a result of the uncertainty principle
described in Section~\ref{sec:uncert}. Consequently, the extremal property
of the eigenfunction $\psi_{0,c}$ can be utilized to optimize the localization
of both the beam intensity $\Phi$ and the density $\sigma$. To be more specific,
we let $\sigma(v)=\psi_{0,c}(v)$ for some real number $c$, such that
formula (\ref{paraxsoln0}) becomes
  \begin{align}
\Phi(s,0) = \int_0^\infty \ai(s+v)\psi_{0,c}(v) \d v=\cA[\psi_{0,c}](s).\label{for:initpro}
  \end{align}
Based on Section \ref{sec:qual}, when $c\in[-5,1.5]$, the resulting Airy
eigenfunction beam concentrates the most energy in $[c,\infty)$, while
remaining Airy-bandlimited.

The densities and corresponding beam profiles for
several values of $c$ are illustrated in Figures~\ref{fig:cm2_a0202},
~\ref{fig:cm1_a0108} and \ref{fig:sigphieig}.

\section{Numerical Experiments}
In this section, we illustrate the performance of the algorithm with several
numerical examples. 

We implemented our algorithm in FORTRAN 77, and compiled it using
Lahey/Fujitsu Fortran 95 Express, Release L6.20e. For the timing
experiments, the Fortran codes were compiled using the Intel Fortran
Compiler, version 2021.2.0, with the \texttt{-fast} flag. We conducted all
experiments on a ThinkPad laptop, with 16GB of RAM and an Intel Core
i7-10510U CPU. 

\subsection{Computation of the eigenfunctions and spectra}\label{sec:eigplot} 
In this section, we report the plots of the eigenfunctions and spectra for
different values of $c$ and $n$ in Figures
\ref{figeigfun}--\ref{fig:lambda0}, and the corresponding computation times
in Table \ref{tablecomptime}. We normalize the eigenfunctions $\psi_{n,c}$
by requiring that $\psi_{n,c}(0)>0$ (recall that $\psi_{n,c}(0)\neq 0$, see
(\ref{rec4})). In addition, we illustrate the importance of selecting the
optimal scaling factor of the scaled Laguerre functions in Figure
\ref{figa}.
\begin{figure}[h]
    \centering
    \begin{subfigure}{0.49\textwidth}
      \centering
      \includegraphics[width=\textwidth]{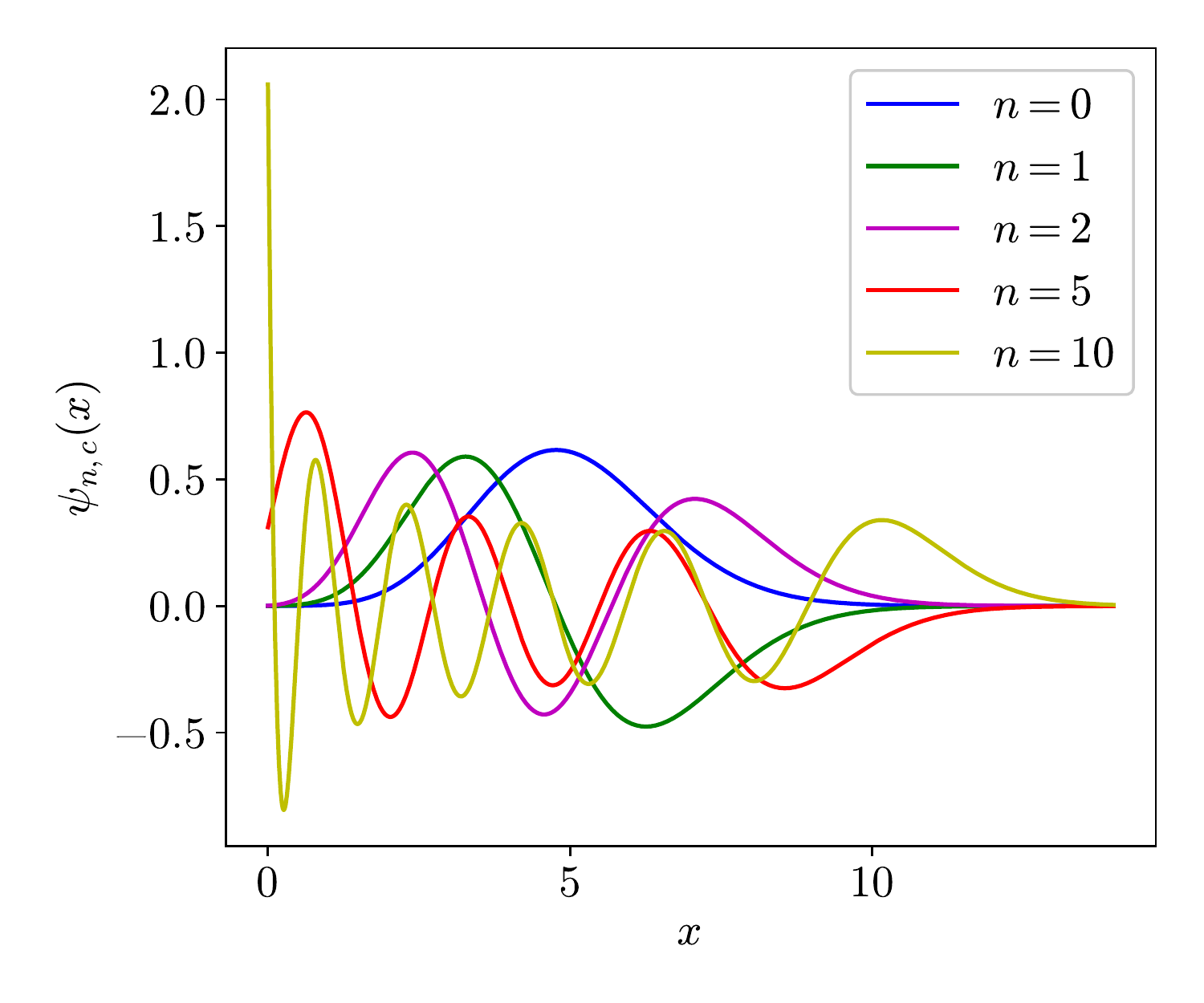}
      \caption{$c=-10$ \label{fig1}}
    \end{subfigure}
    \begin{subfigure}{0.49\textwidth}
      \centering
      \includegraphics[width=\textwidth]{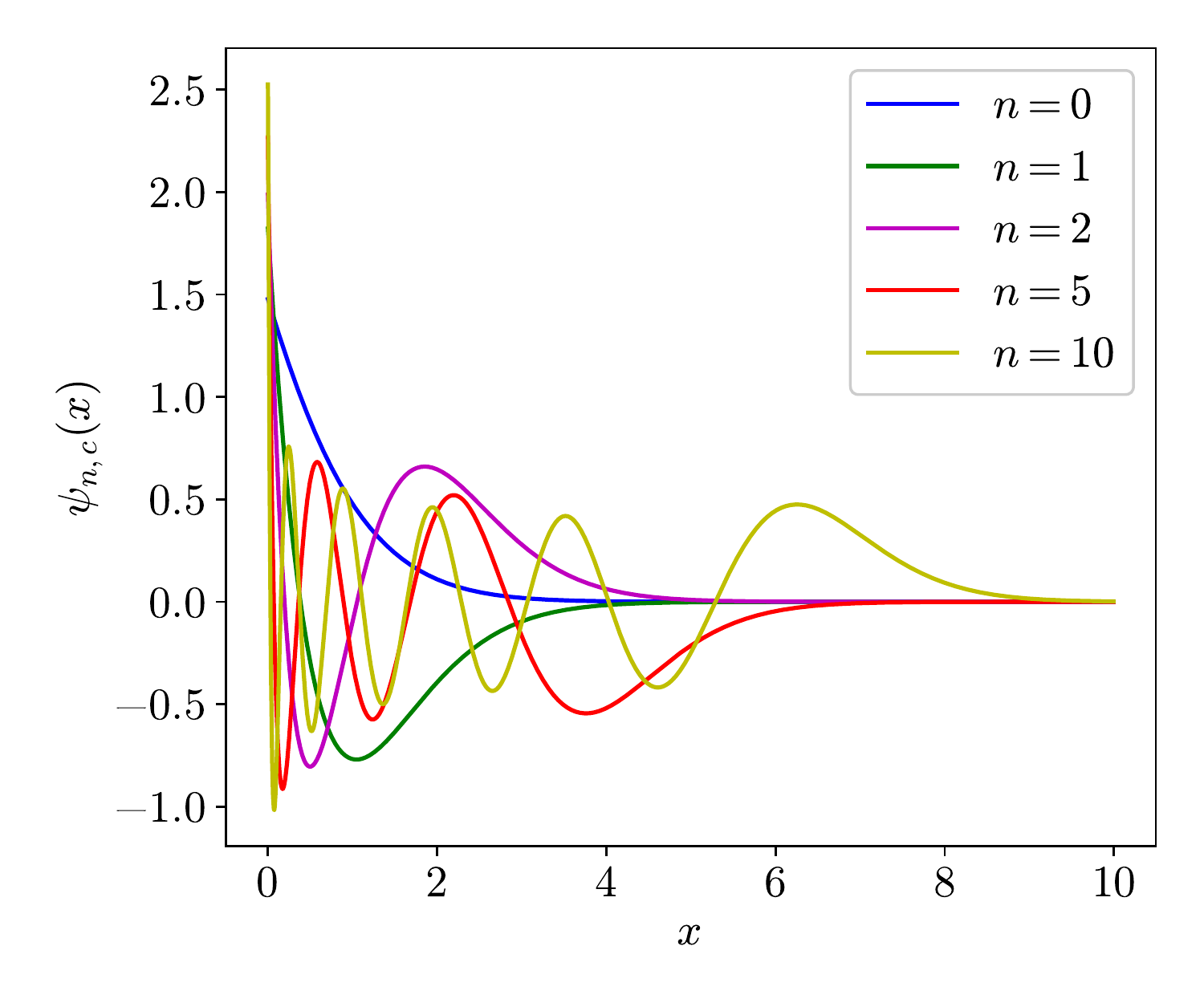}
      \caption{$c=0$ \label{fig2}}
    \end{subfigure}
  \begin{subfigure}{0.49\textwidth}
    \centering
    \includegraphics[width=\textwidth]{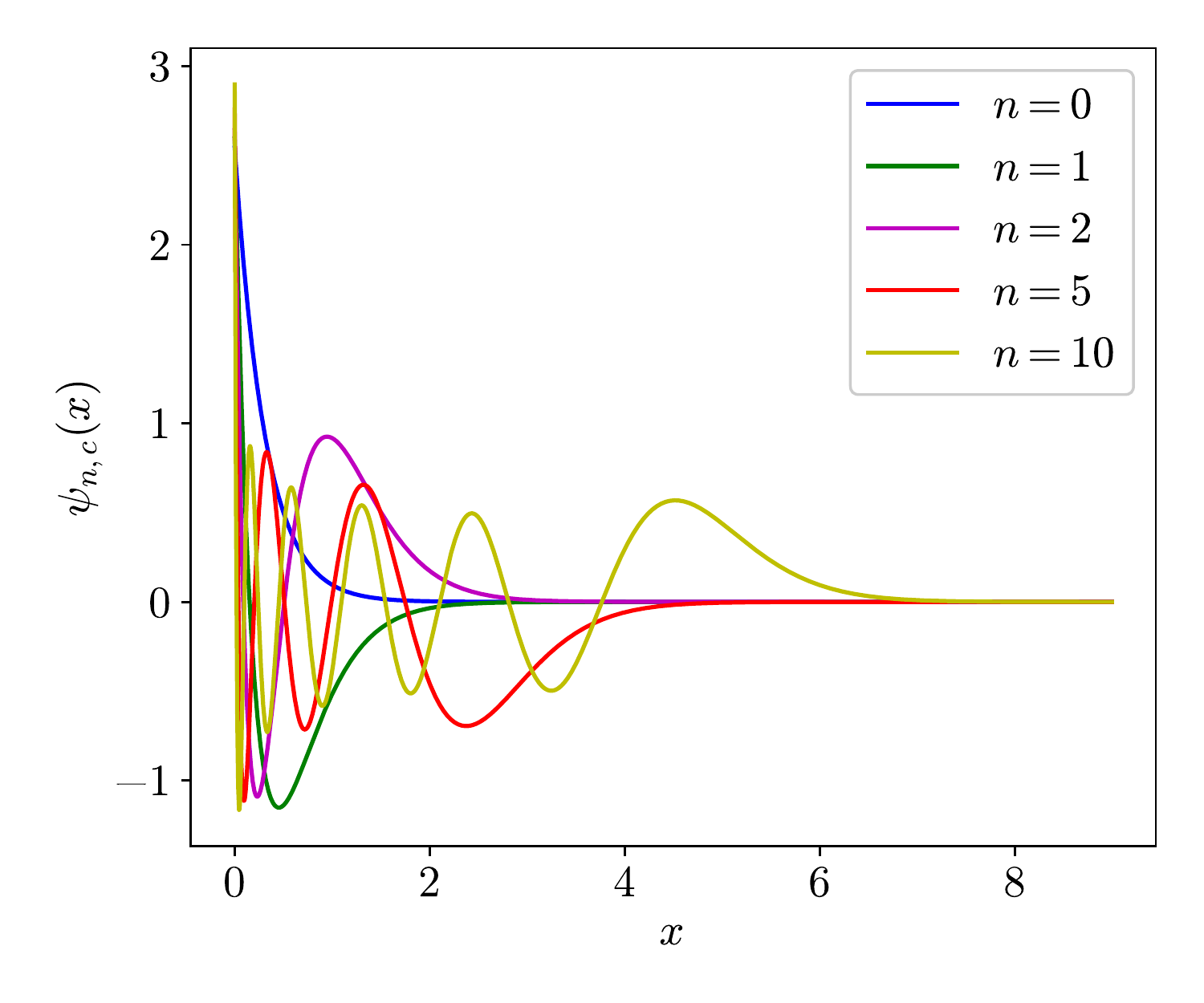}
    \caption{$c=10$ \label{fig3}}
  \end{subfigure}
  \begin{subfigure}{0.49\textwidth}
    \centering
    \includegraphics[width=\textwidth]{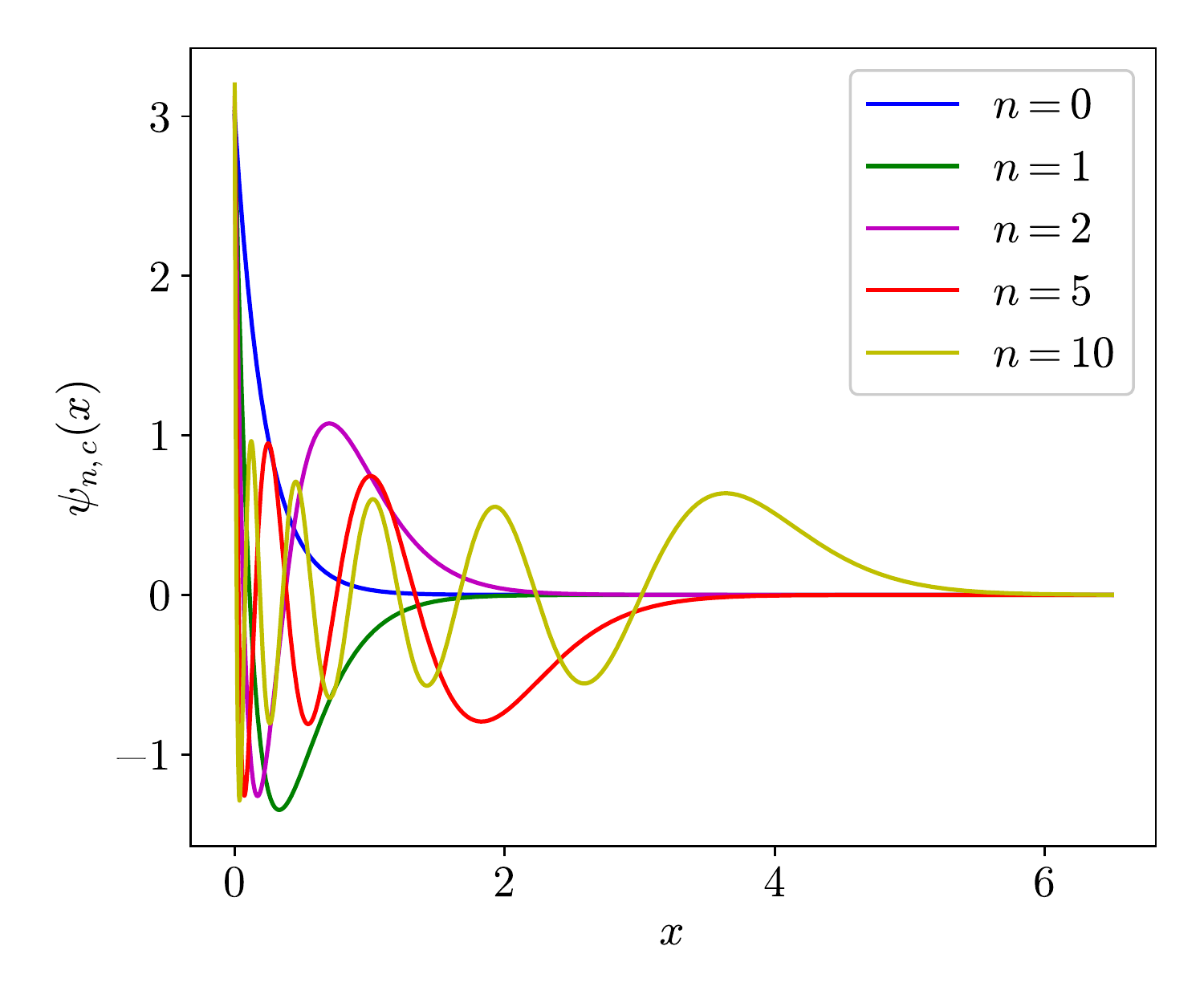}
    \caption{$c=20$ \label{fig4}}
  \end{subfigure}
  \caption{{\bf Eigenfunctions $\psi_{n,c}$, defined by (\ref{Teigfun}), of
  different orders with different parameters $c$}.}
  \label{figeigfun}

\end{figure}

\begin{figure}[h]
    \centering
    \includegraphics[width=0.75\textwidth]{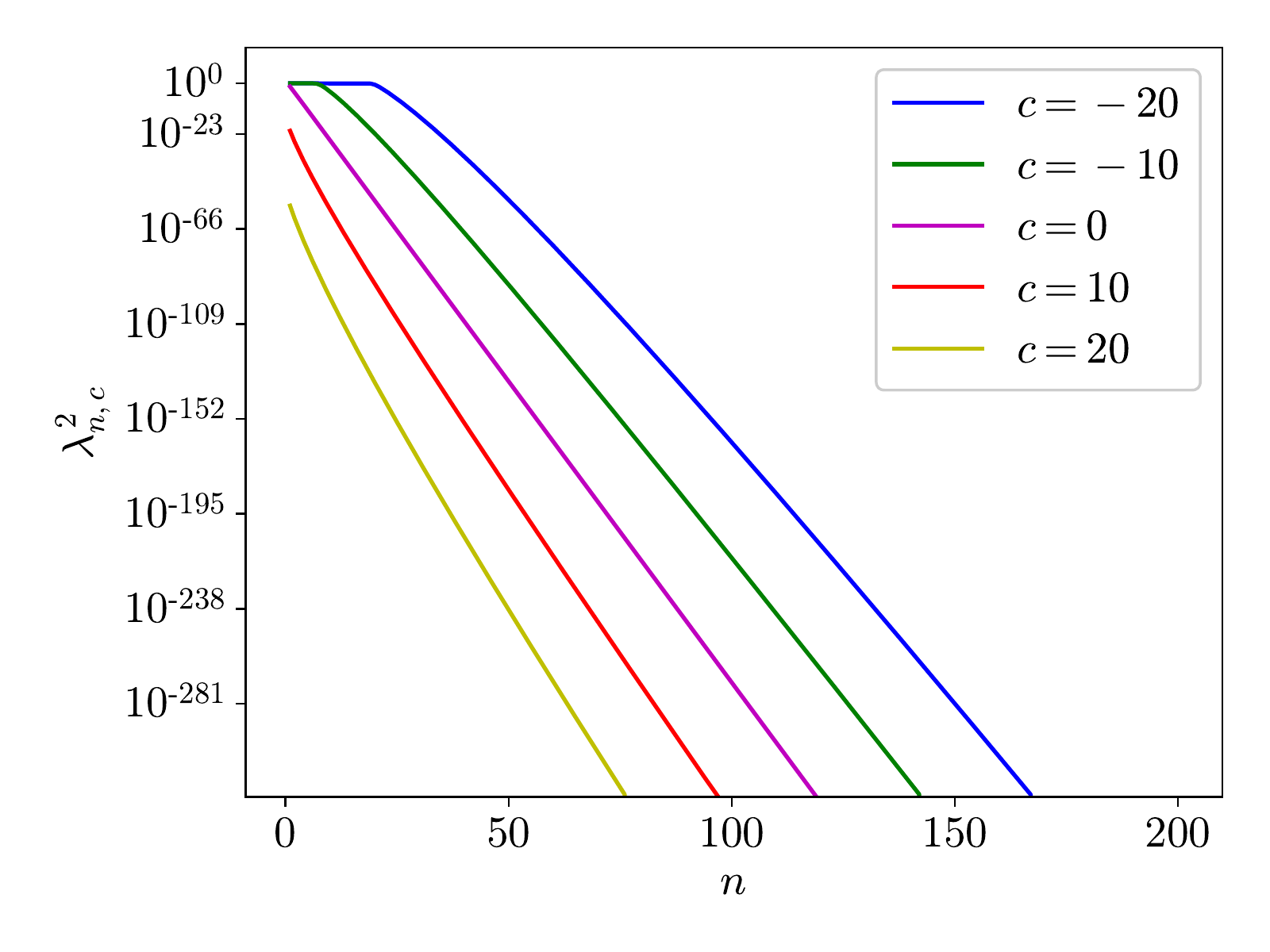}

  \caption{
      {\bf Squares of the spectra of the Airy integral operators $\T_c$ with
      different parameters $c$}. This corresponds to the spectra of the
      integral operator $\cK$ (see formulas
      (\ref{f2kai}), (\ref{KGeq})). Note that the square of the leading
      eigenvalues converge to $1$ as $c\to -\infty$.}
   \label{figspctm}

\end{figure}

\begin{figure}[h]
    \centering
    \includegraphics[width=0.75\textwidth]{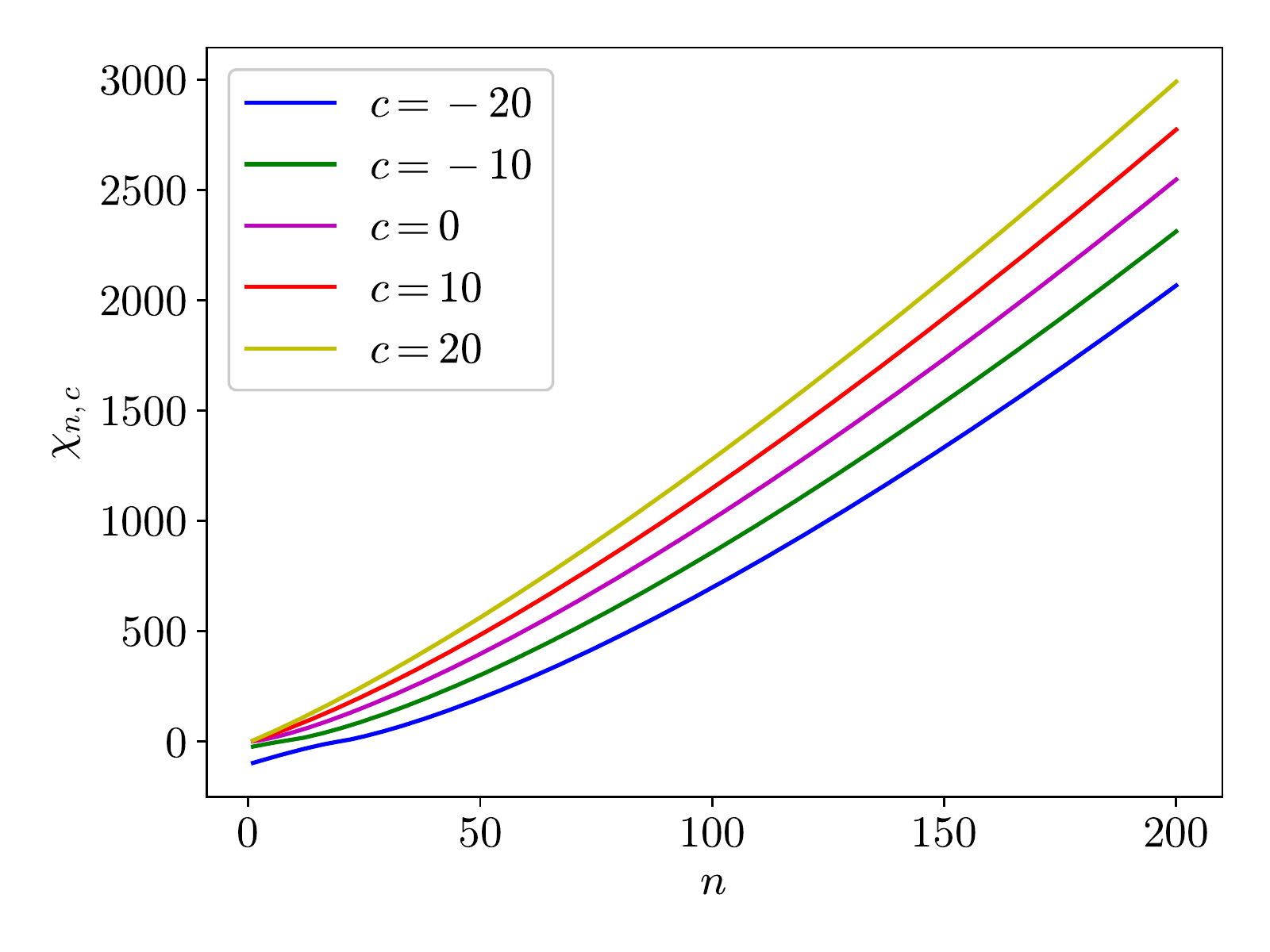}

  \caption{
      {\bf The spectra of the commuting differential operators $\L_c$ with
      different parameters $c$}. Note the presence of negative eigenvalues
      for sufficiently negative values of $c$. }
   \label{figspctm_chi}

\end{figure}

\begin{figure}[h]
    \centering
    \includegraphics[width=0.75\textwidth]{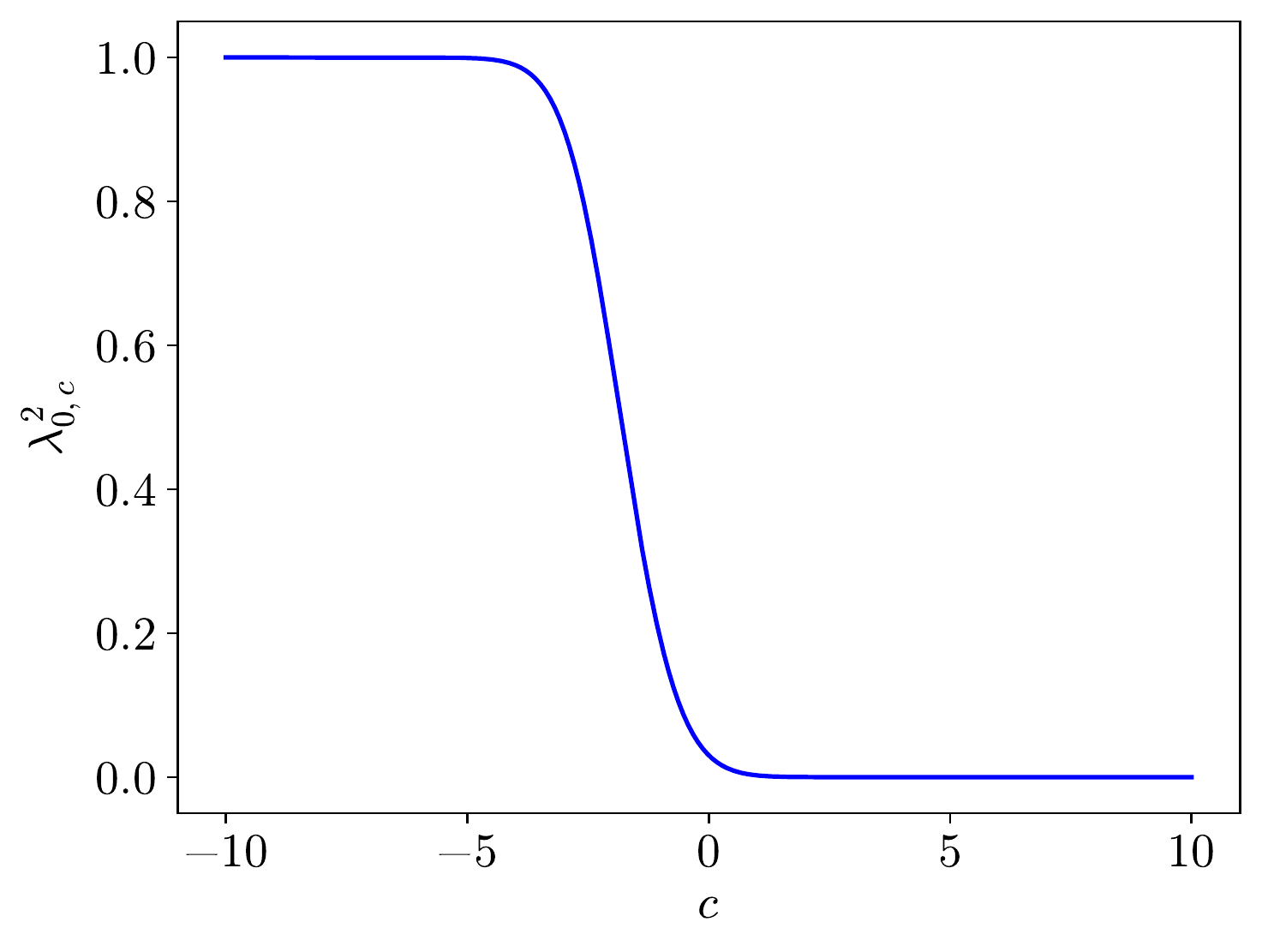}

  \caption{
      {\bf Squares of the first eigenvalues $\lambda_{0,c}$ of the Airy
      integral operators $\T_c$ with different parameters $c$}. Note that
      $\lambda_{0,c}^2$ is equal to the maximal proportion of energy
      a function that is Airy-bandlimited to $[0,\infty)$ can have 
      on $[c,\infty)$ (see Theorems \ref{thm:uncert} and \ref{thm:extremal}).}
   \label{fig:lambda0}

\end{figure}

\begin{table}[h!!]
    \begin{center}
    \begin{tabular}{cccc}
    $c$& $n$ & $N$ & Time  \\

    \midrule
    \addlinespace[.5em]
    $20$  & 50 & 175 & 2.10\e{-3} secs  \\
    \addlinespace[.25em]
      & 100 & 230 & 3.64\e{-3} secs  \\
    \addlinespace[.25em]
      & 200 & 340 & 8.76\e{-3} secs  \\
    \addlinespace[.25em]
      & 400 & 560 & 3.35\e{-2} secs  \\
    \addlinespace[.25em]
    $0$  & 50 & 155 & 3.36\e{-3} secs  \\
    \addlinespace[.25em]
    & 100 & 210 & 4.93\e{-3} secs  \\
    \addlinespace[.25em]
     & 200 & 320 & 9.75\e{-3} secs  \\
    \addlinespace[.25em]
      & 400 & 540 & 3.17\e{-2} secs  \\
    \addlinespace[.25em]
    $-20$  & 50 & 175 & 4.32\e{-3} secs  \\
    \addlinespace[.25em]
     & 100 & 230 & 5.64\e{-3} secs  \\
    \addlinespace[.25em]
      & 200 & 340 & 1.14\e{-2} secs  \\
    \addlinespace[.25em]
      & 400 & 560 & 3.37\e{-2} secs  \\
    \end{tabular}
    \caption{
    {\bf The computation time of the eigenfunctions and spectra of the
    integral operator for different value of $c$ and $n$}. The value of $N$
    is determined by Observation \ref{obsN}. The time cost is of order
    $\O(N^2)$.
    }
    \label{tablecomptime}

    \end{center}
\end{table}

\begin{figure}[h]
    \centering
    \begin{subfigure}{0.49\textwidth}
      \centering
      \includegraphics[width=\textwidth]{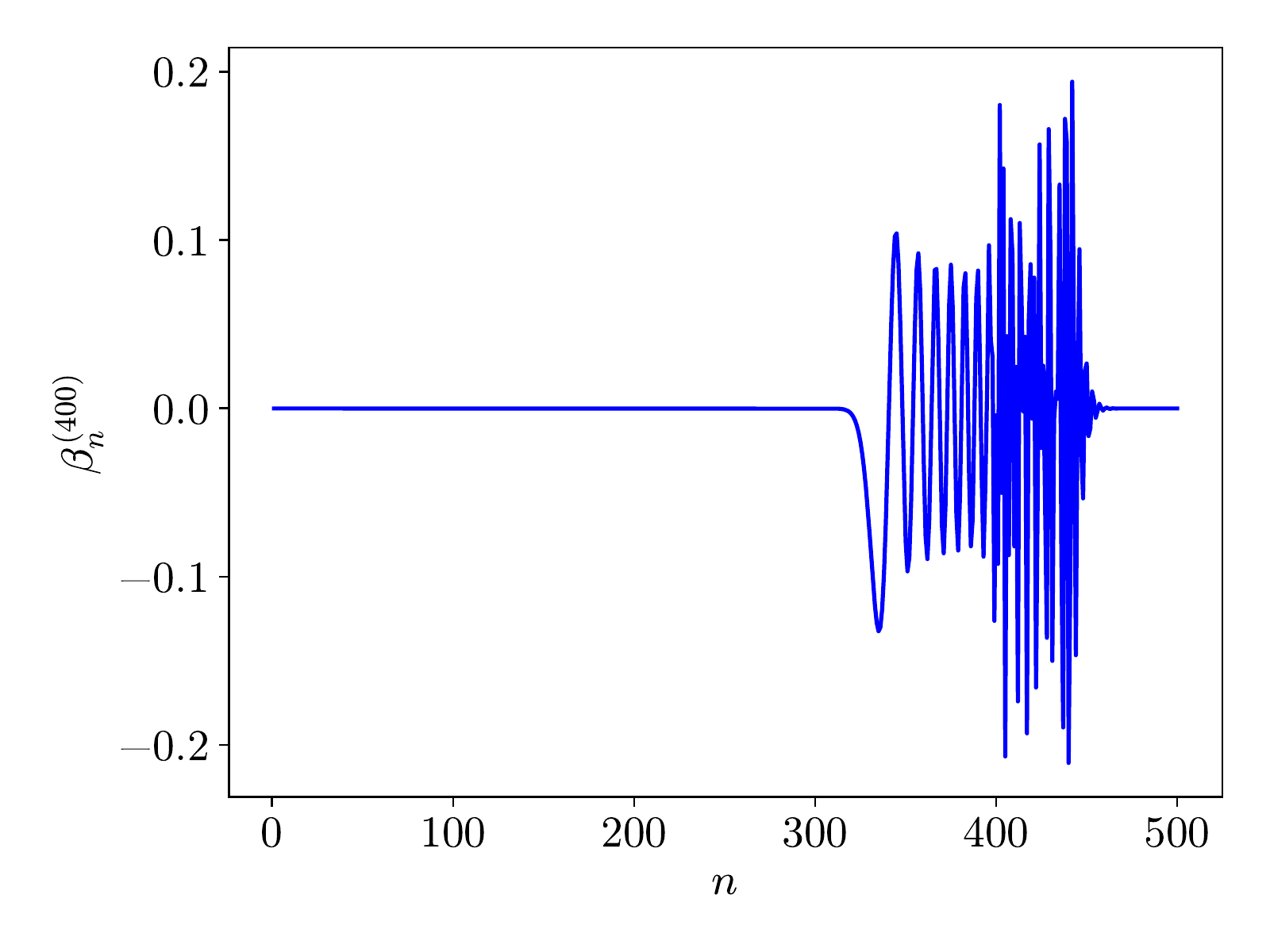}
      \caption{$a=\tilde a$ \label{fig11}}
    \end{subfigure}
    \begin{subfigure}{0.49\textwidth}
      \centering
      \includegraphics[width=\textwidth]{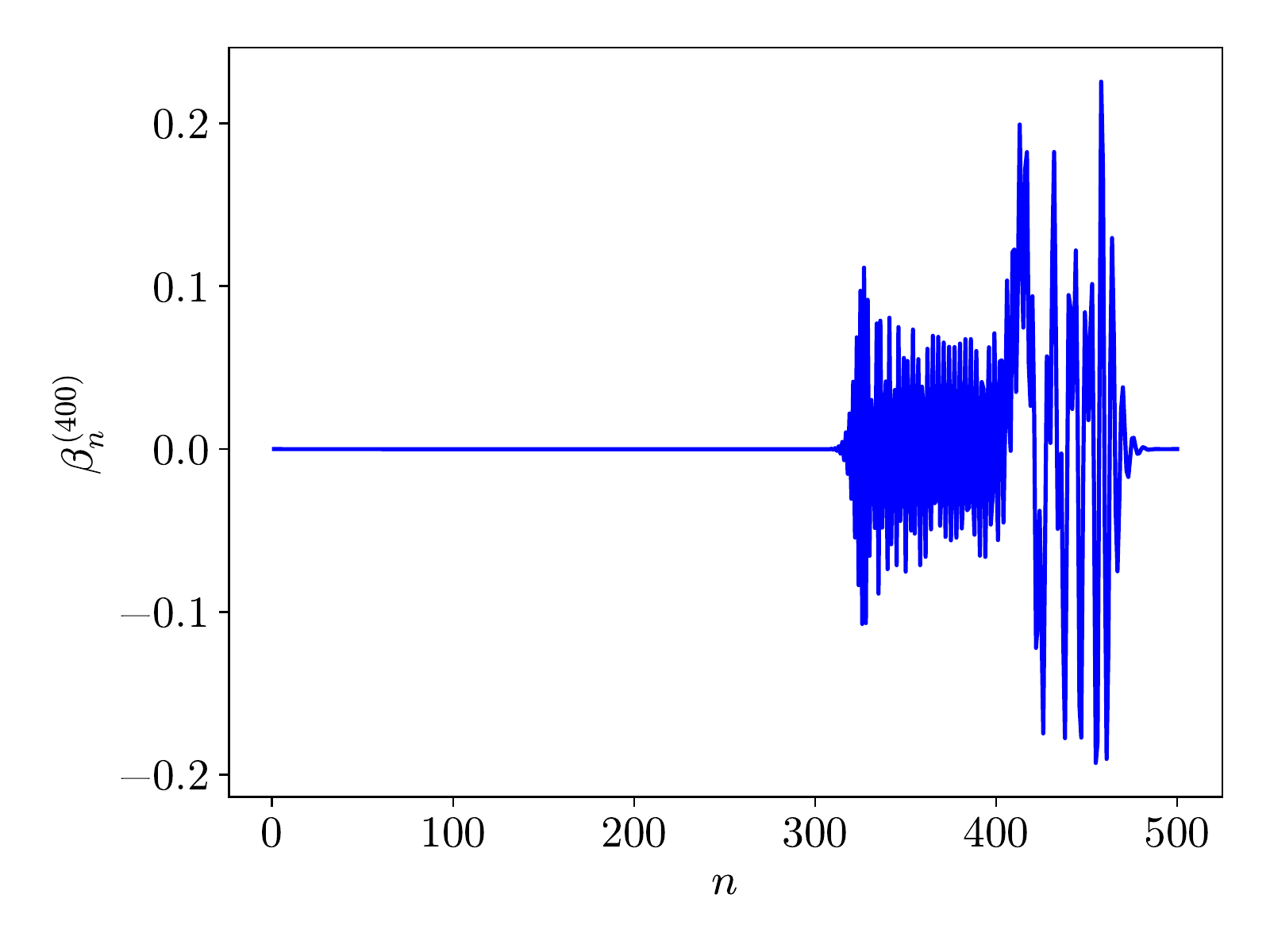}
      \caption{$a=\tilde a-4$ \label{fig22}}
    \end{subfigure}
  \begin{subfigure}{0.49\textwidth}
    \centering
    \includegraphics[width=\textwidth]{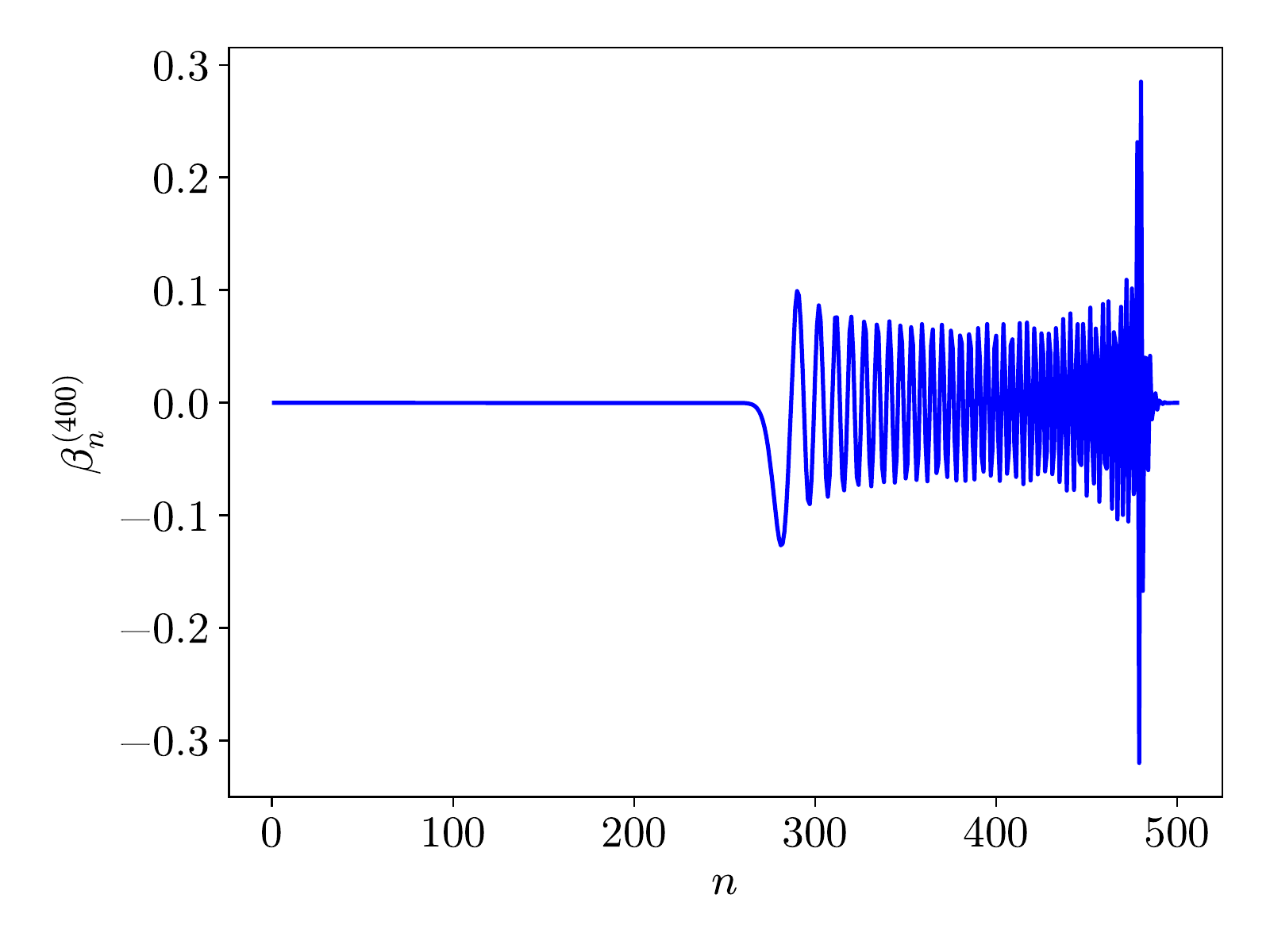}
    \caption{$a=\tilde a+4$ \label{fig33}}
  \end{subfigure}
  \begin{subfigure}{0.49\textwidth}
    \centering
    \includegraphics[width=\textwidth]{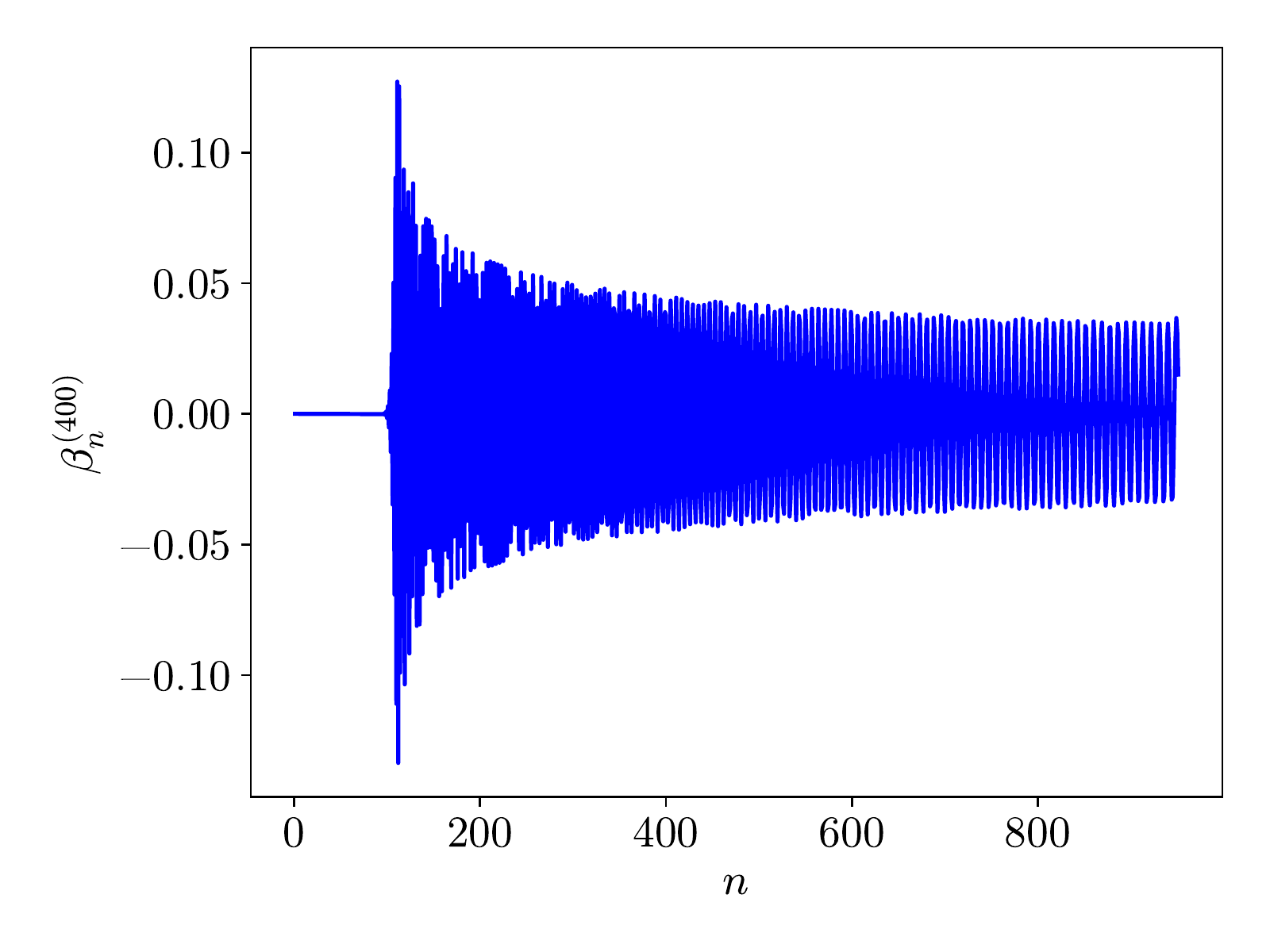}
    \caption{$a=1$ \label{fig44}}
  \end{subfigure}
  \caption{{\bf Expansion coefficients of $\psi_{n,c}$ in the basis of
  scaled Laguerre functions with different scaling factors $a$, where
  $n=400$ and $c=10$}. Note that the optimal scaling factor $\tilde a\approx
  20.62$, and is selected by formula (\ref{choicea}). It's clear that our
  basis functions become optimal when $a=\tilde a$. Figure~(\subref{fig44})
  shows that the unscaled Laguerre functions are unsuitable for
  approximating the eigenfunctions of the Airy integral operator.}
  \label{figa}

\end{figure}
\clearpage

\subsection{Computation of the distribution of the $k$-th largest
eigenvalue of the Gaussian unitary ensemble}
  \label{sec:computtw}
In this section, we report the computation time and the numerical errors of
the PDF $\frac{d}{ds}F_2(k;s)$ and CDF $F_2(k;s)$ in Tables \ref{pdfexp} and
\ref{cdfexp}, for different values of $k$ and $s$. The reference solutions
are computed by our solver using extended precision. We note that Pr\"ahofer
tabulated the values of $F_2(1;s)$ and $\log\frac{d}{ds}F_2(1;s)$ 
to 16 digits of relative accuracy in \cite{table}, and our computed values
match with the values reported there. We also show the plots of
$\frac{d}{ds}F_2(k;s)$ and $F_2(k;s)$ for $k=1,2,3$ in Figure \ref{twplot}.
\begin{table}
    \begin{center}
    \begin{tabular}{cccccccc}
    \multirow{2}{*}{$k$} &\multirow{2}{*}{$s$}&  \multirow{2}{*}{$n$} & \multirow{2}{*}{$N$} & \multirow{2}{*}{Time} & Relative & Absolute & \multirow{2}{*}{$\frac{d}{ds}F_2(k;s)$} \\
    & & &   &  & error & error&\\
    \midrule
    \addlinespace[.5em]
    1 & $50$  & 30 & 50 & 1.70\e{-4} secs & 2.53\e{-14} & 3.76\e{-222} & 1.48437\e{-208}\\
    \addlinespace[.25em]
    & $25$  & 20  & 40  & 1.20\e{-4} secs & 1.16\e{-14} & 7.61\e{-90} & 6.56096\e{-76}\\
    \addlinespace[.25em]
    & $10$  & 20  &  40 & 1.25\e{-4} secs & 2.18\e{-15} & 4.14\e{-36}&1.90064\e{-21}\\
    \addlinespace[.25em]
    & $5$  & 20  & 40 & 1.36\e{-4} secs & 3.28\e{-16} & 8.27\e{-25}&2.52106\e{-9} \\
    \addlinespace[.25em]
    & $2$  & 20  & 40 & 1.44\e{-4} secs & 4.15\e{-15} & 1.57\e{-18} & 3.79199\e{-4} \\
    \addlinespace[.25em]
    & $0$  & 20  & 40 & 1.50\e{-4} secs & 2.07\e{-16} & 1.39\e{-17} & 6.69753\e{-2}\\
    \addlinespace[.25em]
    & $-2$  & 20  & 40 & 1.49\e{-4} secs & 5.03\e{-16} & 2.22\e{-16} & 4.41382\e{-1}\\
    \addlinespace[.25em]
    & $-5$  & 40 &60  & 2.96\e{-4} secs & 7.25\e{-13} & 9.71\e{-17} & 1.34039\e{-4} \\
    \addlinespace[.25em]
    & $-10$  & 50 & 80  & 4.39\e{-4} secs & 2.53\e{-4} & 2.66\e{-39} & 1.05359\e{-35} \\
    \addlinespace[.25em]
    & $-20$  & 70  & 120 & 9.45\e{-4} secs & 8.50\e{87} & 1.50\e{-200} & 1.77193\e{-288}\\
    \addlinespace[.25em]
    2 & $30$  & 30 & 50 & 1.79\e{-4} secs & 3.23\e{-14} & 2.85\e{-217} & 8.88120\e{-204}\\
    \addlinespace[.25em]
     & $0$  & 20 & 40 & 1.50\e{-4} secs & 1.11\e{-15} & 1.36\e{-20} & 1.21766\e{-5}\\
    \addlinespace[.25em]
     & $-4$  & 30 & 50 & 2.23\e{-4} secs & 3.08\e{-15} & 1.55\e{-15} & 5.05206\e{-1}\\
    \addlinespace[.25em]
     & $-6$  & 50 & 80 & 4.88\e{-4} secs & 1.43\e{-13} & 3.02\e{-16} & 2.10626\e{-3}\\
    \addlinespace[.25em]
     & $-10$  & 50 & 100 & 7.38\e{-4} secs & 1.35\e{-6} & 2.27\e{-30} & 1.67893\e{-24}\\
    \addlinespace[.25em]
     & $-12$  & 60 & 120 & 1.07\e{-3} secs & 2.80\e{-2} & 3.20\e{-48} & 1.14082\e{-46}\\
    \addlinespace[.25em]
    3 & $15$  & 30 & 50 & 2.46\e{-4} secs & 4.10\e{-15} & 1.02\e{-140} & 2.48166\e{-126}\\
    \addlinespace[.25em]
     & $4$  & 20 & 40 & 2.11\e{-4} secs & 1.50\e{-15} & 8.21\e{-48} & 5.50657\e{-33}\\
    \addlinespace[.25em]
     & $-4$  & 30 & 50 & 3.03\e{-4} secs & 1.15\e{-14} & 1.44\e{-15} & 1.25051\e{-1}\\
    \addlinespace[.25em]
     & $-8$  & 50 & 80 & 5.69\e{-4} secs & 5.81\e{-12} & 1.03\e{-16} & 1.76988\e{-5}\\
    \addlinespace[.25em]
     & $-10$  & 50 & 100 & 8.19\e{-4} secs & 1.07\e{-8} & 8.56\e{-24} & 8.01983\e{-16}\\
    \addlinespace[.25em]
     & $-13$  & 60 & 120 & 1.27\e{-3} secs & 1.61\e{-2} & 1.55\e{-48} & 9.63884\e{-47}\\
    \end{tabular}
    \caption{
    {\bf The evaluation of the probability density functions}. The actual
    values of $\frac{d}{ds}F_2(k;s)$ are reported to 6 significant digits.
    Note that the relative accuracy degenerates when one evaluates
    $\frac{d}{ds}F_2(k;s)$ in the left tails of the distributions (see
    Section \ref{sec:klevel}).
    }
    \label{pdfexp}
    \end{center}
\end{table}

\begin{table}[h!!]
    \begin{center}
    \begin{tabular}{cccccccc}
    \multirow{2}{*}{$k$} &\multirow{2}{*}{$s$}&  \multirow{2}{*}{$n$} & \multirow{2}{*}{$N$} & \multirow{2}{*}{Time} & Relative & Absolute & \multirow{2}{*}{$F_2(k;s)$} \\
    & & &   &  & error & error&\\
    \midrule
    \addlinespace[.5em]
    $1$ & $50$  & 30 & 50 & 1.70\e{-4} secs & $<$1.00\e{-16} & $<$1.00\e{-16} & 1.00000\e{0} \\
    \addlinespace[.25em]
    & $25$  & 20  & 40  & 1.20\e{-4} secs & $<$1.00\e{-16} & $<$1.00\e{-16} & 1.00000\e{0} \\
    \addlinespace[.25em]
    & $10$  & 20  & 40 & 1.25\e{-4} secs & $<$1.00\e{-16} & $<$1.00\e{-16} & 1.00000\e{0} \\
    \addlinespace[.25em]
    & $5$  & 20  & 40 & 1.36\e{-4} secs & $<$1.00\e{-16} & $<$1.00\e{-16}& 1.00000\e{0} \\
    \addlinespace[.25em]
    & $2$  & 20  & 40 & 1.44\e{-4} secs & 1.11\e{-16} & 1.11\e{-16} & 9.99888\e{-1} \\
    \addlinespace[.25em]
    & $0$  & 20  & 40 & 1.50\e{-4} secs & 1.15\e{-16} & 1.11\e{-16} & 9.69373\e{-1}\\
    \addlinespace[.25em]
    & $-2$  & 20  & 40 & 1.49\e{-4} secs & 4.03\e{-16} & 1.67\e{-16} & 4.41322\e{-1}\\
    \addlinespace[.25em]
    & $-5$  & 40 & 60  & 2.96\e{-4} secs & 1.39\e{-12} & 2.96\e{-17} & 2.13600\e{-5} \\
    \addlinespace[.25em]
    & $-10$  & 50 & 80  & 4.39\e{-4} secs & 3.07\e{-4} & 1.29\e{-40} & 4.21226\e{-37} \\
    \addlinespace[.25em]
    & $-20$  & 70  & 120 & 9.45\e{-4} secs & 1.80\e{88} & 3.19\e{-202} & 1.77182\e{-290}\\
    \addlinespace[.25em]
    2 & $30$  & 30 & 50 & 1.79\e{-4} secs & $<$1.00\e{-16} & $<$1.00\e{-16} & 1.00000\e{0}\\
    \addlinespace[.25em]
     & $0$  & 20 & 40 & 1.50\e{-4} secs & 1.11\e{-16} & 1.11\e{-16} & 9.99998\e{-1}\\
    \addlinespace[.25em]
     & $-4$  & 30 & 50 & 2.23\e{-4} secs & 1.04\e{-14} & 3.50\e{-15} & 3.35602\e{-1}\\
    \addlinespace[.25em]
     & $-6$  & 50 & 80 & 4.88\e{-4} secs & 2.99\e{-13} & 1.10\e{-16} & 3.69221\e{-4}\\
    \addlinespace[.25em]
     & $-10$  & 50 & 100 & 7.38\e{-4} secs & 1.70\e{-6} & 1.38\e{-31} & 8.14202\e{-26}\\
    \addlinespace[.25em]
     & $-12$  & 60 & 120 & 1.07\e{-3}  secs & 3.30\e{-2} & 1.21\e{-49} & 3.65917\e{-48}\\
    \addlinespace[.25em]
    3 & $15$  & 30 & 50 & 2.46\e{-4}secs & $<$1.00\e{-16} & $<$1.00\e{-16} & 1.00000\e{0}\\
    \addlinespace[.25em]
     & $4$  & 20 & 40 & 2.11\e{-4} secs & $<$1.00\e{-16} & $<$1.00\e{-16} & 1.00000\e{0}\\
    \addlinespace[.25em]
     & $-4$  & 30 & 50 & 3.03\e{-4} secs & $<$1.00\e{-16} & $<$1.00\e{-16} & 9.59838\e{-1}\\
    \addlinespace[.25em]
     & $-8$  & 50 & 80 & 5.69\e{-4} secs & 9.70\e{-12} & 2.03\e{-17} & 2.09567\e{-6}\\
    \addlinespace[.25em]
     & $-10$  & 50 & 100 & 8.19\e{-4} secs & 1.42\e{-8} & 6.93\e{-25} & 4.89120\e{-17}\\
    \addlinespace[.25em]
     & $-13$  & 60 & 120 & 1.27\e{-3} secs & 1.89\e{-2} & 5.63\e{-50} & 2.98361\e{-48}
    \end{tabular}
    \caption{
    {\bf The evaluation of the cumulative distribution functions}. The
    actual values of $F_2(k;s)$ are reported to 6 significant digits.
    Note that the relative accuracy degenerates when one evaluates
    $F_2(k;s)$ in the left tails of the distributions (see
    Section \ref{sec:klevel}).
    }
    \label{cdfexp}
    \end{center}
\end{table}
\pagebreak
\begin{observation}
The computation times of $\frac{d}{ds}F_2(k;s)$ and $F_2(k;s)$ are dominated
by the calculation of the eigendecomposition of the Airy integral operator
$\T_s$ (see Observation \ref{fasttw}). As a consequence, the computation times of
$\frac{d}{ds}F_2(k;s)$ and $F_2(k;s)$ are almost identical for any fixed
$k$ and $s$ (see Tables \ref{pdfexp}, \ref{cdfexp}).
\end{observation}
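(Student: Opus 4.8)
The plan is to split each evaluation into two disjoint phases and compare their costs. Both $F_2(k;s)$ and $\frac{d}{ds}F_2(k;s)$ are produced by first computing the eigendecomposition data of $\T_s$---namely the eigenvalues $\{\lambda_{i,s}\}$ together with the point values $\{\psi_{i,s}(0)\}$ read off from the eigenfunction expansion coefficients---and then assembling the nested series (\ref{f2k}) for the CDF, or (\ref{df2k}) for the PDF, from this data. I would establish the Observation by showing (i) that the first phase dominates the total running time of each computation, and (ii) that this dominant phase is not merely of the same order but is literally the \emph{same} computation for the two quantities.

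For assertion (i), I would recall from Section \ref{numalgo} that producing the eigendecomposition data costs $\O(n^2)$ operations, where $n$ is the number of retained eigenpairs. I would then invoke Observation \ref{fasttw}, whose content is exactly that the exponential decay of the $\lambda_{i,s}$ lets each nested series be truncated after a fixed number of terms, so that the \emph{assembly} phase costs only $\O(k)$ operations for each of the CDF and the PDF, rather than the naive $\O(n^k)$ or $\O(n^{k+1})$. In the tabulated regime (Tables \ref{pdfexp} and \ref{cdfexp}) the truncation index $n$ grows with $|s|$ and ranges over the tens, while $k$ is a small fixed integer; hence $\O(n^2)$ overwhelms $\O(k)$, and the eigendecomposition dominates both running times.

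For assertion (ii), I would observe that the expensive $\O(n^2)$ phase is shared: both (\ref{f2k}) and (\ref{df2k}) are assembled from the same eigenvalues $\{\lambda_{i,s}\}$, and the single extra ingredient needed by the PDF, the values $\{\psi_{i,s}(0)\}$, is a negligible by-product of the same eigenfunction-coefficient computation that already produced the $\lambda_{i,s}$. Thus the two evaluations execute an identical dominant phase and differ only in the cheap $\O(k)$ assembly step. Since this difference is lower-order next to the common $\O(n^2)$ cost, the total running times coincide up to a negligible amount, which is precisely the claim that they are ``almost identical.''

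The main obstacle, such as it is, is that the statement is asymptotic and empirical rather than exact: ``dominated'' and ``almost identical'' must be read as ``up to the lower-order $\O(k)$ assembly cost,'' and making this precise reduces to confirming that in the operating regime one indeed has $n^2 \gg k$. The only genuinely substantive input is Observation \ref{fasttw}'s reduction of the assembly from $\O(n^k)$ to $\O(k)$ via eigenvalue decay; once that is granted, the rest is immediate. The one point requiring care is to verify that the two series are truncated at the \emph{same} $n$, which holds because the truncation is governed entirely by the decay of the shared eigenvalues $\lambda_{i,s}$ and not by the differing prefactors appearing in (\ref{f2k}) versus (\ref{df2k}).
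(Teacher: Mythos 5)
Your proposal is correct and follows the same reasoning the paper itself uses: the Observation is justified in the paper precisely by citing Observation \ref{fasttw} (the $\O(k)$ assembly cost of the truncated series (\ref{f2k}), (\ref{df2k}) versus the $\O(n^2)$ eigendecomposition cost from Section \ref{numalgo}), together with the fact that the dominant eigendecomposition phase, including the by-product values $\psi_{i,s}(0)$, is shared by the CDF and PDF evaluations. Your additional care about truncating both series at the same $n$ governed by the decay of the common eigenvalues $\lambda_{i,s}$ is a sensible, if implicit, point that matches the paper's intent.
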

\begin{observation}
Once the eigendecomposition of the Airy integral operator $\T_s$ is
computed, the time cost of evaluating $F_2(k;s)$ and $\frac{d}{ds}F_2(k;s)$
via formulas (\ref{f2k}) and (\ref{df2k}) for different $k$ is relatively
negligible (see Observation \ref{fasttw}). We note that we only consider the
evaluation of $\frac{d}{ds}F_2(k;s)$ and $F_2(k;s)$ for a single $k$ in our
experiments, which means that the reported times include the time required
for the computation of the eigendecomposition. 
\end{observation}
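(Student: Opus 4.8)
The plan is to obtain the observation as a direct corollary of the complexity accounting already recorded in Observation~\ref{fasttw}, which I would take as given. The statement splits into two parts, and I would dispatch them in turn.

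First, for the assertion that evaluating $F_2(k;s)$ and $\frac{d}{ds}F_2(k;s)$ for various $k$ is negligible once the eigendecomposition is available, I would recall the two cost estimates established in Observation~\ref{fasttw}. Granted the eigenvalues $\{\lambda_{i,s}\}$ and the boundary values $\{\psi_{i,s}(0)\}$, each of the nested series~(\ref{f2k}) and~(\ref{df2k}) can be summed to full relative accuracy using a number of effective terms that is bounded independently of $k$, because the $\lambda_{i,s}$ decay exponentially in $i$; hence the per-$k$ evaluation cost is $\O(k)$. On the other hand, producing the eigendecomposition by the algorithm of Section~\ref{numalgo} costs $\O(n^2)$ operations (the relevant matrices having dimension $N=1.1n+|c|+100$, see Observation~\ref{obsN}), where $n$ is taken large enough to resolve the distribution. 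Comparing $\O(k)$ against $\O(n^2)$ in the operating regime of the experiments yields the first assertion, and in fact shows that even accumulating the evaluation cost over an entire range of $k$-values remains dominated by the one-time $\O(n^2)$ eigendecomposition.

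Second, for the methodological remark, I would observe that since each experiment evaluates the distribution at a single value of $k$, the reported wall-clock time is the sum of the $\O(n^2)$ eigendecomposition and a single $\O(k)$ series evaluation; by the dominance just established, this sum equals the eigendecomposition time up to lower-order terms, so the reported timings do indeed reflect the cost of the eigendecomposition. As a by-product, this also explains why the PDF and CDF timings agree for any fixed pair $(k,s)$ in Tables~\ref{pdfexp} and~\ref{cdfexp}, since both quantities are bottlenecked by the identical eigendecomposition step.

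I expect the only genuinely substantive ingredient to be the uniform-in-$k$ truncation bound, namely that the infinite sums~(\ref{f2k}) and~(\ref{df2k}) collapse to $\O(1)$ effective terms irrespective of $k$. This is exactly the exponential decay of $\lambda_{i,s}$ in $i$ (illustrated in Figure~\ref{figspctm}) that is invoked in Observation~\ref{fasttw}, so that estimate is supplied upstream; the residual work here is a bookkeeping comparison of the two recorded complexities, and I would not anticipate any analytical obstacle beyond keeping the $\O(n^2)$ and $\O(k)$ terms correctly ordered.
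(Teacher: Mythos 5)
Your proposal is correct and follows essentially the same route as the paper, which likewise justifies this observation purely by citing the complexity accounting of Observation~\ref{fasttw}: the $\O(k)$ per-evaluation cost (with an effectively $k$-independent number of series terms, owing to the exponential decay of the $\lambda_{i,s}$) is dominated by the one-time $\O(n^2)$ eigendecomposition, so the reported timings reflect the latter. Your added remark that this explains the near-identical PDF and CDF timings in Tables~\ref{pdfexp} and~\ref{cdfexp} matches the paper's adjacent observation as well.
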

\begin{figure}[h]
    \begin{subfigure}{0.49\textwidth}
      \centering
      \hspace*{-2em}\includegraphics[width=\textwidth]{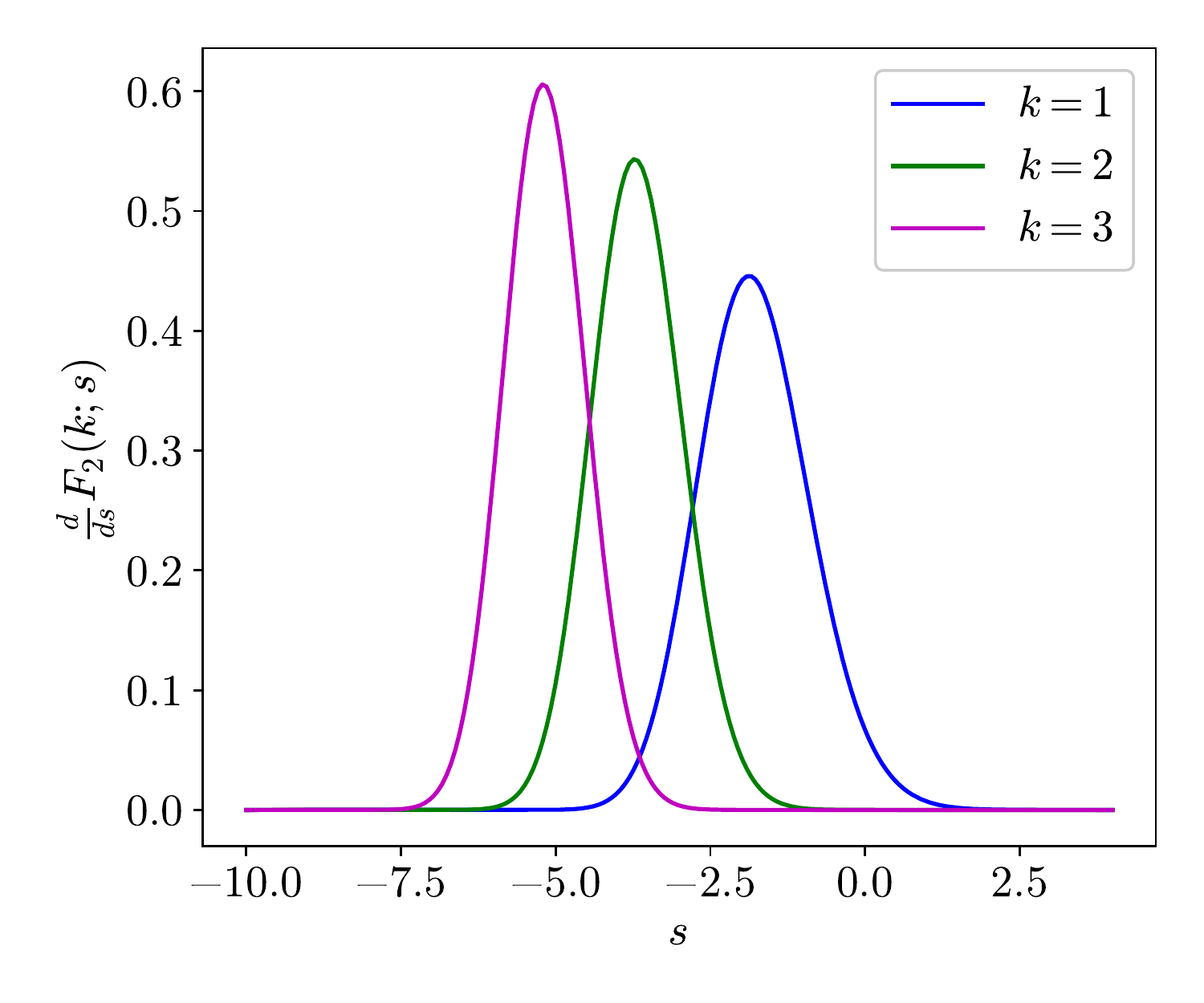}
    \end{subfigure}
    \begin{subfigure}{0.49\textwidth}
      \centering
      \hspace*{-2em}\includegraphics[width=\textwidth]{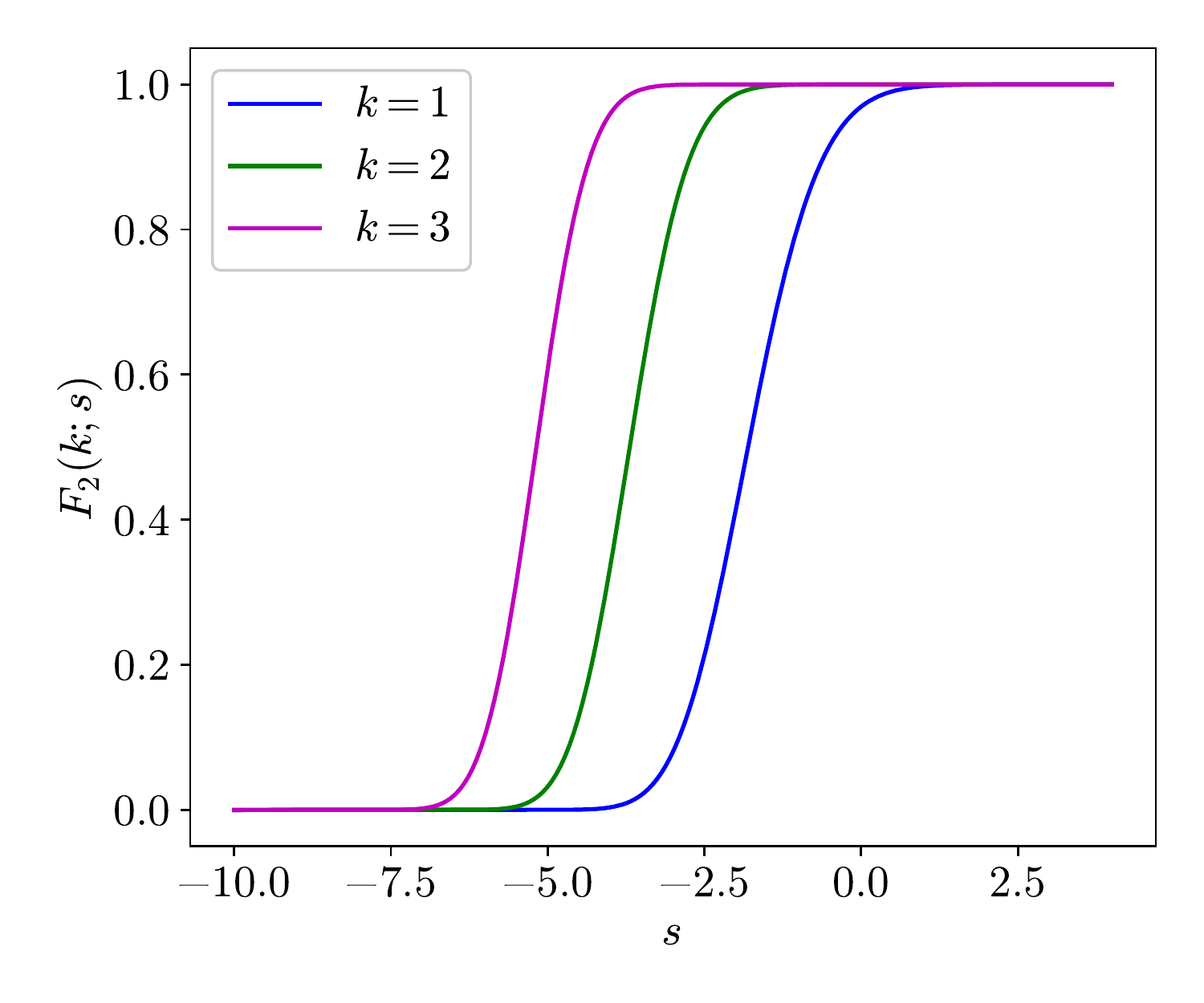}
    \end{subfigure}
  \caption{{\bf $\frac{d}{ds}F_2(k;s)$ and $F_2(k;s)$ for $k=1,2,3$}.}
  \label{twplot}
\end{figure}
\begin{observation}
From Tables \ref{pdfexp}, \ref{cdfexp} and Figure \ref{twplot}, it's clear
that our algorithm evaluates the distributions $\frac{d}{ds}F_2(k;s)$ and
$F_2(k;s)$ to relative accuracy everywhere, except in the left tail. The
algorithm only evaluates the left tail of the distributions to absolute
precision, since the leading eigenvalues of the Airy integral operator
$\T_s$ converge to $1$ as $s\to -\infty$ (see also Theorem \ref{thm:eig1}),
which leads to catastrophic cancellation in the computation of the
distributions (see formulas (\ref{f2k}), (\ref{df2k})). 
\end{observation}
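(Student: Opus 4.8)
The plan is to read the statement as a forward floating-point error analysis of the evaluation of the nested series (\ref{f2k}) and (\ref{df2k}), taking as given that the inputs $\{\lambda_{i,s}\}$ and $\{\psi_{i,s}(0)\}$ are produced to relative precision by the algorithm of Section \ref{numalgo} (see Remark \ref{abserr}). First I would record the arithmetic structure of the two expressions: after the truncation justified by the exponential decay of the spectrum (Observation \ref{fasttw}), both $F_2(k;s)$ and $\frac{d}{ds}F_2(k;s)$ are finite sums of terms, each of which is a product of the non-negative quantities $\lambda_{i,s}^2$, $\bigl(\psi_{i,s}(0)\bigr)^2$, and the factors $1-\lambda_{i,s}^2$. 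Crucially, the only subtraction anywhere in the evaluation is the formation of the factors $1-\lambda_{i,s}^2$; every other operation combines non-negative numbers by multiplication and addition. Since $0\le\lambda_{i,s}^2<1$ for every finite $s$, each such factor is strictly positive, so no sign changes occur in the outer sums.

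Next I would establish the benign-case estimate. Under the standard first-order model of floating-point arithmetic, a product of quantities carrying relative errors $\eta_i$ carries relative error $\sum_i\eta_i$ to leading order, and a sum of non-negative quantities carrying relative error at most $\eta$ again carries relative error at most $\eta$, because no cancellation can occur. The eigenvalues $\lambda_{i,s}$ are relatively accurate by construction, and $\psi_{i,s}(0)=\sum_k \beta_k^{(i)}h_k^a(0)$ is relatively accurate as long as these partial sums do not cancel (its coefficients being coordinate-wise relatively accurate by Remark \ref{abserr}, and $\psi_{i,s}(0)\neq 0$ by Theorem \ref{thm:rec}). Hence relative accuracy can be lost only in the factors $1-\lambda_{i,s}^2$.

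The heart of the argument is the analysis of this one subtraction. Writing the computed square as $\widehat{\lambda_{i,s}^2}=\lambda_{i,s}^2(1+\theta_i)$ with $\abs{\theta_i}\le\epsilon$, the computed factor is $1-\lambda_{i,s}^2-\lambda_{i,s}^2\theta_i$, so its absolute error is at most $\lambda_{i,s}^2\abs{\theta_i}\le\epsilon$ while its relative error is about $\epsilon/(1-\lambda_{i,s}^2)$. When $s$ is not in the left tail, the spectrum decays and every contributing $\lambda_{i,s}^2$ is bounded away from $1$, so $1-\lambda_{i,s}^2=\Theta(1)$ and the factor is relatively accurate; combined with the previous step, $F_2(k;s)$ and $\frac{d}{ds}F_2(k;s)$ are delivered to relative accuracy. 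As $s\to-\infty$, however, Theorem \ref{thm:eig1} forces $\lambda_{j,s}^2\to 1$ for every fixed $j$, so $1-\lambda_{j,s}^2\to 0^+$ and the relative error $\epsilon/(1-\lambda_{j,s}^2)$ of the leading factors diverges, which is precisely the catastrophic cancellation. Propagating through the product $\prod_i(1-\lambda_{i,s}^2)=\det\bigl(I-\cK|_{L^2[s,\infty)}\bigr)$, the relative error of the computed distribution grows like $\epsilon\sum_j 1/(1-\lambda_{j,s}^2)$ and is lost, whereas the absolute error of each differenced factor is retained at the $\epsilon$ level, so that the assembled distribution is obtained to absolute, rather than relative, precision.

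The main obstacle I expect is twofold. The first difficulty is making rigorous that no cancellation occurs anywhere except in the factors $1-\lambda_{i,s}^2$: this requires verifying that the summands of the nested series are genuinely non-negative (immediate from $\lambda_{i,s}^2\ge 0$, $(\psi_{i,s}(0))^2\ge 0$, and $1-\lambda_{i,s}^2>0$) and that the Laguerre partial sums for $\psi_{i,s}(0)$ do not themselves cancel. The second, and subtler, difficulty is turning the informal notion of ``absolute precision'' into a quantitative bound on the absolute error of the final product in the deep tail, and pinning down the crossover value of $s$ at which the leading factor $1-\lambda_{0,s}^2$ falls below $\epsilon$, which marks the boundary of the left tail and is consistent with the errors reported in Tables \ref{pdfexp} and \ref{cdfexp}. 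The clean mathematical input driving the entire dichotomy, nevertheless, is exactly Theorem \ref{thm:eig1}.
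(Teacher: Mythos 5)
Your proposal is correct and takes essentially the same route as the paper: the Observation's justification is exactly your dichotomy, namely that all operations in the truncated series (\ref{f2k}), (\ref{df2k}) combine non-negative relatively accurate quantities except the subtractions $1-\lambda_{i,s}^2$, which by Theorem \ref{thm:eig1} suffer catastrophic cancellation as $s\to-\infty$, leaving only absolute precision in the left tail (confirmed empirically in Tables \ref{pdfexp}, \ref{cdfexp}). The one point you relegate to a caveat --- possible cancellation in evaluating $\psi_{i,s}(0)=\sum_k\beta_k^{(i)}h_k^a(0)$ --- is in fact named by the paper (Section \ref{sec:klevel}) as a co-equal source of the left-tail accuracy loss alongside $1-\lambda_{j,s}^2$, so your instinct to flag it was right and it should be promoted from obstacle to part of the mechanism.
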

\clearpage

\subsection{Computation of finite-energy Airy beams}

In this section, we compute the beam intensities for both the finite Airy
beams and the Airy eigenfunction beams constructed from
$\psi_{0,c}$, described in Section~\ref{optics}. In our experiments, we 
construct finite Airy beams and Airy eigenfunction beams 
with unit total energy, and with roughly the same intensity in their main lobes.
We demonstrate that the eigenfunction beams are more non-diffracting
than the finite Airy beams (see Figures \ref{fig:cm2_a0202},
\ref{fig:cm1_a0108}). We also plot the densities and beam intensities of
the Airy eigenfunction beams at $\xi=0$ for various values of the parameter
$c$ in Figure~\ref{fig:sigphieig}.

\begin{figure}[h]
    \centering
    \begin{subfigure}{0.6\textwidth}
      \centering
      \includegraphics[width=\textwidth]{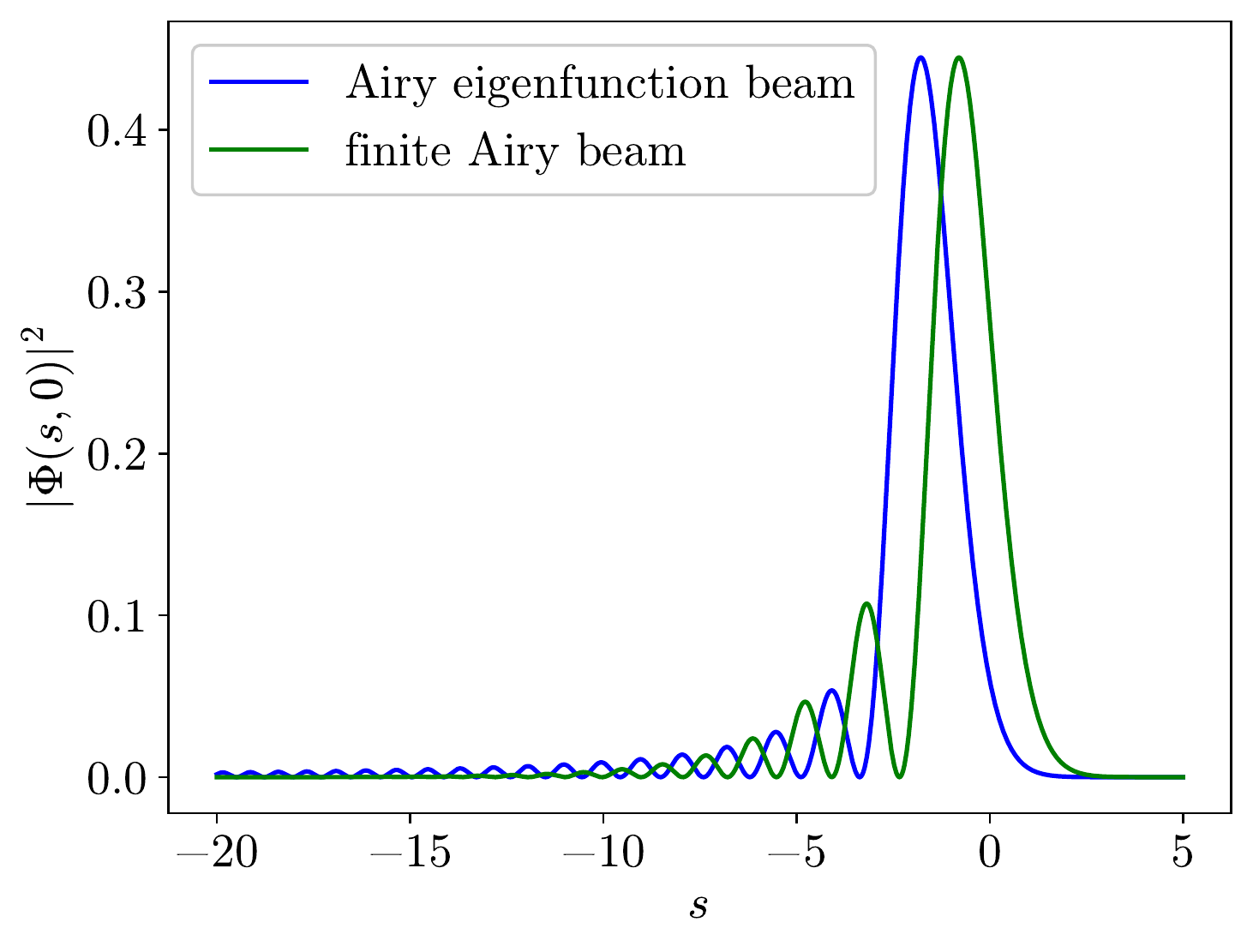}
      \caption{\label{fig:cm2_a0202_init}}
    \end{subfigure}
    \begin{subfigure}{0.99\textwidth}
      \centering
      \includegraphics[width=\textwidth]{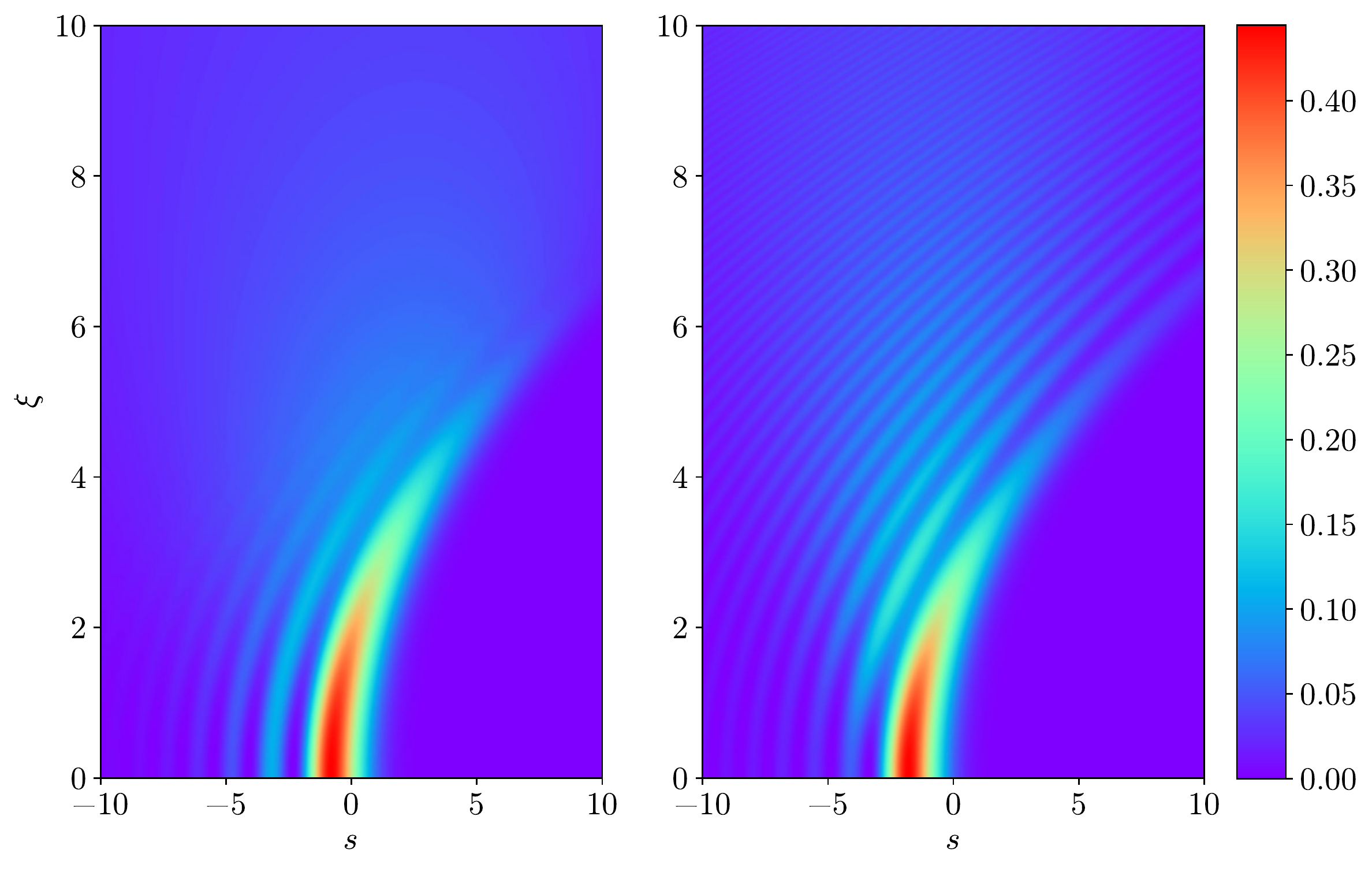}
      \caption{\label{fig:cm2_a0202_beam}}
    \end{subfigure}

  \caption{{\bf The intensity profiles of the finite Airy beam with
  parameter $\alpha=0.202$ and the Airy eigenfunction beam with
  parameter $c=-2$.}  The initial intensity profiles are shown in Figure
  (\subref{fig:cm2_a0202_init}). The beam profiles of the finite Airy beam and
  the Airy eigenfunction beam are shown on the left and right of Figure
  (\subref{fig:cm2_a0202_beam}), respectively.}
  \label{fig:cm2_a0202}

\end{figure}

\begin{figure}[h]
    \centering
    \begin{subfigure}{0.6\textwidth}
      \centering
      \includegraphics[width=\textwidth]{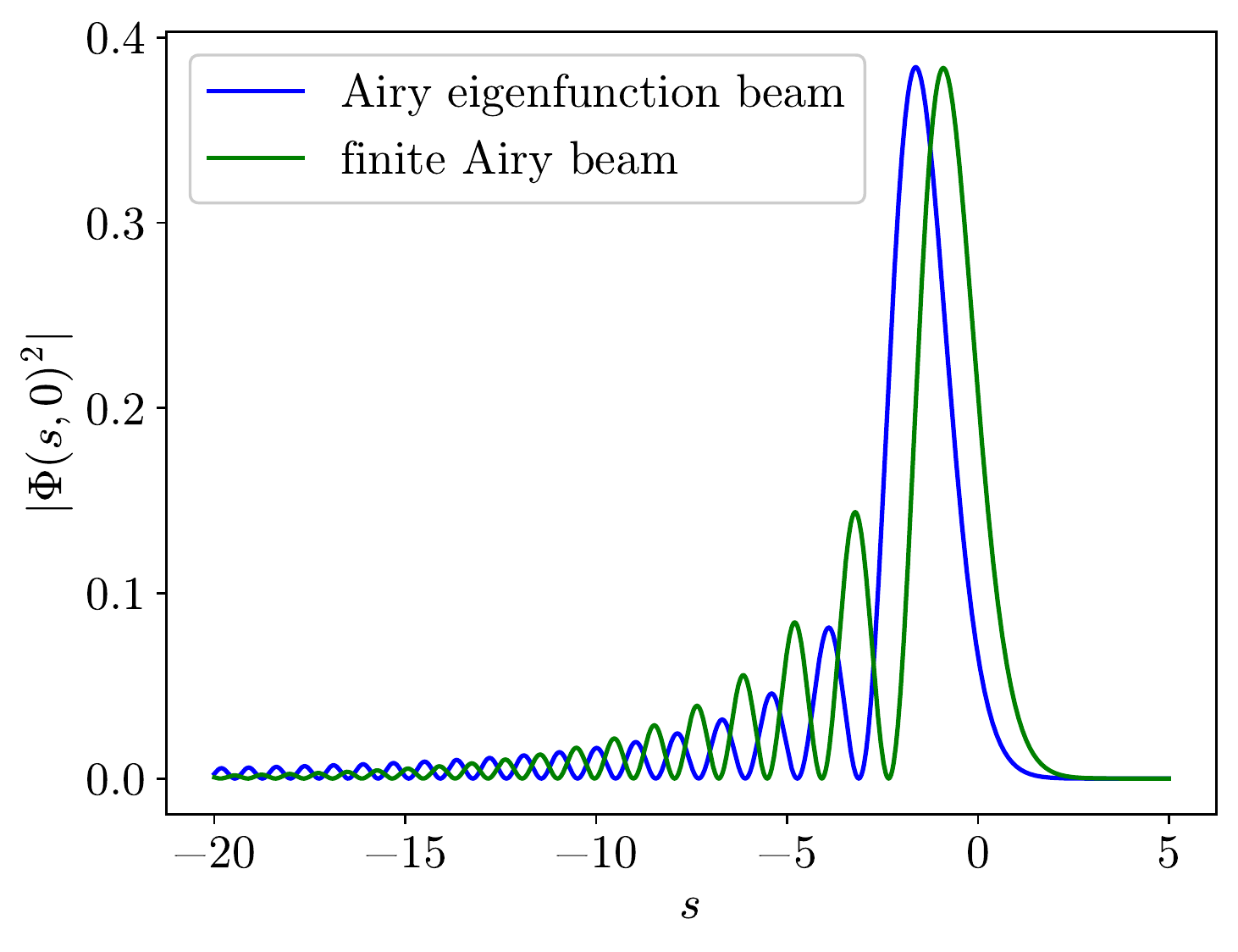}
      \caption{\label{fig:cm1_a0108_init}}
    \end{subfigure}
    \begin{subfigure}{0.99\textwidth}
      \centering
      \includegraphics[width=\textwidth]{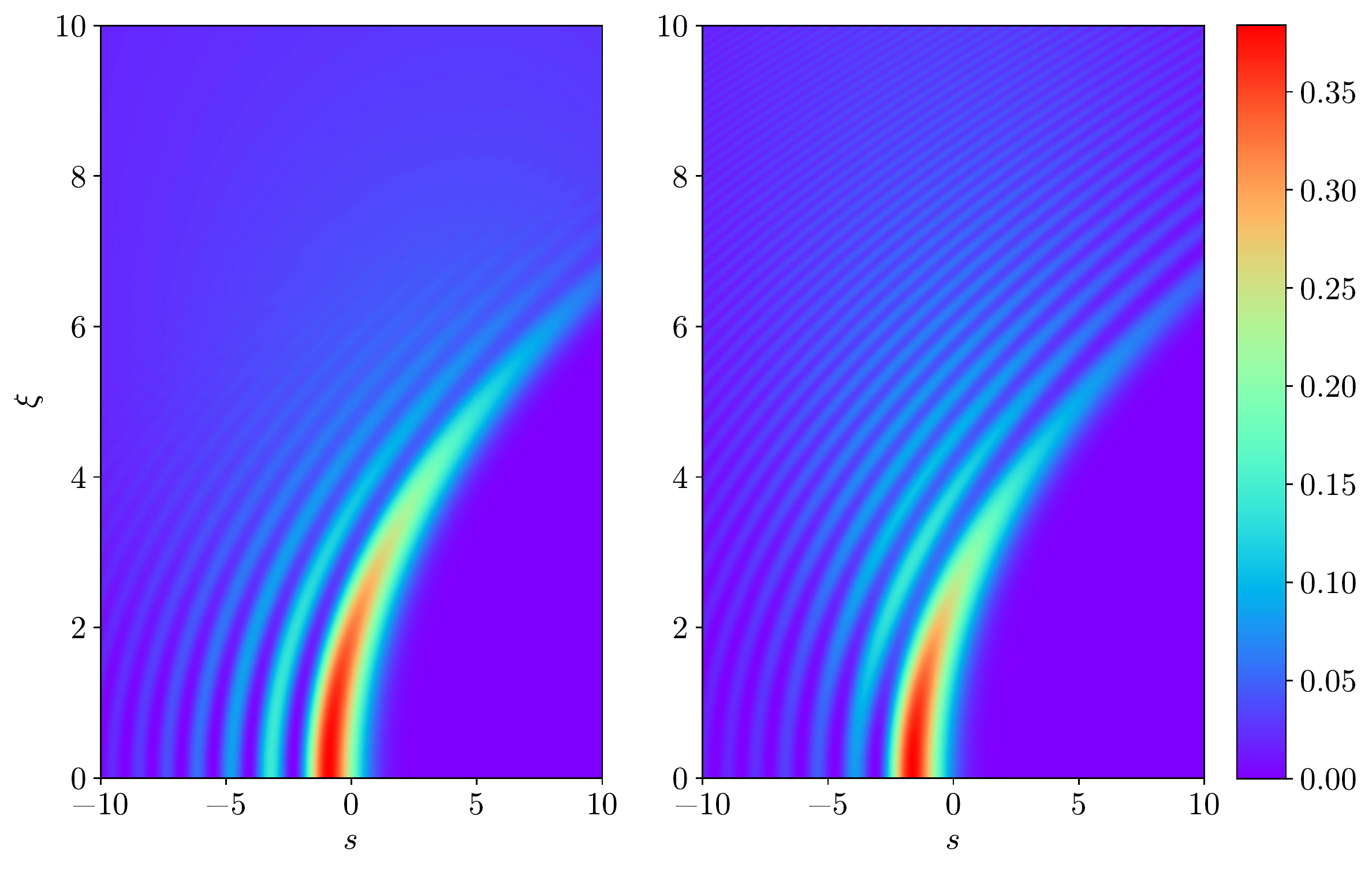}
      \caption{\label{fig:cm1_a0108_beam}}
    \end{subfigure}
  \caption{{\bf The intensity profiles of the finite Airy beam with
  parameter $\alpha=0.108$ and the optimal finite-energy Airy beam with
  parameter $c=-1$.} The initial intensity profiles are shown in 
  (\subref{fig:cm1_a0108_init}). The beam profiles of the finite Airy beam and
  the Airy eigenfunction beam are shown on the left and right of Figure
  (\subref{fig:cm1_a0108_beam}), respectively.}
  \label{fig:cm1_a0108}

\end{figure}

\begin{figure}[h]
    \centering
    \begin{subfigure}{0.49\textwidth}
      \centering
      \includegraphics[width=\textwidth]{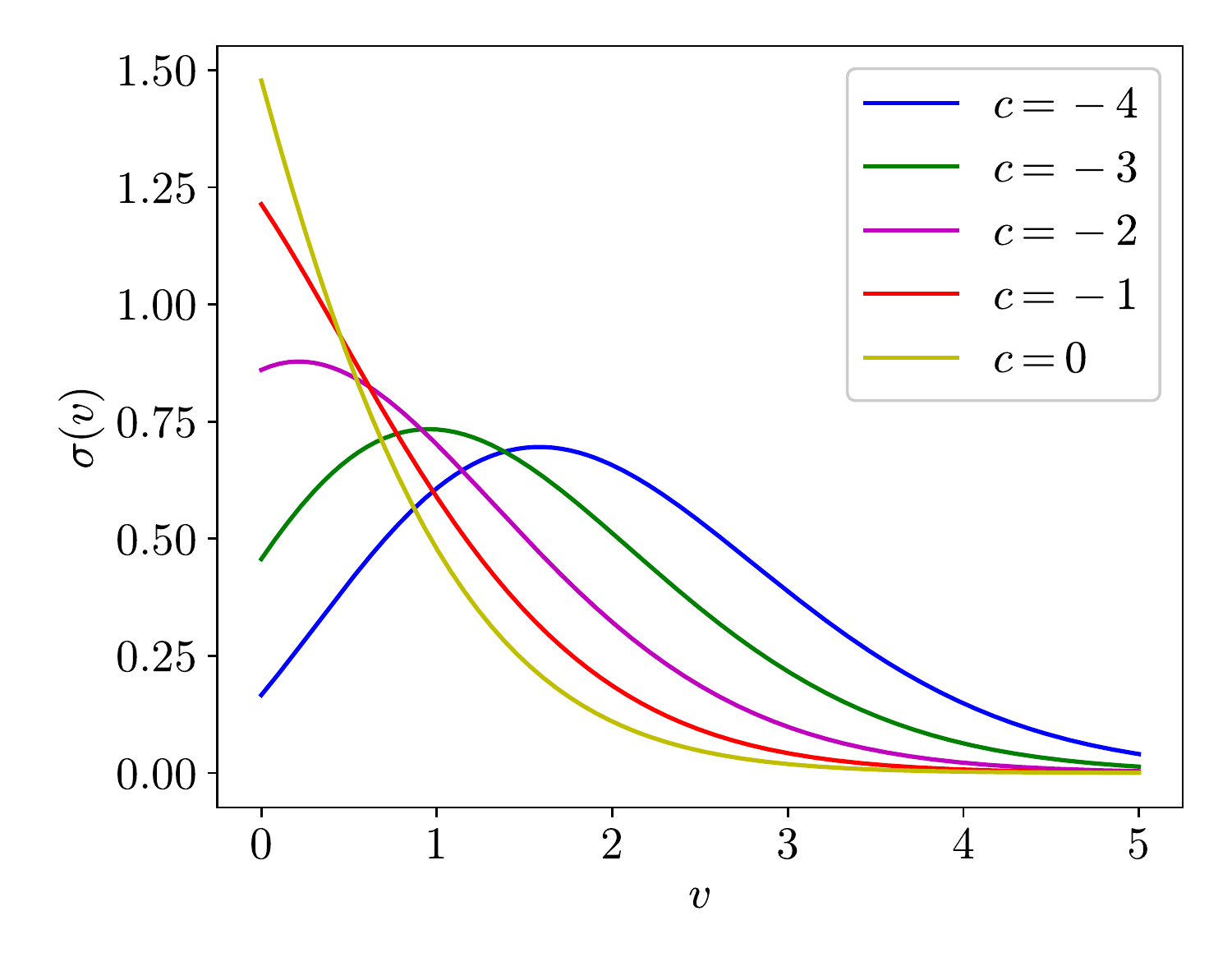}
      \caption{\label{fig:sigeig}}
    \end{subfigure}
    \begin{subfigure}{0.49\textwidth}
      \centering
      \includegraphics[width=\textwidth]{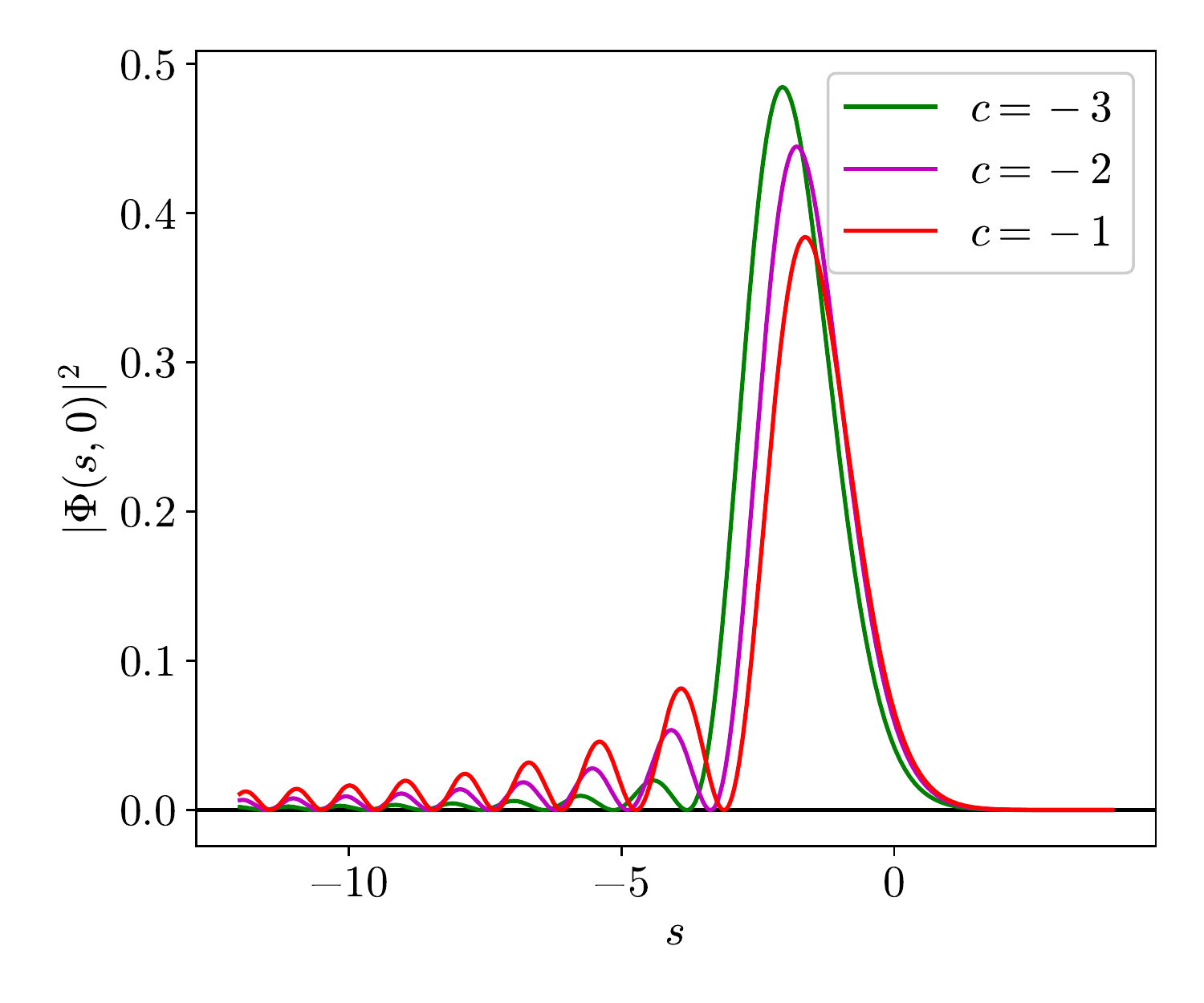}
      \caption{\label{fig:phieig}}
    \end{subfigure}
  \caption{{\bf The density functions and beam intensities of the Airy
  eigenfunction beams, constructed from $\psi_{0,c}$, for various values of
  the parameter $c$.} The density functions $\sigma(v)=\psi_{0,c}(v)$ and
  their corresponding beam intensities $\abs{\Phi(s,0)}^2$, defined by
  formula~(\ref{for:initpro}), are shown in Figures~(\subref{fig:sigeig})
  and~(\subref{fig:phieig}), respectively, for various values of the
  parameter $c$.  }
  \label{fig:sigphieig}

\end{figure}

\begin{observation}
From Figures \ref{fig:cm2_a0202} and \ref{fig:cm1_a0108}, it's clear
that the Airy eigenfunction beams exhibit the key characteristics of the Airy beam.
Moreover, the Airy eigenfunction beams do a better job in preserving 
fine structure than the finite Airy beams, which implies that
the Airy eigenfunction beams have both a better self-healing ability, and
stronger gradient forces.
\end{observation}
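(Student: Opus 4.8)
The statement is an empirical observation whose primary evidence is Figures \ref{fig:cm2_a0202} and \ref{fig:cm1_a0108}, so the plan is not to produce a fully rigorous proof but to assemble the analytical and physical reasoning that explains the figures. The claim decomposes into three linked parts: (i) the Airy eigenfunction beams inherit the Airy beam's non-diffracting, self-accelerating behavior; (ii) at matched total energy and matched main-lobe intensity they preserve finer oscillatory side-lobe structure than the finite Airy beams; and (iii) this superior fine-structure preservation implies both better self-healing and stronger gradient forces. I would treat the three parts in turn.

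First I would establish (i) by returning to the paraxial solution (\ref{paraxsoln}) with $\sigma=\psi_{0,c}$. Since the diffractive spreading is caused solely by the phase factor $\exp(i v \xi/2)$ under the integral, the diffraction-free range is governed by the effective support of $\sigma$ in $v$; because $\psi_{0,c}$ is concentrated on a bounded interval (Theorem \ref{thm:tpsiineq} shows its analytic continuation decays for $v\ge\max(-c,0)$, and Section \ref{sec:qual} describes its localization near $v=0$), the beam stays close to a pure translate of $\ai$ over a long axial distance, giving the profile seen in Figure \ref{fig:cm2_a0202_beam}. The extremal property (Theorem \ref{thm:extremal}) then guarantees that, among all Airy-bandlimited densities of a given energy, $\psi_{0,c}$ places the maximal fraction $\lambda_{0,c}^2$ of the transform's energy on $[c,\infty)$, so that the main lobe of $\cA[\psi_{0,c}]=\Phi(s,0)$ is as strong as possible for its degree of localization.

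The heart of the argument is part (ii), and I would make it by contrasting the two initial envelopes directly. The finite Airy beam (\ref{for:finiteic}) multiplies $\ai(s)$ by the exponential aperture $\exp(\alpha s)$, which for $\alpha>0$ damps the oscillatory tail as $s\to-\infty$ at an exponential rate, thereby erasing the fine side-lobe structure of the ideal Airy profile. By contrast, the eigenfunction envelope $\Phi(s,0)=\cA[\psi_{0,c}](s)$ retains substantial tail oscillations precisely when $1-\lambda_{0,c}^2$ is not negligible, that is, for $c$ in the intermediate regime $[-5,1.5]$ analyzed in Section \ref{sec:qual}, where the Airy transform still resembles a scaled Airy function complete with side lobes. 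I would quantify this by comparing, at fixed $\norm{\Phi(\cdot,0)}_2=1$ and matched main-lobe height, the decay of the two envelopes into the side-lobe region: the exponential suppression of the finite Airy beam versus the slower, oscillatory decay of $\cA[\psi_{0,c}]$ forced by the uncertainty principle (Theorem \ref{thm:uncert}) together with the fact that $\psi_{0,c}$ saturates its bound. This is the step I expect to be the main obstacle, since ``fine structure'' and ``matched main lobe'' are not sharply defined; the cleanest route is to fix a concrete functional (for instance the $L^2$ mass beyond the first zero, or the number of side lobes above a threshold) and verify, in tandem with the figures, that the eigenfunction beam dominates.

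Finally, for part (iii) I would invoke the established optics mechanism that links the preceding facts to the physical properties. Self-healing in Airy-type beams arises from transverse energy flow out of the side lobes into a perturbed main lobe, so a beam with richer, more slowly decaying side lobes, as established in (ii), reconstructs its main lobe more effectively after an obstruction; likewise, sharper and more numerous intensity oscillations produce larger transverse intensity gradients and hence stronger gradient forces. I would cite the finite-Airy-beam literature already referenced in Section \ref{optics} for these correspondences, and then conclude that the fine-structure advantage of the Airy eigenfunction beams translates directly into the claimed advantages in self-healing and gradient force, consistent with Figures \ref{fig:cm2_a0202} and \ref{fig:cm1_a0108}.
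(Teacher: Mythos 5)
Your proposal is essentially sound, but it takes a genuinely different route from the paper, for the simple reason that the paper offers no proof at all: the statement is an empirical Observation whose entire support is the numerical experiment in which finite Airy beams and Airy eigenfunction beams are constructed with unit total energy and roughly matched main-lobe intensity, and the computed profiles in Figures \ref{fig:cm2_a0202} and \ref{fig:cm1_a0108} are compared directly. What you supply instead is a mechanistic rationale assembled from the paper's analytical apparatus --- the extremal property (Theorem \ref{thm:extremal}), the uncertainty principle (Theorem \ref{thm:uncert}), the turning-point bound (Theorem \ref{thm:tpsiineq}), the qualitative discussion of Section \ref{sec:qual}, and the remark in Section \ref{optics} that the eigenfunction beams spread their energy as evenly as possible in their side lobes --- plus the optics-literature link from side-lobe structure to self-healing and gradient forces. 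Your route buys explanatory content the paper leaves implicit, and your contrast between the exponential aperture damping $e^{\alpha s}$ in (\ref{for:finiteic}) and the Airy-like oscillatory tail of $\cA[\psi_{0,c}]$ in (\ref{for:initpro}) is exactly the right quantitative axis; the paper's route buys honesty about the epistemic status, since ``fine structure,'' ``matched main lobe,'' ``self-healing,'' and ``gradient force'' are never formalized, so your parts (ii) and (iii) cannot be closed deductively --- as you concede --- and the figures remain the actual evidence. One soft spot to correct: Theorem \ref{thm:uncert} provides only an \emph{upper} bound on the energy fraction on a half-line, so saturation of that bound by $\psi_{0,c}$ does not by itself force slowly decaying tail oscillations; the mechanism the paper actually uses (Section \ref{sec:qual}) is that $1-\lambda_{0,c}^2$ is non-negligible for $c\in[-5,1.5]$, combined with the fact that $\cA[\psi_{0,c}]$ still resembles a scaled Airy function, whose side-lobe amplitude decays only algebraically --- in contrast to the exponential suppression in the finite Airy beam. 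With that substitution, your argument aligns with the picture the paper's experiments are designed to exhibit.
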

\clearpage

\section{Conclusions}

In this paper, we present a numerical algorithm for rapidly evaluating the
eigendecomposition of the Airy integral operator $\T_c$,
defined in~(\ref{Tc}). Our method computes the eigenvalues $\lambda_{j,c}$
of $\T_c$ to full relative accuracy, and computes the eigenfunctions
$\psi_{j,c}$ of $\T_c$ and $\L_c$ in the form of an expansion~(\ref{exp}) in
scaled Laguerre functions, where the expansion coefficients are also
computed to full relative accuracy. 
In addition, we characterize the previously unstudied eigenfunctions of the
Airy integral operator, and describe their extremal properties in relation
to an uncertainty principle involving the Airy transform. 

We also describe two applications. First, we show that this algorithm can be
used to rapidly evaluate the distributions of the $k$-th largest level at
the soft edge scaling limit of Gaussian ensembles to full relative precision
rapidly everywhere, except in the left tail (the left tail is computed to
absolute precision).  
Second, we show that the eigenfunctions of the Airy integral operator can be
used to construct finite-energy Airy beams that achieve the longest possible
diffration-free distances by spreading their energy as evenly as possible in
their side lobes, while also concentrating energy near the main lobes of
their initial profiles.

\section{Acknowledgements}
We sincerely thank Jeremy Quastel for his helpful advice and for our
informative conversations. The first author would like to thank Shen Zhenkun
and Gu Qiaoling for their endless support, and he is fortunate, grateful,
and proud of being their grandson.

\appendix

\section{Appendix: Miscellaneous Properties of the Airy integral operator and its
commuting differential operator}\label{sec:misc}

In this section, we describe miscellaneous properties of the eigenfunctions
$\psi_{n,c}$ of the operators $\T_c$ and $\L_c$, as well as properties of
the eigenvalues $\chi_{n,c}$ of the commuting differential operator $\L_c$, and
$\lambda_{n,c}$ of the Airy integral operators $\T_c$.

\subsection{Derivative of $\chi_{n,c}$ with respect to $c$}
\begin{theorem}
For all real number $c$ and non-negative integers $n$, 
    \begin{align}
\pp{\chi_{n,c}}{c}=\int_0^\infty x\bigl(\psi_{n,c}(x)\bigr)^2 \d x.
    \end{align}
\end{theorem}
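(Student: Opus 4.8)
The plan is to prove this identity by a Hellmann--Feynman (first-order perturbation) argument applied to the Sturm--Liouville eigenproblem $\L_c[\psi_{n,c}]=\chi_{n,c}\psi_{n,c}$ (see~(\ref{eigode2})), together with the normalization $\norm{\psi_{n,c}}_2=1$. The key observation is that the parameter $c$ enters the operator $\L_c$ (see~(\ref{Lc})) only through the zeroth-order term $x(x+c)f=(x^2+cx)f$, so that $\pp{\L_c}{c}$ is simply the operator of multiplication by $x$. Since the eigenvalues of $\L_c$ have multiplicity one (see Section~\ref{commutechap}), analytic perturbation theory guarantees that $\chi_{n,c}$ and $\psi_{n,c}$ depend differentiably on $c$, which legitimizes the manipulations below.

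First I would differentiate the eigenvalue equation with respect to $c$, which gives
\begin{align}
x\,\psi_{n,c}+\L_c\Bigl[\pp{\psi_{n,c}}{c}\Bigr]=\pp{\chi_{n,c}}{c}\psi_{n,c}+\chi_{n,c}\pp{\psi_{n,c}}{c}.
\end{align}
Taking the $L^2[0,\infty)$ inner product of both sides with $\psi_{n,c}$ and using $\norm{\psi_{n,c}}_2=1$, I would obtain
\begin{align}
\inner{\psi_{n,c}}{x\,\psi_{n,c}}+\inner{\psi_{n,c}}{\L_c[\pp{\psi_{n,c}}{c}]}=\pp{\chi_{n,c}}{c}+\chi_{n,c}\inner{\psi_{n,c}}{\pp{\psi_{n,c}}{c}}.
\end{align}
Next I would invoke the self-adjointness of $\L_c$ to rewrite $\inner{\psi_{n,c}}{\L_c[\pp{\psi_{n,c}}{c}]}=\inner{\L_c[\psi_{n,c}]}{\pp{\psi_{n,c}}{c}}=\chi_{n,c}\inner{\psi_{n,c}}{\pp{\psi_{n,c}}{c}}$, so that this term cancels exactly against the last term on the right. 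What survives is precisely
\begin{align}
\pp{\chi_{n,c}}{c}=\inner{\psi_{n,c}}{x\,\psi_{n,c}}=\int_0^\infty x\bigl(\psi_{n,c}(x)\bigr)^2\d x,
\end{align}
which is the desired identity.

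The main obstacle is justifying the self-adjointness step in this singular setting, since $\L_c$ has singular endpoints at $x=0$ and $x=\infty$, and one must check that $\pp{\psi_{n,c}}{c}$ lies in a domain on which integration by parts produces no boundary contribution. Concretely, after integrating the second-order part by parts twice, the symmetric bilinear form reduces to the boundary term $\bigl[x\bigl(g'\psi_{n,c}-\psi_{n,c}'g\bigr)\bigr]_0^\infty$ with $g=\pp{\psi_{n,c}}{c}$; this vanishes at $x=0$ owing to the explicit factor $x$ multiplying the leading derivative (the solutions being continuous at the regular singular point), and it vanishes at $x=\infty$ because $\psi_{n,c}$ and its $c$-derivative inherit the superexponential decay furnished by the integral representation~(\ref{Teigfun}). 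As an independent sanity check, I would note that the spurious term $\inner{\psi_{n,c}}{\pp{\psi_{n,c}}{c}}$ equals $\tfrac12\pp{}{c}\norm{\psi_{n,c}}_2^2=0$ by the normalization, confirming that it could equally well have been discarded directly.
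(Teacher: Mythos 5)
Your proposal is correct and follows essentially the same route as the paper: both differentiate the eigenvalue equation $\L_c[\psi_{n,c}]=\chi_{n,c}\psi_{n,c}$ with respect to $c$ (the paper phrases this with infinitesimals $h,\epsilon,\delta$), take the inner product with $\psi_{n,c}$, and use the self-adjointness of $\L_c$ together with the normalization $\norm{\psi_{n,c}}_2=1$ to eliminate the term involving $\pp{\psi_{n,c}}{c}$. Your additional discussion of why the boundary terms vanish at the singular endpoints $x=0$ and $x=\infty$ is a welcome refinement of a step the paper takes for granted, but it does not change the argument.
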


\begin{proof}
By (\ref{eigode}), we have
    \begin{align}
\dd{}{x} \Bigl(x \dd{}{x}\psi_{n,c} \Bigr) -
(x^2+cx-\chi_n)\psi_{n,c}=0.\label{dchi2}
    \end{align}
With the infinitesimal change $c=c+h$, it follows that
$\chi_n=\chi_n+\epsilon$, $\psi_{n,c}(x)=\psi_{n,c}(x)+\delta(x)$.
Therefore, (\ref{dchi2}) becomes
    \begin{align}
\dd{}{x} \Bigl(x \dd{}{x}(\psi_{n,c}+\delta) \Bigr) -
\bigl(x^2+(c+h)x-(\chi_n+\epsilon)\bigr)(\psi_{n,c}+\delta)=0.\label{dchi3}
    \end{align}
After subtracting (\ref{dchi2}) from (\ref{dchi3}) and discarding
infinitesimals of second order or greater, (\ref{dchi3}) becomes
    \begin{align}
\L_c[\delta](x)-(hx-\epsilon)\psi_{n,c}(x)=0,\label{dchi4}
    \end{align}
where $\L_c$ is defined by (\ref{Lc}). Then, we multiply both sides of
(\ref{dchi4}) by $\frac{\psi_{n,c}(x)}{h}$ and integrate both sides over the
interval $[0,\infty)$, which gives us
    \begin{align}
\hspace{-2em}\frac{1}{h}\int_0^\infty\L_c[\delta](x)\psi_{n,c}(x)\d
x-\int_0^\infty x\bigl(\psi_{n,c}(x)\bigr)^2\d
x+\frac{\epsilon}{h}\int_0^\infty\bigl(\psi_{n,c}(x)\bigr)^2=0.\label{dchi5}
    \end{align}
Due to the self-adjointness of $\L_c$,
    \begin{align}
\frac{1}{h}\int_0^\infty\L_c[\delta](x)\psi_{n,c}(x)\d
x=\frac{1}{h}\int_0^\infty\delta(x)\L_c[\psi_{n,c}](x)\d x=0.\label{dchi7}
    \end{align}
By (\ref{dchi7}) and the fact that $\norm{\psi_{n,c}}_2=1$, in the
appropriate limit, (\ref{dchi5}) becomes
    \begin{align}
\pp{\chi_{n,c}}{c}=\int_0^\infty x\bigl(\psi_{n,c}(x)\bigr)^2 \d x.
    \end{align}
\end{proof}

\subsection{Recurrence relations involving the derivatives of eigenfunctions of different orders}
\begin{theorem}\label{thm:rec}
For all real numbers $c$, non-negative integers $n$, and $x\in[0,\infty)$,
    \begin{align}
    \label{rec1}
&\hspace{-2em}-k(k-1) \psi_{n,c}^{(k-2)}(x)  - k(c+2x) \psi_{n,c}
^{(k-1)}(x)+\left(\chi_{n,c}-c x -x^2\right) \psi_{n,c} ^{(k)}(x)\notag\\
&\hspace{-2em}+(k+1) \psi_{n,c} ^{(k+1)}(x)+x \psi_{n,c} ^{(k+2)}(x)=0,
    \end{align}
for all $k\geq 2$. Furthermore,
    \begin{align}
    \label{rec2}
\hspace{-2em}-(c+2x)\psi_{n,c}(x)+(\chi_{n,c}-cx-x^2)\psi_{n,c}'(x)+2\psi_{n,c}''(x)+x\psi_{n,c}^{(3)}(x)=0.
    \end{align}
In particular, for all positive real $c$, non-negative integers $n$,
    \begin{align}
&\hspace{-2em}\chi_{n,c}\psi_{n,c}(0)+\psi_{n,c}'(0)=0,\label{rec3}\\
&\hspace{-2em}\psi_{n,c}(0)\neq 0.\label{rec4}
    \end{align}
\end{theorem}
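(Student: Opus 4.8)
The plan is to generate the entire family (\ref{rec1})--(\ref{rec4}) from a single source, the defining ODE~(\ref{eigode}), by differentiating it repeatedly and then specializing to $x=0$. Writing $\L_c[\psi_{n,c}]=\chi_{n,c}\psi_{n,c}$ out explicitly, we have
\begin{align}
x\psi_{n,c}''(x)+\psi_{n,c}'(x)+(\chi_{n,c}-cx-x^2)\psi_{n,c}(x)=0.\notag
\end{align}
The structural point is that the two non-constant coefficients appearing here, the factor $x$ in front of $\psi''$ and the quadratic $\chi_{n,c}-cx-x^2$ in front of $\psi$, are polynomials of degree one and two. Hence when I apply the Leibniz rule to differentiate the equation $k$ times, each coefficient contributes only through its first two derivatives, and the identity closes into a five-term relation. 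Explicitly, $\frac{d^k}{dx^k}(x\psi'')=x\psi^{(k+2)}+k\psi^{(k+1)}$, $\frac{d^k}{dx^k}\psi'=\psi^{(k+1)}$, and $\frac{d^k}{dx^k}\big((\chi_{n,c}-cx-x^2)\psi\big)=(\chi_{n,c}-cx-x^2)\psi^{(k)}-k(c+2x)\psi^{(k-1)}-k(k-1)\psi^{(k-2)}$; summing these and combining the two $\psi^{(k+1)}$ contributions reproduces~(\ref{rec1}) for every $k\ge 2$.

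I would then observe that~(\ref{rec2}) is nothing but the $k=1$ instance of the same computation, recorded separately because the $\psi^{(k-2)}$ term is absent (equivalently, its coefficient $k(k-1)$ vanishes); the $k=0$ instance is just the original ODE. Evaluating the original ODE at $x=0$, where the $x\psi''$ term drops out, gives $\psi_{n,c}'(0)+\chi_{n,c}\psi_{n,c}(0)=0$, which is~(\ref{rec3}); no hypothesis on the sign of $c$ is used here.

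The substantive assertion is~(\ref{rec4}). I would argue by contradiction, assuming $\psi_{n,c}(0)=0$, and show that then every derivative of $\psi_{n,c}$ at the origin is forced to vanish. Indeed~(\ref{rec3}) gives $\psi_{n,c}'(0)=-\chi_{n,c}\psi_{n,c}(0)=0$, evaluating~(\ref{rec2}) at $x=0$ gives $2\psi_{n,c}''(0)=c\,\psi_{n,c}(0)-\chi_{n,c}\psi_{n,c}'(0)=0$, and evaluating~(\ref{rec1}) at $x=0$ yields, for each $k\ge 2$,
\begin{align}
(k+1)\psi_{n,c}^{(k+1)}(0)=k(k-1)\psi_{n,c}^{(k-2)}(0)+kc\,\psi_{n,c}^{(k-1)}(0)-\chi_{n,c}\psi_{n,c}^{(k)}(0),\notag
\end{align}
where setting $x=0$ has annihilated the top-order term $x\psi^{(k+2)}$ and left $\psi_{n,c}^{(k+1)}(0)$ determined by the three lower derivatives. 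Starting from $\psi_{n,c}(0)=\psi_{n,c}'(0)=\psi_{n,c}''(0)=0$, a routine induction then forces $\psi_{n,c}^{(k)}(0)=0$ for all $k$.

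To finish, I would appeal to analyticity: by Theorem~\ref{thm:anacont} the eigenfunction $\psi_{n,c}$ coincides on $[0,\infty)$ with an entire function, so the vanishing of all its derivatives at the origin forces $\psi_{n,c}\equiv 0$, contradicting $\norm{\psi_{n,c}}_2=1$. This last step is where the real difficulty lies: $x=0$ is a regular singular point of~(\ref{eigode}) (the leading coefficient $x$ degenerates there, with a double indicial root at $0$), so the usual initial-value uniqueness theorem does not apply, and it is precisely the recurrence-plus-analyticity argument that substitutes for it. I note that this argument uses only that $\psi_{n,c}$ is analytic and nonzero, so the conclusion $\psi_{n,c}(0)\ne 0$ in fact holds for all real $c$; the positivity hypothesis in the statement reflects only the regime in which the fact is invoked elsewhere in the paper.
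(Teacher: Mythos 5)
Your proposal is correct and follows essentially the same route as the paper, whose proof is only a three-line sketch (repeated differentiation of~(\ref{eigode}) for~(\ref{rec1})--(\ref{rec2}), substitution of $x=0$ for~(\ref{rec3}), and an unspecified contradiction for~(\ref{rec4})); your Leibniz computation and the induction on the recurrences at $x=0$ combined with the analytic continuation from Theorem~\ref{thm:anacont} supply precisely the details the paper leaves to the reader, and your remark that $x=0$ is a regular singular point with a double indicial root correctly identifies why this substitute for initial-value uniqueness is needed. Your observation that the argument never uses $c>0$, so that $\psi_{n,c}(0)\neq 0$ holds for all real $c$, is also right and is in fact how the paper invokes the result when normalizing the eigenfunctions.
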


\begin{proof}
The identities (\ref{rec1}) and (\ref{rec2}) are immediately obtained by
repeated differentiation of (\ref{eigode}).
The identity (\ref{rec3}) is proved by substituting $x=0$ into (\ref{eigode}).
Finally, the identity (\ref{rec4}) can be easily verified via proof by contradiction.
\end{proof}

\begin{remark}
We can compute the initial conditions $\psi_{n,c}(x)$ and $\psi_{n,c}'(x)$ by
evaluating the truncated expansion (\ref{exptrun}) and its first derivative in
$\O(N)$ operations, where $N$ represents the number of the expansion
coefficients. The higher derivatives can then be calculated via identities
(\ref{eigode}), (\ref{rec2}) and (\ref{rec1}) in $\O(1)$ operations. This
theorem is useful for computing the Taylor expansion of $\psi_{n,c}$ at a
given point $x$.
\end{remark}

\begin{corollary}
For all positive real $c$, non-negative integers $m,n$,
    \begin{align}
\hspace{-2em}(\chi_{m,c}-\chi_{n,c})\psi_{m,c}(0)\psi_{n,c}(0)+\psi_{m,c}'(0)\psi_{n,c}(0)-\psi_{m,c}(0)\psi_{n,c}'(0)=0.
    \end{align}
\end{corollary}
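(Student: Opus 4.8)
The plan is to reduce the corollary to a one-line algebraic identity by invoking formula~(\ref{rec3}) of Theorem~\ref{thm:rec} for each of the two indices. Since $c$ is assumed to be positive real, that theorem applies and yields the boundary relation $\chi_{n,c}\psi_{n,c}(0)+\psi_{n,c}'(0)=0$ for every non-negative integer $n$. Writing this separately for the indices $m$ and $n$, I obtain $\psi_{m,c}'(0)=-\chi_{m,c}\psi_{m,c}(0)$ and $\psi_{n,c}'(0)=-\chi_{n,c}\psi_{n,c}(0)$.

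First I would substitute these two expressions into the last two terms of the left-hand side of the claimed identity. Writing $A:=\psi_{m,c}(0)\psi_{n,c}(0)$ for brevity, the term $\psi_{m,c}'(0)\psi_{n,c}(0)$ becomes $-\chi_{m,c}A$, and the term $-\psi_{m,c}(0)\psi_{n,c}'(0)$ becomes $+\chi_{n,c}A$, since the two minus signs cancel. The first term of the identity is already of the form $(\chi_{m,c}-\chi_{n,c})A$.

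Collecting, the left-hand side equals $(\chi_{m,c}-\chi_{n,c})A-\chi_{m,c}A+\chi_{n,c}A$, and the coefficients of $A$ sum to zero, which establishes the corollary. There is essentially no obstacle here: the result is a direct algebraic consequence of the single boundary identity~(\ref{rec3}), and the only point requiring attention is that the hypothesis $c>0$, needed to invoke~(\ref{rec3}), is inherited from the statement of the corollary, which it is by assumption.
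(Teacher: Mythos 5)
Your proposal is correct and takes exactly the paper's route: the paper also derives this corollary directly from the boundary identity~(\ref{rec3}), and your substitution of $\psi_{m,c}'(0)=-\chi_{m,c}\psi_{m,c}(0)$ and $\psi_{n,c}'(0)=-\chi_{n,c}\psi_{n,c}(0)$ simply spells out the cancellation that the paper leaves implicit. No gaps; the hypothesis $c>0$ is correctly tracked as the condition needed to invoke~(\ref{rec3}).
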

\begin{proof}
The corollary follows directly from the identity (\ref{rec3}).
\end{proof}

\subsection{Expansions in eigenfunctions}
Given a real number $c$, the functions $\psi_{0,c},\psi_{1,c},\dots$ are a
complete orthonormal basis in $L^2[0,\infty)$.
Thus, every $f\in L^2[0,\infty)$ admits an expansion in the basis $\{\psi_{n,c}\}$.
In this subsection, we'll provide identities for the expansion coefficients of
$\psi_{n,c}', \psi_{n,c}'', x\psi_{n,c}$ and $\pp{\psi_n}{c}$, in the
basis $\{\psi_{n,c}\}$.

\begin{theorem}
\label{eseries}
For any real $c$, non-negative integers $m,n$, 
    \begin{align}
&\hspace{-5em}\int_0^\infty \psi_n'(x)\psi_m(x)\d
x=-\frac{\lambda_m}{\lambda_n+\lambda_m}\psi_n(0)\psi_m(0),\label{es1}
    \end{align}
and if $m\neq n$, then
    \begin{align}
&\hspace{-5em}\int_0^\infty \psi_n''(x)\psi_m(x)\d x=\frac{\lambda_m}{\lambda_n-\lambda_m}\Bigl(\psi_n'(0)\psi_m(0)-\psi_n(0)\psi_m'(0)\Bigr),\label{es2}\\
&\hspace{-5em}\int_0^\infty x\psi_n(x)\psi_m(x)\d x=\frac{\lambda_n\lambda_m}{\lambda_n^2-\lambda_m^2}\Bigl(\psi_n'(0)\psi_m(0)-\psi_n(0)\psi_m'(0)\Bigr),\label{es3}\\
&\hspace{-5em}\int_0^\infty \pp{\psi_n}{c}(x)\psi_{m}(x)\d x=\frac{\lambda_n\lambda_m}{\lambda_m^2-\lambda_n^2}\psi_m(0)\psi_n(0),\label{es4}
    \end{align}
where $\psi_m,\psi_n,\lambda_m,\lambda_n$ denote the eigenfunctions and
eigenvalues of the Airy integral operator with parameter $c$.
\end{theorem}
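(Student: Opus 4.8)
The plan is to handle all four identities with a single engine and then specialize. The common observation is that the kernel $\ai(x+y+c)$, together with each of its derivatives in $x$, depends only on the sum $x+y+c$ and is therefore symmetric under the interchange $x\leftrightarrow y$. Hence, if I differentiate the eigenvalue identity~(\ref{Teigfun}) once or twice in $x$, pair the result against a second eigenfunction $\psi_m$, and integrate over $[0,\infty)$, the resulting double integral is invariant under swapping the roles of $(n,x)$ and $(m,y)$. Writing $I_{nm}=\int_0^\infty\psi_n'\psi_m\d x$ and $J_{nm}=\int_0^\infty\psi_n''\psi_m\d x$, this symmetry gives the two ``cross'' relations
\begin{align}
\lambda_n I_{nm}=\lambda_m I_{mn},\qquad \lambda_n J_{nm}=\lambda_m J_{mn}.
\end{align}
The first of these immediately yields Theorem~\ref{ratiothm}. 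Each cross relation is then combined with an elementary integration-by-parts identity to pin down the integral of interest.

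For~(\ref{es1}) I would integrate by parts to obtain $I_{nm}+I_{mn}=-\psi_n(0)\psi_m(0)$, the boundary term at $+\infty$ vanishing by the superexponential decay discussed below. Solving this together with $\lambda_n I_{nm}=\lambda_m I_{mn}$ gives $I_{nm}=-\frac{\lambda_m}{\lambda_n+\lambda_m}\psi_n(0)\psi_m(0)$, which is~(\ref{es1}) and in fact remains valid when $m=n$. For~(\ref{es2}), integration by parts gives $J_{nm}-J_{mn}=\psi_m'(0)\psi_n(0)-\psi_n'(0)\psi_m(0)$; combined with $\lambda_n J_{nm}=\lambda_m J_{mn}$ and the hypothesis $m\neq n$, this produces~(\ref{es2}).

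For~(\ref{es3}) I would use the Airy equation~(\ref{aiode}) in the form $\ai''(z)=z\,\ai(z)$. Differentiating~(\ref{Teigfun}) twice in $x$ and substituting this relation gives
\begin{align}
\lambda_n\psi_n''(x)=(x+c)\lambda_n\psi_n(x)+\int_0^\infty y\,\ai(x+y+c)\psi_n(y)\d y.
\end{align}
Pairing against $\psi_m$, integrating over $[0,\infty)$, and swapping the order of integration in the last term (whose inner integral then collapses to $\lambda_m\psi_m(y)$) identifies that term with $\lambda_m K_{nm}$, where $K_{nm}=\int_0^\infty x\,\psi_n\psi_m\d x$. For $m\neq n$ the $(x+c)$ term contributes only $\lambda_n K_{nm}$, so $\lambda_n J_{nm}=(\lambda_n+\lambda_m)K_{nm}$, and substituting~(\ref{es2}) yields~(\ref{es3}). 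For~(\ref{es4}) I would instead differentiate~(\ref{Teigfun}) in $c$. Since the $c$-derivative of the kernel equals its $x$-derivative, that contribution is $\lambda_n\psi_n'(x)$; the $\pp{\psi_n}{c}$ appearing inside the integral contributes $\lambda_m M_{nm}$ (after swapping the order of integration), where $M_{nm}=\int_0^\infty\pp{\psi_n}{c}\psi_m\d x$; and the $\pp{\lambda_n}{c}$ term drops out upon pairing with $\psi_m$ by orthogonality, since $m\neq n$. This leaves $(\lambda_n-\lambda_m)M_{nm}=\lambda_n I_{nm}$, and substituting~(\ref{es1}) gives~(\ref{es4}).

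The main difficulty is analytic rather than algebraic: I must justify differentiation under the integral sign, the Fubini interchanges, and, most delicately, the vanishing of the boundary terms at $+\infty$ in each integration by parts. All of these rest on the superexponential decay of $\psi_{n,c}$ and its derivatives, which follows from the integral representation~(\ref{Teigfun}) and the superexponential decay of the Airy function (Remark~\ref{asymp}); the entireness supplied by Theorem~\ref{thm:anacont} guarantees the required smoothness. The one genuinely nonroutine bookkeeping step is the $c$-differentiation for~(\ref{es4}), where it is essential that pairing against a \emph{different} eigenfunction annihilates the $\pp{\lambda_n}{c}$ contribution; this is exactly why~(\ref{es4}), like~(\ref{es2}) and~(\ref{es3}), is stated only for $m\neq n$.
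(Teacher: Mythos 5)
Your proof is correct and takes essentially the same route as the paper's: you derive the same kernel-symmetry cross relations $\lambda_n I_{nm}=\lambda_m I_{mn}$ and $\lambda_n J_{nm}=\lambda_m J_{mn}$ from $\dd{}{x}\ai(x+y+c)=\dd{}{y}\ai(x+y+c)$, close them with the same integrations by parts for (\ref{es1})--(\ref{es2}), use the Airy equation and orthogonality in exactly the paper's way to get $(\lambda_n+\lambda_m)K_{nm}=\lambda_n J_{nm}$ for (\ref{es3}), and differentiate (\ref{Teigfun}) in $c$ with orthogonality annihilating the $\pp{\lambda_n}{c}$ term to get $(\lambda_n-\lambda_m)M_{nm}=\lambda_n I_{nm}$ for (\ref{es4}). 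Your closing paragraph on justifying the Fubini interchanges and vanishing boundary terms via the superexponential decay of the eigenfunctions is a point the paper passes over silently, so it is a welcome addition rather than a deviation.
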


\begin{proof}
To prove (\ref{es1}), we start with the identity
    \begin{align}
\hspace{-3em}\lambda_n\int_0^\infty \psi_n'(x)\psi_m(x)\d
x=\int_0^\infty\Bigr(\int_0^\infty\dd{}{x}\ai(x+y+c)\psi_n(y)\d
y\Bigl)\psi_m(x)\d x.\label{es5}
    \end{align}
Note that 
    \begin{align}
\dd{}{x}\ai(x+y+c)=\dd{}{y}\ai(x+y+c).\label{es5.4}
    \end{align}
Therefore, the above calculation (\ref{es5}) can be repeated with $m$ and
$n$ exchanged, yielding the identity
    \begin{align}
\hspace{-3em}\lambda_m\int_0^\infty \psi_m'(x)\psi_n(x)\d
x=&\int_0^\infty\Bigr(\int_0^\infty\dd{}{x}\ai(x+y+c)\psi_m(y)\d
y\Bigl)\psi_n(x)\d x\notag\\
 \hspace{-3em}=&\int_0^\infty\Bigr(\int_0^\infty\dd{}{y}\ai(y+x+c)\psi_n(x)\d
x\Bigl)\psi_m(y)\d y.\label{es5.5}
    \end{align}
By combining (\ref{es5}) and (\ref{es5.5}), we get
    \begin{align}
\int_0^\infty \psi_n'(x)\psi_m(x)\d
x=\frac{\lambda_m}{\lambda_n}\int_0^\infty \psi_m'(x)\psi_n(x)\d
x.\label{es7}
    \end{align}
On the other hand, integrating the right side of (\ref{es7}) by parts and
rearranging the terms gives (\ref{es1}).

In the following, we assume that $m\neq n$. 

To prove formula (\ref{es2}), we first combine (\ref{aiode}),
(\ref{Teigfun}), and derive the following identity:
    \begin{align}
\lambda_n\psi_n''(x)=\int_0^\infty(x+y+c)\ai(x+y+c)\psi_n(y)\d y.\label{es9}
    \end{align}
By repeating the same procedure (\ref{es5})-(\ref{es7}), we get
    \begin{align}
\int_0^\infty \psi_n''(x)\psi_m(x)\d
x=\frac{\lambda_m}{\lambda_n}\int_0^\infty \psi_m''(x)\psi_n(x)\d
x.\label{es10}
    \end{align}
Integrating the right side of (\ref{es10}) by parts twice and rearranging
the terms gives (\ref{es2}).

To prove (\ref{es3}), first note that by combining (\ref{es9}) and
(\ref{Teigfun}), we get
    \begin{align}
\lambda_n\Bigl(\psi_n''(x)-(x+c)\psi_n(x)\Bigr) = \int_0^\infty
y\ai(x+y+c)\psi_n(y)\d y.\label{es12}
    \end{align}
Taking the inner product of both sides of (\ref{es12}) with $\psi_m(x)$, we have
    \begin{align}
\hspace*{-6em}\lambda_n\int_0^\infty\Bigl(\psi_n''(x)-(x+c)\psi_n(x)\Bigr)\psi_m(x)\d
x =&\int_0^\infty\int_0^\infty y\ai(x+y+c)\psi_n(y)\d y\, \psi_m(x)\d
x\notag\\
\hspace*{-6em}=&\int_0^\infty y\psi_n(y)\int_0^\infty \ai(y+x+c)\psi_m(x)\d
x\, \d y\notag\\
\hspace*{-6em}=&\,\lambda_m\int_0^\infty y\psi_n(y)\psi_m(y) \d y.\label{es14}
    \end{align}
Therefore, (\ref{es14}) becomes
    \begin{align}
\hspace*{-4em}(\lambda_m+\lambda_n)\int_0^\infty x\psi_n(x)\psi_m(x)\d x=
\lambda_n\int_0^\infty\Bigl(\psi_n''(x)-c\psi_n(x)\Bigr)\psi_m(x)\d x.
    \end{align}
By combining the orthogonality of $\psi_n$ and (\ref{es2}), we prove (\ref{es3}).

To prove (\ref{es4}), we take the derivative with respect to $c$ of both
sides of (\ref{Teigfun}), yielding the identity
    \begin{align}
\hspace*{-5em}\pp{\lambda_n}{c}\psi_n(x)+\lambda_n\pp{\psi_n}{c}(x)=\int_0^\infty
\Bigl(\frac{d}{dc}\ai(x+y+c)\psi_n(y)+\ai(x+y+c)\pp{\psi_n}{c}(y)\Bigr)\d
y.\label{es17}
    \end{align}
Taking the inner product of both sides of (\ref{es17}) with $\psi_m(x)$, by
(\ref{Teigfun}), we get
    \begin{align}
&\hspace*{-6em}\lambda_n\int_0^\infty \pp{\psi_n}{c}(x)\psi_m(x)\d x\notag\\
&\hspace*{-6em}=\int_0^\infty\Bigl(\int_0^\infty
\frac{d}{dc}\ai(x+y+c)\psi_n(y)\d y\Bigr)\psi_m(x)\d x
+\lambda_m\int_0^\infty\pp{\psi_n}{c}(y)\psi_m(y)\d y.\label{es19}
    \end{align}
Since
    \begin{align}
\dd{}{x}\ai(x+y+c)=\dd{}{c}\ai(x+y+c),\label{es20}
    \end{align}
we have that
    \begin{align}
\lambda_n\psi_n'(x)=\int_0^\infty\dd{}{c}\ai(x+y+c)\psi_n(y)\d y.\label{es22}
    \end{align}
Therefore, 
    \begin{align}
\hspace*{-3em}\int_0^\infty\Bigl(\int_0^\infty
\frac{d}{dc}\ai(x+y+c)\psi_n(y)\d y\Bigr)\psi_m(x)\d
x=\lambda_n\int_0^\infty \psi_n'(x)\psi_m(x)\d x.\label{es25}
    \end{align}
Finally, by combining (\ref{es1}), (\ref{es19}) and (\ref{es25}), we prove
(\ref{es4}).
\end{proof}

\subsection{Behavior of the eigenfunction $\psi_{n,c}$ as $c\to\infty$}
In this section, we show that the eigenfunction $\psi_{n,c}$ of the Airy
integral operator $\T_c$ converges to a scaled Laguerre function
in the limit as $c\to\infty$. 

\begin{theorem}\label{thm:lag}
As $c\to \infty$, 
  \begin{align}
\psi_{n,c}(x)\to h_n^{2\sqrt{c}}(x),
  \end{align}
where $h_n^{2\sqrt{c}}$ is the scaled Laguerre function defined in
(\ref{slag}) with parameter $a=2\sqrt{c}$. 
\end{theorem}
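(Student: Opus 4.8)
The plan is to exploit the fact that, in the scaled Laguerre basis with the specific parameter $a = 2\sqrt c$, the commuting differential operator $\L_c$ is \emph{almost} exactly diagonalized. The starting point is the observation, from (\ref{slagode}), that $h_n^a$ is the $(n+1)$-th eigenfunction of the operator $M_a := -\dd{}{x}\bigl(x\dd{}{x}\bigr) + \frac{a^2}{4}x$ with eigenvalue $a\bigl(n+\tfrac12\bigr)$, together with the exact identity $\L_c = M_{2\sqrt c} + x^2$, which holds because $\frac{a^2}{4} = c$ when $a = 2\sqrt c$. Thus the theorem amounts to showing that, as $c\to\infty$, the additive perturbation $x^2$ becomes negligible relative to the spectral gaps of $M_{2\sqrt c}$, which are of size $2\sqrt c$.

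First I would establish the eigenvalue asymptotics $\chi_{n,c} = (2n+1)\sqrt c + O(1/c)$ by a min-max argument. The lower bound $\chi_{n,c} \ge (2n+1)\sqrt c$ is immediate from $x^2 \ge 0$ and $\L_c = M_{2\sqrt c} + x^2$. For the upper bound I would use the trial space $\spn\{h_0^{2\sqrt c}, \dots, h_n^{2\sqrt c}\}$, on which $M_{2\sqrt c}$ contributes at most $(2n+1)\sqrt c$ and the perturbation contributes $\max_{\|v\|=1}\inner{x^2 v}{v} = O(1/c)$, since the relevant moments $\int_0^\infty x^2 (h_k^{2\sqrt c})^2\,\mathrm dx$ scale like $1/c$ under the substitution $x = \xi/\sqrt c$. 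This shows consecutive eigenvalues of $\L_c$ are separated by approximately $2\sqrt c \to \infty$.

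Next I would expand $\psi_{n,c} = \sum_k \beta_k h_k^{2\sqrt c}$ and work with the explicit five-diagonal matrix of $\L_c$ in this basis, given by Theorem \ref{thm:Lh} (equivalently (\ref{A1})--(\ref{A3})). The key algebraic fact is that the quantity $a^3 - 4ac$ vanishes identically at $a = 2\sqrt c$, so the off-diagonal entries collapse to $A_{k,k+1} = -(k+1)^2/c$ and $A_{k,k+2} = (k+1)(k+2)/(4c)$, while the diagonal is $A_{k,k} = (2k+1)\sqrt c + O(1/c)$. Projecting $\L_c\psi_{n,c} = \chi_{n,c}\psi_{n,c}$ onto $h_k^{2\sqrt c}$ yields $(\chi_{n,c} - A_{k,k})\beta_k = A_{k,k-2}\beta_{k-2} + A_{k,k-1}\beta_{k-1} + A_{k,k+1}\beta_{k+1} + A_{k,k+2}\beta_{k+2}$. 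For $k \ne n$ the prefactor satisfies $|\chi_{n,c} - A_{k,k}| \approx 2|n-k|\sqrt c$, which is large, while the right-hand side is $O(\mathrm{poly}(k)/c)$; hence each $|\beta_k|$ with $k\ne n$ is forced to be small. Combined with the normalization $\sum_k \beta_k^2 = 1$ (see (\ref{sum1})), this forces $\beta_n \to 1$ and therefore $\|\psi_{n,c} - h_n^{2\sqrt c}\|_2 \to 0$, with the sign pinned down by the convention $\psi_{n,c}(0) > 0$.

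The main obstacle will be converting the entrywise smallness of the $\beta_k$ into genuine $\ell^2$ (equivalently $L^2$) convergence, since the off-diagonal entries grow with $k$ and the recurrence couples infinitely many coordinates. To close this gap I would combine the spectral-gap estimate with a uniform-in-$c$ tail bound on the coefficients, supplied by the super-algebraic decay of Theorem \ref{decaypf}, bootstrapping from the coordinates near $k=n$ outward so that the growing polynomial factors are dominated both by the decay of $\beta_k$ and by the $c^{-3/2}$ smallness of each coupling term after division by the gap.
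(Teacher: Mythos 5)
Your route is genuinely different from the paper's. The paper proves this theorem in two sentences at the level of the ODE: starting from the eigenvalue equation (\ref{eigode}), it argues via the turning-point bound of Theorem \ref{thm:tpsiineq} that $\psi_{n,c}$ becomes essentially concentrated near $x=0$ as $c\to\infty$, drops the $x^2$ term against $cx$, and matches the resulting limit equation with (\ref{slagode}) at $a=2\sqrt{c}$. You instead work at the level of the operator and its matrix, and your core ingredients all check out: the decomposition $\L_c = M_{2\sqrt{c}} + x^2$ with $M_a h_n^a = a(n+\tfrac12)h_n^a$ is exact; the min-max sandwich $(2n+1)\sqrt{c} \le \chi_{n,c} \le (2n+1)\sqrt{c} + O_n(1/c)$ is correct (the moments indeed scale like $1/a^2 = 1/(4c)$ under $\xi = 2\sqrt{c}\,x$); and the algebraic collapse $a^3 - 4ac = 0$ at $a=2\sqrt{c}$, giving $A_{k,k+1} = -(k+1)^2/c$, $A_{k,k+2}=(k+1)(k+2)/(4c)$, and $A_{k,k}=(2k+1)\sqrt{c}+O_k(1/c)$ in (\ref{A1})--(\ref{A3}), is right. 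What your approach buys over the paper's is an explicit rate, $\norm{\psi_{n,c}-h_n^{2\sqrt{c}}}_2 = O_n(c^{-3/2})$, where the paper's argument is purely qualitative.

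However, the step you flag as the main obstacle is a genuine gap as proposed, for two concrete reasons. First, the entrywise gap estimate degenerates in the tail: for $k \gtrsim c^{3/4}$ the diagonal behaves like $(2k+1)\sqrt{c} + \tfrac{3k^2}{2c}$ while the couplings are themselves of size $\sim k^2/c$, so $\abs{\chi_{n,c}-A_{k,k}}$ no longer dominates the off-diagonal entries and the recurrence ceases to force decay there. Second, the patch you propose cannot work as stated: Theorem \ref{decaypf} gives super-algebraic decay in $k$ only for fixed $c$, and its proof carries the factor $1/\abs{\lambda_{m,c}}$, which blows up superexponentially as $c\to+\infty$ (the kernel $\ai(x+y+c)$ vanishes uniformly, so all $\lambda_{m,c}\to 0$), which is precisely the limit in which you need uniformity. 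Fortunately, a standard spectral-projection (Temple/Davis--Kahan type) bound closes the argument cleanly and makes the coordinate bootstrapping unnecessary: since $\L_c h_n^{2\sqrt{c}} = (2n+1)\sqrt{c}\, h_n^{2\sqrt{c}} + x^2 h_n^{2\sqrt{c}}$ with $\norm{x^2 h_n^{2\sqrt{c}}}_2 = O_n(1/c)$, and your min-max asymptotics (using $\chi_{k,c}\ge(2k+1)\sqrt{c}$ for all $k$ and the trial-space upper bound for $k<n$) show that $\dist\bigl((2n+1)\sqrt{c},\, \sigma(\L_c)\setminus\{\chi_{n,c}\}\bigr) \ge 2\sqrt{c}-O(1/c)$, the $L^2$ distance from $h_n^{2\sqrt{c}}$ to the one-dimensional $\psi_{n,c}$-eigenspace is at most the residual over the gap, i.e.\ $O_n(c^{-3/2})$, with the sign pinned down by $\psi_{n,c}(0)>0$ as you say. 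With that substitution your proof is complete, and strictly more quantitative than the paper's.
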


\begin{proof}
As $c\to\infty$, $\psi_{n,c}$ converges to the solution of 
  \begin{align}
\dd{}{x}(x\dd{}{x} f)-cxf=-\chi_{n,c}f,\label{newlode}
  \end{align}
by formula (\ref{eigode}) and the fact that $\psi_{n,c}$ becomes almost
compactly supported in the limit as $c\to\infty$ (see Theorem
\ref{thm:tpsiineq}). By comparing (\ref{newlode}) and (\ref{slagode}), we
conclude that
  \begin{align}
\lim_{c\to \infty}\psi_{n,c}(x)=h_{n}^{2\sqrt{c}}(x).
  \end{align}
\end{proof}

\begin{corollary}
$\lim_{c\to \infty}\chi_{n,c}=
(2n+1)\sqrt{c}.$
\end{corollary}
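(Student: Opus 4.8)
The plan is to read off the leading asymptotics of $\chi_{n,c}$ by comparing the two ordinary differential equations that already appear in the proof of Theorem~\ref{thm:lag}. Recall from that proof that, as $c\to\infty$, the eigenfunction $\psi_{n,c}$ converges to the solution of equation~(\ref{newlode}),
\begin{align}
\dd{}{x}\Bigl(x\dd{}{x}f\Bigr)-cxf+\chi_{n,c}f=0,\notag
\end{align}
which is obtained from~(\ref{eigode}) by discarding the $x^2$ term, since $\psi_{n,c}$ becomes almost compactly supported near the origin as $c\to\infty$ (its turning point $\tfrac{-c+\sqrt{c^2+4\chi_{n,c}}}{2}\to 0$). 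On the other hand, the scaled Laguerre function $h_n^{2\sqrt c}$ satisfies~(\ref{slagode}) with $a=2\sqrt c$, which, after substituting $a^2=4c$ and $a^3=8c^{3/2}$, reads
\begin{align}
\dd{}{x}\Bigl(x\dd{}{x}h_n^{2\sqrt c}\Bigr)-cx\,h_n^{2\sqrt c}+(2n+1)\sqrt c\,h_n^{2\sqrt c}=0.\notag
\end{align}
Since Theorem~\ref{thm:lag} tells us that $\psi_{n,c}$ and $h_n^{2\sqrt c}$ coincide in the limit, and the second- and first-order parts of the two displayed equations are identical, the coefficients of the zeroth-order terms must agree in the limit, giving $\chi_{n,c}\to(2n+1)\sqrt c$.

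To make this matching rigorous, rather than appeal informally to ``equating ODE coefficients,'' I would instead compute $\chi_{n,c}$ directly as a Rayleigh quotient. Since $\L_c[\psi_{n,c}]=\chi_{n,c}\psi_{n,c}$ and $\norm{\psi_{n,c}}_2=1$, we have $\chi_{n,c}=\inner{\L_c[\psi_{n,c}]}{\psi_{n,c}}$. Replacing $\psi_{n,c}$ by its limit $h_n^{2\sqrt c}$ and using the explicit expansion of $\L_c[h_n^a]$ from Theorem~\ref{thm:Lh}, the orthonormality of the scaled Laguerre functions isolates the diagonal coefficient~(\ref{A1}), so that with $a=2\sqrt c$,
\begin{align}
\inner{\L_c[h_n^{2\sqrt c}]}{h_n^{2\sqrt c}}=(2n+1)\sqrt c+\frac{8+24n+24n^2}{16c}.\notag
\end{align}
Letting $c\to\infty$, the remainder vanishes and the leading behaviour is exactly $(2n+1)\sqrt c$. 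As a pleasant consistency check, one finds that the entire $O(1/c)$ remainder is precisely the diagonal contribution of the $x^2$ term, i.e. the term that was discarded in passing from~(\ref{eigode}) to~(\ref{newlode}).

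The main obstacle is making precise the sense of the limit: the right-hand side $(2n+1)\sqrt c$ itself depends on $c$, so the statement is really the asymptotic equivalence $\chi_{n,c}=(2n+1)\sqrt c+o(\sqrt c)$, i.e. $\chi_{n,c}/\sqrt c\to 2n+1$. The Rayleigh-quotient computation above in fact shows the stronger remainder bound $O(1/c)$, provided one controls the error incurred by replacing $\psi_{n,c}$ with $h_n^{2\sqrt c}$ inside the inner product. Justifying that this replacement error is of lower order---equivalently, that the discarded contribution $\inner{x^2\psi_{n,c}}{\psi_{n,c}}$ is $o(\sqrt c)$---is the only delicate point, and it follows from the concentration of $\psi_{n,c}$ near $x=0$ (the support scale is $x\sim(2n+1)/\sqrt c$) established via the turning-point estimate of Theorem~\ref{thm:tpsiineq}.
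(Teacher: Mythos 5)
Your proposal is correct, and its first paragraph is precisely the paper's own proof: the paper disposes of this corollary in one line, as an ``immediate consequence'' of comparing the limiting equation~(\ref{newlode}) with the scaled Laguerre equation~(\ref{slagode}) at $a=2\sqrt{c}$, which is exactly your coefficient-matching argument, including the correct reading of the statement as $\chi_{n,c}/\sqrt{c}\to 2n+1$. Where you go beyond the paper is the Rayleigh-quotient computation: writing $\chi_{n,c}=\inner{\L_c[\psi_{n,c}]}{\psi_{n,c}}$ and substituting $h_n^{2\sqrt{c}}$, the orthonormality of the $h_k^a$ isolates the diagonal coefficient~(\ref{A1}), and your arithmetic checks out --- with $a=2\sqrt{c}$ one has $a^3=4ac=8c^{3/2}$ and $4a^2=16c$, giving $(2n+1)\sqrt{c}+\frac{8+24n+24n^2}{16c}$, whose remainder indeed coincides with the diagonal contribution $\frac{6n^2+6n+2}{a^2}$ of the $x^2$ term from~(\ref{slagxx}), exactly as you observe. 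This buys an explicit $O(1/c)$ remainder estimate and a structural explanation of where the error comes from (the term discarded in passing from~(\ref{eigode}) to~(\ref{newlode})), neither of which the paper provides. You are also right to flag the one genuinely delicate point, which both your argument and the paper's share: the replacement of $\psi_{n,c}$ by $h_n^{2\sqrt{c}}$ rests on Theorem~\ref{thm:lag}, whose proof in the paper is itself informal (the ``almost compactly supported'' concentration claim via Theorem~\ref{thm:tpsiineq}), so your rigor here matches or exceeds the source; just note that using $\chi_{n,c}\sim(2n+1)\sqrt{c}$ to locate the turning point at $x\sim(2n+1)/\sqrt{c}$ is mildly circular and would need a crude a priori bound $\chi_{n,c}=o(c^2)$ (or similar) to bootstrap cleanly.
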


\begin{proof}
This corollary is an immediate consequence of formulas
(\ref{newlode}) and (\ref{slagode}).
\end{proof}

\subsection{Behavior of the eigenfunction $\psi_{n,c}$ as $c\to-\infty$}
In this section, we show that the eigenfunction $\psi_{n,c}$ of the Airy
integral operator $\T_c$ converges to a scaled and shifted Hermite function
in the limit as $c\to-\infty$. We first introduce the mathematical preliminaries.

The Hermite polynomials, denoted by $H_n\colon\R\to \R$, are defined
by the following three-term recurrence relation for any $k\geq 1$ (see
\cite{stegun}):
    \begin{align}
H_{n+1}(x)=2xH_n(x)-2nH_{n-1}(x),\label{hrec}
    \end{align}
with the initial conditions
    \begin{align}
H_0(x)=1,\quad H_1(x)=1-x.\label{hini}
    \end{align}
The polynomials defined by the formulas (\ref{hrec}) and (\ref{hini}) are an
orthogonal basis in the Hilbert space induced by the inner product
$\inner{f}{g}=\int_0^\infty e^{-x^2} f(x)g(x) \d x$, i.e.,
    \begin{align}
\inner{H_n}{H_m}=\int_0^\infty e^{-x^2}H_n(x)H_m(x) \d
x=\sqrt{\pi}2^nn!\delta_{n,m}.\label{hort}
    \end{align}
In addition, Hermite polynomials are solutions of Hermite's equation:
    \begin{align}
f''-2xf'+2nf=0.    
    \end{align}

We find it useful to use the scaled Hermite functions defined below.
\begin{definition}
Given a positive real number $a$, the scaled Hermite functions, denoted by
$\phi_n^a\colon\R\to \R$, are defined by
    \begin{align}
\phi_n^a(x)=\frac{\sqrt{a}}{\pi^{\frac{1}{4}}2^{\frac{n}{2}}(n!)^{\frac{1}{2}}}
e^{-a^2x^2/2}H_n(ax).\label{sherm}
    \end{align}
\end{definition}
The scaled Hermite functions satisfy the differential equation
    \begin{align}
-\frac{d^2}{dx^2}\phi_n^a  +a^4x^2\phi_n^a=a^2(2n+1)\phi_n^a.\label{shode}
    \end{align}

\begin{lemma}
Given a negative real number $c$, define
$g_n(u)=\psi_{n,c}((-\frac{c}{2})^{\frac{1}{4}}u-\frac{c}{2})$,
where $\psi_{n,c}$ is the $(n+1)$-th eigenfunction of the Airy integral
operator $\T_c$. Then, $g_n$ is the solution of the ODE
  \begin{align}
\hspace*{-2em}-\frac{d^2}{du^2} f - \bigl(-\frac{c}{2}\bigr)^{-\frac{3}{4}}\Bigl(u\frac{d^2}{du^2} f
+\frac{d}{du} f\Bigr) + u^2 f =
\bigl(\bigl(-\frac{c}{2}\bigr)^{\frac{3}{2}}+\bigl(-\frac{c}{2}\bigr)^{-\frac{1}{2}}\chi_{n,c}\bigr)f,
  \end{align}
where $\chi_{n,c}$ is the $(n+1)$-th eigenvalue of the commuting differential 
operator $\L_c$.

\end{lemma}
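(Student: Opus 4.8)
The plan is to obtain the stated ODE by a direct change of variables in equation~(\ref{eigode}). Writing $\psi = \psi_{n,c}$ and $\chi = \chi_{n,c}$, I would first expand~(\ref{eigode}) into non-divergence form $x\psi'' + \psi' - (x^2 + cx - \chi)\psi = 0$. Next I introduce the abbreviation $\mu = (-c/2)^{1/4}$, so that the substitution defining $g_n$ reads $x = \mu u + \mu^4$, and I record the relations $-c/2 = \mu^4$ and $c = -2\mu^4$, which I will use repeatedly.

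Setting $f = g_n$, so that $f(u) = \psi(\mu u + \mu^4)$, the chain rule gives $\psi'(x) = \mu^{-1} f'(u)$ and $\psi''(x) = \mu^{-2} f''(u)$. Substituting these, together with $x = \mu u + \mu^4$, into the expanded ODE yields
\begin{align*}
(\mu^{-1} u + \mu^2)\, f'' + \mu^{-1} f' - (x^2 + cx - \chi)\, f = 0.
\end{align*}
The one mildly clever step is to handle the potential by factoring $x^2 + cx = x(x+c)$: under the substitution the two factors become the conjugate linear forms $x = \mu u + \mu^4$ and $x + c = \mu u - \mu^4$, so their product collapses to the difference of squares $\mu^2 u^2 - \mu^8$. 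This is exactly the cancellation that removes the term linear in $u$ from the potential, leaving a clean $u^2$ term after scaling, which is what makes the eventual comparison with the scaled Hermite equation~(\ref{shode}) possible.

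With this simplification the equation reads $(\mu^{-1}u + \mu^2)f'' + \mu^{-1}f' - (\mu^2 u^2 - \mu^8 - \chi)f = 0$. Dividing through by $\mu^2$ and regrouping to isolate the $f''$ and $(u f'' + f')$ terms gives
\begin{align*}
-f'' - \mu^{-3}(u f'' + f') + u^2 f = (\mu^6 + \mu^{-2}\chi)\, f.
\end{align*}
Finally I would rewrite the coefficients back in terms of $c$ using $\mu^{-3} = (-c/2)^{-3/4}$, $\mu^6 = (-c/2)^{3/2}$, and $\mu^{-2} = (-c/2)^{-1/2}$, which is precisely the claimed ODE. I expect no genuine obstacle here: the whole argument is a single substitution, and the only place demanding care is the bookkeeping of the fractional powers of $-c/2$ together with the sign of $c$ (negative, so all these powers are real and positive). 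The difference-of-squares observation is the one point worth flagging, since it is what keeps the algebra short and produces the purely quadratic potential.
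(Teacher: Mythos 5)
Your proof is correct and takes essentially the same route as the paper, whose proof simply states that the lemma ``directly follows from the definition of $g_n$'' and equation~(\ref{eigode}); you have merely written out the substitution $x = \mu u + \mu^4$ with $\mu = (-c/2)^{1/4}$ in full, and the algebra (including the difference-of-squares collapse $x(x+c) = \mu^2 u^2 - \mu^8$) checks out exactly, reproducing the stated ODE after division by $\mu^2$.
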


\begin{proof}
The lemma directly follows from the definition of $g_n$ and the differential
equation satisfied by $\psi_{n,c}$ (see formula (\ref{eigode})).
\end{proof}
\begin{theorem}\label{thm:herm}
As $c\to -\infty$, 
  \begin{align}
\psi_{n,c}(x)\to \frac{1}{\sqrt{a}}\phi_n^{a}\bigl(x+\frac{c}{2}\bigr),
  \end{align}
where $a=(-\frac{c}{2})^{-\frac{1}{4}}$. In other words, $\psi_{n,c}(x)$
converges a Hermite function that is translated by $-\frac{c}{2}$, and
scaled by scaling parameter $(-\frac{c}{2})^{-\frac{1}{4}}$.
\end{theorem}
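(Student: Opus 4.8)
The plan is to mirror the strategy used for the opposite limit in Theorem~\ref{thm:lag}: reduce the claim to a comparison of ordinary differential equations, but now working in the rescaled coordinate furnished by the preceding Lemma. First I would take as the starting point the equation satisfied by $g_n(u)=\psi_{n,c}\bigl((-\tfrac{c}{2})^{1/4}u-\tfrac{c}{2}\bigr)$, rewritten with the small parameter $\varepsilon=(-\tfrac{c}{2})^{-3/4}$ as
\[
-\ddd{g_n}{u}+u^2 g_n-\varepsilon\Bigl(u\,\ddd{g_n}{u}+\dd{g_n}{u}\Bigr)=\mu_c\,g_n,
\]
where $\mu_c=(-\tfrac{c}{2})^{3/2}+(-\tfrac{c}{2})^{-1/2}\chi_{n,c}$. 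As $c\to-\infty$ we have $\varepsilon\to0$, and the $u$-domain $[-(-\tfrac{c}{2})^{3/4},\infty)$ exhausts all of $\R$.

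Next I would send $c\to-\infty$ and argue that the $\varepsilon$-term may be dropped, so that $g_n$ converges to a bounded $L^2(\R)$ solution of the harmonic-oscillator equation $-f''+u^2f=\mu f$. This is precisely the scaled Hermite equation (\ref{shode}) with $a=1$, whose only solutions decaying at $\pm\infty$ occur at $\mu=2n+1$ and equal the Hermite functions $\phi_n^1$. Since $\L_c$ has a strictly increasing, simple spectrum, the $(n+1)$-th eigenpair is carried to the $(n+1)$-th oscillator state; hence $\mu_c\to 2n+1$, equivalently $\chi_{n,c}=-\tfrac{c^2}{4}+(2n+1)(-\tfrac{c}{2})^{1/2}+o\bigl((-\tfrac{c}{2})^{1/2}\bigr)$, and, after the usual normalization, $g_n\to\phi_n^1$.

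Finally I would unwind the change of variables. With $a=(-\tfrac{c}{2})^{-1/4}$ and $u=a(x+\tfrac{c}{2})$, the definition (\ref{sherm}) yields the dilation identity $\phi_n^1\bigl(a(x+\tfrac{c}{2})\bigr)=\tfrac{1}{\sqrt a}\,\phi_n^a\bigl(x+\tfrac{c}{2}\bigr)$, so the convergence $g_n\to\phi_n^1$ translates immediately into $\psi_{n,c}(x)\to\tfrac{1}{\sqrt a}\,\phi_n^a\bigl(x+\tfrac{c}{2}\bigr)$, which is the assertion.

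The hard part is that discarding the $\varepsilon$-term is a \emph{singular} perturbation, since that term carries the top-order derivative $u\,g_n''$, so a naive limit is not automatically valid. The justification must exploit the localization of $\psi_{n,c}$. The classically allowed region of (\ref{eigode}) lies between the turning points $x_\pm=\tfrac{-c\pm\sqrt{c^2+4\chi_{n,c}}}{2}$; with the asymptotics above, both points sit within $O\bigl((-\tfrac{c}{2})^{1/4}\bigr)$ of $-\tfrac{c}{2}$, so in the $u$-variable they map to $\pm\sqrt{2n+1}+o(1)$ — an $O(1)$ window about the origin, exactly the turning points of the $n$-th oscillator state — where $u=O(1)$ and the offending coefficient $\varepsilon u$ is genuinely $O(\varepsilon)$. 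Only near the left endpoint $x=0$, i.e.\ $u\approx-(-\tfrac{c}{2})^{3/4}$, does $\varepsilon u$ become $O(1)$; but there Theorem~\ref{thm:tpsiineq} and the super-exponential decay of the limiting Gaussian profile force $g_n$ to be negligibly small, so the perturbation acts only where the eigenfunction carries no mass. Turning this heuristic into a rigorous statement — establishing that $\mu_c$ stays bounded (which is tantamount to the next-order asymptotic of $\chi_{n,c}$), and controlling the tails uniformly in $c$ via a perturbation bound on the eigenpairs or a WKB/comparison estimate — is where the real work lies; the ODE matching and the final dilation identity are routine.
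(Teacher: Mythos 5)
Your proposal follows essentially the same route as the paper's own proof: the paper likewise starts from the Lemma's rescaled equation for $g_n$, drops the $(-\tfrac{c}{2})^{-3/4}$ term as $c\to-\infty$, identifies the limit with the scaled Hermite equation (\ref{shode}) to get $g_n\to\phi_n^1$, and unwinds the change of variables via the same dilation identity. Your closing discussion of why discarding the top-order term $\varepsilon\,u\,g_n''$ is a singular perturbation, and of the turning-point localization needed to justify it, goes beyond the paper, which passes over this point entirely by simply noting $\lim_{c\to-\infty}(-\tfrac{c}{2})^{-3/4}=0$.
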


\begin{proof}
As $c\to-\infty$, $g_n$ converges to the solution of 
  \begin{align}
-\frac{d^2}{du^2} f + u^2 f =
\bigl(\bigl(-\frac{c}{2}\bigr)^{\frac{3}{2}}+\bigl(-\frac{c}{2}\bigr)^{-\frac{1}{2}}\chi_{n,c}\bigr)f,
\label{gnode}
  \end{align}
since $\lim_{c\to -\infty} (-\frac{c}{2})^{-\frac{3}{4}} =0$. By comparing 
(\ref{gnode}) and $(\ref{shode})$, we conclude that
  \begin{align}
\lim_{c\to -\infty}g_n(u)=\phi_n^1(u).
  \end{align}
Therefore, by definition,
  \begin{align}
\lim_{c\to
-\infty}\psi_{n,c}(x)=\phi_n^1\bigl(\bigl(-\frac{c}{2}\bigr)^{-\frac{1}{4}}\bigl(x+\frac{c}{2}\bigr)\bigr)
=\frac{1}{\sqrt{a}}\phi_n^a\bigl(x+\frac{c}{2}\bigr),
  \end{align}
where $a=(-\frac{c}{2})^{-\frac{1}{4}}$.
\end{proof}

\begin{corollary}

$\lim_{c\to -\infty}\chi_{n,c}=
(2n+1)(-\frac{c}{2})^{\frac{1}{2}}-\frac{c^2}{4}.$
\end{corollary}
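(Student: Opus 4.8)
The plan is to read off the asymptotics of $\chi_{n,c}$ directly from the eigenvalue relation appearing in the proof of Theorem~\ref{thm:herm}, in exactly the same spirit as the preceding corollary for the $c\to\infty$ case. Recall that in that proof we showed the rescaled eigenfunction $g_n$ satisfies the ODE~(\ref{gnode}) and converges, as $c\to-\infty$, to the Hermite function $\phi_n^1$, which is the solution of~(\ref{shode}) with $a=1$, i.e.\ it satisfies $-\frac{d^2}{du^2}\phi_n^1 + u^2\phi_n^1 = (2n+1)\phi_n^1$. The key point is that the left-hand differential operators in~(\ref{gnode}) and in~(\ref{shode}) coincide in the limit, and the eigenfunctions converge; hence the scalar multiplying $f$ on the right-hand side of~(\ref{gnode}) must converge to the Hermite eigenvalue $2n+1$.

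First I would record this matching of eigenvalues, namely
  \begin{align}
\bigl(-\tfrac{c}{2}\bigr)^{\frac{3}{2}} + \bigl(-\tfrac{c}{2}\bigr)^{-\frac{1}{2}}\chi_{n,c} \longrightarrow 2n+1
  \end{align}
as $c\to-\infty$. Then I would simply solve this relation for $\chi_{n,c}$: multiplying through by $\bigl(-\tfrac{c}{2}\bigr)^{\frac{1}{2}}$ and using that the discarded error term is $o(1)$, one obtains
  \begin{align}
\chi_{n,c} = (2n+1)\bigl(-\tfrac{c}{2}\bigr)^{\frac{1}{2}} - \bigl(-\tfrac{c}{2}\bigr)^{2} + o\bigl((-\tfrac{c}{2})^{\frac{1}{2}}\bigr),
  \end{align}
and since $\bigl(-\tfrac{c}{2}\bigr)^{2} = c^2/4$, the two leading terms are exactly the claimed expression $(2n+1)\bigl(-\tfrac{c}{2}\bigr)^{\frac{1}{2}} - \tfrac{c^2}{4}$.

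There is essentially no obstacle here, since the hard analytic work—the convergence of $g_n$ to $\phi_n^1$—was already carried out in Theorem~\ref{thm:herm}; the corollary is then an immediate algebraic consequence of the eigenvalue relation, precisely as the preceding corollary followed from~(\ref{newlode}) and~(\ref{slagode}). The only point meriting a word of care is the interpretation of the statement: as written the right-hand side depends on $c$, so the assertion should be understood as a leading-order asymptotic equivalence as $c\to-\infty$ rather than a literal limit, in keeping with the notational convention of the preceding corollary.
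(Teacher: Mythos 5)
Your proposal is correct and follows exactly the paper's route: the paper likewise obtains the corollary as an immediate consequence of matching the eigenvalue $\bigl(-\tfrac{c}{2}\bigr)^{3/2}+\bigl(-\tfrac{c}{2}\bigr)^{-1/2}\chi_{n,c}$ in (\ref{gnode}) with the Hermite eigenvalue $2n+1$ from (\ref{shode}) (with $a=1$) and solving for $\chi_{n,c}$. Your added remark that the statement must be read as a leading-order asymptotic equivalence, since the right-hand side depends on $c$, is a fair and correct clarification of the paper's loose notation, not a deviation from its argument.
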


\begin{proof}
This corollary is an immediate consequence of formulas
(\ref{gnode}) and (\ref{shode}).
\end{proof}

\end{document}